\author{Pernak, Rauzy}
\newcommand{\N}{\mathbb{N}}
\newcommand{\Z}{\mathbb{Z}}
\newcommand{\Lan}{\mathcal{L}}
\DeclarePairedDelimiterX\set[1]\lbrace\rbrace{#1} 
\newtheoremstyle{}{}{}{}{}{}{}{}{\thmname{#1}\thmnumber{ #2} #3} 
\newcounter{stmt} 
\newtheorem{lemma}[stmt]{Lemma}
\theoremstyle{definition}
\newtheorem{definition}[stmt]{Definition}
\newtheorem{theorem}[stmt]{Theorem}
\newtheorem{theoremalpha}{Theorem}
\newtheorem{propositionalpha}[theoremalpha]{Proposition}
\newtheorem{example}[stmt]{Example}
\newtheorem{remark}[stmt]{Remark}
\newtheorem{proposition}[stmt]{Proposition}
\newtheorem{corollary}[stmt]{Corollary}
\newtheorem{problem}[stmt]{Problem}
\begin{document}
	\pagenumbering{gobble}	
	\title{Groups with presentations in EDT0L}
\author{Laurent Bartholdi, Leon Pernak, Emmanuel Rauzy}
\maketitle

\begin{abstract}
	To any family of languages LAN, let us associate the class, denoted $\pi(\text{LAN})$, of finitely generated groups that admit a group presentation whose set of relators forms a language in LAN. We show that the class of L-presented groups, as introduced by the first author in 2003, is exactly the class of groups that admit presentations in the family of languages EDT0L. 
	We show that the marked isomorphism problem is not semi-decidable for groups given by EDT0L presentations,  contrary to the finite presentation case. 
	We then extend and unify results of the first author with Eick and Hartung about nilpotent and finite quotients, by showing that it is possible to compute the marked hyperbolic and marked abelian-by-nilpotent quotients of a group given by an EDT0L presentation. 
	Finally, we show how the results about quotient computations enable  the construction of  recursively presented groups that do not have EDT0L presentations, thus proving $\pi(\text{EDT0L})\ne \pi(\text{REC})$. This is done by building a residually nilpotent group with solvable word problem whose sequence of maximal nilpotent quotients is non-computable.
\end{abstract}

\lhead{EDT0L Presentations}
\rhead{Bartholdi, Pernak, Rauzy}

\begin{justify}
	\pagenumbering{arabic}
	\pagestyle{fancy}
	
\section{Introduction}

	In the study of finitely generated groups, it is very natural to represent group elements as words over the generators. Many connections between group theory and the theory of formal languages stem from this simple observation. \\
	Such a connection, that has perhaps been little studied, appears via the notion of presentation of groups. For LAN a family of languages, we define a class of groups, which we denote by $\pi(\text{LAN})$, which corresponds to \emph{the class of groups that have some presentation in LAN}. Namely, it is the class of groups that can be written, for a certain alphabet $S$, as $$\mathbb{F}_S/\langle\langle R\rangle \rangle, $$  where $\mathbb{F}_S$ denotes the free group over $S$, $R$ is a language in $\text{LAN}$ over the alphabet $S \cup S^{-1}$, and $\langle\langle R\rangle \rangle$ is the normal closure of $R$ in $\mathbb F_S$.\\
	In fact, presentations of groups are naturally studied in the category of marked groups: a  \emph{marked group} $(G,S)$ is a pair formed by a finitely generated group together with a generating tuple of this group. When a group $G$ is given as a quotient of a free group  $\mathbb{F}_S$, we naturally get a marking of $G$ by taking as a generating tuple the image of the basis $S$ of the free group in $G$. 
	This leads us to define, associated to a family of languages LAN, a class of marked groups  denoted $\pi_M(\text{LAN})$, which is the class of marked groups that admit presentations in LAN.  
	\\
	Two classes of groups defined via presentations in families of languages are well studied: \emph{finitely presentable groups} and \emph{recursively presentable groups}. These correspond  respectively to $\pi(\text{FIN})$  and $\pi(\text{RE})$, where  FIN is the family of finite languages and RE the family of recursively enumerable languages. 
	They are related via Higman's embedding theorem \cite{Higman1961}, which says that the set of finitely generated subgroups of finitely presentable groups is exactly the set of recursively presentable groups. 
	\\
	However, most of the theory of formal languages takes place \emph{between} the families FIN and RE: FIN and RE could be considered respectively the smallest and biggest families of languages usually encountered in formal language theory. 
	\\
	Because of this, many presentations that are commonly given as typical examples of recursive presentations in fact provide much more information than an ``average'' recursive presentation. 
	For example, the lamplighter group $\mathbb{Z}/2\wr\mathbb{Z}$ is given by the presentation 
	$$\langle a,\varepsilon\vert\,\varepsilon^{2},\,\left[\varepsilon,a^{i}\varepsilon a^{-i}\right],i\in\mathbb{Z}\rangle$$
	From this given presentation, one immediately sees that the abelianization of the lamplighter group is $\mathbb{Z}/2\times\mathbb{Z}$. But compare this to the following result of the third author: 
	\begin{theorem}[\cite{Rauzy2021}]
		If P is a group property which is semi-decidable for groups given by recursive presentations, then P is \emph{quotient stable} (that is to say, preserved under taking quotients).
	\end{theorem}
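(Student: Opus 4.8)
The plan is to exploit a ``finite-prefix'' continuity property of the hypothetical semi-decider. Fix an algorithm $\mathcal{A}$ witnessing that $P$ is semi-decidable for recursive presentations: its input is a finite tuple $S$ together with a Turing machine $M$ enumerating a set $R$ of words over $S\cup S^{-1}$, and $\mathcal{A}$ halts on $(S,M)$ if and only if the group $\mathbb{F}_S/\langle\langle R\rangle\rangle$ has property $P$. Now suppose $G$ has $P$ and fix a recursive presentation $\langle S\mid R\rangle$ of $G$ with enumerating machine $M$. Since $G$ has $P$, the run of $\mathcal{A}$ on $(S,M)$ halts; in the finitely many steps it takes, it can have extracted only a finite amount of information about $R$, namely a finite sublist $R_{0}\subseteq R$ of relators that $M$ has by then produced. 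Hence $\mathcal{A}$ runs identically --- and in particular still halts --- on any input $(S,M')$ whose machine $M'$ is arranged to output the words of $R_{0}$ first, in the same order, before doing anything else; by correctness of $\mathcal{A}$, each group $\mathbb{F}_S/\langle\langle R'\rangle\rangle$ so obtained, with $R'$ the set enumerated by $M'$, has property $P$.

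Next I would feed this the relators of a quotient. If $H=G/N$, then the preimage $\widetilde N\trianglelefteq\mathbb{F}_S$ of $N$ contains $\langle\langle R\rangle\rangle$; assuming $H$ is recursively presented --- equivalently, that $\widetilde N$ is recursively generated as a normal subgroup --- pick a recursively enumerable set $E\subseteq\mathbb{F}_S$ with $\langle\langle R\cup E\rangle\rangle=\widetilde N$, so that $\langle S\mid R\cup E\rangle$ presents $H$. Let $M'$ first output the prefix $R_{0}$ in the order $\mathcal{A}$ saw it and thereafter enumerate $R\cup E$. By the first step $\mathcal{A}(S,M')$ halts, so $H=\mathbb{F}_S/\langle\langle R\cup E\rangle\rangle$ has $P$. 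This establishes the theorem for all recursively presented quotients of $G$, taken with the induced marking, which is the generality relevant for the corollaries about abelianizations, finite quotients, and the like.

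For a wholly arbitrary quotient $H=G/N$ one approximates: writing $\widetilde N=\bigcup_k\langle\langle R\cup\{w_1,\dots,w_k\}\rangle\rangle$ for an enumeration $w_1,w_2,\dots$ of $\widetilde N$, the groups $H_k:=\mathbb{F}_S/\langle\langle R\cup\{w_1,\dots,w_k\}\rangle\rangle$ are recursively presented quotients of $G$, hence have $P$ by the previous paragraph, and $(H_k,S)\to(H,S)$ in the space of marked groups. The main obstacle is precisely this last passage to the limit: semi-decidability of $P$ does not on its own force the locus of marked groups with $P$ to be closed, so concluding that $H$ itself has $P$ is the delicate step, whereas it is the first, finite-prefix, step that carries the real weight of the argument.
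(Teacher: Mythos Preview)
The paper does not give a proof of this theorem; it is quoted from \cite{Rauzy2021} without argument. So I can only assess your proof on its own terms.

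The gap is in your continuity step. You claim that $\mathcal{A}$ ``runs identically --- and in particular still halts --- on any input $(S,M')$ whose machine $M'$ is arranged to output the words of $R_{0}$ first, in the same order''. But $\mathcal{A}$ receives the \emph{code} of $M$ as input, not oracle access to the stream $M$ emits. Nothing prevents a correct semi-decider from inspecting that code syntactically and branching on its shape before simulating a single step; for instance $\mathcal{A}$ could begin ``if the G\"odel number of the input machine is even, run subroutine $\mathcal{A}_1$, else run $\mathcal{A}_2$'', with $\mathcal{A}_1,\mathcal{A}_2$ both valid semi-deciders but with entirely different running behaviour. Since your $M'$ has a different code from $M$, the inference that the computation proceeds identically is unjustified. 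The use-principle intuition you invoke is sound for \emph{oracle} Turing machines, but does not transfer to machines taking an index as input; this is a well-known pitfall.

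The standard repair abandons the direct continuity claim and argues by contradiction. Given $G=\langle S\mid R\rangle$ with $P$ and a recursively presented quotient $H=\langle S\mid R\cup E\rangle$ allegedly without $P$, build for each Turing machine $N$ a machine $M_N$ that enumerates $R$ throughout and, in parallel, starts enumerating $E$ as soon as $N$ is observed to halt. Then $M_N$ presents $G$ when $N$ diverges and $H$ when $N$ halts, so by correctness $\mathcal{A}(S,M_N)$ halts exactly when $N$ diverges; this would make the complement of the halting problem semi-decidable, a contradiction. This is nothing other than the upward-closure half of the Rice--Shapiro theorem for r.e.\ index sets, and invoking that theorem is the cleanest route.

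Your hesitation about non-recursively-presented quotients is appropriate: the argument above, and every application made in the present paper, only concerns quotients that are themselves recursively presentable.
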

	Since ``having $H$ as a quotient'' is quotient stable exactly when $H$ is the trivial group, \emph{recursive presentations do not permit to compute quotients,} and in particular the abelianization. In fact, while the set of relators of the presentation of the lamplighter group given above is indeed recursively enumerable, it also belongs to many usually encountered families of languages that are smaller than RE (including EDT0L). \\ 
	These remarks point to a lack of vocabulary about group presentations; this article was written to address this fact. \\
	
	\vspace{0.3cm}
	
	In \cite{Lysenok1985}, Lysenok proved that the Grigorchuk group, which was proven by Grigorchuk himself not to have a finite presentation, still admits a simple enough presentation, obtained by starting from three relators and iterating a simple letter substitution to these relators to obtain an infinite presentation:
$$\langle a,c,d\vert\, \sigma ^i (a^2), \sigma ^i ((ad)^4), \sigma ^i ((adacac)^4), i\ge 0 \rangle,$$ 
where $\sigma \{a,c,d\}^*\rightarrow\{a,c,d\}^*$ is defined by $\sigma (a)=aca$, $\sigma(c)=cd$ and $\sigma(d)=c$. 
	This result led the first author to introduce the notion of \emph{L-presentation} in \cite{Bartholdi2003}. A group $G$ with generating set $S$ has a finite L-presentation if there are two finite sets of relations  $Q,R \subseteq \mathbb{F}_S$ and a finite set  $\Phi$ of endomorphisms of $\mathbb{F}_S$ such that $G$ is isomorphic to the quotient
	$$\mathbb{F}_S/ \langle \langle Q \cup \bigcup_{\phi\in \Phi ^* }\phi (R)\rangle \rangle,$$ where $\Phi ^{*}$ designates the monoid generated by $\Phi$, i.e. the closure of $\{1\}\cup \Phi$ under composition. 
	If $Q$ can be supposed empty in the above, $G$ is said to have an \emph{ascending L-presentation}. \\
	This notion yields a wide generalization of Lysenok's result: 
	\begin{theorem}[\cite{Bartholdi2003}]
		Any finitely generated self-similar regular contracting branch group admits a finite L-presentation, but no finite presentation. 
	\end{theorem}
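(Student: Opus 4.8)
The plan is to prove the two halves by quite different means: the existence of a finite L-presentation by an explicit construction from the self-similar, regular branch and contracting data, and the absence of a finite presentation by reducing, via B.H.\ Neumann's lemma, to the classical fact that branch groups are not finitely presentable.

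For the construction, let $G$ act on the $d$-regular rooted tree with self-similarity embedding $\psi\colon G\hookrightarrow G\wr\mathrm{Sym}(d)=G^{d}\rtimes\mathrm{Sym}(d)$, let $\mathcal N\subseteq G$ be the finite nucleus witnessing contraction, and let $K\le G$ be the finite-index subgroup over which $G$ is regular branch, so $K^{d}\le\psi(K)$. First I would take as generating tuple $S$ an enumeration of $\mathcal N$, which generates $G$. Then I would produce finitely many endomorphisms $\phi_{1},\dots,\phi_{d}$ of $\mathbb{F}_{S}$ encoding the self-similarity in the manner of Lysenok's substitution $\sigma$ for the Grigorchuk group, chosen so that each $\phi_{i}$ carries relators of $G$ to relators of $G$ by realizing a relation, one level deeper in the tree, as a section of the relation $\phi_{i}(w)$. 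For the finite set $R$ I would take the words over $S$ of bounded length that are trivial in $G$ and all of whose iterated sections already lie in $\mathcal N$; for $Q$ I would take the finitely many words presenting the finite quotient $G/\mathrm{Stab}_G(1)$ together with its action on the $d$ subtrees, and the words recording, generator by generator, the coordinates of $\psi$. With $\Phi=\{\phi_{1},\dots,\phi_{d}\}$, the claim to establish is $\ker(\mathbb{F}_{S}\to G)=\langle\langle Q\cup\bigcup_{\theta\in\Phi^{*}}\theta(R)\rangle\rangle$. The inclusion $\supseteq$ is designed into the choice of $\Phi$, $R$, $Q$; for $\subseteq$, given $w$ with $w=_{G}1$ I would correct by relators of $Q$ so that $w\in\mathrm{Stab}_G(1)$, pass to the sections $\psi(w)=(w_{1},\dots,w_{d})$ (legitimate thanks to the $\psi$-recording relators, which lie in $\Phi^{*}(Q)$), note that each $w_{i}=_{G}1$, and recurse; contraction forces the recursion to terminate, after finitely many levels every section lying in $\mathcal N$ and hence being a relator of $R$, after which the $\phi_{i}$ rebuild $w$ from the deep relators. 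I expect this to be the main obstacle: arranging the descent-and-ascent so that one \emph{finite} set $R$ suffices no matter how deep a given relation reaches is exactly where the uniformity of regular branching is needed, and it also forces the $\phi_{i}$ to be chosen carefully enough that the ascent is traceable and that the inclusion $\supseteq$ holds verbatim.

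For the negative half I would argue by contradiction. If $G$ were finitely presented then, being already presented on the finite generating set $S$ by the relator set $Q\cup\bigcup_{\theta\in\Phi^{*}}\theta(R)$, B.H.\ Neumann's lemma shows $G$ is presented by $Q$ together with some finite subset of $\bigcup_{\theta}\theta(R)$; since $\Phi^{*}$ is the increasing union of the sets of words of length $\le n$ in $\Phi$, that subset sits at bounded depth, so $G$ coincides with the group $G_{n}$ presented by $Q$ and the depth-$\le n$ relators, for some $n$. It then remains to see that each surjection $G_{n}\twoheadrightarrow G$ is proper, i.e.\ that relations of depth $>n$ are never consequences of those of depth $\le n$ together with $Q$. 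Here I would invoke the branch structure as in the classical argument that branch groups are not finitely presentable: the depth-$\le n$ relators constrain only a bounded neighbourhood of the root and the sections inside $\mathcal N$, while the rigid vertex stabilizers $\mathrm{Rist}_{G}(v)$ are non-trivial arbitrarily far down the tree and supply genuinely new relations at every depth; to make this effective one either produces, for each $n$, a group onto which $G_{n}$ surjects but $G$ does not, or shows that $H_{2}(G)$ is infinitely generated, which a finitely presented group cannot be. The two halves together prove the theorem.
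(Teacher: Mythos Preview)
This theorem is not proved in the present paper: it is quoted from \cite{Bartholdi2003} in the introduction as motivation, and no argument for it is given here. There is therefore no proof in this paper to compare your proposal against.

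On its own merits, your outline of the positive half follows the strategy of \cite{Bartholdi2003}, and you correctly identify the crux: one must define free-group endomorphisms that provably carry relators to relators, and then show that contraction together with regular branching force every relator, after finitely many corrections by $Q$, into the $\Phi^{*}$-orbit of a fixed finite set $R$. In the original this is done with a single lifting substitution built from the inclusion $K^{d}\hookrightarrow\psi(K)$, not with $d$ independent coordinate maps, and the verification of both inclusions is the main technical work of that paper; your descriptions of $\phi_{i}$, $Q$ and $R$ are too loose to carry either inclusion as written, though the scheme is the right one.

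For the negative half you take an unnecessary detour. That branch groups are not finitely presentable is a theorem of Grigorchuk, proved independently of any L-presentation, and one simply cites it. Your B.H.\ Neumann route would still oblige you to show $G_{n}\ne G$ for every $n$, and the phrases ``rigid stabilizers supply genuinely new relations at every depth'' and ``$H_{2}(G)$ infinitely generated'' are restatements of that goal rather than arguments for it; producing, for each $n$, a group satisfying the depth-$\le n$ relations but not all relations of $G$ is exactly the content of the cited result.
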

	Several other examples of finitely L-presented groups were given in \cite{Bartholdi2003}, including wreath products and free groups in varieties given by finitely many identities. \\
	The notion of L-presentation has since found use in several distinct applications, for instance because they often yield finitely presented groups via HNN extentions, see \cite{Sapir2002, Olshanskii2002}, or to let us investigate branch groups \cite{Hartung2013, DiDomenico2022}; for more applications see \cite{Funar2010}. 
	
	\subsection{First result: identification of L-presented groups and $\pi(\text{EDT0L})$}
	
	In view of our previous remarks about formal languages, it seems desirable  to relate the notion of L-presentation to a notion of presentation in a certain family of languages. 
	It is rather easy to see that  L-presented groups coincide with groups that admit a presentation in DTF0L+FIN (see Section \ref{Subsec: Def L languages} for a definition), but this family of languages is badly behaved in terms of closure properties, and not considered to be a very interesting family of languages \cite{Rozenberg1997}. \\
	But different families of languages can yield the same classes of groups: a group has a context-free presentation if and only if it has a finite presentation (Proposition \ref{prop: Context free= finite}), and similarly a group has a recursive set of relators if and only if it has a  recursively enumerable set of relators (Craig's trick). Thus we can search for a family of languages that defines the same set of groups as DTF0L+FIN but which is better behaved. 
	\\
	Our first result is:
	\begin{theoremalpha}\label{Thm: EDT0L = Lpres intro}
		A group admits a presentation in EDT0L if and only if it has a finite L-presentation. Or again: $$\pi (\text{EDT0L})=\pi (\text{DTF0L+FIN}).$$
	\end{theoremalpha}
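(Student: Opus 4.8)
The plan is to establish the two equalities separately. The identification $\pi(\text{L-pres})=\pi(\text{DTF0L+FIN})$ is a matter of unwinding the two definitions: after choosing word representatives for the finitely many relators of $Q$, for $R$, and for the generators of $\Phi$, a relator set of the shape $Q\cup\bigcup_{\phi\in\Phi^*}\phi(R)$ is exactly a DTF0L language (axiom set $R$, table set $\Phi$) enlarged by the finite language $Q$, and conversely every DTF0L+FIN set of relators has this shape. So the real content is the equivalence ``a group has an EDT0L presentation iff it has a finite L-presentation'', which I would prove as two inclusions. Throughout I would use one bookkeeping device that also disposes of the clash between EDT0L tables (monoid endomorphisms) and the endomorphisms appearing in an L-presentation (group endomorphisms): whenever the relators of a group are given as a language $L$ over an alphabet $A$ containing $S\cup S^{-1}$ (and possibly auxiliary letters), one may instead present the same group over the free group $\mathbb{F}_A$ on $A$ regarded as a bare set, after adjoining the finite relator set $\{s\,s^{-1}:s\in S\}$ identifying the letter named $s^{-1}$ with the inverse of the letter named $s$; this leaves the group unchanged, but now every monoid endomorphism of $A^*$ extends canonically to a group endomorphism of $\mathbb{F}_A$.

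For the inclusion $\pi(\text{L-pres})\subseteq\pi(\text{EDT0L})$, given a finite L-presentation with data $Q=\{q_1,\dots,q_n\}$, $R=\{r_1,\dots,r_m\}$, $\Phi=\{\phi_1,\dots,\phi_k\}$ over $\Sigma=S\cup S^{-1}$, I build an EDT0L system with terminal alphabet $\Sigma$, a disjoint barred copy $\bar\Sigma$, and a fresh start symbol $\star$; the axiom is $\star$ and the tables are: for each $r_i$ a table $t_i$ with $t_i(\star)=\bar r_i$ acting as the identity on $\Sigma\cup\bar\Sigma$; for each $\phi_j$ a table $u_j$ acting on $\bar\Sigma$ as the barred copy of a chosen monoid realization of $\phi_j$ and as the identity on $\Sigma\cup\{\star\}$; an unbarring table $v$ sending $\bar a\mapsto a$ and fixing $\Sigma\cup\{\star\}$; and for each $q_i$ a table $w_i$ with $w_i(\star)=q_i$ acting as the identity elsewhere. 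Since $\Sigma$-letters are never rewritten and $\star$ disappears only through some $t_i$ or $w_i$ (and is never recreated), a short inspection shows the orbit of an $r_i$ is computed entirely inside the barred workspace and emitted only upon applying $v$, while the $q_i$ are emitted directly; hence the generated language, read in $\mathbb{F}_S$, is exactly $Q\cup\bigcup_{\phi\in\Phi^*}\phi(R)$.

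The reverse inclusion $\pi(\text{EDT0L})\subseteq\pi(\text{L-pres})$ is the substantial one. Let $G=\mathbb{F}_S/\langle\langle L\rangle\rangle$ with $L$ generated by an EDT0L system of terminal alphabet $\Sigma=S\cup S^{-1}$, total alphabet $V=\Sigma\sqcup N$, axiom $\omega$ and table set $H$ (the case $L=\emptyset$, giving $G=\mathbb{F}_S$, being trivial, we may assume $L\neq\emptyset$). I would first put the system into a normal form possessing a \emph{finalization table} $f\in H$ that fixes $\Sigma$ pointwise, sends every nonterminal into $\Sigma^*$, and satisfies $L=\{f(g(\omega)):g\in H^*\}$; this is the standard device of carrying the intended terminal output in a fresh marked copy of $\Sigma$ throughout the derivation and letting $f$ be the unique table that unmarks it, so that a word over $\Sigma$ is reachable only through $f$ and is afterwards fixed by every table. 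Now I write the L-presentation over the free group on the set $V=\Sigma\sqcup N$ (promoting each nonterminal to a generator, which is legitimate since the statement concerns abstract groups, not marked ones): take $R=\{\omega\}$, take $\Phi=\{\hat g:g\in H\}$ the canonical group-endomorphism extensions of the tables, and take $Q=\{s\,s^{-1}:s\in S\}\cup\{X^{-1}\hat f(X):X\in N\}$, so that the full relator family is $Q\cup\{g(\omega):g\in H^*\}$.

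It then remains to identify the quotient. Killing the relators $s\,s^{-1}$ returns us to $\mathbb{F}_S$ with the nonterminals still present as generators, and the relators $X^{-1}\hat f(X)$ are Tietze eliminations $X=f(X)$; performing all of them rewrites each surviving relator $g(\omega)\in V^*$ by the substitution $X\mapsto f(X)$ ($X\in N$), which is precisely $f$ followed by the canonical interpretation $\Sigma^*\to\mathbb{F}_S$, so $g(\omega)$ becomes the element of $\mathbb{F}_S$ represented by $f(g(\omega))$, and as $g$ ranges over $H^*$ these represent exactly the words of $L$; the quotient is therefore $\mathbb{F}_S/\langle\langle L\rangle\rangle=G$. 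I expect the heart of the proof, and its main obstacle, to be exactly this reconciliation: the intersection ``$\cap\,\Sigma^*$'' in the definition of an EDT0L language silently discards the orbit words $\{h(\omega):h\in H^*\}$ that still carry nonterminals, whereas an L-presentation is rigidly forced to keep the \emph{entire} orbit $\bigcup_{\phi\in\Phi^*}\phi(R)$ among its relators, with no filtering mechanism. The solution is not to filter but to neutralize: promoting nonterminals to generators and adjoining the finite family $X=f(X)$ makes every ``un-finalized'' orbit word $g(\omega)$ equal, in the group, to the genuine terminal relator $(f\circ g)(\omega)\in L$, so nothing extraneous is imposed. Arranging the normal form so that $f$, applied last, recovers exactly $L$ (and the letter-level bookkeeping between $s$ and $s^{-1}$, handled by the free-generator device) are the remaining points requiring care.
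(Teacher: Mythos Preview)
Your proof is correct and follows essentially the same route as the paper. The crucial step in both is the passage from an EDT0L presentation to an L-presentation by promoting the nonterminals to generators, taking the seed as the single iterated relator, taking the tables as the iterated endomorphisms, and then Tietze-eliminating the nonterminals via a finite family of relations $X=f(X)$; in both arguments this last step is what converts every orbit word $g(\omega)$ into the genuine relator $f(g(\omega))\in L$.

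The only point worth flagging is terminological. What you call the ``normal form possessing a finalization table $f$'' with $L=\{f(g(\omega)):g\in H^*\}$ is precisely the HDT0L description of an EDT0L language (Nielsen--Rozenberg--Salomaa--Skyum), which the paper simply cites as Lemma~\ref{eqhedt0l} rather than reproving. Your one-line justification (``the standard device of carrying the output in a marked copy of $\Sigma$ and unmarking via $f$'') is gesturing at this equivalence but is not itself a proof: in particular it leaves unspecified what $f$ does on the original nonterminals $N$, and there is no canonical choice that makes $f(\text{orbit})\subseteq L$ for an arbitrary EDT0L system. The clean way to obtain your normal form is exactly to invoke EDT0L${}={}$HDT0L, take the resulting DT0L alphabet $B$ as the new nonterminal set (disjoint from $\Sigma$), extend all tables and $f$ to fix $\Sigma$, and adjoin $f$ to the table set; then $\{f(g(\omega)):g\in H^*\}=\{f(\phi(\omega)):\phi\in\Phi^*\}=L$ as you need. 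Once this is granted, your Tietze argument and the paper's are identical. Your explicit handling of the $s$ versus $s^{-1}$ bookkeeping via the relations $s\cdot s^{-1}$ is a careful touch that the paper leaves implicit.
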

		The family of languages EDT0L is very well behaved in several respects. 
		Recent results of Ciobanu, Dieckert and Elder have opened the way for a wide range of applications of the EDT0L family of languages in group theory, starting with the study of solutions sets to equations in free and hyperbolic groups: it is shown in 	\cite{Ciobanu2021} that solution sets to systems of equations and inequations in hyperbolic groups form an EDT0L language whose grammar can be computed in  NPSPACE (extending \cite{Ciobanu2015} where similar results were proven for free groups).  
		
	Theorem \ref{Thm: EDT0L = Lpres intro} is thus the one that really anchors the study of finitely L-presented groups in the theory of formal languages. Moving forwards, it seems that the definition of this class of groups in terms of EDT0L languages should be the preferred one. \\
	Thanks to this characterization, results from the theory of formal languages can be applied to obtain results about groups. For instance, as a consequence of \cite{rozenberg1972}, we show:
	\begin{propositionalpha}
		The marked isomorphism problem is not semi-decidable for groups given by EDT0L presentations. 
	\end{propositionalpha}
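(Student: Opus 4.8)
The plan is to reduce the equivalence problem for EDT0L languages --- which is undecidable as a consequence of \cite{rozenberg1972} --- to the marked isomorphism problem. Since the uniform membership problem for EDT0L languages is decidable, the inclusion $L(\mathcal{G}_1)\subseteq L(\mathcal{G}_2)$ between languages presented by EDT0L grammars is co-recursively-enumerable, and hence so is the equivalence $L(\mathcal{G}_1)=L(\mathcal{G}_2)$; a co-recursively-enumerable problem that is undecidable cannot be semi-decidable. It therefore suffices to give a computable map sending a pair of EDT0L grammars $\mathcal{G}_1,\mathcal{G}_2$ over an alphabet $\Sigma$ to EDT0L presentations of marked groups $(G_1,S)$ and $(G_2,S)$, over a common generating tuple $S$, such that $(G_1,S)\cong(G_2,S)$ if and only if $L(\mathcal{G}_1)=L(\mathcal{G}_2)$.

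For this, recall that with a common tuple $S$ the marked groups $(\mathbb{F}_S/\langle\langle R_1\rangle\rangle,S)$ and $(\mathbb{F}_S/\langle\langle R_2\rangle\rangle,S)$ are isomorphic precisely when $\langle\langle R_1\rangle\rangle=\langle\langle R_2\rangle\rangle$. So I want to turn a language $L\subseteq\Sigma^{*}$ into a relator set $R_L\subseteq(S\cup S^{-1})^{*}$, with $S\supseteq\Sigma$, in such a way that $L$ is recoverable from $\langle\langle R_L\rangle\rangle$ and that $R_L$ is again EDT0L, with a grammar computed from one for $L$; then taking $R_i=R_{L(\mathcal{G}_i)}$ does the job. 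The naive choice $R_L=\{\,[t,w]:w\in L\,\}$ with a fresh letter $t$ is EDT0L-preserving but not faithful: $\langle\langle R_L\rangle\rangle$ only detects the subgroup of $\mathbb{F}_\Sigma$ generated by $L$. Instead I would fix an injective ``rigidifying'' coding $\rho$ sending each $w\in\Sigma^{*}$ to a cyclically reduced word $\rho(w)$ over $S$, chosen so that $\{\rho(w):w\in\Sigma^{*}\}$ is a set of relators satisfying a small-cancellation condition, say $C'(1/6)$ (and with no $\rho(w)$ a cyclic permutation of $\rho(w')^{\pm1}$ for $w\ne w'$). This can be arranged with $|\rho(w)|$ linear in $|w|$, since such a condition only forbids long overlaps and there is room for the $|\Sigma|^{n}$ needed relators of length $O(n)$.

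With $R_i=\{\rho(w):w\in L(\mathcal{G}_i)\}$, faithfulness then follows from Greendlinger's lemma: if $\rho(w')\in\langle\langle\{\rho(w):w\in M\}\rangle\rangle$ for some $M\subseteq\Sigma^{*}$ and $w'\notin M$, then $\rho(w')$ would have to contain more than half of some $\rho(w)^{\pm1}$ (with $w\in M$, hence $w\neq w'$) as a subword, contradicting the $C'(1/6)$ condition; thus $\rho(w')\in\langle\langle\rho(M)\rangle\rangle$ iff $w'\in M$, and $\langle\langle R_1\rangle\rangle=\langle\langle R_2\rangle\rangle$ forces $L(\mathcal{G}_1)=L(\mathcal{G}_2)$, the converse being immediate. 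The groups $G_i=\mathbb{F}_S/\langle\langle R_i\rangle\rangle$ come with EDT0L presentations essentially by construction, provided one checks that the language $R_i=\rho(L(\mathcal{G}_i))$ is EDT0L and that a grammar for it is effectively obtained from $\mathcal{G}_i$; for this $\rho$ should be realised by a composition of operations under which EDT0L is closed (letter-to-word substitutions, inverse homomorphisms, intersections with regular languages, and a copy/mirror construction to attach the part of each relator that makes it ``rigid'').

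I expect the construction of $\rho$ to be the main obstacle. It must at once be faithful at the level of normal closures --- which is what rules out the simple homomorphic coding, that recovers only the generated subgroup, and forces an independent-relator coding such as a small-cancellation one --- and be tame enough that passing from a grammar of $L$ to a grammar of $\rho(L)$ stays effectively inside EDT0L; these requirements are in tension, because the operations that preserve EDT0L tend to carry long common factors of input words to long common subwords of the corresponding relators. Once a suitable $\rho$ is available the reduction is immediate and proves the statement. It is worth noting that this behaviour is specific to infinite relator sets: when $R_1,R_2$ are finite, $\langle\langle R_1\rangle\rangle=\langle\langle R_2\rangle\rangle$ is equivalent to the two semi-decidable conditions $R_1\subseteq\langle\langle R_2\rangle\rangle$ and $R_2\subseteq\langle\langle R_1\rangle\rangle$, so the marked isomorphism problem is semi-decidable for finitely presented groups.
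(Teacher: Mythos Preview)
Your high-level strategy --- reduce equality of EDT0L languages to marked isomorphism, using that equality is co-semi-decidable but undecidable --- is exactly the one the paper uses. The gap is precisely where you locate it: you do not construct $\rho$, and the tension you describe is real. Any EDT0L-preserving operation built from homomorphisms, inverse homomorphisms, regular intersections and mirror/copy moves will send words with long common factors to relators with long common pieces, so there is no evident way to force $C'(1/6)$ on $\rho(\Sigma^{*})$ while keeping $\rho(L)$ effectively EDT0L. As written, the argument is a plan whose hard step is missing.

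The paper sidesteps this by abandoning small cancellation and using an amalgamated-product encoding instead. Over $S\cup\hat S\cup\{a,b\}$ it sets
\[
\pi(L)=\bigl\langle S,\hat S,a,b \;\big|\; waw=(\hat w b\hat w)^{-1},\ w\in L\bigr\rangle,
\]
where $\hat S$ is a disjoint copy of $S$. The relator language $\{\,waw\hat w b\hat w : w\in L\,\}$ is immediately EDT0L from a grammar for $L$: duplicate the seed as $w_0 a w_0 \hat w_0 b \hat w_0$ and extend each table to act identically on the hatted copy and fix $a,b$. Faithfulness comes not from Greendlinger but from the normal form theorem for amalgamated free products: the families $\{waw\}_{w\in L}$ and $\{\hat w b\hat w\}_{w\in L}$ are Nielsen reduced (the two occurrences of $a$, respectively $b$, never cancel in a product $(waw)^{\pm1}(w'aw')^{\pm1}$), hence are bases of free subgroups, so $\pi(L)$ is a genuine amalgam and $waw=(\hat w b\hat w)^{-1}$ holds in it iff $w\in L$. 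This gives $\pi(L)\cong\pi(L')$ as marked groups iff $L=L'$.

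The point is that the amalgam structure tolerates arbitrarily long shared pieces among the relators --- exactly what small cancellation forbids --- while still making the relator set recoverable from the normal closure. If you want to complete your approach along its own lines you would need a genuinely new $\rho$; otherwise, replacing the small-cancellation step by the Nielsen-reduced/amalgam argument above closes the gap cleanly.
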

	The marked isomorphism problem asks to determine if a given bijection between the generators of two marked groups extends to a group isomorphism. This proposition contrasts with the finite presentation case, where the marked isomorphism problem, while undecidable, is still semi-decidable. 

		\subsection{Marked quotient algorithms}

	One of the successful uses of L-presentations is quotient computation: in  \cite{BARTHOLDI2008}, it is shown that it is possible, given a finite L-presentation of a group $G$ and a natural number $n$, to compute a finite presentation for the nilpotent quotient $G/\gamma_{n}(G)$, where $\gamma_{n}(G)$ designates the $n$-th term in the lower central series of $G$, defined by  $\gamma_{1}(G)=G$ and $\gamma_{n+1}(G)=[\gamma_{n}(G),G]$. The given algorithms were in fact implemented and shown to allow explicit computations that run reasonably fast, especially in the case of ascending L-presentations. In \cite{Hartung2011}, Hartung shows that coset enumeration can be applied to finitely L-presented groups, and gives explicit computations of some finite index subgroups. This result relies crucially on the ability to recognize finite quotients of L-presented groups, in the form of the following result: there is an algorithm that takes as input a finite L-presentation for a group and a finite presentation of a finite group, and determines whether the latter is a marked quotient of the former. In the vocabulary of \cite{Rauzy2022}, finitely L-presented groups have computable finite quotients. \\
		We present here a unified proof of the results of \cite{BARTHOLDI2008} and \cite{Hartung2011}, and extend them to the case of abelian-by-nilpotent, hyperbolic and virtual direct product of hyperbolic groups. What we in fact show is that the hypotheses of Noetherianity and of uniform solvability of the word problem, that were used in the case of finite and nilpotent groups, can be replaced more generally by hypotheses of \emph{equational Noetherianity} and of \emph{solvability of the universal Horn theory}. Solvability of the universal theory of virtual direct products of hyperbolic groups was established in \cite{Ciobanu2020}, building on \cite{Dahmani2010}.

		\begin{theoremalpha}[Quotient computations, extending \cite{BARTHOLDI2008}, \cite{Hartung2011}] \label{Thm:quotient computation intro}
			Let $\mathcal{C}$ be any of: the class of abelian-by-nilpotent groups or the class of virtual direct products of finitely many hyperbolic groups. 
			There is an algorithm that, given an EDT0L presentation for a marked group $(G,S)$ and a finite presentation for a marked group $(H,S')$ in $\mathcal{C}$, decides whether or not $(H,S')$ is a marked quotient of $(G,S)$. 
		\end{theoremalpha}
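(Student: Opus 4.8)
The plan is to reformulate ``$(H,S')$ is a marked quotient of $(G,S)$'' as a decision question about systems of equations over $H$, then solve that question using equational Noetherianity to obtain termination and the decidability of quasi-identities to detect it. First, by Theorem~\ref{Thm: EDT0L = Lpres intro} one may convert the given EDT0L presentation of $(G,S)$ into a finite L-presentation, i.e.\ finite sets $Q,R\subseteq\mathbb{F}_S$ and a finite set $\Phi$ of endomorphisms of $\mathbb{F}_S$ with $G=\mathbb{F}_S/\langle\langle Q\cup\bigcup_{\phi\in\Phi^*}\phi(R)\rangle\rangle$. Identifying the two generating tuples (if their lengths differ, the answer is ``no''), the marking of $H$ determines a homomorphism $\pi:\mathbb{F}_S\to H$, and $(H,S')$ is a marked quotient of $(G,S)$ if and only if $\pi$ kills every defining relator of $G$, that is, $\pi(Q)=1$ and $\pi(\phi(r))=1$ for all $\phi\in\Phi^*$ and $r\in R$. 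The first condition is a finite list of word-problem instances in $H$; the difficulty lies entirely in the second, an infinite family indexed by the monoid $\Phi^*$.

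To handle it I would pass to the associated descending chain of algebraic sets over $H$. For $n\ge 0$ let $\Phi^{\le n}$ be the set of endomorphisms that are compositions of at most $n$ members of $\Phi$, put $E_n=\{\phi(r):\phi\in\Phi^{\le n},\ r\in R\}\subseteq\mathbb{F}_S$ (a finite, explicitly computable set of words, regarded as coefficient-free equations in the variables $(x_s)_{s\in S}$), and let $W_n\subseteq\mathrm{Hom}(\mathbb{F}_S,H)\cong H^S$ be the set of $\rho$ with $\rho(w)=1$ for all $w\in E_n$. Then $W_0\supseteq W_1\supseteq\cdots$, the intersection $\bigcap_n W_n$ is the solution set of $E_\infty:=\{\phi(r):\phi\in\Phi^*,\ r\in R\}$, and by the first paragraph $(H,S')$ is a marked quotient of $(G,S)$ iff $\pi(Q)=1$ and $\pi\in\bigcap_n W_n$. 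Two facts make this computable. Since $H$ is equationally Noetherian, the infinite system $E_\infty$ is equivalent over $H$ to one of its finite subsystems, hence to $E_N$ for some $N$, so $W_N=\bigcap_n W_n$ and, the chain being decreasing, $W_N=W_{N+1}=\cdots$. And from the identity $\Phi^{\le n+1}=\Phi^{\le n}\cup\Phi\cdot\Phi^{\le n}$ one gets $W_{n+1}=W_n\cap\bigcap_{\psi\in\Phi}\{\rho:\rho\circ\psi\in W_n\}$, so $W_{n+1}$ is determined by $W_n$; hence as soon as $W_n=W_{n+1}$ the chain is constant from $n$ onwards and $W_n=\bigcap_k W_k$.

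This gives the algorithm. Compute $E_0,E_1,E_2,\dots$, and at each stage decide whether $W_n=W_{n+1}$: as $W_{n+1}\subseteq W_n$ always holds, this is the question whether, for every $w\in E_{n+1}$, the quasi-identity ``$\bigwedge_{v\in E_n} v(\bar x)=1\ \Rightarrow\ w(\bar x)=1$'' holds in $H$ --- a finite list of universal Horn sentences, decidable because groups in $\mathcal C$ have solvable universal Horn theory. By equational Noetherianity this search terminates, say at the least $n$ with $W_n=W_{n+1}$, and then $W_n=\bigcap_k W_k$. Output ``yes'' if $\pi(Q)=1$ and $\pi\in W_n$ --- the latter being the decidable question whether every word of $E_n$ is trivial in $H$ --- and ``no'' otherwise; correctness follows from the characterisation above.

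It then remains to check that both classes forming $\mathcal C$ satisfy the three standing hypotheses used here --- equational Noetherianity, solvable universal Horn theory, and solvable word problem for groups given by a finite presentation. For virtual direct products of finitely many hyperbolic groups this combines the equational Noetherianity of hyperbolic groups (Sela in the torsion-free case, Reinfeldt--Weidmann in general) with the stability of equational Noetherianity under direct products and finite extensions, the solvability of the universal theory established in \cite{Ciobanu2020} on the basis of \cite{Dahmani2010}, and the uniform solvability of the word problem in hyperbolic groups, hence in virtual direct products of them; for abelian-by-nilpotent groups it rests on the corresponding, more classical facts for that class. I expect the main obstacle to be conceptual rather than computational: one must see that the infinite orbit condition ``$\phi(R)$ dies in $H$ for every $\phi\in\Phi^*$'' is captured by a descending chain of algebraic sets over $H$ that must stabilise by equational Noetherianity, and --- the crucial point --- that \emph{detecting} where this chain stabilises is itself a universal Horn (quasi-identity) decision problem over $H$. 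This is precisely the step where the hypothesis ``uniformly solvable word problem'', sufficient for finite and nilpotent targets, must be upgraded to ``solvable universal Horn theory''.
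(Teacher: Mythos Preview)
Your argument is correct, and essentially coincides with the paper's, for the class of virtual direct products of hyperbolic groups: the paper likewise invokes equational Noetherianity of $H$ for termination and decidability of its universal theory for detection, working through the residual $\kappa(G)=\bigcap_{\phi:G\to H}\ker\phi$ rather than directly with the chain $(W_n)$, but this is only a change of packaging. Your observation that $W_n=W_{n+1}$ already forces the whole chain to be constant (via $W_{n+1}=W_n\cap\bigcap_{\psi\in\Phi}\{\rho:\rho\circ\psi\in W_n\}$) is exactly the mechanism the paper exploits.

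The gap is in the abelian-by-nilpotent case. You assert that the needed ``corresponding, more classical facts'' --- equational Noetherianity of $H$ and decidability of its universal Horn theory --- hold there, but neither is a result one can simply quote: equational Noetherianity of an arbitrary finitely generated abelian-by-nilpotent group (already of $\mathbb Z\wr\mathbb Z$, say) is not standard, and decidability of quasi-identities over such a group even less so. The paper does \emph{not} use these hypotheses for this class; it exploits the variety instead. If $H$ is abelian-by-$k$-nilpotent, set $\kappa(G)=[\gamma_k(G),\gamma_k(G)]$; every homomorphism $\mathbb F_S\to H$ factors through $\mathbb F_S/\kappa$. By Hall's theorem $\mathbb F_S/\kappa$ satisfies Max-$n$, so the ascending chain of normal closures $\langle\langle E_n\rangle\rangle^{\mathbb F_S/\kappa}$ stabilises --- and hence so does your chain $(W_n)$, since $W_n$ depends only on that normal closure. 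Crucially, detecting \emph{this} stabilisation asks only whether each $w\in E_{n+1}$ is trivial in $\langle S\mid E_n\rangle/\kappa$, a word-problem instance in a finitely presented abelian-by-$k$-nilpotent group, decidable uniformly by \cite{Baumslag1981}. Your overall strategy therefore survives, but the stabilisation test must be run in the relatively free group of the variety, not as a quasi-identity in $H$; this is exactly the distinction the paper draws when it says that Noetherianity plus uniform word problem suffice for abelian-by-nilpotent targets, while equational Noetherianity plus universal (Horn) theory are what is needed for hyperbolic ones.
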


	Theorem \ref{Thm:quotient computation intro} in particular applies to $\mathcal{C}$ being the class of free groups. In this case, we can be more precise:  
	\begin{theoremalpha}[Residually free image] \label{Thm:residually free image}
		There is an algorithm that, given an EDT0L presentation for a marked group $(G,S)$, produces a finite presentation in the category of residually free groups of the residually free image of $(G,S)$.  
	\end{theoremalpha}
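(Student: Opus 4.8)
The plan is to combine Theorem~\ref{Thm: EDT0L = Lpres intro} with the solvability of the universal Horn theory of free groups --- an ingredient that also enters the free-group case of Theorem~\ref{Thm:quotient computation intro} --- and with equational Noetherianity of free groups. For $X\subseteq\mathbb F_S$ write $\operatorname{rad}(X)$ for the intersection of the kernels of all homomorphisms from $\mathbb F_S$ to free groups that kill $X$; this is a closure operator, the residually free image of $(\mathbb F_S/\langle\langle X\rangle\rangle,S)$ is $(\mathbb F_S/\operatorname{rad}(X),S)$, and a finite set $X_0\subseteq\mathbb F_S$ is a presentation of this marked group in the category of residually free groups exactly when $\operatorname{rad}(X_0)=\operatorname{rad}(X)$. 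Observe that, for a finite $X_0$ and a word $w$, the relation ``$w\in\operatorname{rad}(X_0)$'' is decidable: it says that the sentence $\forall\bar y\,\bigl(\bigwedge_{x\in X_0}x(\bar y)=1\to w(\bar y)=1\bigr)$ holds in $F_2$, and the universal Horn theory of free groups is decidable (Makanin).

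First I would apply the effective direction of Theorem~\ref{Thm: EDT0L = Lpres intro} to compute, from the given EDT0L presentation, a finite L-presentation of $(G,S)$: finite sets $Q,R\subseteq\mathbb F_S$ and a finite set $\Phi$ of endomorphisms of $\mathbb F_S$ with $\mathfrak R:=Q\cup W_\infty$, $W_\infty:=\bigcup_{\phi\in\Phi^*}\phi(R)$, a defining set of relators. For $d\ge 0$ put $W_d:=\bigcup_{\phi\in\Phi^{\le d}}\phi(R)$, where $\Phi^{\le d}$ is the set of compositions of at most $d$ elements of $\Phi$ (including $\mathrm{id}_{\mathbb F_S}$), and $R_d:=Q\cup W_d$; these are finite and computable, $R_0\subseteq R_1\subseteq\cdots$, and $\bigcup_d R_d=\mathfrak R$. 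The algorithm then searches over $d=1,2,3,\dots$ and at each stage tests whether
\[
\phi(W_d)\subseteq\operatorname{rad}(W_d)\qquad\text{for every }\phi\in\Phi ,
\]
a finite conjunction of instances of the decidable relation above; as soon as the test succeeds it returns $R_d$.

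Correctness rests on the elementary fact that, for an endomorphism $\phi$ of $\mathbb F_S$ and any $X\subseteq\mathbb F_S$, $\phi(X)\subseteq\operatorname{rad}(X)$ implies $\phi(\operatorname{rad}(X))\subseteq\operatorname{rad}(X)$: if $\psi\colon\mathbb F_S\to F$ kills $X$ with $F$ free, then it kills $\operatorname{rad}(X)\supseteq\phi(X)$, so $\psi\circ\phi$ kills $X$ and hence kills $\operatorname{rad}(X)$, i.e. $\psi(\phi(\operatorname{rad}(X)))=1$; as $\psi$ varies over all such homomorphisms this gives the claim. Thus if the test succeeds at stage $d$ then $\operatorname{rad}(W_d)$ is invariant under every $\phi\in\Phi$, hence under $\Phi^*$; since $R\subseteq W_d\subseteq\operatorname{rad}(W_d)$ we get $W_\infty=\bigcup_{\phi\in\Phi^*}\phi(R)\subseteq\operatorname{rad}(W_d)\subseteq\operatorname{rad}(R_d)$, and together with $Q\subseteq R_d\subseteq\operatorname{rad}(R_d)$ this gives $\mathfrak R\subseteq\operatorname{rad}(R_d)$, so $\operatorname{rad}(\mathfrak R)\subseteq\operatorname{rad}(R_d)$; the inclusion $R_d\subseteq\mathfrak R$ gives the reverse, so $\operatorname{rad}(R_d)=\operatorname{rad}(\mathfrak R)$ and $R_d$ is a valid output. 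For termination, equational Noetherianity of free groups provides a finite $Y\subseteq W_\infty$ with $\operatorname{rad}(Y)=\operatorname{rad}(W_\infty)$; for $d$ large enough that $Y\subseteq W_d$ one has $\operatorname{rad}(W_d)=\operatorname{rad}(W_\infty)$, and then $\phi(W_d)\subseteq W_{d+1}\subseteq W_\infty\subseteq\operatorname{rad}(W_\infty)=\operatorname{rad}(W_d)$ for every $\phi\in\Phi$, so the test succeeds.

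The main obstacle, around which the argument is organized, is that ``$\operatorname{rad}(X_0)=\operatorname{rad}(\mathfrak R)$'' is a priori an infinitary condition that cannot be checked directly --- a naive attempt yields only semi-decidability in one direction. The device that makes it finite is to exploit that the iterated part of an L-presentation is a $\Phi^*$-orbit, so that the one-step inclusions $\phi(W_d)\subseteq\operatorname{rad}(W_d)$ --- finitely many, each decidable by the solvability of the universal Horn theory of free groups --- already force $\operatorname{rad}(W_d)$ to be $\Phi^*$-invariant and hence to contain the radical of the whole orbit; one must only take care to keep the fixed relators $Q$ outside this invariance bookkeeping, since $\operatorname{rad}(\mathfrak R)$ itself is in general not $\Phi$-invariant. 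The remaining ingredients --- effectivity of Theorem~\ref{Thm: EDT0L = Lpres intro}, decidability of the existential theory of free groups (Makanin), and equational Noetherianity of free groups --- are invoked as black boxes.
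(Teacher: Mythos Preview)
Your proof is correct and uses the same core ingredients as the paper: equational Noetherianity of free groups for termination, Makanin's theorem to make the one-step test decidable, and the functoriality of the residual (your ``elementary fact'' that $\phi(X)\subseteq\operatorname{rad}(X)$ forces $\phi(\operatorname{rad}(X))\subseteq\operatorname{rad}(X)$) to propagate one-step invariance to $\Phi^*$-invariance.

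The packaging differs. The paper does \emph{not} pass through an L-presentation on $S$; instead it converts the EDT0L presentation to an HDT0L system $(w_0,\Phi,f)$ with $\Phi$ acting on an auxiliary alphabet $A$ and a final morphism $f\colon A^*\to S^*$. It then runs exactly your stabilization test on the $A$-side (Lemma~\ref{lem: noeth for free quotients}) and afterwards invokes a separate lemma (Lemma~\ref{lem: stabilizaton on non-terminals suffices}) saying that stabilization modulo $\kappa$ on the non-terminals forces stabilization modulo $\kappa$ on $S$. The advantage of the paper's route is that the DT0L part of an HDT0L system has a single seed and no ``fixed'' relators, so the $Q$-bookkeeping that you carefully isolate simply does not arise; the price is the extra transfer lemma from $A$ to $S$. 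Your route buys directness --- everything happens on the generating set $S$ --- at the cost of having to keep $Q$ out of the invariance argument, which you do correctly.
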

	This simply means that given an EDT0L presentation for a marked group $(G,S)$, it is possible to find a finite presentation for a marked group $(H,S')$ so that $(G,S)$ and  $(H,S')$  have exactly the same marked free quotients. 
	Free quotients of finitely generated groups were studied in depth in relation to  Tarski's problem about the universal theory of free groups. A \emph{limit group} is a group which has the same universal theory as a free group. By a theorem of Sela and Kharlampovich and Myasnikov, limit groups are always finitely presentable \cite{Sela2001,Kharlampovich1998}. Another important theorem states: 
	\begin{theorem}[Sela, \cite{Sela2001}]
		For any finitely generated group $G$, there are finitely many limit groups $\{\Gamma_{i},1\le i\le n\}$ and morphisms $\varphi_{i}:G\rightarrow\Gamma_{i}$ such that any morphism from $G$ to a free group factors through one of the morphisms $\varphi_{i}$.  
	\end{theorem}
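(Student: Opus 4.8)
The statement is part of Sela's construction of the Makanin--Razborov diagram \cite{Sela2001}; I will describe the proof of this (weaker) factorization statement, which can be read off most cleanly from the algebraic geometry over groups of Baumslag--Myasnikov--Remeslennikov and Kharlampovich--Myasnikov. First I would reduce to a single target free group: since $G$ is finitely generated, the image of any morphism $\psi\colon G\to\mathbb{F}'$ into a free group is finitely generated, hence free of finite rank, hence embeds into $\mathbb{F}_2$; so it suffices to analyse $\mathrm{Hom}(G,\mathbb{F}_2)$. Fixing a generating tuple $(x_1,\dots,x_k)$ of $G$ and identifying a morphism $\varphi\colon G\to\mathbb{F}_2$ with the tuple $(\varphi(x_1),\dots,\varphi(x_k))\in\mathbb{F}_2^{\,k}$, the set $\mathrm{Hom}(G,\mathbb{F}_2)$ becomes the solution set in $\mathbb{F}_2^{\,k}$ of the system of equations $\{\,w=1 : w\in\ker(\mathbb{F}_k\to G)\,\}$, i.e.\ an algebraic subset of $\mathbb{F}_2^{\,k}$.

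The key external input is that free groups are \emph{equationally Noetherian} (Guba): every system of equations in finitely many variables over $\mathbb{F}_2$ is equivalent to a finite subsystem of it. Equivalently, the Zariski topology on $\mathbb{F}_2^{\,k}$ is Noetherian, so the usual minimal-counterexample argument shows that every algebraic set is a finite union of irreducible ones. I would then write $\mathrm{Hom}(G,\mathbb{F}_2)=V_1\cup\dots\cup V_n$ with each $V_i$ irreducible, let $I(V_i)$ be the normal subgroup of those $g\in G$ with $\varphi(g)=1$ for all $\varphi\in V_i$, set $\Gamma_i := G/I(V_i)$, and take $\varphi_i\colon G\to\Gamma_i$ to be the quotient map; each $\Gamma_i$ is finitely generated as a quotient of $G$.

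Next I would use that the coordinate group of an irreducible algebraic set over a free group is discriminated by that free group, i.e.\ is fully residually free; combined with the fact that a finitely generated fully residually free group has the same universal theory as a free group, this shows each $\Gamma_i$ is a limit group. The factorization is then immediate: given $\psi\colon G\to\mathbb{F}'$, compose with an embedding of $\psi(G)$ into $\mathbb{F}_2$ to get $\bar\psi\colon G\to\mathbb{F}_2$; its associated point of $\mathrm{Hom}(G,\mathbb{F}_2)$ lies in some $V_i$, so $I(V_i)\subseteq\ker\bar\psi$ by the definition of $I(V_i)$, hence $\bar\psi$ — and therefore $\psi$ itself, since the factoring map has image inside $\psi(G)\leq\mathbb{F}'$ — factors through $\varphi_i$. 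Thus the $\Gamma_i$ may be taken to be the coordinate groups of the irreducible components of $\mathrm{Hom}(G,\mathbb{F}_2)$, and there are finitely many of them.

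The difficulty is entirely hidden in the two invoked facts: equational Noetherianity of free groups, and the identification of coordinate groups of irreducible components with limit groups (which itself rests on the structure theory of finitely generated fully residually free groups, a component of Sela's resolution of the Tarski-type problems). Sela's original route is different and substantially harder: starting from a divergent sequence of morphisms $G\to\mathbb{F}_2$, one passes to a limiting isometric action of the stable quotient on an $\mathbb{R}$-tree, runs the Rips machine / Bestvina--Feighn analysis of stable actions to extract a splitting of the limit group, and then applies the \emph{shortening argument} to replace the sequence by one whose morphisms are shorter modulo automorphisms coming from that splitting; iterating, a descending chain condition forces termination and hence finiteness. That is the version yielding the effective Makanin--Razborov diagram used throughout Sela's work, but for the bare existence statement above the Noetherian argument suffices, and it is the one I would carry out.
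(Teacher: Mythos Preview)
The paper does not prove this theorem: it is quoted as an external result of Sela (cited twice, in the introduction and in the section on Makanin--Razborov diagrams), and the only further comment the paper makes is that a proof via Zorn's lemma and a compactness argument in the space of marked groups can be found in Champetier--Guirardel. There is therefore no ``paper's own proof'' to compare against.

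That said, your sketch is a correct outline of the standard algebraic-geometry-over-groups proof of the bare factorization statement, and you have identified the right load-bearing ingredients: Guba's equational Noetherianity of free groups (so the Zariski topology on $\mathbb{F}_2^{\,k}$ is Noetherian and $\mathrm{Hom}(G,\mathbb{F}_2)$ decomposes into finitely many irreducible components), and the identification of coordinate groups of irreducible components with finitely generated fully residually free groups, hence limit groups. One small cleanup: in the last step you do not need the phrase about the factoring map having image inside $\psi(G)$; since your chosen embedding $\iota\colon\psi(G)\hookrightarrow\mathbb{F}_2$ is injective, $\ker\bar\psi=\ker\psi$, so $I(V_i)\subseteq\ker\psi$ directly and $\psi$ itself factors through $\varphi_i$. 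Your closing paragraph accurately contrasts this Noetherian/topological route with Sela's original geometric argument via limiting $\mathbb{R}$-tree actions and the shortening argument; the paper, for its part, only gestures at the Champetier--Guirardel compactness proof and does not rehearse either.
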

	The set of limit groups given in the above theorem is the \emph{first layer of the Makanin-Razborov diagram of $G$} \cite{Sela2001}. 
	As a consequence of a theorem of  Groves and Wilton  \cite{Groves2009} we prove: 
	\begin{theoremalpha}
		There is an algorithm that, given an EDT0L presentation for a group $G$, produces the first layer of its Makanin-Razborov diagram. 
	\end{theoremalpha}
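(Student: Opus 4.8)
The plan is to reduce the statement to the finitely presented case, where the computation of Makanin--Razborov diagrams is already available, by passing through the residually free image. The starting observation is that the first layer of the Makanin--Razborov diagram of a finitely generated group $G$ depends only on the collection of homomorphisms from $G$ to free groups, hence only on the residually free image of $G$: writing $q\colon G\twoheadrightarrow H$ for the canonical epimorphism onto the residually free image, every homomorphism $\rho\colon G\to F$ into a free group vanishes on $\ker q$, hence factors uniquely as $\rho=\bar\rho\circ q$, and conversely every homomorphism from $H$ to a free group pulls back along $q$. Consequently, if $\psi_i\colon H\to\Gamma_i$, $1\le i\le n$, constitute the first layer of the Makanin--Razborov diagram of $H$, then the composites $\varphi_i=\psi_i\circ q\colon G\to\Gamma_i$ constitute the first layer of the diagram of $G$ (given $\rho\colon G\to F$, factor it as $\bar\rho\circ q$, then $\bar\rho$ as $\theta\circ\psi_i$, so that $\rho=\theta\circ\varphi_i$).

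With this in hand, the algorithm would proceed as follows. First I would apply Theorem~\ref{Thm:residually free image} to the given EDT0L presentation of $(G,S)$, obtaining a finite presentation in the category of residually free groups of the residually free image $(H,S')$ of $(G,S)$; because this presentation matches the generating tuples $S$ and $S'$, it simultaneously encodes the canonical marked epimorphism $q$. Then I would invoke the theorem of Groves and Wilton \cite{Groves2009} to compute, from this presentation of $H$, the limit groups $\Gamma_i$ (as finite presentations) together with the morphisms $\psi_i\colon H\to\Gamma_i$ making up the first layer of the Makanin--Razborov diagram of $H$. Finally I would return the $\Gamma_i$ together with the morphisms $\varphi_i=\psi_i\circ q$; on the level of generating tuples, $\varphi_i$ is just $\psi_i$ read through the identification $S\leftrightarrow S'$.

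The hard part will be matching the two imported ingredients at their interface. Theorem~\ref{Thm:residually free image} hands us the residually free image only as a finite presentation \emph{in the category of residually free groups}, whereas a finitely generated residually free group need not be finitely presented in the ordinary sense; so one must either apply the Groves--Wilton construction in a form that accepts data of this shape directly, or else first turn it into an honest finite presentation of a group with the same Makanin--Razborov diagram --- for instance by using Groves and Wilton's recursive enumeration of limit groups to effectively embed $H$ into a finite direct product of limit groups (feasible since the word problem and, by the results quoted above, the universal theory are uniformly solvable over limit groups) and then extracting the diagram from this embedding. Everything else --- composing with $q$, relabelling generators, and verifying the universal factorization property --- is routine bookkeeping, so once this interface is pinned down the proof is complete.
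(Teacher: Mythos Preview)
Your overall strategy---reduce to a finitely presented situation via the residually free image, then compute the first layer there---is exactly the paper's. But there are two points where your execution diverges from the paper, one a spurious worry and one a genuine gap.

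First, your interface concern is unnecessary. A ``finite presentation in the category of residually free groups'' is, concretely, an ordinary finite presentation $\langle S\mid R\rangle$ of some group (generally not itself residually free) whose residually free image coincides with that of $G$; equivalently, $\langle S\mid R\rangle$ and $G$ have the same marked free quotients, and hence the same first layer of the Makanin--Razborov diagram. So after invoking Theorem~\ref{Thm:residually free image} you already hold a genuine finite presentation, and there is nothing to repair at the interface.

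Second, and more substantively, you invoke Groves--Wilton as if it directly computes the first layer of the Makanin--Razborov diagram from a finite presentation. It does not: Theorem~\ref{Thm: GrovesWilton} is only an \emph{enumeration} of finite presentations of limit groups. The paper explicitly remarks that the statement ``from a finite presentation one can compute the first layer'' seems to be new, and supplies the missing step: enumerate all finite sets $\{\langle S_i\mid R_i\rangle\}$ of limit-group presentations (possible by Groves--Wilton), and for each candidate set check, using Makanin's decidability of the universal theory of free groups, whether the universal sentence
\[
\forall S,\quad \bigwedge_{r\in R} r=1 \iff \bigvee_{i}\bigwedge_{s\in R_i} s=1
\]
holds. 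This search-and-verify loop is what you are missing; once you add it, your proof coincides with the paper's.
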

	This result seems to be new even for finitely presented groups. 
	
			\subsection{Separation of classes of groups defined by languages}
			
	One of the classical problems in the theory of formal languages is that of identifying and separating families of languages: proving that different definitions yield the same family of language, or on the contrary establishing that some inclusions between families of languages are strict. (The P=NP problem is of this kind, as are all separation problems in complexity theory.)
	We consider the similar problem for classes of groups defined by presentations. In fact, establishing that $\pi(\text{LAN}_1)\ne \pi (\text{LAN}_2)$ is usually much more difficult than establishing the purely language theoretical result $\text{LAN}_1\ne \text{LAN}_2$. 
	We show why quotient computations permit to establish $\pi (\text{EDT0L})\ne \pi (RE)$. \\
	The results of Theorem \ref{Thm:quotient computation intro} are uniform: for $\mathcal{C}$ one of the classes of abelian-by-nilpotent or of virtual direct product of hyperbolic groups, there is an algorithm which, given as input both an EDT0L presentation for a marked group $(G,S)$ and a finite presentation for a marked group in $\mathcal{C}$, determines whether or not the latter is a marked quotient of the former. 
	We can consider a weaker statement, the non-uniform version of the above result: ``for each group $G$ that admits an EDT0L presentation, there is an algorithm that, given a finite presentation for a marked group in $\mathcal{C}$, determines whether it is a marked quotient of $G$''. The point here is that this non-uniform version of Theorem \ref{Thm:quotient computation intro} provides a possibly non-trivial property of each group that admits an EDT0L presentation. 
	In fact, if we take $\mathcal{C}$ to be the class of abelian-by-$k$-nilpotent groups (for a fixed $k$) or the class of free groups, the obtained property is trivial: for each finitely generated group, there is an algorithm that recognizes its marked abelian-by-$k$-nilpotent quotients, and one that determines its marked free quotients. Thus the interest of  Theorem \ref{Thm:quotient computation intro} in regards to free and abelian-by-$k$-nilpotent quotients is purely in its uniformity. \\
	However, by \cite{Rauzy2022}, the property of  ``having a recursive set of marked finite quotients'' is non-trivial, and transverse to solvability of the word problem. (Note that this property does not depend on a choice of a marking for the considered group.)
	\begin{theorem}[\cite{Rauzy2022}]
		There exists a finitely generated residually finite group $G$ with solvable word problem such that for any finite generating set $S$ of $G$,  no algorithm can, given a finite presentation of a marked finite group, determine whether or not this presentation defines a marked quotient of $(G,S)$. 
	\end{theorem}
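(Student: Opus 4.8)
The plan is to reduce the statement to making a single set non-recursive, and then to build $G$ as a carefully controlled subdirect product of finite groups.

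\textbf{Reduction.} Fix a generating tuple $S$ of a group $G$ with solvable word problem. A finite marked group $(H,S')$ with $|S'|=|S|$ is a marked quotient of $(G,S)$ precisely when $S\mapsto S'$ extends to a homomorphism, i.e.\ when every word in $S$ trivial in $G$ maps to a word trivial in $H$. Both ``trivial in $G$'' (solvable word problem) and ``trivial in $H$'' ($H$ finite) are decidable, so this is a $\Pi_1$ condition in the input, and its negation --- \emph{exhibit} a $G$-relator that survives in $H$ --- is $\Sigma_1$. Hence the set of marked finite quotients of $(G,S)$ is always co-r.e.; it is therefore non-recursive exactly when it is not recursively enumerable, so it suffices to make it non-recursive. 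Moreover, for another generating tuple $S^\ast$ the fixed words expressing each letter of $S$ through $S^\ast$ and conversely convert an instance of the $(G,S)$-problem into an equivalent instance of the $(G,S^\ast)$-problem, so decidability for one marking would give it for all; thus it is enough to exhibit one group $G$, with one marking $S_0$, whose set of marked finite quotients with respect to $S_0$ is non-recursive.

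\textbf{Construction.} Fix a non-recursive $\Pi_1$ set $B\subseteq\N$, say with $\N\setminus B=\bigcup_t V_t$ for an increasing recursive sequence of finite sets $V_t$. I would take $(G,S_0)$ to be the subgroup of an infinite direct product $\prod_m Q_m$ of finite marked groups $(Q_m,S_m)$, generated diagonally by the tuples $S_m$ (all of the same length), the sequence $(Q_m,S_m)$ being produced recursively during the construction. Then $G$ is residually finite for free, and a word is trivial in $G$ iff it is trivial in every $Q_m$. The construction must guarantee two things: (i) for every $m$, the relations of $Q_m$ among words of length $\le m$ already hold in all the $Q_{m'}$; this forces ``$w$ trivial in $G$'' to be decided by evaluating $w$ in $Q_{|w|}$, so $G$ has solvable word problem; and (ii) there is a recursive sequence of finite marked groups $(H_n)$ such that $H_n$ is a marked quotient of $G$ if and only if $n\in B$. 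Granting (i) and (ii), an algorithm recognising the marked finite quotients of $(G,S_0)$, fed the sequence $(H_n)$, would decide $B$ --- impossible --- which is the theorem.

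\textbf{Main obstacle.} The difficulty is that (i) and (ii) pull in opposite directions: solvability of the word problem wants an explicit, recursive supply of finite quotients already separating points, whereas the content of (ii) is that the collection of \emph{all} finite quotients of $G$ --- in particular $\{H_n:n\in B\}$ --- is not even recursively enumerable. I would reconcile them by keeping the two tasks on disjoint blocks of coordinates. The \emph{separating} factors $Q_m$ are fixed as the construction proceeds so that the relations among length-$\le m$ words are exactly those of the limit group; this is possible because the relations imposed on $G$ are themselves introduced effectively, so the short-word behaviour of $G$ is pinned down by stage $m$. On the remaining coordinates the groups $H_n$ are taken pairwise independent --- for instance assembled from a recursive sequence of pairwise non-isomorphic nonabelian finite simple groups, so that a finite quotient of $G$ can only involve the $H_n$'s whose coordinates are \emph{switched on}. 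For $n$ that remains forever a candidate for $B$ one arranges $G\twoheadrightarrow H_n$ as a \emph{limiting} phenomenon while maintaining the invariant that no finite stage $\prod_{m\le c}Q_m$ surjects onto $H_n$; this keeps a witness word $w_n\in\bigcap_{m\le c}\mathrm{rel}(Q_m)\setminus\mathrm{rel}(H_n)$ available, so that if $n$ is later enumerated into $\N\setminus B$ one can permanently prevent $G\twoheadrightarrow H_n$ by imposing $w_n$ --- chosen long enough to be harmless for (i) --- as a relation in all later factors. The point is precisely that $B$ is co-r.e.\ but not recursive: one never needs to \emph{confirm} $n\in B$, only to keep the option of reacting should $n$ leave $B$. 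The genuinely technical step is to verify that this bookkeeping never spoils (i), so the word problem stays solvable, and never produces an unintended finite quotient that would break the equivalence in (ii).
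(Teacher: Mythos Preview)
The paper does not prove this statement: it is quoted from \cite{Rauzy2022} and used as a black box to separate $\pi(\text{EDT0L})$ from $\pi(\text{RE})$. There is therefore no proof in the present paper against which to compare your attempt.

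On the proposal itself: your reduction is correct and well formulated --- the set of marked finite quotients is always co-r.e.\ once the word problem is solvable, and invariance under change of marking is exactly the right observation. The outline of the construction (a diagonally generated subgroup of a recursive product of finite groups, tuned so that length-$\le m$ relations are frozen by stage $m$, with a priority-style mechanism controlling which $H_n$ survive as quotients) is a reasonable strategy and is broadly in the spirit of the argument in \cite{Rauzy2022}.

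What you have written, however, is a plan rather than a proof, and the crux is left unexplained. The sentence ``one arranges $G\twoheadrightarrow H_n$ as a limiting phenomenon while maintaining the invariant that no finite stage $\prod_{m\le c}Q_m$ surjects onto $H_n$'' is doing all the work and is not justified. Concretely: if $H_n$ (or anything marked-surjecting onto it) ever appears as a factor $Q_m$, the map $G\twoheadrightarrow H_n$ becomes irrevocable and your witness $w_n$ is useless; if it never appears, you must explain why $\bigcap_m\mathrm{rel}(Q_m)\subseteq\mathrm{rel}(H_n)$ nonetheless holds in the limit. This forces you to control \emph{which} relations the construction ever imposes on $G$ --- essentially, for $n$ still in $B$, you may only impose relations that also hold in $H_n$ --- and simultaneously to keep those relations long enough not to disturb condition (i). That interaction, together with the need to avoid unintended quotients arising from combinations of the $Q_m$, is the substance of the proof, and you explicitly defer it (``the genuinely technical step is to verify\ldots''). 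Until that mechanism is spelled out, the argument is incomplete.
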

	(The above result obviously remains true replacing ``finite groups'' by ``hyperbolic groups''.) In view of Theorem \ref{Thm:quotient computation intro}, the group given in this theorem cannot have a presentation in EDT0L, even though it is recursively presented. Thus we know that the clauses of Theorem \ref{Thm:quotient computation intro} that concern abelian-by-$k$-nilpotent groups and free groups do not provide information on groups with presentations in EDT0L, while the one that concerns hyperbolic groups does. 
	We finally consider nilpotent quotients, without fixing a bound on the nilpotency class. 
	We prove here:
	\begin{theoremalpha}\label{Thm: uncomputable nilp quotients Intro}
		The exists a residually nilpotent finitely generated group $H$ with solvable word problem but uncomputable nilpotent quotients: no algorithm can, on input $n$, produce a finite presentation of $H/\gamma_n (H)$.
	\end{theoremalpha}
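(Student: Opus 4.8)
The plan is to construct $H$ by hand, encoding a recursively enumerable non-recursive set $A\subseteq\N$ into the lower central series of $H$ so that membership in $A$ becomes visible in the finite nilpotent quotients of $H$, while keeping the word problem decidable; this follows the strategy of Rauzy's construction in \cite{Rauzy2022} for finite quotients, transported from finite to nilpotent quotients.

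I would first fix the relators. Let $\mathbb F=\mathbb F_{\{x,y\}}$ and construct, by induction, a computable sequence of basic commutators $c_0,c_1,\dots\in\mathbb F$ of strictly increasing weights $w_0<w_1<\dots$ such that the image of $c_m$ in the free rational Lie algebra $\mathfrak L$ on $x,y$ lies outside the Lie ideal generated by $c_0,\dots,c_{m-1}$; this is always possible, and the weights may be forced to grow arbitrarily fast, because an ideal of $\mathfrak L$ generated by finitely many homogeneous elements of degree $\ge 2$ has, by a Golod--Shafarevich estimate, homogeneous components of positive codimension in every sufficiently large degree. Set $R_m:=c_m^{\,p_m}$ with $p_m$ the $m$-th prime, fix a computable enumeration of $A$, and put $H:=\mathbb F/\langle\langle R_m : m\in A\rangle\rangle$.

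The nilpotent quotients of $H$ detect $A$. The image of $R_m$ in $H/\gamma_{w_m+1}(H)$ vanishes precisely when $R_m\in\langle\langle R_j:j\in A\rangle\rangle\,\gamma_{w_m+1}(\mathbb F)$; a relator $R_j$ with $w_j>w_m$ contributes nothing to the layer $\gamma_{w_m}(\mathbb F)/\gamma_{w_m+1}(\mathbb F)$, while the contribution of $R_j$ with $j<m$ is divisible by $p_j$ and lies rationally in the ideal generated by $c_j$; so the Lie-independence of $c_m$ and the distinctness of the primes together force the image of $R_m$ in $H/\gamma_{w_m+1}(H)$ to be non-trivial whenever $m\notin A$, and it is trivially zero when $m\in A$. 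Hence from a marked finite presentation of the finitely generated nilpotent group $H/\gamma_{w_m+1}(H)$ one could decide ``$m\in A$'' by testing triviality of the image of $c_m^{\,p_m}$; an algorithm computing $n\mapsto H/\gamma_n(H)$ would thus decide $A$ --- a contradiction --- so the nilpotent quotients of $H$ are uncomputable.

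What remains, and what carries all the weight of the proof, is to arrange simultaneously that $H$ is residually nilpotent and has solvable word problem. The relators sit at depths $w_m$ determined by the value $m$, which is exactly what endangers decidability; the construction must therefore be tuned so that the depth of any relator that can influence --- or be influenced by --- a word of length $\ell$, or that can appear in $H/\gamma_n(H)$, is bounded by a computable function of $\ell$ (respectively of $n$), using that non-trivial elements of $\gamma_k(\mathbb F)$ have length tending to infinity with $k$. Alongside $H$ one builds a computable family of auxiliary quotients --- nilpotent quotients of rapidly increasing class, or finite $p$-groups --- that are manifestly quotients of $H$ and jointly separate its points, but whose nilpotency classes lag behind the lower central series of $H$. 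This family makes ``$w\neq 1$'' semidecidable (while ``$w=1$'' is semidecidable from the defining relators), hence the word problem solvable, and it makes $H$ residually nilpotent; crucially it does \emph{not} suffice to recursively present the individual quotients $H/\gamma_n(H)$, so no contradiction with their uncomputability arises. The principal obstacle is precisely this decoupling: proving that the ``depth'' phenomenon which lets the word problem be decided locally coexists with the global uncomputability of the nilpotent quotients, the quantitative inputs being Golod--Shafarevich growth of $\mathfrak L$ modulo the relator ideal and length estimates in free nilpotent groups.
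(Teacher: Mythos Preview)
Your detection argument---that $H/\gamma_{w_m+1}(H)$ sees whether $m\in A$---is sound in outline, but the proposal has a genuine gap exactly where you flag it: the word problem. You argue that only relators $R_m$ of weight at most $f(\ell)$ can influence a word $w$ of length $\ell$. Even granting this, it only reduces the question to whether $w$ lies in the normal closure of $\{R_m : m\in A,\ w_m\le f(\ell)\}$, and for that you must know $A$ restricted to a finite (computable) set---precisely what is undecidable about $A$. Your proposed escape, a computable family of auxiliary nilpotent quotients of $H$ that separate its points, runs into the very obstruction you are building: for $m\notin A$ the element $c_m^{p_m}$ is non-trivial in $H$, the natural witness is $H/\gamma_{w_m+1}(H)$, and any quotient of $H$ you can write down without knowing $A$ (say by imposing $R_j=1$ for all $j$ in a recursive superset of $A$) kills it. You give no mechanism for producing computable quotients of $H$, as opposed to quotients of $\mathbb F$, that detect these elements; the residual-nilpotence claim is unproved for the same reason.

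The paper's construction is quite different and sidesteps this difficulty structurally. It starts not from a free group but from Hall's centre-by-metabelian group $G=\langle a,b\rangle$, whose centre is free abelian on generators $f_1,f_2,\dots$ with $f_k\in\gamma_{k+2}(G)\setminus\gamma_{k+3}(G)$, and forms $H=G/\mathcal A$ where $\mathcal A$ consists of the \emph{central} relations $f_{p_n}=f_{p_n^{\,t}}$ whenever the $n$-th Turing machine halts in exactly $t$ steps. Two features are decisive and are absent from your sketch. First, the relation records the halting \emph{time}, so $\mathcal A$ is not merely r.e.\ but \emph{recursive}, and moreover local: whether a central word in $f_1,\dots,f_N$ vanishes depends only on those relations with both indices $\le N$, which one finds by running machines $M_1,\dots,M_N$ for at most $N$ steps. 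This solves the word problem directly, with no appeal to nilpotent quotients. Second, because the relations only identify pairs of basis elements of a free abelian centre, $\mathcal Z(H)$ is again free abelian, and residual nilpotence of $H$ is inherited from that of $G$ (established via explicit matrix models). Uncomputability of the nilpotent quotients then follows because $f_{p_n}$ survives in $H/\gamma_{p_n+3}(H)$ if and only if $M_n$ does not halt. The moral is that encoding ``$m\in A$'' as ``relator $R_m$ present/absent'' is too crude; one needs an encoding in which the full relator set is recursive yet its effect on the lower central series is not.
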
 
	The group given by this theorem can be supposed to be central-by-metabelian, as it is constructed as a central quotient of Hall's central-by-metabelian group whose center in free abelian on countably many generators \cite{Hall1954}.  This theorem thus shows that the statement about nilpotent groups of Theorem \ref{Thm:quotient computation intro} gives rise to groups that do not have EDT0L presentations.

	\subsection{Open problems}

	Moving forward, several directions of research appear promising. First, establishing for families of languages  other than EDT0L whether they give rise to groups that admit marked quotient algorithms.  
	Secondly, finding methods other than the construction of groups with non-computable quotients to separate classes of groups defined by languages. For instance we ask: 
	\begin{problem}
		Find a family of languages LAN such that $$\pi(\text{EDT0L})\subsetneq \pi(\text{LAN})\subsetneq\pi(\text{RE}).$$
	\end{problem} 
	Finally, we include problems related to the class $\pi(\text{DTF0L})$:
	\begin{problem}
		Which group varieties admit free groups with presentations in  $\pi(\text{DTF0L})$? 
	\end{problem} 
	
	\subsection*{Acknowledgements}
	We would like to thank Constantinos Kofinas for very helpful remarks about endomorphisms of free center-by-metabelian groups. We would also like to thank Martin Kassabov for helpful discussions. Thanks also to Corentin Bodart and Alex Bishop for insights on context-free languages. Finally, we would like to thank Simon André for references and remarks about limit groups. 
	
\section{Preliminaries}

\subsection{Language Theory}

Let $A$ be any set, to which we refer from now on as the \textbf{alphabet}.
The free monoid over $A$, i.e. the set of all words over $A$, including the empty word which we denote by $\epsilon$, is denoted by $A^*$. If not specified, the term morphism always refers to monoid homomorphism.
A \textbf{formal language} over $A$ is a set of words over $A$, i.e. a subset of $A^*$.
A \textbf{family of languages over $A$} is a set of formal languages over $A$. \\
A \textbf{family of languages} is then defined to be a map that associates to a set $A$ a family of languages over $A$. \\
In practice (see \cite{Rozenberg1997}), families of languages come equipped with ways of giving finite descriptions  of the considered languages, that take the form of machines (automata, Turing machines, and so on) or of grammars. We could thus give a more precise definition of  the notion of ``family of languages'', that reflects more precisely how this term is used, thanks to the theory of numberings, see for instance \cite{Weihrauch1987}. But we will not use such a definition here. \\
We will usually denote families of languages in all capitals, such as LAN. 
 \subsubsection{Regular and context free languages}\label{Subsec: Regular and context free}
 Here, we introduce regular and context free languages, denoted REG and CF. Denote by FIN the family of finite languages. In Section \ref{subsec: First Non Injectivity Result}, we show that in fact: $$\pi(\text{FIN})=\pi(\text{REG})=\pi(\text{CF}).$$

\begin{definition}
	A \textbf{deterministic finite automaton (DFA)} over a finite alphabet $A$ is a tuple $(S,s_0,\delta,F)$ where
	\begin{itemize}
		\item $S$ is a finite set referred to as the \textbf{states} of the DFA,
		\item $s_0 \in S$ is a distinguished \textbf{initial state},
		\item $\delta: S \times A \rightarrow S$ is the \textbf{transition function},
		\item $F \subseteq S$ is the set of \textbf{accepting states}.
	\end{itemize}
\end{definition}

A word $a_0\dots a_n \in A^*$ can be processed by such a DFA: Starting in state $s_0$, one computes $s_1=\delta(s_0,a_0)$ and repeatedly $s_{i+1}=\delta(s_i,a_i)$ until $i=n$. If the  resulting state $s_n$ is contained in $F$, we say that the DFA \textbf{accepts} the word.
The set of all words accepted by a given DFA is called its \textbf{language} and denoted $\Lan(S,s_0,\delta,F)$.

A formal language $L \subseteq A^*$ is called \textbf{regular} (or rational) if it is the language of a DFA over $A$. 

\begin{definition}
	A \textbf{context free grammar} over a finite alphabet $A$ is a tuple $(A,N,R, s_0)$ where
	\begin{itemize}
		\item $N$ is a finite alphabet disjoint from $A$ called the set of \textbf{nonterminal characters}, 
		\item $R$ is the set of \textbf{production rules}, a finite subset of $N\times (A\cup N)^*$ ,
		\item $s_0\in N$ is the \textbf{initial symbol}. 
	\end{itemize}
\end{definition}

Let  $\mathcal{G}=(A,N,R, s_0)$ be a context free grammar. 
For words $u_1$ and $u_2$ in $(A\cup N)^*$, we say that $u_1$ \textbf{directly yields} $u_2$ if there is a production rule $(a,w)\in R$ such that $u_1$ can be written as $u_1=xay$ for words $x$ and $y$ of $(A\cup N)^*$, and that $u_2=xwy$. In this case we write $u_1\Rightarrow_{\mathcal{G}}u_2$. Denote by $\Rightarrow^*_{\mathcal{G}}$ the transitive closure of the relation $\Rightarrow_{\mathcal{G}}$. 

\begin{definition}
	Let  $\mathcal{G}=(A,N,R, s_0)$ be a context free grammar. The \textbf{language generated by $\mathcal{G}$} is defined as $$\mathcal{L}(\mathcal{G})=\{w\in A^*, s_0\Rightarrow^*_{\mathcal{G}}w\}.$$
\end{definition}

\subsubsection{Some L-languages}\label{Subsec: Def L languages}

This section will introduce concepts from the theory of parallel rewriting language families, which are often called L-languages or ``Lindenmayer languages''. For the interested reader, a broader account can be found in chapter 5 of \cite{Rozenberg1997}.

\begin{definition}\label{defsys}
Fix an alphabet $A$
	\begin{enumerate}
		\item A \textbf{DT0L system} over $A$ is a word $w_0$ together with a finite set $\Phi$ of morphisms from $A^*$ into $A^*$.
		\item A \textbf{DTF0L system} over $A$ is a finite set $W_0 \subseteq A^*$ of words together with a finite set $\Phi$ of morphisms from $A^*$ into $A^*$.
		\item Fix an additional alphabet $A_E$ disjoint from $A$. An \textbf{EDT0L system} over $A$ with non-terminals $A_E$ is a DT0L system over $A \cup A_E$.
		\item An \textbf{HDT0L system} over $A$ is a DT0L system over some alphabet $B$ together with a morphism $f:B^* \rightarrow A^*$.
	\end{enumerate}
\end{definition}

In the following definition we will write $\Phi(w)$ and $\Phi(W)$ where $\Phi$ is a set of morphisms, $w$ is a word and $W$ a set of words to mean the sets
$$\Phi(w)=\{\phi(w) \mid \phi \in \Phi\}$$
$$\Phi(W)=\bigcup_{w\in W}\Phi(w)$$
Furthermore $\Phi^n$ denotes the $n$-th iterate of applying $\Phi$. In particular, $\Phi^n(w)$ contains the image of $w$ under any chain of $n$ morphisms in $\Phi$.

\begin{definition}\label{deflang}
Suppose $A$ is an alphabet.
	\begin{enumerate}
		\item A DT0L system $(w_0,\Phi)$ over $A$ defines a language 
		$$\Lan(w_0,\Phi)=\bigcup_{n < \omega} \Phi^n(w_0)$$
		\item A DTF0L system $(W_0, \Phi)$ over $A$ defines a language
		$$\Lan(W,\Phi)=\bigcup_{w \in W} \Lan(w,\Phi)$$
		\item An EDT0L system $(w_0,\Phi)$ over $A$ with non-terminals $A_E$ defines a language
		$$\Lan(w_0,\Phi,A_E)=\Lan(w_0,\Phi) \cap A^*$$
		\item A HDT0L system $(w_0,\Phi, f)$ defines a language
		$$\Lan(w_0,\Phi,f)=f(\Lan(w_0,\Phi))$$ 
	\end{enumerate}
\end{definition}

We adopt the common terminology of saying that a formal language is DT0L (DTF0L, EDT0L,...) if there is a corresponding system which defines that language. We also denote by DT0L, DTF0L, ... the respective families of languages. Additionally, FIN denotes the family of all finite languages.

\begin{remark}
	DT0L is strictly contained in DTF0L.
\end{remark}
\begin{proof}
The family DTF0L trivially contains all finite languages. The words of the finite language can be taken as the seed words of the DTF0L system and the morphisms to be the identity.\\
In particular, the finite single character language $\{a, a^2\}$ is DTF0L. However, it is not DT0L: If it was, then either $a$ or $a^2$ must be the seed word. If $a$ is the seed, a morphism mapping $a$ to $a^2$ must be part of the defining DT0L system for the language. But then $a^4$ would ne to be in the language as well. The word $a^2$ cannot be the seed word either because there would have to be a morphism mapping $a^2$ to $a$. Such a morphism does not exist, there is no suitable choice for the image of $a$ under such a morphism.
\end{proof}

The following is a result of Nielsen, Rozenberg, Salomaa and Skyum. It can be found in \cite{Nielsen1974}.

\begin{lemma}[\mbox{\cite[Lem. 3.3, Thm. 6.4, 6.5]{Nielsen1974}}]\label{eqhedt0l}
The families EDT0L and HDT0L are effectively the same, i.e. there is an algorithm that computes from an EDT0L system an HDT0L system that defines the same language and vice versa.
\end{lemma}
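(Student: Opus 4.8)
The plan is to prove the two inclusions $\text{HDT0L}\subseteq\text{EDT0L}$ and $\text{EDT0L}\subseteq\text{HDT0L}$ separately, checking in each case that the translation of grammars is explicit and hence algorithmic. After renaming letters we may always assume that the working alphabet of a system is disjoint from its output alphabet.

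For the inclusion $\text{HDT0L}\subseteq\text{EDT0L}$, which I expect to be the routine one, start from an HDT0L system $(w_0,\Phi,f)$ where $(w_0,\Phi)$ is a DT0L system over an alphabet $B$ and $f\colon B^*\to A^*$, with $A\cap B=\emptyset$. Consider the DT0L system over $C=A\cup B$ with seed $w_0$ and morphism set $\{\hat\phi:\phi\in\Phi\}\cup\{\hat f\}$, where each $\hat\phi$ agrees with $\phi$ on $B$ and is the identity on $A$, and $\hat f$ sends $b\mapsto f(b)$ for $b\in B$ and $a\mapsto a$ for $a\in A$; read this as an EDT0L system with terminal alphabet $A$. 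The key observation is that, starting from $w_0\in B^*$, the maps $\hat\phi$ keep the word in $B^*$, the map $\hat f$ sends a word of $B^*$ into $A^*$, and once a word lies in $A^*$ every $\hat\phi$ and $\hat f$ fixes it; hence a reachable word lies in $A^*$ exactly when it has the form $f\bigl(\phi_{i_k}\circ\cdots\circ\phi_{i_1}(w_0)\bigr)$ (in particular when it is the empty word). So the EDT0L language of this system is exactly $f(\Lan(w_0,\Phi))$, as wanted.

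For the inclusion $\text{EDT0L}\subseteq\text{HDT0L}$ — which I expect to be the main obstacle — start from an EDT0L system, i.e. a DT0L system $(w_0,\Phi)$ over $B=A\cup A_E$ whose language is $\Lan(w_0,\Phi)\cap A^*$. The naive idea of taking $f\colon B^*\to A^*$ to be the identity on $A$ and to erase $A_E$ fails: this produces $f(u)$ for \emph{every} reachable word $u$, not only for those already lying in $A^*$, and there is no reason for the ``$A$-part'' of an arbitrary reachable word to belong to the language. The difficulty is intrinsic: a single final morphism acts letter by letter and cannot test the global condition ``no non-terminal letter survives''. The approach is therefore to first modify the DT0L system, over a larger and suitably decorated alphabet, so that the reachable words that must be discarded come to use only a fixed ``erasable'' sub-alphabet while the words of the intended language use a disjoint ``kept'' sub-alphabet; a uniform morphism, erasing on the former and the identity on the latter, can then finish the job. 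The picture to establish is that EDT0L coincides both with the class of morphic images of DT0L languages (which is HDT0L by definition) and with the class of non-erasing morphic images of DT0L languages, the nontrivial points being that the intersection $\cap A^*$ can be absorbed into a morphic image and that erasing can subsequently be removed from the final morphism; these are precisely Lemma 3.3 and Theorems 6.4 and 6.5 of \cite{Nielsen1974}.

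Finally I would record effectiveness: each construction above replaces the given system by an explicitly described one, and the reductions of \cite{Nielsen1974} are likewise algorithmic, so one obtains the stated algorithm converting an EDT0L system into an equivalent HDT0L system and vice versa.
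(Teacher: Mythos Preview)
The paper does not actually prove this lemma: it is stated with a citation to \cite{Nielsen1974} and no proof is given. So there is nothing to compare against on the paper's side.

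That said, your proposal is sound. The argument for $\text{HDT0L}\subseteq\text{EDT0L}$ is correct and self-contained: since the seed lies in $B^*$, the extended morphisms $\hat\phi$ preserve $B^*$, the map $\hat f$ sends $B^*$ into $A^*$, and every morphism fixes $A^*$ pointwise, so the reachable words in $A^*$ are exactly $f(\Lan(w_0,\Phi))$. For $\text{EDT0L}\subseteq\text{HDT0L}$ you correctly identify why the naive erasing morphism fails and then defer to the same results of \cite{Nielsen1974} that the paper cites; this is appropriate, as the construction there (absorbing the intersection with $A^*$ into a morphic image and then eliminating erasing) is genuinely technical and not reproduced in the present paper either. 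Your remark on effectiveness is also in order.
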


The following can be found in \cite{Rozenberg1976}.

\begin{lemma}
The family EDT0L is effectively closed under finite union, applying morphisms and intersection with a regular language.
\end{lemma}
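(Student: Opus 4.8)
The plan is to prove the three closure statements separately, in increasing order of difficulty, using the identification $\text{EDT0L}=\text{HDT0L}$ of Lemma~\ref{eqhedt0l} wherever it shortens the argument; in each case the construction produced is manifestly algorithmic, which is all that ``effectively'' requires.

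\emph{Closure under morphisms.} Given an EDT0L language $L$ over $A$ and a monoid morphism $g\colon A^*\to B^*$, I would first apply Lemma~\ref{eqhedt0l} to replace an EDT0L system for $L$ by an HDT0L system $(w_0,\Phi,f)$ with $f\colon C^*\to A^*$, so that $L=f(\Lan(w_0,\Phi))$. Then $g(L)=(g\circ f)(\Lan(w_0,\Phi))$, and since $g\circ f\colon C^*\to B^*$ is again a morphism, $(w_0,\Phi,g\circ f)$ is an HDT0L system for $g(L)$; converting it back with Lemma~\ref{eqhedt0l} yields an EDT0L system. Every step is effective.

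\emph{Closure under finite union.} It suffices to treat two languages $L_1,L_2$ over $A$, with EDT0L systems over $A\cup A_1$ and $A\cup A_2$ (rename the non-terminals so that $A_1\cap A_2=\emptyset$). I would first normalise each system so that no terminal letter occurs before the last step of a derivation: inside the $i$-th system, rename the terminal alphabet $A$ to a fresh disjoint copy $\bar A_i$ and adjoin a single ``finalising'' morphism $\beta_i$ which sends each $\bar a$ to $a$, each original non-terminal to a new dead symbol $\dagger$, and each $a\in A$ (and $\dagger$) to itself; one checks this system still defines $L_i$. Now work over the disjoint union of the two normalised alphabets together with $A$, a fresh start symbol $s$ and $\dagger$: add branch-choosing morphisms $\alpha_i$ with $\alpha_i(s)=\bar w_i$ and $\alpha_i(x)=\dagger$ for every other letter $x$, and extend each branch-$i$ morphism so that it sends every symbol foreign to branch $i$ to $\dagger$. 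A routine case check shows that the words over $A$ reachable from $s$ are exactly those emitted by $\beta_1$ or $\beta_2$, that is, $L_1\cup L_2$; iterating handles any finite union.

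\emph{Closure under intersection with a regular language.} This is the main obstacle, because the parallel and deterministic nature of T0L rewriting blocks the usual product construction: a single morphism rewrites every occurrence of a letter identically, so distinct occurrences cannot independently ``guess'' how a DFA $\mathcal D=(S,s_0,\delta,F)$ recognising the regular set $R$ will treat their expansions. The feature that rescues the argument is that in a DT0L derivation every letter of every intermediate word has a uniquely determined descendant in the terminal word, hence a uniquely determined transition profile: the element of the finite monoid $S^S$ given by $p\mapsto\delta^*(p,u)$, where $u$ is that terminal descendant and $\delta^*$ extends $\delta$ to words. I would therefore tag each letter $v$ by a profile $f\in S^S$, launch the new derivation from a fresh start symbol $s$ using \emph{one morphism per admissible profile-tagging of the seed word} --- finitely many, of which we keep only those whose profiles compose to a function mapping $s_0$ into $F$ --- and, for each original morphism $\phi$, provide \emph{one morphism per way of distributing the profile of $v$ among the letters of $\phi(v)$}, again finitely many since the alphabet and $S^S$ are finite; a last morphism rewrites a correctly tagged terminal letter $[a,\delta(\cdot,a)]$ to $a$ and every other symbol to $\dagger$. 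The verification records that the composition of profiles read along the word is invariant under the $\phi$-steps, is pinned at the start to a function sending $s_0$ into $F$, and equals the true transition profile of the terminal word when the last morphism fires, so that every word produced lies in $L\cap R$; conversely, for $w\in L\cap R$ one reads off the required tags and distributions from the given DT0L derivation of $w$, which selects a legal sequence of these finitely many morphisms. Since $\mathcal D$ and the original system are finite, the whole recipe is effective.
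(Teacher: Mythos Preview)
The paper does not prove this lemma; it merely records it as a known fact with a citation to \cite{Rozenberg1976}. So there is no ``paper's own proof'' to compare against, and your argument has to be judged on its own merits.

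Your argument is correct and follows the standard route. The morphism closure via the HDT0L characterisation is the cleanest way to handle that case. The finite-union construction with branch-selecting morphisms and a dead symbol $\dagger$ is correct; the one point you gloss over is that every morphism must be total, so in a written-up version you should say explicitly what each new morphism does on every letter of the enlarged alphabet (in particular on $s$, $\dagger$, and the letters foreign to its branch), though you clearly have this in mind.

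For the intersection with a regular language, your construction --- tagging letters by elements of the transition monoid $S^S$, introducing one new morphism for every global choice of compatible profile-factorisations, and maintaining the invariant that the left-to-right product of tags is a function sending $s_0$ into $F$ --- is precisely the classical Ehrenfeucht--Rozenberg argument. Two technicalities worth making explicit: (i) when $\phi(v)=\epsilon$, the only compatible tag is $f=\mathrm{id}_S$, so the distribution morphism must send $[v,f]$ to $\dagger$ whenever $f\neq\mathrm{id}_S$; (ii) ``one morphism per way of distributing'' really means one morphism for every \emph{function} from the finite set of tagged letters to factorisations --- a finite product of finite sets, not one choice per occurrence --- since determinism forces all occurrences of $[v,f]$ to be rewritten identically. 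With those details spelled out, the proof is complete.
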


We denote by DTF0L+FIN the family of languages that are a union of a DTF0L and a finite language. 

\begin{proposition}\label{prop:dtf0lpsub}
	DTF0L+FIN is a proper subfamily of EDT0L.
\end{proposition}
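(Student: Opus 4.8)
The plan is to verify the two halves of the assertion: the containment $\mathrm{DTF0L+FIN}\subseteq\mathrm{EDT0L}$, which is routine, and its strictness, which I would witness by the single language $\{a\}^*$.

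For the containment, I would just unwind the definitions. A DTF0L language $\Lan(W_0,\Phi)$ is by definition the \emph{finite} union $\bigcup_{w\in W_0}\Lan(w,\Phi)$ of the DT0L languages $\Lan(w,\Phi)$; and each DT0L language is EDT0L, since a DT0L system $(w,\Phi)$ over $A$ is an EDT0L system over $A$ with empty non-terminal alphabet whose language is $\Lan(w,\Phi)\cap A^*=\Lan(w,\Phi)$. As EDT0L is closed under finite union, this gives $\mathrm{DTF0L}\subseteq\mathrm{EDT0L}$, and in particular every finite language is EDT0L (every finite language is already DTF0L, by the Remark above). A language in $\mathrm{DTF0L+FIN}$ is a union of a DTF0L language and a finite language, hence a union of two EDT0L languages, hence EDT0L.

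For strictness I would first observe that $\{a\}^*$ is EDT0L, by exhibiting a system: over the terminal alphabet $A=\{a\}$ with non-terminal alphabet $A_E=\{S\}$, take the two morphisms $\phi_1\colon S\mapsto aS,\ a\mapsto a$ and $\phi_2\colon S\mapsto\epsilon,\ a\mapsto a$ and seed word $S$; an easy induction gives $\Phi^n(S)=\{a^nS\}\cup\{a^k:0\le k<n\}$, so that $\Lan(S,\Phi)\cap\{a\}^*=\{a\}^*$. Then I would argue $\{a\}^*\notin\mathrm{DTF0L+FIN}$ as follows. Suppose $\{a\}^*=D\cup F$ with $D$ DTF0L and $F$ finite; I may assume $D\not\subseteq\{\epsilon\}$. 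Fix a DTF0L system $(W_0,\Phi)$ for $D$ and a non-empty seed $w\in W_0$; since $D\subseteq\{a\}^*$ we have $w=a^{c}$ with $c\ge1$, and then for every $\phi\in\Phi$ the word $\phi(a)^{c}=\phi(a^{c})$ lies in $D\subseteq\{a\}^*$, which forces $\phi(a)\in\{a\}^*$, say $\phi(a)=a^{d_\phi}$. Consequently the set of word-lengths occurring in $D$ is $\bigcup_i\{c_i\,p:p\in P\}$, where the $c_i\ge0$ are the lengths of the seed words and $P\subseteq\N$ is the multiplicative submonoid generated by $\{d_\phi:\phi\in\Phi\}$ (so $1\in P$). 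Now a \emph{prime} length $q$ in this set must satisfy $q=c_i\,p$ for some $i$ and $p\in P$; primality forces either $q=c_i$ (one of finitely many seed lengths), or $c_i=1$ and $q=p$ equals a product of generators $d_\phi$, which, being prime, must itself be one of the finitely many generators. Hence $D$ realises only finitely many prime lengths, whereas $D\cup F=\{a\}^*$ would force $D$ to contain words of every prime length greater than $\max_{w\in F}|w|$ — a contradiction. Therefore $\{a\}^*$ separates the two families.

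The only delicate point in this plan is the normalisation in the last argument — noting that a DTF0L system whose language is contained in $\{a\}^*$ may be taken to act by $a\mapsto a^{d_\phi}$ on the one-letter alphabet, so that lengths multiply — after which the count of prime lengths is immediate. (One could instead observe that a one-letter DTF0L language has at most polylogarithmically many words of length $\le N$ and so cannot be cofinite in $\{a\}^*$, but the prime-length argument is shorter to write.)
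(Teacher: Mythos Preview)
Your proof is correct and follows essentially the same route as the paper: the containment is obtained from the closure of EDT0L under finite union, and strictness is witnessed by the language $\{a\}^*$ via the same EDT0L system (seed a single non-terminal, morphisms appending $a$ or erasing the non-terminal) and the same length argument (over a one-letter alphabet every DTF0L morphism is $a\mapsto a^{d}$, so the set of attainable lengths is $\{c_i\cdot p:p\in P\}$ and misses all sufficiently large primes). Your write-up is in fact a bit more careful than the paper's in justifying why the morphisms must send $a$ into $\{a\}^*$ and in spelling out the prime-factorisation dichotomy.
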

\begin{proof}
For being a subfamily it suffices, due to the closure properties of EDT0L, to show that both DTF0L and FIN are subfamilies of EDT0L. This is achieved by two very similar constructions: One defines a single non-terminal $N$ which is also the seed of the system. Additionally, one adds morphisms that map $N$ to each word in the finite language or seed word of the DTF0L system respectively and that act trivially everywhere else. In the DTF0L case, the morphisms of the original system are added as well, acting trivially on $N$.\\
The following example proves that the inclusion is proper: Consider the maximal language $\Lan=\{a^n\mid n \in \N\}$ over the single letter alphabet $A=\{a\}$. It is easily be seen to be EDT0L: The system consists of a single non-terminal $N$ as the seed word and two morphisms mapping $N \mapsto Na$ and $N \mapsto \epsilon$.
However, the language is not DTF0L+FIN. If it was, the defining system would have seed words $a^{s_1},\dots,a^{s_k}$, finitely many morphisms defined by $a \mapsto a^{k_i}$ and a finite language $\{a^{n_1},\dots,a^{n_l}\}$. There is a number $m$ that is not a product of any of theses numbers (for example choose a large enough prime). Any word in the language defined by this system is of the form $a^{s_iK}$ where $K$ is a finite product of the $s_i$, or it is one of the $a^{n_i}$. Hence $a^m$ is not in the language defined by the system.
\end{proof}

\subsubsection{Regular Control}

In \cite{Asveld1977} Asveld proved that regular control does not increase the power of various language models, among them the EDT0L formalism. His tools have hence been adapted and used by various authors, for example by Ciobanu, Elder and others \cite{Ciobanu2021},\cite{Ciobanu2023},\cite{Ciobanu2015} to construct certain EDT0L languages.

We summarize this method as follows.

\begin{definition}\label{def:regcedt0l}
	A \textbf{regularly controlled EDT0L system} is an EDT0L system $(w_0,\Phi, A_E)$ together with a regular language $\mathcal{R}$ over the alphabet $\Phi$. As each word in $\mathcal{R}$ defines a composition of morphisms\footnote{Note that this correspondence uses a left-to-right order of composition, i.e. the morphism which is the first letter of a word is also applied first. This can be thought of in the same spirit as morphisms acting on the right on their domain.} and can therefore itself be seen as a morphism, one defines the language of a regularly controlled EDT0L system by
	$$\Lan(w_0,\Phi,\mathcal{R})=\mathcal{R}(w_0) \cap A^*.$$
\end{definition}

It is important to note, although the regular control $\mathcal{R}$ can be an infinite set, it is still defined by finite data, as all regular languages are. Hence also regularly controlled EDT0L systems have a finite description.
This fact is rather unsurprising in light of the next result which is a corollary of Theorem 2.1 in \cite{Asveld1977}.

\begin{lemma}
	The family of languages defined by regularly controlled EDT0L systems is effectively identical to the family of EDT0L languages.
\end{lemma}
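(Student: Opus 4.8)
The plan is to prove the two inclusions between the family of regularly controlled EDT0L languages and the family of EDT0L languages; the first is immediate, and the second is the content of Asveld's theorem, which I would reprove by a direct construction tailored to the deterministic setting so as to make the equality visibly effective.

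For the easy direction, given an EDT0L system $(w_0,\Phi,A_E)$, equip it with the regular control $\mathcal{R}=\Phi^*$, the full language over the alphabet $\Phi$, which is regular. Then $\mathcal{R}(w_0)\cap A^*=\bigcup_{n<\omega}\Phi^n(w_0)\cap A^*=\Lan(w_0,\Phi,A_E)$, so every EDT0L language is the language of a regularly controlled EDT0L system, and the construction is trivially algorithmic.

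For the converse, let $(w_0,\Phi,A_E)$ be an EDT0L system over $A$ with total alphabet $V=A\cup A_E$, and let $\mathcal{R}\subseteq\Phi^*$ be recognized by a DFA $(Q,q_0,\delta,F)$. First I would dispose of the degenerate case $w_0=\epsilon$: then $\mathcal{R}(w_0)\cap A^*$ is $\{\epsilon\}$ or $\emptyset$ according to whether $\mathcal{R}$ is empty (which is decidable), hence finite and therefore EDT0L. Otherwise I build an EDT0L system whose nonterminals also record the current DFA state. The working alphabet is $V'=A\cup A_E\cup (V\times Q)\cup\{\bot\}$, with terminal alphabet $A$ and a distinguished dead nonterminal $\bot$; a word $a_1\cdots a_n$ produced by the controlled system after a control prefix that drives the automaton to state $q$ is represented by $\langle a_1,q\rangle\cdots\langle a_n,q\rangle$. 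For each $\phi\in\Phi$ and $q\in Q$, define a morphism $\phi_q$ of $(V')^*$ by: $\phi_q(\langle a,q\rangle)$ is $\phi(a)$ with every letter tagged by $\delta(q,\phi)$; $\phi_q(\langle a,p\rangle)=\bot$ for $p\ne q$; and $\phi_q$ sends every element of $A\cup A_E\cup\{\bot\}$ to $\bot$. Add one finalizing morphism $\tau$ with $\tau(\langle a,q\rangle)=a$ for $q\in F$, $\tau(\langle a,q\rangle)=\bot$ for $q\notin F$, and $\tau$ sending $A\cup A_E\cup\{\bot\}$ to $\bot$. The new seed is $w_0$ with every letter tagged by $q_0$, the new morphism set is $\{\phi_q : \phi\in\Phi,\ q\in Q\}\cup\{\tau\}$, and the new terminal alphabet is $A$. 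Then I would check: (i) each $\phi_q$ and $\tau$ is a well-defined monoid morphism and the system is deterministic; (ii) starting from the seed, the only derivations that can reach a word over $A$ are those that first apply a sequence $\phi^{(1)}_{q_0}\phi^{(2)}_{q_1}\cdots\phi^{(m)}_{q_{m-1}}$ with $q_i=\delta(q_{i-1},\phi^{(i)})$ — which keeps the word uniformly tagged by the current state and carries the underlying word to $\phi^{(m)}(\cdots\phi^{(1)}(w_0)\cdots)$, in accordance with the left-to-right composition convention of Definition~\ref{def:regcedt0l} — and then apply $\tau$ once; any other move produces $\bot$, which is never removed; (iii) such a derivation outputs a word over $A$ precisely when $q_m\in F$, i.e.\ when $\phi^{(1)}\cdots\phi^{(m)}\in\mathcal{R}$, and then it outputs exactly the corresponding element of $\mathcal{R}(w_0)\cap A^*$. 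Hence the new system defines $\Lan(w_0,\Phi,\mathcal{R})$, and all data ($Q$, $\delta$, $\Phi$, $V$) being finite, the construction is algorithmic.

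The main obstacle is point~(ii): one must verify that the simulation is faithful in both directions, so that no illegal schedule of morphism applications — reordering, ignoring the control, or applying $\tau$ prematurely or too late — can sneak an unintended word into the language. This is exactly what the dead symbol $\bot$ together with the rule ``$\phi_q$ annihilates everything not tagged by $q$'' is designed to enforce, and verifying it is a short but slightly delicate case analysis. It also requires observing that the construction preserves determinism, which is why Asveld's control theorem, stated for the nondeterministic family, specializes to the EDT0L family here.
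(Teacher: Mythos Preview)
The paper does not actually prove this lemma; it simply records it as a corollary of Asveld's Theorem~2.1. Your proposal supplies the standard product construction --- tagging each letter with the current DFA state and adding a finalizing morphism $\tau$ --- which is precisely the specialization of Asveld's argument to the deterministic tabled setting, so in spirit your approach and the paper's are the same, with yours carrying more detail.

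There is, however, one genuine gap in your construction: it mishandles the empty word. Since the morphisms in $\Phi$ may be erasing, the tagged word can shrink to $\epsilon$ at an intermediate step, say after applying $\phi^{(1)}_{q_0}\cdots\phi^{(k)}_{q_{k-1}}$, and $\epsilon\in A^*$ regardless of whether the control prefix $\phi^{(1)}\cdots\phi^{(k)}$ extends to a word of $\mathcal{R}$. Concretely, take $A=\{a\}$, $w_0=a$, a single morphism $\phi$ with $\phi(a)=\epsilon$, and $\mathcal{R}=\emptyset$: the controlled language is empty, but your system already produces $\epsilon$ after one application of $\phi_{q_0}$. Your handling of the degenerate case $w_0=\epsilon$ does not cover this (and incidentally has the emptiness test reversed). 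The usual fix is to adjoin to the seed a sentinel nonterminal, say $\langle\#,q\rangle$, that carries the current state, is updated but never erased by the $\phi_q$, and is removed by $\tau$ only when $q\in F$; with that addition your case analysis in (ii)--(iii) goes through and the construction is correct and effective.
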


In fact, in \cite{Ciobanu2015} and \cite{Diekert2017} the authors give an even slightly more restrictive version of Definition \ref{def:regcedt0l}, which is still equivalent.

\begin{lemma}
	If one removes from Definition \ref{def:regcedt0l} the intersection with terminal words $A^*$, so in fact requires the regular control $\mathcal{R}$ to consist only of morphisms mapping $w_0$ to $A^*$, the defined family of languages is still the same.
\end{lemma}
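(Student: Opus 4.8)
The plan is to prove separately the two inclusions between the family of languages defined by the restricted systems of the statement and the family EDT0L. Since the preceding lemma already identifies regularly controlled EDT0L systems with EDT0L, the first inclusion is immediate: if $(w_0,\Phi,\mathcal R)$ is a restricted system, then by hypothesis $w(w_0)\in A^*$ for every $w\in\mathcal R$, so $\mathcal R(w_0)\cap A^*=\mathcal R(w_0)$, and the restricted system is, verbatim, a regularly controlled EDT0L system in the sense of Definition~\ref{def:regcedt0l} defining the same language; the preceding lemma then produces (effectively) an EDT0L system for it.

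For the converse I would \emph{not} attempt to modify a bare EDT0L system: the naive idea of appending one ``finalizing'' morphism that primes the terminal letters fails, because a morphism cannot discard the derivations that still carry non-terminals, and there is no way to recognize, at the level of the finite control alphabet, which control words happen to produce a word in $A^*$. The right device is already built into the HDT0L formalism, so I would start there. Given an EDT0L language $L\subseteq A^*$, use Lemma~\ref{eqhedt0l} to compute an HDT0L system $(w_0,\Phi,f)$ over some alphabet $B$ with $L=f(\Lan(w_0,\Phi))=f\bigl(\bigcup_{n<\omega}\Phi^n(w_0)\bigr)$; after renaming the letters of $B$ we may assume $B\cap A=\emptyset$. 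Now form the EDT0L system with terminal alphabet $A$, non-terminal alphabet $B$, seed $w_0$, and morphism set $\Phi'=\{\tilde\phi:\phi\in\Phi\}\cup\{g\}$, where every morphism is extended to an endomorphism of $(A\cup B)^*$ by $\tilde\phi|_B=\phi,\ \tilde\phi|_A=\mathrm{id}$ and $g|_B=f,\ g|_A=\mathrm{id}$; writing $\widetilde\Phi=\{\tilde\phi:\phi\in\Phi\}$, control this system by the regular language $\mathcal R=\widetilde\Phi^*\,g$ over the alphabet $\Phi'$.

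It then remains to verify two routine points, using the left-to-right composition convention of Definition~\ref{def:regcedt0l}. First, every control word $w=\tilde\phi_1\cdots\tilde\phi_n\,g$ maps $w_0$ to a terminal word: since $w_0\in B^*$ and each $\tilde\phi_i$ preserves $B^*$, the word $\tilde\phi_n(\cdots\tilde\phi_1(w_0)\cdots)=\phi_n\circ\cdots\circ\phi_1(w_0)$ lies in $B^*$, and applying $g$ last maps it into $A^*$; hence $(w_0,\Phi',\mathcal R)$ is a restricted system. Second, $\mathcal R(w_0)=\{\,f(\phi_n\circ\cdots\circ\phi_1(w_0)):n\ge 0,\ \phi_i\in\Phi\,\}=f(\Lan(w_0,\Phi))=L$. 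All constructions above are effective, which yields the ``effectively identical'' form of the statement. The only genuine subtlety is the one already flagged: recognizing that HDT0L, not bare EDT0L, is the correct starting point, precisely because its terminal morphism performs exactly the role of a filter-free finalization step.
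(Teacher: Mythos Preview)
Your argument is correct, but it takes a different route from the paper's, and in doing so you dismiss as impossible precisely the device the paper uses. You write that ``there is no way to recognize, at the level of the finite control alphabet, which control words happen to produce a word in $A^*$''; in fact the paper shows exactly the opposite. Given a regularly controlled EDT0L system $(w_0,\Phi,A_E,\mathcal R)$, the set
\[
\mathcal R'=\{\,w\in\Phi^*\mid w(w_0)\in A^*\,\}
\]
is regular over the alphabet $\Phi$: one builds a DFA whose states are the subsets of $A\cup A_E$, whose initial state is the set of letters occurring in $w_0$, whose transition on $\phi\in\Phi$ sends a set $M$ to the set of letters occurring in $\phi(M)$, and whose accepting states are the subsets of $A$. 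Intersecting the original control with $\mathcal R'$ then yields a restricted system with the same seed, the same morphisms, and the same language.

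Your detour through HDT0L is valid and effective, but it changes the underlying alphabet and morphism set, whereas the paper's argument keeps the system intact and only refines the regular control. The paper's construction is thus both shorter and more transparent: it isolates the single combinatorial fact (regularity of $\mathcal R'$) that makes the restriction harmless. Your approach does have the minor virtue of making the ``finalizing morphism'' idea work after all, at the cost of importing it from the HDT0L characterisation rather than from the system at hand.
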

\begin{proof}
Suppose $(w_0,\Phi,A_E,\mathcal{R})$ is a regularly controlled EDT0L system. We claim that the set 
$$\mathcal{R}'=\{ f \in \Phi^* \mid f(w_0) \in A^*\}$$
is regular over $\Phi$, where again we identify a word over $\Phi$ with the morphisms that is the composition of its letters. Assuming this claim, one notices that $\mathcal{R} \cap \mathcal{R}'$ is still a regular set as an intersection of regular sets and 
$$\mathcal{L}(w_0,\Phi,A_E,\mathcal{R})=\mathcal{L}(w_0,\Phi,A_E,\mathcal{R}\cap\mathcal{R}')=(\mathcal{R}\cap \mathcal{R}')(w_0).$$
To prove the claim we construct an explicit DFA $(S, s_0,\delta, F)$ over the alphabet $\Phi$ that accepts $\mathcal{R}'$. We denote by $\mathfrak{l}:(A \cup A_E)^* \rightarrow \mathcal{P}(A\cup A_E)$ the mapping of a word to the set of the letters used in it.\\
Then we can define the automaton by $S=\mathcal{P}(A \cup A_E)$, $s_0=\mathfrak{l}(w_0)$, $F=\mathcal{P}(A)$ and $\delta(M,\phi)=\mathfrak{l}(\phi(M))$ for any $M \in S$ and $\phi \in \Phi$. The automaton tracks for a composition of morphisms which letters are used as part of its image of $w_0$. It exactly accepts those compositions that only use letters from $A$, so they map to $A^*$. As $A$ and $A_E$ are finite, so is their power set, and we have defined a DFA accepting $\mathcal{R}'$.
\end{proof}

\subsection{Presentations of groups in families of languages}

\subsubsection{Marked groups and presentations}

Let $k$ be a natural number. A\textbf{ $k$-marked group} is a finitely
generated group $G$ together with a $k$-tuple $S=(s_{1},...,s_{k})$
of elements of $G$ that generate it. Note that repetitions are allowed in $S$, the order of the
elements matters, and $S$ could contain the identity element of $G$.
A \textbf{morphism of marked groups} between $k$-marked groups $(G,(s_{1},...,s_{k}))$
and $(H,(t_{1},...,t_{k}))$ is a group morphism $\varphi$ between
$G$ and $H$ that additionally satisfies $\varphi(s_{i})=t_{i}$, $i\in \{1,..,k\}$.
It is an \textbf{isomorphism of marked groups} if $\varphi$ is a group
isomorphism. Marked groups are considered up to isomorphism.
\\
Note that there is at most one morphism of marked groups from a marked group $(G,S)$ to a marked group $(H,T)$. If there are morphisms $(G,S)\rightarrow (H,T)$  and $(H,T)\rightarrow (G,S)$, then the marked groups $(H,T)$ and $(G,S)$ are isomorphic. 
Thus isomorphism classes of marked groups define a partially ordered set, which can easily be seen to be a lattice. 
If $G$ is a group, we say that any marked group of the form $(G,S)$  is a \textbf{marking} of $G$. We precise \textbf{$k$-marking} if $S$ is a $k$-tuple. 
\\
For each $k\in \N$ we fix a free group $\mathbb{F}_k$ of rank $k$ together with a basis $S_k$.
A $k$-marking of a group $G$ can then
be seen as an epimorphism $\varphi:\mathbb{F}_{k}\rightarrow G$,
the image of $S_k$ by $\varphi$ defines a marking with respect to
the previous definition. Two $k$-marked groups are then isomorphic
if they are defined by morphisms with identical kernels: the isomorphism
classes of $k$-marked groups are classified by the normal subgroups
of a rank $k$ free group. \\
If $G$ is a group and $A$ a subset of $G$, we denote by $\langle\langle A\rangle  \rangle^{G}$  the normal closure of $A$ in $G$. We omit the exponent $G$ when there is no ambiguity about which is the ambient group. 
If $S$ is a set, we denote by $\mathbb{F}_S$ the free group over $S$. 
\begin{definition}
A \textbf{presentation} is a pair $\langle S\vert R \rangle$, where $S$ is a set and $R$ is a subset  of the set of words  over $S\cup S^{-1}$. 
The marked group defined by the presentation   $\langle S\vert R \rangle$ is the pair 
$$(\mathbb{F}_S/\langle \langle R\rangle \rangle, S) .$$ 
\end{definition}

The elements of the set $R$ above are usually referred to as the ``relators'' of the presentation. It is common to specify them in terms of equations. For example, the equation $ab=cd$ in a presentation has the same meaning as the word $abd^{-1}c^{-1}$.

\begin{definition}
Let $(G,S)$ be a marked group and LAN a family of languages. We say that $(G,S)$ has \textbf{a LAN-presentation} if $G$ has a group presentation
$$(G,S)=\langle S|R \rangle$$
such that $R$ is a language in LAN over the alphabet $S \cup S^{-1}$.\\
\end{definition}

\begin{definition}
	If LAN is a family of languages, define:
	$$\pi_M(\text{LAN}): \text{ the set of marked groups with a presentation in LAN};$$
	$$\pi(\text{LAN}): \text{ the set of groups with some marking in }\pi_M(\text{LAN}).$$
\end{definition}

Note that an important point which is not apparent in these definitions is the following: in practice, families of languages come equipped with a type of finite description for the languages they contain (usually in the form of a grammar). 
And thus the definition above also allows to define a type of finite description for marked groups: a tuple $S$ of symbols for generators, together with the finite description of a language over $S \cup S^{-1}$, will constitute a finite description of a marked group. 
It is these finite descriptions that permit us to ask decidability questions about groups given by presentations.  \\
This could be made more precise thanks to the use of \emph{numberings}, see \cite{Weihrauch1987}. A numbering of a set $X$ is a partial map which goes from a subset of $\N$ to $X$. This formalizes the fact that elements of $X$ have finite descriptions. The maps $\pi$ and $\pi_M$ are functions that associate to a numbering of a set of languages  numberings of  abstract or marked groups. 
We do not give any more details here because we will not use the possibilities offered by making this formalism explicit.

The concept of L-presentation, or endomorphic presentation, was introduced in \cite{Bartholdi2003}. 
\begin{definition}
	A \textbf{finite L-presentation} is a quadruple 
	$$\langle S\mid Q\mid \Phi\mid R\rangle $$
	which consists in a set $S$ of generators, finite sets $Q,R \subseteq \mathbb{F}_S$ of reduced words which are the \textbf{additional} and \textbf{initial} relations, and a finite set $\Phi$ of group endomorphisms $\mathbb{F}_S \rightarrow \mathbb{F}_S$. This defines a marked group
	$$\langle S\mid Q\mid \Phi\mid R\rangle=(\mathbb{F}_S/ \langle \langle Q \cup \bigcup_{n\in\N }\Phi^n (R)\rangle\rangle,S).$$
	
\end{definition}
An L-presentation is called \textbf{ascending} if the set $Q$ in the above definition is empty. 

\subsubsection{First examples for the non-injectivity of $\pi$ and $\pi_M$ }\label{subsec: First Non Injectivity Result}

Some of the smallest families of languages usually considered are finite and regular languages, families denoted FIN and REG. 
On the other hand of the spectrum, the biggest families usually considered are recursive (REC) and recursively enumerable (RE) languages. 
In each case these different families of languages yield the same classes of (marked)  groups.

The following are folklore results:
\begin{proposition}[Craig's trick]
	$\pi_M(\text{REC})=\pi_M(\text{RE})$, and thus 	$\pi(\text{REC})=\pi(\text{RE})$. 
\end{proposition}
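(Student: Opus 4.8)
The plan is to establish the nontrivial inclusion $\pi_M(\text{RE})\subseteq\pi_M(\text{REC})$; the reverse inclusion is immediate because $\text{REC}\subseteq\text{RE}$ as families of languages, and $\pi(\text{REC})=\pi(\text{RE})$ then follows at once, since by definition $\pi(\text{LAN})$ is the set of groups admitting some marking lying in $\pi_M(\text{LAN})$. So I would fix a marked group $(G,S)=\langle S\mid R\rangle$ with $R\subseteq(S\cup S^{-1})^*$ recursively enumerable. If $R$ is already recursive — in particular if $R=\emptyset$ — or if $S=\emptyset$, there is nothing to prove; otherwise fix a letter $a\in S$ and a total computable surjection $g\colon\N\to R$ (such a $g$ exists for every nonempty recursively enumerable set, by a standard dovetailing argument).

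The construction is a version of Craig's trick adapted to groups. In logic one would replace $R$ by $\{\,g(n)\wedge\dots\wedge g(n)\ (n\text{ times})\,\}$, which makes membership decidable because the length of the conjunction reveals $n$; the direct translation $\{\,g(n)^n : n\in\N\,\}$ fails here, since in a free group $\langle\langle x^n\rangle\rangle$ is strictly smaller than $\langle\langle x\rangle\rangle$. \emph{This is the only real obstacle}, and it is resolved by padding with a word that is trivial in $\mathbb{F}_S$ but whose length still encodes the index: set
$$R'=\bigl\{\,g(n)\,a^{n}a^{-n}\ :\ n\ge 1\,\bigr\}\subseteq(S\cup S^{-1})^{*}.$$
I would first check that $R'$ is recursive, indeed effectively so from a machine enumerating $R$: given a word $w$, a factorization $w=g(n)a^{n}a^{-n}$ forces $2n\le|w|$, so $n$ ranges over the finite set $\{1,\dots,\lfloor|w|/2\rfloor\}$; for each such $n$ one lets $v$ be the length-$(|w|-2n)$ prefix of $w$, tests whether $w$ equals $v\,a^{n}a^{-n}$ and whether $v=g(n)$ (computing $g(n)$ halts), and declares $w\in R'$ iff some $n$ passes both tests. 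All of this is decidable.

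It then remains to see that $R'$ and $R$ have the same normal closure in $\mathbb{F}_S$. The word $a^{n}a^{-n}$ represents the identity of $\mathbb{F}_S$, so $g(n)a^{n}a^{-n}$ and $g(n)$ have the same image there; since $g$ is surjective onto $R$, the set of elements of $\mathbb{F}_S$ represented by $R'$ is exactly the image of $R$ in $\mathbb{F}_S$, and normal closures depend only on that set of group elements, so $\langle\langle R'\rangle\rangle=\langle\langle R\rangle\rangle$. Consequently $\langle S\mid R'\rangle=(\mathbb{F}_S/\langle\langle R'\rangle\rangle,S)=(G,S)$, so $(G,S)\in\pi_M(\text{REC})$, which finishes the proof. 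I do not anticipate any difficulty beyond the point already flagged: once the padding is chosen so that the decoding argument and triviality in $\mathbb{F}_S$ hold simultaneously, everything else is routine bookkeeping about word lengths and normal closures.
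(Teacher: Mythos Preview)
Your proof is correct. The paper states this proposition as a folklore result and gives no proof of its own, so there is nothing to compare your argument against; your padding-by-$a^{n}a^{-n}$ implementation of Craig's trick is exactly the standard way to carry this out.
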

\begin{proposition}
	$\pi_M(\text{FIN})=\pi_M(\text{REG})$ and $\pi(\text{FIN})=\pi(\text{REG})$.
\end{proposition}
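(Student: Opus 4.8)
The plan is to prove the only non-trivial inclusion, $\pi_M(\text{REG})\subseteq\pi_M(\text{FIN})$; the reverse inclusion is immediate because every finite language is regular, and the equality $\pi(\text{FIN})=\pi(\text{REG})$ then follows at once, since $\pi(\text{LAN})$ consists exactly of the groups admitting some marking in $\pi_M(\text{LAN})$. So let $(G,S)$ be a marked group with a presentation $\langle S\mid R\rangle$ in which $R\subseteq(S\cup S^{-1})^*$ is regular, accepted by a DFA with $N$ states. (Since DFAs run over finite alphabets, a regular set of relators involves only finitely many generators, so $S\cup S^{-1}$ is finite and the usual pigeonhole arguments apply.) I will exhibit a finite subset $R_0\subseteq R$ with $\langle\langle R_0\rangle\rangle=\langle\langle R\rangle\rangle$ in $\mathbb{F}_S$; then $(G,S)=\langle S\mid R_0\rangle\in\pi_M(\text{FIN})$. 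Concretely, take $R_0=\{w\in R:\ |w|\le 3N\}$, which is finite.

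The inclusion $\langle\langle R_0\rangle\rangle\subseteq\langle\langle R\rangle\rangle$ is trivial, so it suffices to show $w\in\langle\langle R_0\rangle\rangle$ for every $w\in R$, and I do this by induction on $|w|$. If $|w|\le 3N$ then $w\in R_0$ and there is nothing to prove. Suppose $|w|>3N$ and consider an accepting run $p_0,p_1,\dots,p_m$ of $w=a_1\cdots a_m$ in the DFA ($p_0$ the initial state, $p_m$ accepting). Since $m>N$, two of $p_0,\dots,p_N$ coincide, say $p_i=p_j$ with $0\le i<j\le N$; write $x=a_1\cdots a_i$, $y=a_{i+1}\cdots a_j$, $z=a_{j+1}\cdots a_m$, so $1\le|y|\le N$, and because $y$ labels a loop at $p_i=p_j$, the word $xy^kz$ is accepted — hence lies in $R$ — for every $k\ge 0$. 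In particular $xz\in R$ with $|xz|=|w|-|y|<|w|$, so the induction hypothesis gives $xz\in\langle\langle R_0\rangle\rangle$. In $\mathbb{F}_S$ one has the identity $w=xyz=(xyx^{-1})(xz)$, so to close the induction it is enough to show that $xyx^{-1}$, equivalently — since $\langle\langle R_0\rangle\rangle$ is normal — the word $y$, lies in $\langle\langle R_0\rangle\rangle$.

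This is the only step that genuinely uses regularity. Let $q:=p_i=p_j$; it lies on an accepting run, so $q$ is reachable from the initial state, and from $q$ one can reach an accepting state. Taking shortest such paths, there are words $x_0,z_0$ of length at most $N-1$ with $x_0$ leading from the initial state to $q$ and $z_0$ leading from $q$ to an accepting state. Then both $x_0z_0$ and $x_0yz_0$ are accepted, i.e.\ lie in $R$, and their lengths are at most $2N-2$ and $3N-2$, both $<3N$, so they lie in $R_0$. Since $(x_0yz_0)(x_0z_0)^{-1}=x_0yx_0^{-1}$ in $\mathbb{F}_S$, we get $x_0yx_0^{-1}\in\langle\langle R_0\rangle\rangle$, and normality yields $y\in\langle\langle R_0\rangle\rangle$, hence $xyx^{-1}\in\langle\langle R_0\rangle\rangle$ and $w=(xyx^{-1})(xz)\in\langle\langle R_0\rangle\rangle$. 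This completes the induction, so $\langle\langle R\rangle\rangle=\langle\langle R_0\rangle\rangle$.

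I do not expect a serious obstacle; the only point requiring care is the bookkeeping of lengths, which is why the threshold is taken to be $3N$ rather than $N$: the pumped-down relator $xz$ must be \emph{strictly} shorter than $w$ (ensured by $|y|\ge 1$) in order to feed the induction, while the auxiliary relators $x_0z_0$ and $x_0yz_0$ must stay short enough to remain inside the finite set $R_0$.
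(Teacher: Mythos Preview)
Your proof is correct. The paper itself does not prove this proposition; it merely records it as a folklore result and moves on, so there is no argument in the paper to compare against directly. That said, your approach is exactly the one the paper uses for the neighbouring (and harder) statement $\pi_M(\text{CF})=\pi_M(\text{FIN})$: pump a long relator to produce a strictly shorter relator together with a short ``loop'' relation, and use these to rewrite the original. Your version is in fact slightly sharper than a naive adaptation of the context-free argument, since you exploit the fact that for a DFA the repeated state $q$ is reachable from the start and co-reachable to an accepting state by paths of length at most $N-1$; this lets you exhibit $y$ as a consequence of two explicit relators in $R_0$, whereas the paper's CF proof merely bounds $|vwxw^{-1}|$ and appeals to \emph{all} relations of length $\le 2p$ (not just those in the original language). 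The bookkeeping with the $3N$ threshold is clean and the induction closes as stated.
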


Here we add the following result. 
\begin{proposition}\label{prop: Context free= finite}
	Let CF be the family of context free languages. Then  $\pi_M(\text{CF})=\pi_M(\text{FIN})$
\end{proposition}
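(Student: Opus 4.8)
The plan is to show that a context-free set of relators can always be replaced by a finite one, which together with the trivial inclusion $\pi_M(\text{FIN})\subseteq\pi_M(\text{CF})$ gives the claim. So suppose $(G,S)=\langle S\mid R\rangle$ with $R\subseteq (S\cup S^{-1})^*$ context-free. The key observation is that the normal closure $\langle\langle R\rangle\rangle$ in $\mathbb{F}_S$ is unchanged if we replace $R$ by any set $R'$ with $R\subseteq \langle\langle R'\rangle\rangle$ and $R'\subseteq\langle\langle R\rangle\rangle$; in particular it suffices to find a \emph{finite} $R'\subseteq\langle\langle R\rangle\rangle$ such that every word of $R$ already lies in $\langle\langle R'\rangle\rangle$. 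The mechanism to produce such an $R'$ is a pumping argument: I would invoke the pumping lemma (or more cleanly, Parikh/Ogden-type decompositions) for context-free languages to show that any sufficiently long word $w\in R$ can be written $w = u_1 v_1 u_2 v_2 u_3$ with $|v_1 v_2|\ge 1$ and $|v_1 u_2 v_2|$ bounded by the pumping constant, such that $u_1 v_1^{\,n} u_2 v_2^{\,n} u_3\in R$ for all $n\ge 0$.

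The heart of the argument is then a group-theoretic cancellation lemma: if $u_1 u_2 u_3$, $u_1 v_1 u_2 v_2 u_3$ and $u_1 v_1^2 u_2 v_2^2 u_3$ all lie in $\langle\langle R'\rangle\rangle$ for some set $R'$, then so does $u_1 v_1^{\,n} u_2 v_2^{\,n} u_3$ for every $n$. This is the standard "if two consecutive pumped words are relators then all of them are" trick, proved by noting that in the free group $\mathbb{F}_S$ the element $u_1 v_1^{\,n+1} u_2 v_2^{\,n+1} u_3$ is a product of conjugates of $u_1 v_1^{\,n} u_2 v_2^{\,n} u_3$ and of $(u_1 v_1 u_2 v_2 u_3)(u_1 u_2 u_3)^{-1}$-type elements — concretely one writes $v_1^{\,n+1}=v_1 v_1^{\,n}$, $v_2^{\,n+1}=v_2^{\,n}v_2$ and conjugates appropriately, so that membership of the shorter pumped words in $\langle\langle R'\rangle\rangle$ forces membership of all longer ones. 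With this, I can prune $R$: throw into $R'$ the shortest words needed to witness, for each "pumpable family" arising above, the base cases $n=0,1,2$; every removed long word of $R$ then lies in $\langle\langle R'\rangle\rangle$. Since the pumping constant bounds $|v_1 u_2 v_2|$, there are only finitely many distinct triples $(v_1,u_2,v_2)$ (up to the relevant equivalence) contributing, and each $u_1,u_3$ can itself be reduced modulo already-collected relators, so iterating this pruning terminates with a finite $R'$ generating the same normal closure; hence $(G,S)\in\pi_M(\text{FIN})$.

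The step I expect to be the main obstacle is making the pumping/pruning induction genuinely terminate and produce a \emph{finite} $R'$: naively, pumping one part of a word still leaves the "context" words $u_1,u_3$ arbitrarily long, and one must argue that they too can be shortened modulo the relators already accumulated, so that a well-founded induction on word length goes through. The clean way to organize this is probably to induct on the length of the longest word in $R$ not yet known to be in $\langle\langle R'\rangle\rangle$: each application of the cancellation lemma replaces a long relator by strictly shorter consequences, and the finitely-many-short-words base case closes the induction. Care is also needed that the decomposition supplied by the pumping lemma has \emph{both} pumped blocks $v_1,v_2$ nonempty simultaneously or reduces to the one-block case $u_1 v_1^{\,n} u_3$, which is handled identically; Ogden's lemma or the standard CFG pumping lemma both suffice, so this is a routine but slightly fiddly bookkeeping matter rather than a conceptual difficulty.
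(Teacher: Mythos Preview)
Your approach via the pumping lemma is the same as the paper's, but the execution is more tangled than necessary, and the ``cancellation lemma'' you state is not quite the right tool. You phrase it as: if the $n=0,1,2$ pumped words lie in $\langle\langle R'\rangle\rangle$ then so do all $n\ge 0$. But the word you are trying to derive is the $n=1$ case itself (namely $r$), so an upward induction on $n$ does not help you. What is actually needed --- and what you gesture at with the ``$(u_1 v_1 u_2 v_2 u_3)(u_1 u_2 u_3)^{-1}$-type elements'' --- is the downward observation that the $n=1$ word is a consequence of the $n=0$ word together with the single relation $v_1 u_2 v_2 = u_2$, and this relation has length at most $2p$. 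Only $n=0,1$ are ever used.

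This also dissolves the termination worry you flag. The paper does not build $R'$ by iterative pruning at all: it simply takes
\[
R' \;=\; \langle\langle R\rangle\rangle \;\cap\; \{\,w\in (S\cup S^{-1})^* : |w|\le 2p\,\},
\]
which is finite because the alphabet is finite. Then for any $r\in R$ with $|r|>2p$, pumping gives $r=uvwxy$ with $uwy\in R$ strictly shorter and $|vwx|\le p$; the relation $vwxw^{-1}$ has length at most $2p$, hence lies in $R'$, and together with $uwy=1$ (which holds by induction on length) it yields $uvwxy=1$. There is no need to control the ``context'' words $u,y$ separately, no need to track pumpable families, and no Ogden-type refinement is required. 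Your argument can be made to work, but once you realise that the short derived relation has bounded length, taking all short relations up front is both simpler and immediately terminating.
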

We will use the classical pumping lemma for context free languages. 

\begin{lemma}[Pumping lemma for context free languages]\label{lem: pumping cfl}
	If $L$ is a context free language over an alphabet $A$, there exists a natural number $p$ such that  any element $a$ of $L$ of length at least $p$ can be split into five words $a=uvwxy$, such that 
	\begin{enumerate}
		\item The words $v$ and $x$ are not both empty,
		\item The word $vwx$ has length at most $p$, 
		\item For any $n\ge 0$, the word $uv^n w x^ny$ also belongs to $L$.
	\end{enumerate}
\end{lemma}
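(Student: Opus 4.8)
The plan is to run the classical parse-tree argument: fix a context free grammar generating $L$, observe that a sufficiently long word must have a derivation tree with a long root-to-leaf path, apply the pigeonhole principle to the nonterminals along such a path to locate a repeated one, and then use the repetition to pump. Concretely, I would fix a context free grammar $\mathcal{G}=(A,N,R,s_0)$ with $\mathcal{L}(\mathcal{G})=L$, which exists by definition of a context free language. To make the bookkeeping clean I would first replace $\mathcal{G}$ by an equivalent grammar in \emph{Chomsky normal form}, in which every production is either of the form $X\to YZ$ with $Y,Z\in N$ or of the form $X\to a$ with $a\in A$; such a grammar generates $L\setminus\{\epsilon\}$, which suffices since we only pump words of length at least $p\ge 1$. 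Derivations $s_0\Rightarrow^*_{\mathcal{G}}a$ are then encoded by finite rooted binary \emph{derivation trees}: internal nodes carry nonterminal labels, leaves carry terminal labels, the children of a node labelled $X$ are the symbols on the right-hand side of the production applied at $X$, and reading the leaves from left to right yields $a$.

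Next I would fix the pumping constant. Since the grammar is binary, a derivation tree whose longest root-to-leaf path has $h$ internal nodes has at most $2^{h}$ leaves. Taking $p=2^{|N|+1}$, any $a\in L$ with $|a|\ge p$ has a derivation tree possessing a root-to-leaf path carrying at least $|N|+1$ internal nodes. Restricting to a \emph{longest} such path and examining its $|N|+1$ lowest internal nodes, two of them must share the same nonterminal label $X$ by the pigeonhole principle; call the higher occurrence $\nu_1$ and the lower one $\nu_2$, so that $\nu_2$ is a proper descendant of $\nu_1$. Because $\nu_1$ lies within the bottom $|N|+1$ internal nodes of a longest path, the continuation of that path below $\nu_1$ is a longest path of the subtree rooted at $\nu_1$, so this subtree has height at most $|N|+1$, hence yield of length at most $2^{|N|+1}=p$.

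Finally I would read off the decomposition. Let $w$ be the yield of the subtree rooted at $\nu_2$, let $vwx$ be the yield of the subtree rooted at $\nu_1$ (so $v$ and $x$ are the portions of $\nu_1$'s yield lying to the left and to the right of $\nu_2$'s yield), and let $u$ and $y$ be the portions of $a$ lying to the left and to the right of $\nu_1$'s yield, so that $a=uvwxy$. Condition (2) is exactly the height bound just established. Condition (1) follows from binarity: since $\nu_2$ is a proper descendant of $\nu_1$ and every internal node has two children, the sibling branch taken at the first descent step contributes at least one leaf to $vwx$ but none to $w$, so $v$ and $x$ are not both empty. For condition (3), the two occurrences of $X$ furnish derivations $s_0\Rightarrow^*_{\mathcal{G}}uXy$, $X\Rightarrow^*_{\mathcal{G}}vXx$ and $X\Rightarrow^*_{\mathcal{G}}w$; iterating the middle derivation $n$ times gives $s_0\Rightarrow^*_{\mathcal{G}}uXy\Rightarrow^*_{\mathcal{G}}uv^nXx^ny\Rightarrow^*_{\mathcal{G}}uv^nwx^ny$, so that $uv^nwx^ny\in L$ for every $n\ge 0$.

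The main obstacle is ensuring conditions (1) and (2) simultaneously. Condition (1) is the delicate one: for an arbitrary grammar a repeated nonterminal can have identical upper and lower yields (for instance through a unit production $X\to X$ or through $\epsilon$-productions), which is precisely why I would pass to Chomsky normal form first, since binarity together with the absence of unit and $\epsilon$-productions forces the upper subtree to strictly dominate the lower one. Condition (2) is secured not by any arbitrary repetition but by the specific \emph{choice} of the pair $\nu_1,\nu_2$ as the lowest repetition on a longest path; this is the only genuinely careful point in the pigeonhole step.
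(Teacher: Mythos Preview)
The paper does not supply its own proof of this lemma; it is stated as the classical Pumping Lemma and then invoked in the proof of Proposition~\ref{prop: Context free= finite}. Your argument is the standard derivation-tree proof via Chomsky normal form and is correct, so there is nothing to compare against.
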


\begin{proof}[Proof of Proposition \ref{prop: Context free= finite}]
Let $(G,S)=\langle S|L\rangle$ be a marked group given by a presentation with a context free language of relators $L$. Fix the constant $p$ for the language $L$ given by the pumping lemma. We are going to prove that
$$(G,S) = \langle S\mid \langle \langle L \rangle\rangle \cap \{w \in \mathbb{F}_S \mid |w| \le 2p\}\rangle$$
where $\langle\langle L \rangle\rangle$ denotes the normal closure of $L$ in the free group over $S$.
\\
 Thus we want  to show that the relations of $(G,S)$ of size at most $2p$ imply all the relations of $L$. Suppose $r \in L$ with $|r| > 2p$. By the pumping lemma we can write $r$ as
$$r=uvwxy$$
with $|vwx| \le p$ and $uwy \in L$. This implies that the equation
$$uvwxy=1=uwy$$
hold in $(G,S)$, and therefore so does $vwx=w$. Note that $|vwxw^{-1}|\le|vwx|+|w|\le 2p$, and thus  $vwx=w$ is a relation of $(G,S)$ of size at most $2p$. Notice also that having the equations
$$uwy=1 \quad \text{ and } \quad vwx=w$$
implies $uvwxy=1$ by replacing $w$ in $uwy$ by $vwx$. Since $|uwy|<|uvwxy|$, one can apply the above process inductively until $|uwy|<2p$. \\
And thus the relations of $(G,S)$ of size at most $2p$ are sufficient to defining it.
\end{proof}

A similar phenomenon occurs in the comparison of DTF0L and EDT0L: $$\pi(\text{DTF0L+FIN})=\pi(\text{EDT0L}).$$ However, we conjecture that $\pi_M(\text{DTF0L+FIN})\ne \pi_M(\text{EDT0L})$. This appears in Section \ref{subsec: identification Lpres EDT0L}.

\subsubsection{Tietze Transformations }
Here, we give, thanks to Tietze transformations, natural conditions on a family of languages LAN so that the set of marked groups $\pi_M (\text{LAN})$ be saturated for the abstract group isomorphism relation. The conditions are given in Lemma \ref{lem: change of generators}.  

A Tietze transformation of a presentation $\langle S|R \rangle$ is one of the following four operations: 
\begin{enumerate}
	\item Adding a redundant relator: if $w$ belongs to  $\langle\langle R\rangle\rangle^{\mathbb{F}_S}$, add $w$ as a relator.  
	\item  Removing a redundant relator:  if $w$ belongs to $R$, and   $\langle\langle R\rangle\rangle^{\mathbb{F}_S}=\langle\langle R\setminus \{ w \}\rangle\rangle^{\mathbb{F}_S}$, remove  $w$ from $R$.
	\item Adding a redundant generator: add a new generating symbol $x\notin S $ along with a new relation $x=w$, where $w$ is a word over $S\cup S^{-1}$.  
	\item Removing a redundant generator:  If $s$ belongs to $S$, and if there is a relation in $R$ of the form $s=w$, where  $w$ is a word over $S\cup S^{-1} \setminus \{s,s^{-1}\}$, then remove $s$ from $S$, and replace all occurrences of $s$ in relations of $R$ by the word $w$.  
\end{enumerate}

It is obvious that applying Tietze transformations to a presentation does not change the group it defines.

Notice that Tietze transformations are of two types: those that induce a change of marking (3.+4.) and those that change the presentation without changing the marking (1.+2.). \\
 A classical lemma is then (see \cite{Lyndon1977}):
\begin{lemma}
	Any two presentations of the same group can be transformed one into the other by Tietze transformations. 
	Any two finite presentations of the same group can be transformed one into the other by finitely many Tietze transformations. 
\end{lemma}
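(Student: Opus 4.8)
The plan is to prove the lemma by the standard two-phase argument: first establish it when the two presentations have the same generating set $S$, and then reduce the general case to this by introducing, and afterwards eliminating, generators. Throughout I use freely the remark that Tietze transformations do not change the presented group.

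\textbf{Phase 1 (same generating set).} Suppose $\langle S\mid R_1\rangle$ and $\langle S\mid R_2\rangle$ present the same group, so that $\langle\langle R_1\rangle\rangle^{\mathbb{F}_S}=\langle\langle R_2\rangle\rangle^{\mathbb{F}_S}$ --- both equal the kernel of the common map $\mathbb{F}_S\twoheadrightarrow G$. First I would use transformation (1) (add a redundant relator) repeatedly to adjoin every element of $R_2$ to $R_1$; each such element lies in $\langle\langle R_1\rangle\rangle$, so this is legitimate, and it reaches $\langle S\mid R_1\cup R_2\rangle$. Then I would use transformation (2) (remove a redundant relator) to delete the elements of $R_1\setminus R_2$ one at a time, ending at $\langle S\mid R_2\rangle$. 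The step to watch is that transformation (2) really is applicable at each removal: at every stage the current relator set $R$ satisfies $R_2\subseteq R\subseteq R_1\cup R_2$, hence $\langle\langle R\rangle\rangle=\langle\langle R_1\cup R_2\rangle\rangle$, and deleting any $w\in R\setminus R_2$ leaves a set that still contains $R_2$, so the normal closure does not drop.

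\textbf{Phase 2 (changing generators).} Let $\langle S_1\mid R_1\rangle$ and $\langle S_2\mid R_2\rangle$ present $G$; after renaming I may assume $S_1\cap S_2=\emptyset$. For each $t\in S_2$ I fix a word $u_t$ over $S_1\cup S_1^{-1}$ representing the image of $t$ in $G$, and for each $s\in S_1$ a word $v_s$ over $S_2\cup S_2^{-1}$ representing the image of $s$ in $G$. Starting from $\langle S_1\mid R_1\rangle$, I would: (i) apply transformation (3) for each $t\in S_2$, adjoining $t$ with the relation $t=u_t$, to reach $\langle S_1\cup S_2\mid R_1\cup\{tu_t^{-1}:t\in S_2\}\rangle$; (ii) apply transformation (1) to adjoin the relators $sv_s^{-1}$ for $s\in S_1$ together with all of $R_2$, each of which holds in $G$ and hence lies in the current normal closure; (iii) apply transformation (4) for each $s\in S_1$ to delete $s$ using the defining relation $s=v_s$ --- legitimate because $v_s$ involves no letter of $S_1$, so deleting one $s$ leaves the relations $s'=v_{s'}$ for the remaining $s'\in S_1$ untouched. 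After (iii) the generating set is $S_2$, every $S_1$-letter has been substituted by the corresponding $v_s$, and the relator set $R$ contains $R_2$; since the presentation still defines $G$ we get $\langle\langle R\rangle\rangle^{\mathbb{F}_{S_2}}=\langle\langle R_2\rangle\rangle^{\mathbb{F}_{S_2}}$, and Phase 1 carries $\langle S_2\mid R\rangle$ to $\langle S_2\mid R_2\rangle$.

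For the finiteness statement, observe that when $S_1,S_2,R_1,R_2$ are all finite each of (i)--(iii) and each half of Phase 1 is a finite chain of moves --- one per generator or relator added or deleted --- so the total chain is finite. The general statement follows from the same recipe, reading ``transformable by Tietze transformations'' as permitting an infinite (or simultaneous) sequence of moves, as is customary for presentations on infinite generating sets. I do not expect a genuine obstacle here: the whole argument is bookkeeping. The only point demanding real care is the repeated use of transformation (2) --- at the end of Phase 1, and within the final invocation of Phase 1 inside Phase 2 --- where one must verify the side condition $\langle\langle R\rangle\rangle=\langle\langle R\setminus\{w\}\rangle\rangle$ at each individual removal; keeping the ``core'' set $R_2$ present throughout is exactly what guarantees this.
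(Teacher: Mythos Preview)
Your proof is correct and is the standard argument; the paper does not give its own proof of this lemma but simply cites it as classical (referring to Lyndon--Schupp), so there is nothing to compare against beyond noting that your Phase~2 is essentially the change-of-generators procedure the paper spells out immediately after the lemma.
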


The following procedure describes how to transform a presentation of a group $G$ on a generating set $S$ to a presentation of $G$ on a generating set $T$. 

\begin{itemize}
	\item Start with a presentation  $\langle S|R \rangle$ for the marked group  $(G,S)$. 
	\item Fix for each element $t$ of $T$ a word $w_t\in (S\cup S^{-1})^*$ that expresses it in terms of the elements of $S$. 
	\item By Tietze transformations, $\langle S, T|R, t=w_t,\, t\in T \rangle$ is now another presentation of $G$. 
	\item Because  $T$ is a generating set, for each element $s$ in $S$ there is a word $w_s$ in $(T \cup T^{-1})^*$ that expresses it in terms of the generators of $T$.
	\item We transform the presentation of $G$ into  $\langle S, T|R, t=w_t,\, t\in T , s=w_s, \, s\in S \rangle$ by adding redundant relators. 
	\item In the obtained presentation, the generators of $s$ are redundant, since they are defined in terms of the generators of $T$.
	\item Apply Tietze transformations to delete the elements of  $S$ from the generator list.
	\item The obtained presentation is $$ \langle T|R(w_s), t=w_t (w_s), t\in T \rangle,$$ where by $R(w_s)$ we mean ``the relations of $R$ where all occurrences of elements of $S$   have been replaced by their expressions in terms of the elements of $T$''. 

\end{itemize}

A consequence of the above construction is the following lemma: 
\begin{lemma}\label{lem: change of generators}
	If  LAN is a family of languages stable under taking homomorphic images and taking union with finite sets,  then the set of marked groups that have a presentation in LAN is closed under group isomorphism: if a group has a presentation in LAN with respect to a marking, it has one with respect to any marking. 
\end{lemma}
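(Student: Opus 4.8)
The plan is to run the explicit Tietze-transformation procedure described just above the lemma statement and check that each step keeps the language of relators inside LAN, using the two closure hypotheses. So suppose $(G,S)$ has a presentation $\langle S\mid R\rangle$ with $R \in \text{LAN}$ over $S \cup S^{-1}$, and let $T$ be any other generating tuple of $G$. I want to produce a presentation of $(G,T)$ whose relator set is again in LAN.

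First I would fix, for each $t \in T$, a word $w_t \in (S \cup S^{-1})^*$ with $t = w_t$ in $G$, and, for each $s \in S$, a word $w_s \in (T\cup T^{-1})^*$ with $s = w_s$ in $G$; there are finitely many of these since $S$ and $T$ are finite tuples. Running the procedure, the final presentation on generating set $T$ is
$$\langle\, T \mid R(w_s),\ t = w_t(w_s),\ t \in T\,\rangle,$$
where $R(w_s)$ denotes the image of $R$ under the free-monoid morphism $(S\cup S^{-1})^* \to (T\cup T^{-1})^*$ sending each letter $s^{\pm 1}$ to the word $w_s^{\pm 1}$ (which I should note is a genuine monoid morphism, so that $R(w_s)$ is a homomorphic image of $R$). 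By the hypothesis that LAN is stable under homomorphic images, $R(w_s) \in \text{LAN}$ over the alphabet $T \cup T^{-1}$. The extra relators $\{ t = w_t(w_s) : t \in T\}$ form a finite set, so by stability of LAN under union with finite sets, the whole relator set lies in LAN. Hence $(G,T) \in \pi_M(\text{LAN})$, which is exactly the conclusion: any marking of a group admitting a LAN-presentation with respect to one marking admits one with respect to every marking.

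The only real content to verify is that the sequence of Tietze transformations in the procedure is legitimate, i.e.\ that each intermediate object is actually a presentation of $G$ — but this is precisely the displayed bullet-point argument preceding the lemma, which only uses the standard fact that Tietze transformations preserve the defined group, together with the fact that $T$ generates $G$ (used to guarantee the words $w_s$ exist) and that $S$ generates $G$ (used for the $w_t$). The main obstacle, such as it is, is bookkeeping: one must be careful that the substitution $R \mapsto R(w_s)$ is applied as a \emph{letter-by-letter} monoid morphism on words over $S \cup S^{-1}$ (mapping $s \mapsto w_s$ and $s^{-1} \mapsto \overline{w_s}$, the formal inverse word), so that it qualifies as a homomorphic image in the sense demanded by the hypothesis; and that the auxiliary "adding redundant relators" step which introduces the $s = w_s$ relations is valid because each $w_s$ indeed equals $s$ in the group presented at that stage. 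No genuine difficulty arises beyond this.
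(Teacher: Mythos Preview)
Your proposal is correct and follows exactly the paper's approach: the paper's proof is simply ``This follows directly from the steps described above,'' referring to the same Tietze-transformation procedure, and you have written out in detail precisely what that sentence means. The only difference is that you explicitly identify which closure hypothesis is used at which step, which the paper leaves implicit.
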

\begin{proof}
This follows directly from the steps described  above. 
\end{proof}
This lemma applies to EDT0L. The following is an effective version of the above lemma, which also applies to EDT0L.  
	
\begin{lemma} \label{lem: effective change of generators}
	Suppose that LAN is a family of uniformly recursively enumerable  languages. Suppose furthermore that LAN is effectively stable under taking homomorphic images and taking union with finite sets. Then there is a procedure that takes as input a  LAN-presentation of a marked groups $(G,S)$, and outputs a sequence of  LAN-presentations of $G$ that contains at least one LAN-presentation  for each marking of $G$. 
\end{lemma}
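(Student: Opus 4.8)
The plan is to make the non-effective construction of Lemma~\ref{lem: change of generators} effective, using the extra hypotheses that LAN is uniformly recursively enumerable (so that all the languages we manipulate come with uniformly computable enumerations) and that the closure operations (homomorphic image, union with a finite set) are \emph{effective}, i.e.\ come with an algorithm transforming descriptions of inputs into a description of the output.

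First I would recall the shape of the construction preceding Lemma~\ref{lem: change of generators}: starting from a presentation $\langle S\mid R\rangle$ with $R\in\text{LAN}$, and given a target tuple $T$ together with words $w_t\in(S\cup S^{-1})^*$ expressing each $t\in T$ and words $w_s\in(T\cup T^{-1})^*$ expressing each $s\in S$, one arrives at the presentation
$$\langle T\mid R(w_s),\ t=w_t(w_s),\ t\in T\rangle,$$
whose relator set is the homomorphic image of $R$ under the monoid morphism $(S\cup S^{-1})^*\to(T\cup T^{-1})^*$ sending $s\mapsto w_s$, together with the finite set $\{\, t\,w_t(w_s)^{-1} : t\in T\,\}$. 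Since LAN is effectively closed under homomorphic images and under union with finite sets, from a LAN-description of $R$ and the finite data $(T,(w_s)_{s},(w_t)_{t})$ one can \emph{compute} a LAN-description of this new relator set. So the only thing missing for an algorithm is a way to enumerate, for each marking of $G$, a suitable choice of the finite combinatorial data $(T,(w_s)_{s\in S},(w_t)_{t\in T})$ that makes $T$ a generating tuple realising that marking.

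The key step is therefore the enumeration. A marking of $G$ is a $k$-tuple $T$ of elements of $G$ generating $G$; two such are ``the same'' precisely when they have the same kernel in $\mathbb F_k$. I would enumerate in parallel, over all $k\in\N$, all finite tuples $(\overline w_1,\dots,\overline w_k)$ of words over $S\cup S^{-1}$ (these are candidate expressions $w_{t}$ for the generators of a would-be $k$-marking $T=(t_1,\dots,t_k)$ with $t_i$ the image of $\overline w_i$) and all finite tuples $(\overline v_s)_{s\in S}$ of words over a $k$-letter alphabet (candidate expressions $w_s$). For a candidate pair, the only thing one needs to verify is that the $\overline v_s$ really do express the $s\in S$ in terms of the $t_i$, i.e.\ that the word $s\,\overline v_s(\overline w_\bullet)^{-1}$ lies in $\langle\langle R\rangle\rangle^{\mathbb F_S}$; this is a consequence of $R$ being uniformly r.e., since $\langle\langle R\rangle\rangle$ is then r.e., so we can run the semi-decision procedure for membership in $\langle\langle R\rangle\rangle$ in dovetailed fashion and output the resulting LAN-presentation the moment all $|S|$ such memberships are confirmed. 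Each marking of $G$ arises from at least one candidate pair that eventually passes this test (take any generating tuple, take $\overline w_i$ to be words expressing them, and $\overline v_s$ to be words expressing $S$ back), so the output sequence contains at least one LAN-presentation for every marking, as required. Dovetailing the membership checks over all candidate pairs and all $k$ simultaneously gives the single procedure claimed.

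The main obstacle, and the point to be careful about, is exactly this semi-decidability of the ``$\overline v_s$ really expresses $s$'' condition: we can never certify that a candidate pair \emph{fails}, only that it succeeds, so the procedure must dovetail over infinitely many candidates and emit presentations as verifications complete, rather than processing candidates one at a time. This is harmless for the statement, which only asks for a (possibly highly redundant, possibly non-terminating) enumeration of LAN-presentations covering all markings — it does not ask that distinct markings be separated, nor that the enumeration be without repetition, nor that it halt. One should also record that the effective closure properties are used with uniform bounds in the alphabet size $k$, which is automatic since ``effective closure'' means a single algorithm operating on descriptions; and that the whole argument is uniform in the input LAN-presentation of $(G,S)$, since every manipulation above is performed algorithmically on descriptions.
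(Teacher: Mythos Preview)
Your proposal is correct and follows essentially the same approach as the paper: enumerate candidate generating tuples $T$ as tuples of words over $S\cup S^{-1}$, use semi-decidability of equality in the recursively presented group $(G,S)$ to certify (via dovetailing) that expressions $w_s$ of the old generators in terms of the new ones hold, and then apply the Tietze construction together with the effective closure properties of LAN to output the resulting presentation. Your write-up is more explicit about the dovetailing and the fact that failure can never be certified, but the argument is the same.
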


\begin{proof}
All finite generating sets 	of $(G,S)$ can be enumerated. Indeed, given a finite set $T$ of words on $S\cup S^{-1}$, we can prove that they form a generating family of  $(G,S)$  by finding expressions of the elements of $S$ in terms of the elements of $T$: expressions of the form 
$$s=w_s,\, w_s\in (T\cup T^{-1})^*,$$
for each $s\in S$. Such expressions can be found because equality is semi-decidable in recursively presented groups.
\\
We can then form a presentation of the form 
$$\langle S,T \vert R, t=w_t,\, t\in T , s=w_s, \, s\in S \rangle.$$
We can then apply the Tietze transformations that delete the generators $S$.  What remains is the presentation  $$ \langle T|R(w_s), t=w_t (w_s), t\in T \rangle,$$the hypotheses on the family LAN of languages immediately imply that this presentation is again a LAN-presentation. 
\end{proof}

\subsubsection{Semi-decidability of the isomorphism problem for finitely presented groups}

The \textbf{isomorphism problem} is the problem of deciding whether two given groups are isomorphic. This makes sense in any class of groups attached to a type of finite descriptions for its elements. 
The \textbf{marked isomorphism problem} is  the problem of deciding whether two given marked groups are isomorphic as marked groups. 
A problem is \textbf{semi-decidable} if there is a Turing machine that stops exactly on positive instances of the problem. A problem is \textbf{co-semi-decidable} when there is a Turing machine that stops exactly on negative instances of the problem. 
A problem is \textbf{decidable} when there is a Turing machine that solves it. A well known lemma says that this is exactly when this problem is semi-decidable and co-semi-decidable. \\
A problem which is undecidable (i.e. not decidable) can thus still be semi-decidable or co-semi-decidable, or it can be neither of them.
\\Note that we will also use the notion of \textbf{recursively enumerable set}: a set is recursively enumerable when there is a Turing machine that enumerates its elements. When equality is a semi-decidable relation, being recursively  enumerable implies being semi-decidable, but these two notions differ in general.

\begin{lemma}\label{lem: marked iso problem for finitely presented groups}
	The marked isomorphism problem is semi-decidable for finitely presented groups.
\end{lemma}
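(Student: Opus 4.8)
The plan is to use the fact — recorded earlier via Tietze transformations — that any two finite presentations of the same group can be connected by a *finite* sequence of Tietze transformations. Given two finite presentations $P_1 = \langle S \mid R_1\rangle$ and $P_2 = \langle S \mid R_2\rangle$ of marked groups on the *same* generating tuple $S$ (so that a candidate marked isomorphism is already fixed: $s_i \mapsto s_i$), I would build a Turing machine that systematically enumerates all finite sequences of Tietze transformations applicable to $P_1$, applies them, and halts as soon as it produces a presentation syntactically equal to $P_2$. Since marked isomorphism of $(G,S)$ and $(H,S)$ means $\langle\langle R_1\rangle\rangle^{\mathbb F_S} = \langle\langle R_2\rangle\rangle^{\mathbb F_S}$, and since the marking is preserved precisely by Tietze transformations of type (1)+(2) (adding/removing redundant relators, which do not touch the generating set), the two presentations are marked-isomorphic if and only if one can be obtained from the other by finitely many such relator-only Tietze transformations. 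The machine therefore halts exactly on positive instances.

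The one subtlety is that a Tietze transformation of type (1), "add a relator $w$ provided $w \in \langle\langle R\rangle\rangle^{\mathbb F_S}$", has a side condition that is only semi-decidable, not decidable. This is handled in the standard way: instead of checking membership before adding $w$, the machine interleaves, over all $w$, the search for an explicit witness expressing $w$ as a product of conjugates of elements of $R$ (a finite certificate, which exists iff $w\in\langle\langle R\rangle\rangle$), and only commits to the transformation once such a witness is found. Dovetailing this search across all candidate relators $w$, all lengths, and all lengths of Tietze sequences gives a single recursively enumerable procedure that enumerates exactly the set of presentations obtainable from $P_1$ by legal relator-Tietze moves. The machine accepts when $P_2$ appears in this enumeration.

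There is a small reduction step to reach the situation above: an instance of the marked isomorphism problem for finitely presented groups is a pair of finite presentations, possibly on different symbol sets or with the marking given by an explicit bijection between the two generating tuples. One first relabels so that a candidate marked isomorphism becomes the identity on a common set $S$ of generator symbols — if the two tuples have different lengths there can be no marked isomorphism and the instance is negative, so we may assume equal length — after which the discussion above applies verbatim.

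The main obstacle is precisely the semi-decidability of the side condition in Tietze move (1): one cannot decide in advance which relators are legal to add, so the proof must be organized as a single dovetailed search over (relators to add, their membership certificates, Tietze-sequence length) rather than as a naive breadth-first exploration of a finitely-branching tree. Once that is set up, correctness is immediate from the Tietze theorem quoted above, and the fact that the procedure never halts on a negative instance is clear since every presentation it produces defines the same marked group as $P_1$.
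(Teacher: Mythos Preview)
Your argument is correct, but it is considerably more elaborate than what the paper does. The paper observes directly that $\langle S\mid R\rangle$ and $\langle S\mid U\rangle$ define the same marked group if and only if $R\subseteq\langle\langle U\rangle\rangle^{\mathbb F_S}$ and $U\subseteq\langle\langle R\rangle\rangle^{\mathbb F_S}$, and each inclusion is semi-decidable because the normal closure of a finite set in a free group is recursively enumerable. That is the entire proof.

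Your Tietze-enumeration procedure is sound, and the dovetailing over membership witnesses is handled correctly, but it is overkill: the only Tietze sequence you actually need is ``add every relator of $R_2$, then delete every relator of $R_1$'', and verifying that this sequence is legal is \emph{exactly} the pair of inclusion checks the paper writes down. So your approach and the paper's are not really different arguments --- yours just embeds the same inclusion test inside a search over all finite Tietze sequences, which adds bookkeeping without adding content. The payoff of the paper's formulation is brevity; your formulation does make the connection to the Tietze lemma explicit, but that connection is not needed for this lemma (it becomes relevant only for the \emph{unmarked} isomorphism problem, where one must also enumerate changes of generating set).
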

\begin{proof}
	Two finite presentations $\langle  S \vert R \rangle$ and $\langle  S \vert U \rangle$ over the same set of generating symbols $S$ define the same marked group if and only if the following  inclusions hold:  $$R \subseteq  \langle \langle U\rangle \rangle^{\mathbb{F}_S},$$ $$U \subseteq  \langle \langle R\rangle \rangle^{\mathbb{F}_S}.$$
	These conditions are semi-decidable, because in a free group, the normal subgroup generated by a finite set is recursively enumerable.
 \end{proof}

\begin{lemma}
	The isomorphism problem is semi-decidable for finitely presented groups. 
\end{lemma}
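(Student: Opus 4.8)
The plan is to reduce the abstract isomorphism problem to the \emph{marked} isomorphism problem, which is already known to be semi-decidable by Lemma \ref{lem: marked iso problem for finitely presented groups}, by running through all possible markings of one of the two groups. So suppose we are given finite presentations $\langle S \mid R\rangle$ and $\langle T \mid U\rangle$ of groups $G$ and $H$, and set $k=|T|$, so that $(H,T)=\langle T\mid U\rangle$ is a $k$-marked group. The key observation is that $G\cong H$ holds if and only if there exists a $k$-marking of $G$ which is isomorphic, as a marked group, to $(H,T)$; and this existential statement is $\Sigma_1$ given the ingredients below.

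First I would invoke Lemma \ref{lem: effective change of generators} with $\mathrm{LAN}=\mathrm{FIN}$: finite languages are uniformly recursive and $\mathrm{FIN}$ is effectively stable under homomorphic images and under union with finite sets, so there is a procedure that, from $\langle S\mid R\rangle$, outputs a computable sequence $\langle S_i\mid R_i\rangle$, $i\in\N$, of finite presentations of $G$ such that every marking of $G$ is realized by some $\langle S_i\mid R_i\rangle$. Discard those with $|S_i|\ne k$, and rename the generating symbols of each surviving presentation to the fixed symbols $T$ (a bijective renaming, hence still a finite presentation over $T\cup T^{-1}$). This yields a computable sequence $\langle T\mid R_i'\rangle$ of finite presentations, each defining a $k$-marking of $G$, and collectively realizing every $k$-marking of $G$.

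Now I would dovetail over $i$ the semi-decision procedure of Lemma \ref{lem: marked iso problem for finitely presented groups} applied to the pair $\langle T\mid R_i'\rangle$, $\langle T\mid U\rangle$: that is, search in parallel, over all $i$, for witnesses of the recursively enumerable inclusions $R_i'\subseteq\langle\langle U\rangle\rangle^{\mathbb F_T}$ and $U\subseteq\langle\langle R_i'\rangle\rangle^{\mathbb F_T}$; as soon as both are witnessed for some $i$, halt and output ``$G\cong H$''. If $G\cong H$, pick an isomorphism $\varphi\colon H\to G$; writing $T=(t_1,\dots,t_k)$, the tuple $(\varphi(t_1),\dots,\varphi(t_k))$ generates $G$ and $\varphi$ is an isomorphism of marked groups $(H,T)\to(G,(\varphi(t_j)))$, so $(G,(\varphi(t_j)))$ is a $k$-marking of $G$ equal, as a marked group, to $(H,T)$. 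By construction it is realized by some $\langle T\mid R_i'\rangle$, for which the two inclusions therefore both hold, so the corresponding branch of the dovetailing halts. Conversely, any positive halt certifies that some marking of $G$ coincides with $(H,T)$ as a marked group, whence $G\cong H$.

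This is essentially a bookkeeping argument on top of Lemmas \ref{lem: marked iso problem for finitely presented groups} and \ref{lem: effective change of generators}, and I do not expect a serious obstacle. The only points requiring care are the two that make the reduction go through: that ``every marking of $G$'' in Lemma \ref{lem: effective change of generators} genuinely includes the pullback of the marking $T$ along a hypothetical isomorphism (so that an abstract isomorphism is actually detected), and that the defining inclusions appearing in the proof of Lemma \ref{lem: marked iso problem for finitely presented groups} are recursively enumerable, so that dovetailing them over the computable family $\langle T\mid R_i'\rangle$ is legitimate.
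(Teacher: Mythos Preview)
Your proposal is correct and follows exactly the approach of the paper: the paper's own proof consists of the single sentence ``This is a joint application of Lemma~\ref{lem: effective change of generators} and Lemma~\ref{lem: marked iso problem for finitely presented groups}'', and you have simply unpacked that application in detail (enumerating markings via the first lemma with $\mathrm{LAN}=\mathrm{FIN}$, then dovetailing the semi-decision procedure of the second).
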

\begin{proof}
This is a joint application of Lemma \ref{lem: effective change of generators} and Lemma \ref{lem: marked iso problem for finitely presented groups}. 
\end{proof}

The results above about finitely presented groups are well known (they were first noticed by Mostowski \cite{Mostowski1973}). However, they are presented here in a way that emphasizes two distinct aspects of the isomorphism problem: the marked isomorphism problem and the problem of enumerating all the markings of a given group. 
This is relevant to the present study, because only the ``change of marking'' half of  this theorem generalizes to EDT0L presentations. \\
The following problem generalizes the isomorphism problem. 

\begin{definition}
	Let $(G,S)$ be a marked group and  $\mathcal{C}$ be  a class of marked groups associated to a type of finite descriptions. The \textbf{marked quotient problem for $(G,S)$ and $\mathcal{C}$ } is the problem, given a marked group in $\mathcal{C}$, to decide whether or not it is a marked quotient of $(G,S)$.\\
	If $X$ is a class of groups attached with a type of finite description, then we can talk about the \textbf{uniform marked quotient problem for $X$ and $\mathcal{C}$}, which is the problem, given a marked group in $X$ and one in $\mathcal{C}$, to decide whether the latter is a marked quotient of the former. 
\end{definition}

\begin{proposition}
	The finitely presented marked quotient problem is uniformly semi-decidable for finitely presented groups.  
\end{proposition}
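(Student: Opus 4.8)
The plan is to reduce the problem to testing membership of finitely many words in the normal closure of a finite set inside a free group, which is recursively enumerable. First I would recall that a $k$-marked group is the same datum as an epimorphism from the fixed free group $\mathbb{F}_k$ with basis $S_k$, and that there is at most one morphism of marked groups between two marked groups. Given as input finite presentations for marked groups $(G,S)$ and $(H,S')$: if the two markings have different lengths, there can be no morphism of marked groups between them, so the procedure may immediately reject; hence I may assume both are presented over the same generating tuple $S = S_k$, say $(G,S)=\langle S\mid R\rangle$ and $(H,S)=\langle S\mid U\rangle$ with $R,U\subseteq\mathbb{F}_S$ finite.

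Next I would observe that $(H,S)$ is a marked quotient of $(G,S)$ if and only if $R\subseteq\langle\langle U\rangle\rangle^{\mathbb{F}_S}$. Indeed, a morphism of marked groups $(G,S)\to(H,S)$ must send each generator to the corresponding generator, so it exists as a group homomorphism $\mathbb{F}_S/\langle\langle R\rangle\rangle\to\mathbb{F}_S/\langle\langle U\rangle\rangle$ exactly when every element of $R$ becomes trivial in $H$, i.e. lies in $\langle\langle U\rangle\rangle^{\mathbb{F}_S}$; and whenever it exists it is automatically surjective, since $S$ already generates $H$.

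Finally, since $R$ is finite and the normal closure $\langle\langle U\rangle\rangle^{\mathbb{F}_S}$ of a finite subset of a free group is recursively enumerable — one enumerates all products of conjugates of elements of $U\cup U^{-1}$ and freely reduces, as in the proof of Lemma~\ref{lem: marked iso problem for finitely presented groups} — the condition $R\subseteq\langle\langle U\rangle\rangle^{\mathbb{F}_S}$ is semi-decidable: run this enumeration and halt accepting once every element of $R$ has been produced. The whole procedure takes both presentations as input, which gives the uniform version of the statement.

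I do not expect a genuine obstacle here. The only point requiring care is the bookkeeping around the definition of ``marked quotient'' (matching marking length, and automatic surjectivity of the comparison map), after which the statement is an immediate consequence of the recursive enumerability of normal closures in free groups, in direct parallel with the semi-decidability of the marked isomorphism problem for finitely presented groups established above.
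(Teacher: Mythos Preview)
Your argument is correct and is essentially the paper's proof unwound. The paper reduces to the marked isomorphism problem of Lemma~\ref{lem: marked iso problem for finitely presented groups} by forming the presentation $\langle S\mid R,\,R'(S)\rangle$ and testing it for marked isomorphism, whereas you go straight to the single inclusion $R\subseteq\langle\langle U\rangle\rangle^{\mathbb{F}_S}$; both rest on the same underlying fact, the recursive enumerability of normal closures of finite sets in a free group, so the difference is only in packaging.
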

\begin{proof}
We have seen above that the marked isomorphism problem is semi-decidable for finitely presented groups. Given two finite presentations $\langle S \vert R\rangle$ and $\langle S' \vert R'\rangle$, deciding whether  $\langle S' \vert R'\rangle$ is a marked quotient of $\langle S \vert R\rangle$ is equivalent to deciding whether $\langle S \vert R, \, R' (S)\rangle$ is another presentation for the same marked group as  $\langle S \vert R\rangle$. Thus the marked quotient problem reduces to the marked  isomorphism problem.
\end{proof}

\subsection{Computing quotients and dismissing infinite presentations}
When working with infinite presentations of groups, a natural question is the following: Is it possible, given an infinite presentation of a finitely presentable group, to computably point out a finite subset of the infinite presentation that is sufficient to define this group?  \\
Let LAN be a family of languages, and $\mathcal{C}$ a set of finitely presented groups. 
In this section, we give conditions under which the following two problems are equivalent: 
\begin{enumerate}
	\item Given a presentation in LAN of a group in $\mathcal{C}$, find a finite presentation for it. 
	\item Given a presentation in LAN for a marked group $(G,S)$, and a finite presentation for a group $(H,S')$ in $\mathcal{C}$, determine whether $(H,S')$ is a marked quotient of $(G,S)$. 
\end{enumerate}

\begin{lemma}\label{lem: marked quotient and finite pres}
	Let $\mathcal{C}$ be  a recursively enumerable class of finitely presented groups. Let LAN be a family of uniformly recursively enumerable languages.
	If the marked quotient problem in  $\mathcal{C}$ is uniformly semi-decidable for groups  given by presentations in LAN, then there is an algorithm which, given as input a presentation in LAN for a marked group in $\mathcal{C}$, produces a finite presentation for this group. 
\end{lemma}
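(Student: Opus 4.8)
The plan is to search, by dovetailing, for a finite presentation $\langle S\mid V\rangle$ on the \emph{same} generating tuple $S$ as the input LAN-presentation $\langle S\mid R\rangle$ of $(G,S)$ which defines the same marked group. The two facts to exploit are that one half of the verification, namely that $(G,S)$ is a marked quotient of $\langle S\mid V\rangle$, comes essentially for free from recursive enumerability of $R$, whereas the other half, that $\langle S\mid V\rangle$ is a marked quotient of $(G,S)$, is precisely an instance of the marked quotient problem whose semi-decidability is assumed.

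First I would set up the ingredients. Since LAN consists of uniformly recursively enumerable languages, from the input we can enumerate $R$, and hence its normal closure $\langle\langle R\rangle\rangle^{\mathbb{F}_S}$ (enumerate freely reduced products of conjugates of elements of $R^{\pm1}$); so for a finite set of words $V$ over $S\cup S^{-1}$ the condition $V\subseteq\langle\langle R\rangle\rangle^{\mathbb{F}_S}$ is semi-decidable, and this condition says exactly that $(G,S)$ is a marked quotient of the finitely presented marked group $\langle S\mid V\rangle$. Next, since $\mathcal{C}$ is recursively enumerable and the isomorphism problem for finitely presented groups is semi-decidable (established earlier), I would enumerate a list $\langle S\mid V_1\rangle,\langle S\mid V_2\rangle,\dots$ of finite presentations, all on the tuple $S$, such that each $\langle S\mid V_i\rangle$ presents a group in $\mathcal{C}$ while, conversely, every finite presentation on the tuple $S$ of a group in $\mathcal{C}$ eventually appears in the list: dovetail over pairs $(V,Q)$, with $V$ a finite set of words over $S\cup S^{-1}$ and $Q$ a presentation from the given enumeration of $\mathcal{C}$, semi-decide whether $\langle S\mid V\rangle$ and $Q$ define isomorphic groups, and append $\langle S\mid V\rangle$ to the list whenever this is confirmed.

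The algorithm is then: process $i=1,2,3,\dots$; for each $i$, run in parallel (i) the semi-decision of $V_i\subseteq\langle\langle R\rangle\rangle^{\mathbb{F}_S}$ and (ii) the semi-decision of ``$\langle S\mid V_i\rangle$ is a marked quotient of $(G,S)$'', the latter being available by the hypothesis of the lemma, because $\langle S\mid V_i\rangle$ is a finite presentation of a group in $\mathcal{C}$ and $(G,S)$ is given by a presentation in LAN; as soon as both halt positively for some $i$, output $\langle S\mid V_i\rangle$ and stop. For correctness: if $\langle S\mid V_i\rangle$ is output, then $(G,S)$ and $\langle S\mid V_i\rangle$ are marked quotients of one another, hence isomorphic as marked groups, so $\langle S\mid V_i\rangle$ is a finite presentation of $G$. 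For termination: $G\in\mathcal{C}$ is finitely presented and finite presentability does not depend on the generating tuple, so $(G,S)$ admits a finite presentation $\langle S\mid V_0\rangle$ on the tuple $S$; this presentation occurs somewhere in the list, and for that index both test (i) succeeds (since $\langle S\mid V_0\rangle=\langle S\mid R\rangle$ as marked groups, so $V_0\subseteq\langle\langle R\rangle\rangle^{\mathbb{F}_S}$) and test (ii) succeeds (a marked group is a marked quotient of itself), so the search halts no later than at that index.

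The only real subtlety I foresee is ensuring that the assumed semi-decision procedure for the marked quotient problem is only ever fed finite presentations that genuinely present groups \emph{in} $\mathcal{C}$ — hence the need to pre-filter candidate presentations on $S$ through the enumeration of $\mathcal{C}$ together with the semi-decidable isomorphism problem for finitely presented groups, rather than simply ranging over all finite presentations on the tuple $S$. Everything else is routine dovetailing, and termination rests entirely on $G$ being finitely presented.
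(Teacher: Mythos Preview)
Your proof is correct and follows essentially the same strategy as the paper: enumerate finite presentations of groups in $\mathcal{C}$ and, for each candidate, semi-decide both that it is a marked quotient of $(G,S)$ (via the hypothesis) and that $(G,S)$ is a marked quotient of it (via recursive enumerability of $R$), halting when both succeed. The paper's write-up is terser---it simply says ``enumerate all finite presentations of groups in $\mathcal{C}$'' and leaves implicit the point you spell out carefully, namely that the candidates fed to the assumed semi-decision procedure must genuinely lie in $\mathcal{C}$; your explicit pre-filtering via the semi-decidable isomorphism problem makes this step watertight, but it is not a different idea.
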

\begin{proof}
	Given a presentation in LAN for a marked group $(G,S)$, enumerate all finite presentations of groups in  $\mathcal{C}$. Use semi-decidability of the quotient problem to enumerate all marked quotient of  $(G,S)$. For each marked quotient $(H,T)$ of  $(G,S)$ obtained this way, we will try to prove that  $(G,S)$ is in fact also a quotient of  $(H,T)$, which will show that they are isomorphic as marked groups. 
\\
But to prove that $(G,S)$ is a marked quotient of  $(H,T)$, it suffices to check that the finitely many defining  relations of $(H,T)$ hold in $(G,S)$. This is semi-decidable as soon as $(G,S)$ was given by a recursive presentation.
\end{proof}

An open problem is to find a family LAN of formal languages for which the above lemma can be applied with $\mathcal{C}$ being the set of all finitely presented groups. 
It is an open problem to decide whether groups given by EDT0L presentations have uniformly semi-decidable marked quotient problem for finitely presented groups. 
	\\

\begin{lemma}\label{lem: iso P EDT0L equiv to quotient problem}
	Let $\mathcal{C}$ be a class of finitely presented groups. Suppose that the isomorphism problem is solvable in  $\mathcal{C}$ for groups given by finite presentations, and that  $\mathcal{C}$  is recursively enumerable via finite presentations. Let LAN be family of languages. If the marked quotient problem in  $\mathcal{C}$ is uniformly semi-decidable for groups  given by presentations in LAN, then the isomorphism problem is solvable for groups in  $\mathcal{C}$ given by presentations in LAN. 
\end{lemma}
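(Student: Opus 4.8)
The plan is to reduce the isomorphism problem for groups in $\mathcal{C}$ given by LAN-presentations to the isomorphism problem for finitely presented groups in $\mathcal{C}$, which is solvable by assumption. So suppose we are handed LAN-presentations of two marked groups $(G_1,S_1)$ and $(G_2,S_2)$, with the promise that $G_1,G_2\in\mathcal{C}$. First I would compute, from the LAN-presentation of $(G_i,S_i)$, an honest finite presentation $\mathcal{P}_i$ of the abstract group $G_i$; having done that, I would run the algorithm solving the isomorphism problem in $\mathcal{C}$ for finitely presented groups on the pair $(\mathcal{P}_1,\mathcal{P}_2)$, and its answer is exactly whether $G_1\cong G_2$.

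The step that does the work is the computation of $\mathcal{P}_i$, and this is precisely (the content of the proof of) Lemma~\ref{lem: marked quotient and finite pres}. One enumerates all finite presentations of groups in $\mathcal{C}$ whose generating tuple has length $|S_i|$ — possible since $\mathcal{C}$ is recursively enumerable via finite presentations, and since a finitely presented group is finitely presented with respect to every finite generating tuple. For each marked group $(H,T)$ so enumerated, one uses the uniform semi-decidability of the marked quotient problem to semi-decide whether $(H,T)$ is a marked quotient of $(G_i,S_i)$; whenever this halts, one also semi-decides whether $(G_i,S_i)$ is a marked quotient of $(H,T)$ by checking that the finitely many defining relators of $(H,T)$ are trivial in $(G_i,S_i)$ — this is semi-decidable because the LAN set of relators defining $(G_i,S_i)$ is recursively enumerable, so the word problem of $(G_i,S_i)$ is semi-decidable. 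Dovetailing all these procedures, one outputs the first $(H,T)$ that passes both tests; since a marked group admitting morphisms of marked groups in both directions to another one is isomorphic to it, such an $(H,T)$ is isomorphic to $(G_i,S_i)$ as a marked group, so its finite presentation is a finite presentation of $G_i$. The search terminates because $(G_i,S_i)$ itself occurs in the enumeration (as $G_i$ is finitely presented and in $\mathcal{C}$) and passes both tests trivially.

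I do not expect a substantial obstacle here: once Lemma~\ref{lem: marked quotient and finite pres} is available the result is essentially a one-line assembly. The only point one must be honest about is the one that already underlies Lemma~\ref{lem: marked quotient and finite pres}, namely that a LAN-presentation is in particular a recursively enumerable presentation, so that triviality of a word in a LAN-presented group is semi-decidable; this is what the argument above uses to test whether $(G_i,S_i)$ is a marked quotient of $(H,T)$, and it is implicit in the requirement that a presentation in LAN come with a finite description. If one prefers not to invoke Lemma~\ref{lem: marked quotient and finite pres} under a weaker hypothesis on LAN than stated there, one simply repeats the argument of its proof as sketched above, which uses exactly the three facts now in force: recursive enumerability of $\mathcal{C}$ via finite presentations, uniform semi-decidability of the marked quotient problem in $\mathcal{C}$ for LAN-presented groups, and recursive enumerability of the LAN languages involved.
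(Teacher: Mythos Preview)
Your proposal is correct and follows exactly the paper's approach: the paper's proof is the single line ``This follows directly from Lemma~\ref{lem: marked quotient and finite pres}'', and you spell out precisely that reduction (compute a finite presentation of each $G_i$ via Lemma~\ref{lem: marked quotient and finite pres}, then apply the assumed isomorphism algorithm for finitely presented groups in~$\mathcal{C}$). Your additional remark about the implicit hypothesis that LAN consist of recursively enumerable languages---needed to semi-decide the word problem in $(G_i,S_i)$---is well taken and in fact makes explicit an assumption the paper leaves tacit in the present lemma's statement.
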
 

\begin{proof}
This follows directly from Lemma \ref{lem: marked quotient and finite pres}. 
\end{proof}
This can be applied with $\text{LAN}=\text{EDT0L}$ and with $\mathcal{C}$ being any of the sets of finite groups, of nilpotent groups, of hyperbolic groups. \\
The above lemma admits a converse. 

\begin{lemma}\label{lem: compute finite pres => quotients are computable}
	Let $\mathcal{C}$ be  a class of finitely presented groups stable under taking quotients. Let LAN be a family of languages for which the operation ``taking union with finite sets'' is computable. 
	Then, if the marked isomorphism problem is solvable for groups in $\mathcal{C}$ with presentations in LAN, the $\mathcal{C}$-marked quotient problem is uniformly semi-decidable for groups given by LAN presentations.
\end{lemma}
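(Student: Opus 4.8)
The plan is to reduce the $\mathcal{C}$-marked quotient problem to the marked isomorphism problem, in exactly the way the finitely presented case was handled in the proof of the proposition on the finitely presented marked quotient problem. So suppose we are given an EDT0L presentation (or more generally a LAN presentation) $\langle S \mid R\rangle$ for a marked group $(G,S)$, and a finite presentation $\langle S' \mid R'\rangle$ for a marked group $(H,S')$ in $\mathcal{C}$. We may assume $S=S'$, since otherwise $(H,S')$ cannot be a marked quotient of $(G,S)$ at all (markings are tuples of a fixed length over a fixed symbol set, and being a marked quotient presupposes the same marking alphabet); this is a syntactic check on the inputs.

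The key observation is that $(H,S)$ is a marked quotient of $(G,S)$ if and only if the presentation $\langle S \mid R \cup R'(S)\rangle$ — where $R'(S)$ is the finite set of relators of $H$ read as words over $S \cup S^{-1}$ — defines the same marked group as $\langle S \mid R'\rangle$, i.e.\ defines $(H,S)$. Indeed, $\mathbb{F}_S/\langle\langle R \cup R'\rangle\rangle$ is the largest common quotient of $(G,S)$ and $(H,S)$; it equals $(H,S)$ precisely when $(H,S)$ is already a quotient of $(G,S)$. Here we use that $\mathcal{C}$ is stable under taking quotients, so that $\mathbb{F}_S/\langle\langle R\cup R'\rangle\rangle$ is again a group in $\mathcal{C}$, hence the marked isomorphism problem is applicable to the pair $\bigl(\langle S\mid R\cup R'\rangle,\ \langle S\mid R'\rangle\bigr)$.

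To carry this out algorithmically: from the given LAN presentation of $(G,S)$ and the finite set $R'(S)$, we use the hypothesis that ``taking union with finite sets'' is a computable operation on LAN to produce a LAN presentation of $\mathbb{F}_S/\langle\langle R\cup R'\rangle\rangle$; this group lies in $\mathcal{C}$ by quotient-stability. We then feed this presentation together with the finite presentation $\langle S\mid R'\rangle$ of $(H,S)$ into the algorithm solving the marked isomorphism problem for groups in $\mathcal{C}$ given by LAN presentations (note $\langle S\mid R'\rangle$ is in particular a LAN presentation, since LAN contains the finite languages — which follows from closure under union with finite sets applied to the empty language, or can simply be added as a standing hypothesis). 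This algorithm halts and answers ``yes'' exactly when $(H,S)$ is a marked quotient of $(G,S)$. Since the marked isomorphism problem is assumed \emph{solvable} (decidable) in $\mathcal{C}$, this gives a decision procedure, in particular a semi-decision procedure, and it is uniform in both inputs.

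There is no serious obstacle here; the content is entirely in the reduction and in checking the hypotheses line up. The one point to be careful about is that the marked isomorphism algorithm for $\mathcal{C}$ is only assumed to work on \emph{presentations in LAN}, which is why we need the computability of ``union with finite sets'' to stay inside LAN when forming $\langle S\mid R\cup R'\rangle$, and why we note that finite presentations are themselves LAN presentations. If one only wants semi-decidability rather than decidability, one could even relax ``solvable'' to ``semi-decidable'' in the hypothesis, but as stated the stronger conclusion is immediate.
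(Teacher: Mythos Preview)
Your proof is correct and uses the same reduction as the paper: form $\langle S\mid R\cup R'\rangle$ and turn the quotient question into a marked isomorphism question. There is one small but real difference worth noting. You compare $\langle S\mid R\cup R'\rangle$ with $\langle S\mid R'\rangle=(H,S)$, whereas the paper, as literally written, compares it with $\langle S_0\mid R_0\rangle=(G,S)$. Your choice is the one that actually works: the equivalence ``$(H,S)$ is a marked quotient of $(G,S)$ iff $\langle S\mid R\cup R'\rangle\cong(H,S)$'' is correct, and both sides lie in $\mathcal{C}$ (the second by hypothesis, the first by quotient-stability), so the assumed isomorphism algorithm applies. The paper's stated equivalence has the direction reversed (it characterizes $(G,S)$ being a quotient of $(H,S)$, not the other way around), and moreover $(G,S)$ is not assumed to lie in $\mathcal{C}$, so the hypothesis on the isomorphism algorithm would not directly cover it. Your version also makes explicit why a finite presentation counts as a LAN presentation, which is needed to feed $\langle S\mid R'\rangle$ into the algorithm; this is a point the paper leaves implicit.
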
 
 
\begin{proof}
Just notice that the group given by $\langle S_1 \vert R_1\rangle$ is a marked quotient of  $\langle S_0 \vert R_0\rangle$ if and only if $\langle S_0 \vert R_0\rangle$ and $\langle S_0 \vert R_0, R_1(S_0)\rangle$ define the same marked group. The hypothesis that $\mathcal{C}$ is quotient stable guarantees that   $\langle S_0 \vert R_0, R_1(S_0)\rangle$ belongs to $\mathcal{C}$ in any case.
 \end{proof}

It seems that all  classes of finitely presented groups for which the isomorphism problem is known to be solvable also happen to give semi-decidable properties of groups. Examples of this include hyperbolic groups (\cite{Papasoglu1995} for semi-decidability, \cite{Dahmani2011} for  the isomorphism problem), virtually nilpotent groups (\cite{Grunewald1980} for the isomorphism problem, semi-decidability is easy) and limit groups (\cite{Bumagin2007} and \cite{Dahmani2008} for the isomorphism problem and \cite{Groves2009} for semi-decidability). Of course, somewhat artificial examples of classes with solvable isomorphism problem that are not semi-decidable are obtained by taking subsets of the above (there are uncountably many sets of hyperbolic groups, some of these are necessarily not recursively enumerable). 
The above lemmas thus apply to all the ``natural'' classes quoted above. 

\section{Identifying EDT0L presentations and L-presentations}

\subsection{Identification result}\label{subsec: identification Lpres EDT0L}

The main results of this section is the following:
 \begin{theorem}\label{thm: EDT0L=Lpres DTF0L=Ascending L pres}
 	Let $G$ be a finitely generated group. The following are equivalent: 
 	\begin{enumerate}
 		\item the group $G$ admits an L-presentation;
 		\item the group $G$ admits a presentation in DTF0L+FIN;
 		\item the group $G$ admits a presentation in EDT0L. 
 	\end{enumerate} 
 	 \end{theorem}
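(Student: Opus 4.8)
The plan is to prove the cycle of implications $(1)\Rightarrow(2)\Rightarrow(3)\Rightarrow(1)$, with the DTF0L+FIN formulation serving as the convenient bridge. The implication $(2)\Rightarrow(3)$ is essentially immediate from Proposition \ref{prop:dtf0lpsub}, which already records that DTF0L+FIN is a subfamily of EDT0L; so this is the easy step and I would dispose of it first with a single sentence. The implication $(1)\Rightarrow(2)$ is also fairly direct from the definitions: given a finite L-presentation $\langle S\mid Q\mid \Phi\mid R\rangle$, the set $\bigcup_{n\in\N}\Phi^n(R)$ is, by definition, the language of a DTF0L system over $S\cup S^{-1}$ whose seed set is $R$ (a finite set of reduced words) and whose morphisms are the restrictions to $(S\cup S^{-1})^*$ of the endomorphisms in $\Phi$ — here one must be slightly careful that each $\phi\in\Phi$, being a group endomorphism of $\mathbb F_S$, induces a monoid morphism of $(S\cup S^{-1})^*$ by sending $s\mapsto \phi(s)$ (a chosen reduced word) and $s^{-1}\mapsto \phi(s)^{-1}$, and that this does not change the normal closure generated. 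Adjoining the finite set $Q$ then lands the relator set in DTF0L+FIN, and the group defined is unchanged.

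The substantive direction is $(3)\Rightarrow(1)$: from an EDT0L presentation of $G$ we must manufacture a finite L-presentation. Here I would invoke Lemma \ref{eqhedt0l} to replace the given EDT0L system by an HDT0L system $(w_0,\Psi,f)$ over some alphabet $B$, with terminal alphabet $S\cup S^{-1}$, so that the relator set is $f(\Lan(w_0,\Psi))=f\big(\bigcup_{n}\Psi^n(w_0)\big)=\bigcup_n f(\Psi^n(w_0))$. The idea is then to emulate the coding morphism $f$ and the single seed word $w_0$ inside a larger free group: pass to the free group on $B$ (or rather on a symbol set whose square is $B$, so that formal inverses are available), take the endomorphism set $\Phi$ to be the monoid-morphism extensions of $\Psi$, take the initial relator set to be $\{w_0\}$, and — the key point — use the \textbf{additional relators} $Q$ to encode the substitution $f$ as defining equations "$b = f(b)$" for $b\in B$, together with relations killing the non-terminal letters that should not survive, so that in the quotient group every element of $\bigcup_n \Psi^n(w_0)$ becomes equal to its $f$-image. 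One then checks, via a Tietze-transformation argument (eliminating the letters of $B\setminus(S\cup S^{-1})$ using the defining equations $b=f(b)$, which is legitimate because $f$ maps into the terminal alphabet), that the resulting L-presentation $\langle B \mid Q \mid \Phi \mid \{w_0\}\rangle$ defines the same marked group $(G,S)$.

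The main obstacle I anticipate is precisely this last bookkeeping: an L-presentation iterates \emph{group endomorphisms of a free group}, whereas an EDT0L/HDT0L system iterates \emph{monoid morphisms on a terminal-plus-nonterminal alphabet} and only afterwards projects. Making the non-terminal letters into genuine free generators, ensuring the iterated endomorphisms respect reduced-word inverses, and arranging that the coding morphism $f$ is realized by finitely many additional relators without accidentally imposing extra relations among the terminal generators — all of this must be done carefully. A secondary subtlety is that $f$ need not be injective nor need it fix the terminal alphabet, so one may first want to normalize the HDT0L system (e.g. so that $f$ restricts to the identity on $S\cup S^{-1}$ and every terminal letter is a fixed point of each morphism in $\Psi$), which is a standard but non-trivial preprocessing step for L-systems; I would isolate that normalization as a preliminary lemma. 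Once the encoding is set up correctly, the equivalence of the two presentations is a routine Tietze-transformation verification of the kind already rehearsed in the Preliminaries.
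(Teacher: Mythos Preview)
Your proposal is correct and follows essentially the same route as the paper: the cycle $(1)\Rightarrow(2)\Rightarrow(3)\Rightarrow(1)$, with the substantive step $(3)\Rightarrow(1)$ handled via the HDT0L characterization (Lemma~\ref{eqhedt0l}), enlarging the generating set to include the non-terminal alphabet, encoding the coding morphism $f$ as the finite set of additional relators $Q=\{\,b=f(b)\mid b\in B\,\}$, and finishing with Tietze eliminations of the non-terminal generators. Your anticipated obstacles (inverse-handling, normalization of the HDT0L system, ``killing'' non-terminal letters) turn out not to arise in the paper's execution: since the seed $w_0$ and all iterates $\Psi^n(w_0)$ are positive words over the non-terminal alphabet, each monoid morphism in $\Psi$ extends canonically to a free-group endomorphism with no ambiguity, and the relators $b=f(b)$ already make every non-terminal generator redundant without any preprocessing.
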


\begin{proof}

From the definitions it is obvious that L-presentations produce their relators exactly by the DTF0L+FIN mechanism, with an additional constraint: the iterated morphisms should be group endomorphisms, and not simply monoid endomorphisms.
Thus having an L-presentation implies having a DTF0L+FIN presentation. 
And of course, since DTF0L+FIN is a proper subfamily of EDT0L, having a presentation in DTF0L+FIN implies having a presentation in EDT0L. 
\\
We now prove the last implication: that the class of EDT0L presented groups is  contained in the class of L-presented groups.\\
	The proof in fact relies on the HDT0L characterisation of EDT0L languages given by Nielsen, Rozenberg, Salomaa and Skyum, see Lemma \ref{eqhedt0l}. 
	Suppose $(w_0,\Phi,f)$ is a HDT0L system on alphabets $S,T$, i.e. $f$ is a morphism from $S^*$ to $T^*$ and $(w_0,\Phi)$ is a DT0L system on $S$.
We have to  construct an L-presentation $\langle S'|R|\Phi'|Q\rangle$ of the group $\langle T|\Lan(w_0,\Phi,f)\rangle$. 
This is achieved by choosing $S'=S \sqcup T$ and $R = \{w_0\}$. Furthermore, $\Phi'$ is the set of the extensions of the morphisms of $\Phi$ to $S'$ by acting as the identity on $T$. Finally, 
$$Q = \{f(s)=s\mid s \in S\}$$
Remember that $f(s)$ in the above definition is a word in $T^*$. We will now prove that the L-presentation $\langle S \cup T|w_0|\Phi'|Q\rangle$ defines the same group as the HDT0L presentation $\langle T|\Lan(w_0,\Phi,f)\rangle$.
By the definition of L-presentations, the L-presentation above defines the (possibly infinite) classical group presentation $\langle S \cup T \mid M\rangle$ where $M$ is the set of relators
$$M= \{\Phi^k(w_0) \mid k \ge 0\} \cup \{f(s)=s\mid s \in S\}$$
The first part is exactly the language of the DT0L system $(w_0, \Phi)$, so
$$M=\Lan(w_0,\Phi) \cup \{f(s)=s \mid s \in S\}$$
For each $s$ in $S$, the relation $s=f(s)$ gives an expression of the generator $s$ as a product of the generators of $T$: this shows that this generator is redundant, and it can be deleted by a Tietze transformation. All occurrences of $s$ in the presentation are then replaced by $f(s)$. 

We then obtain a new presentation $\langle T\mid M'\rangle$ for the same group, where $M'$ is the language $\Lan(w_0,\Phi)$ with every occurrence of a letter $s \in S$ replaced by $f(s)$. This is exactly the language  $f(\Lan(w_0,\Phi))$. So $M'=f(\Lan(w_0,\Phi))=\Lan(w_0,\Phi,f)$.
\end{proof}

We finally want to remark a subtlety that one might overlook: it is not clear whether or not $\pi(\text{DTF0L})$ defines exactly the set of  groups that admit ascending L-presentations. 
Indeed,  the definitions of these two classes of groups are almost identical, with the slight difference that in one case one iterates free group endomorphisms, whereas in the other case free monoid endomorphisms are iterated. 
Indeed, a group given by an ascending L-presentation can be written as 

$$\mathbb{F}_S/ \langle \langle \bigcup_{\phi\in \Phi ^* }\phi (R)\rangle \rangle,$$ where $R$ is a finite set of relations, $\Phi$ is a finite set of group endomorphisms of  $\mathbb{F}_S$, and $\Phi ^*$ designates the monoid generated by $\Phi$. We obtain the notion of presentation in DTF0L if instead of supposing that the elements of $\Phi$ are group endomorphisms of  $\mathbb{F}_S$, we only ask that they be monoid endomorphisms of $(S\cup S^{-1})^*$. (Thus for instance a transformation given by $a\mapsto a$ and $a^{-1}\mapsto a^2$ is a valid monoid endomorphism but not a free group endomorphism.) \\
This distinction need not arise in the EDT0L case, since the monoid endomorphisms acting on non-terminals can directly be seen as group endomorphisms.

\subsection{Application to the marked isomorphism problem}

One of the purposes of relating the study of L-presentations to the theory of formal languages is to be able to apply results from formal language theory. Here, we do so with a result of Rozenberg \cite{rozenberg1972}: equality of EDT0L languages is not decidable. 

Because EDT0L presentations generalize finite presentations, the isomorphism problem is undecidable for groups given by EDT0L presentations. 
However, the isomorphism problem for finitely presented groups is semi-decidable: there is an algorithm that, on input two finite presentations, will halt exactly when they define the same group.
We have seen above that this fact can be seen as the conjunction of two results: the marked isomorphism problem is semi-decidable for finitely presented groups, and there is an algorithm that, on input a finite presentation for a marked group, enumerates a list of finite presentations that will contain at least one presentation for  each marking of this group. 
This second algorithm exists for groups with EDT0L presentations, and more generally whenever the conditions of Lemma \ref{lem: effective change of generators} are met. \\
We show now that the marked isomorphism problem for groups given by  EDT0L presentations is strictly harder than for groups given by  finite presentations. 
\begin{proposition}
	The marked isomorphism problem is not semi-decidable for groups given by EDT0L presentations.  
\end{proposition}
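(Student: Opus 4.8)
The plan is to reduce the non-semi-decidability of equality of EDT0L languages to the marked isomorphism problem, using Rozenberg's theorem \cite{rozenberg1972} that the former is undecidable. First I would record the needed strengthening: equality of two EDT0L languages is co-semi-decidable — given grammars for $L_1,L_2$ one enumerates all words $w$, decides $w\in L_1$ and $w\in L_2$ (membership in an EDT0L language is decidable), and halts as soon as the answers differ — so, being undecidable, it is not semi-decidable. It therefore suffices to produce a computable map sending a pair of EDT0L systems for languages $L_1,L_2\subseteq\Sigma^*$ to a pair of EDT0L presentations of marked groups $(G_{L_1},S),(G_{L_2},S)$ over a \emph{fixed} alphabet $S$, such that $(G_{L_1},S)\cong(G_{L_2},S)$ as marked groups if and only if $L_1=L_2$; a semi-decision procedure for marked isomorphism would then semi-decide EDT0L equality.

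For the encoding I would use a finitely generated group $\mathcal H=\langle x,t\rangle$ whose centre is free abelian of countably infinite rank — Hall's central-by-metabelian group \cite{Hall1954} — together with an explicit $\mathbb Z$-basis $\{z_n\}_{n\in\N}$ of $Z(\mathcal H)$ described by words $\zeta_n$ over $\{x^{\pm1},t^{\pm1}\}$ (e.g.\ commutators of the form $[x,x^{t^n}]$ in the relevant quotient). By Theorem \ref{Thm: EDT0L = Lpres intro} the group $\mathcal H$ has an EDT0L presentation $\langle S\mid Q_{\mathcal H}\rangle$ with $S=\{x,t\}$. I would fix an injective ``address'' function $e\colon\Sigma^*\to\N$ and set $G_L:=\langle S\mid Q_{\mathcal H}\cup\{\zeta_{e(w)}:w\in L\}\rangle$. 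Because the $z_n$ are central, the normal closure (in $\mathcal H$) of a family of basis elements equals the subgroup it generates, a direct summand of $Z(\mathcal H)$; hence $\langle\langle\{z_{e(w)}:w\in L_1\}\rangle\rangle=\langle\langle\{z_{e(w)}:w\in L_2\}\rangle\rangle$ forces $\{e(w):w\in L_1\}=\{e(w):w\in L_2\}$, and injectivity of $e$ yields $L_1=L_2$ (the converse being trivial). Since EDT0L is closed under finite union, $G_L$ has an EDT0L presentation as soon as the relator set $\{\zeta_{e(w)}:w\in L\}$ is EDT0L.

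The heart of the argument, and the step I expect to be the main obstacle, is precisely that last point: producing $\{\zeta_{e(w)}:w\in L\}$ in EDT0L from an EDT0L grammar for $L$. The crucial subtlety is that $w\mapsto\zeta_{e(w)}$ cannot be a monoid morphism: a morphic image of $a^2$ is the square of the morphic image of $a$, hence lies in the normal closure of the latter, which would destroy the injectivity of $L\mapsto G_L$ — and the same obstruction (periodic relators) rules out the naive route through small-cancellation sets of relators in a free group. One must instead build the relator language by hand from an HDT0L system for $L$ (Lemma \ref{eqhedt0l}): run several synchronised copies of the underlying DT0L system in parallel and apply a tailored terminal morphism that assembles $\zeta_{e(w)}$ — including any length-dependent pieces, such as a trailing block $t^{-|w|}$ — from the derivation of $w$. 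Arranging the addressing function $e$ so that the bookkeeping $w\mapsto e(w)$ can be realised by such parallel-substitution mechanisms while staying injective is the delicate part; once that is in place, everything else is a routine verification that each transformation used is effective, so that the reduction is computable and the proposition follows.
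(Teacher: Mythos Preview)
Your reduction strategy --- upgrade Rozenberg's undecidability to non-semi-decidability via co-semi-decidability of EDT0L equality, then pull back along an injective, computable map $L\mapsto (G_L,S)$ --- is exactly the paper's. The divergence is in the encoding, and your version carries a genuine gap at precisely the place you flag as ``the delicate part''.

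The paper avoids Hall's group entirely. It sets
\[
\pi(L)=\bigl\langle S,\hat S,a,b \;\bigm|\; waw=(\hat w\,b\,\hat w)^{-1},\ w\in L\bigr\rangle,
\]
with $\hat S$ a disjoint copy of $S$. The relator language $\{waw\hat w b\hat w:w\in L\}$ is obtained from an EDT0L grammar for $L$ by doubling the alphabet, replacing the seed $w_0$ by $w_0\,a\,w_0\,\hat w_0\,b\,\hat w_0$, and extending each table to act identically on the hatted copy while fixing $a,b$; this is immediately EDT0L. Injectivity of $L\mapsto\pi(L)$ comes from the normal form for amalgamated products: the families $\{waw\}_{w\in L}$ and $\{\hat w b\hat w\}_{w\in L}$ are Nielsen reduced (the two $a$'s, respectively $b$'s, can never cancel in a product), hence each is a free basis, so $\pi(L)$ really is an amalgam and $waw=(\hat w b\hat w)^{-1}$ holds there iff $w\in L$.

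Your encoding, by contrast, needs $\{\zeta_{e(w)}:w\in L\}$ to be EDT0L for a fixed injective $e:\Sigma^*\to\N$, uniformly in an EDT0L grammar for $L$. You do not show this, and the mechanism you sketch cannot deliver it. Running synchronised parallel copies of an HDT0L derivation of $w$ and then applying a terminal morphism yields a word of the form $f_1(w)\cdots f_k(w)$ for fixed monoid morphisms $f_i$, hence of length $O(|w|)$; but $|\zeta_n|$ is $\Theta(n)$ and any injective $e$ must take values $\ge |\Sigma|^{|w|}$ infinitely often, so $|\zeta_{e(w)}|$ is forced to be exponential in $|w|$. (A block like $t^{-|w|}$ is linear in $|w|$ and is not $t^{-e(w)}$.) Attempting instead to let the parallel track grow exponentially along the \emph{derivation} makes the output depend on the derivation rather than on $w$, destroying well-definedness of $L\mapsto G_L$. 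In short, ``kill the $e(w)$-th basis vector of the centre'' does not sit inside EDT0L in any way you have indicated, whereas the paper's amalgam keeps the relator for $w$ at length $O(|w|)$ while still being rigid enough (via Nielsen reducedness) to recover $L$.
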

\begin{proof}
We use the fact that equality of EDT0L languages is not semi-decidable, see  \cite{rozenberg1972}. (Note that the   theorem  stated in  \cite{rozenberg1972}  is  ``equality is undecidable for EDT0L languages''. But for every family of uniformly recursive languages the equality problem is co-semi-decidable, since to prove that two languages are different it suffices to produce an element which belongs to one but not to the other. Hence the statement that equality is not decidable shows actually that equality is not semi-decidable for EDT0L languages.) 
\\
Let $L$ be an EDT0L language over an alphabet $S$. Consider a copy $\hat{S}$ of $S$, and two letters $a$ and $b$ that do not belong to $S$. 
For a word $w$ of $S^*$, denote by $\hat{w}$ the same word rewritten over the alphabet $\hat{S}$. \\
Consider the marked group given by the presentation 
$$\pi (L)=\langle S, \hat{S}, a, b \, \vert \,  waw=(\hat{w}b\hat{w})^{-1}, \, w\in L\rangle.$$
The proposition  follows directly from the following facts:
\begin{itemize}
	\item The presentation $\pi (L)$ is given by an EDT0L language, and an EDT0L grammar for it can be computed from the EDT0L grammar of $L$.
	\item We have the equivalence  $$\pi (L)=\pi (L')\iff L=L',$$ where the first equality corresponds to  marked group isomorphism. 
\end{itemize}
We now prove those two facts. 
To produce an EDT0L grammar for the language $\{waw\hat{w}b\hat{w},\, w\in L\}$, we proceed as follows. If $N$ denotes the set of non-terminals for $L$, we consider a copy $\hat{N}$ of $N$ as a new set of non-terminals. Add also the two letters $a$ and $b$ as new terminals. 
If $w_0$ was the seed for the grammar of $L$, replace it by the new seed $$w_0 a w_0 \hat{w_0}b\hat{w_0}.$$ Finally, extend all morphisms that appear in the grammar for $L$ in the following way. If $\phi $ is a morphism in the grammar of $L$, it is defined on  $S\cup N$. We  extend it to  $\hat{S}\cup \hat{N}$ by the formula    $$\phi (\hat{u})= \hat{\phi (u)}$$ for $u$ in $S\cup N$, and by $\phi(a)=a$ and $\phi(b)=b$.
\\ It is easy to see that this grammar produces exactly the desired language. 
\\
We now prove the second point: the marked group defined by the presentation  $\pi (L)$  determines uniquely the language $L$. 
This follows directly from the normal form theorem for free products with amalgamation, which gives us the following equivalence: in the marked group given by  $\pi (L)$,  $$waw=(\hat{w}b\hat{w})^{-1} \iff w\in L.$$
To show that the presentation  $\pi (L)$ indeed defines an amalgamated free product, it suffices to show that the families $waw$  and $(\hat{w}b\hat{w})^{-1}$, for $w\in L$, generate isomorphic marked groups. 

But the given families are both bases of the free group they generate. This follows from the fact that they are Nielsen reduced families \cite{Lyndon1977}: given two distinct words $w$ and $w'$ in $L$, the two occurrences of the letter $a$ that appear in a product  $(waw)^{\pm 1}(w'aw')^{\pm 1}$  can never cancel. 
\end{proof}

The marked quotient problem is the problem of deciding, given two marked groups, whether the first one is a marked quotient of the second one. 
\begin{corollary}
	The marked quotient problem in EDT0L is not semi-decidable for groups given by EDT0L presentations. 
\end{corollary}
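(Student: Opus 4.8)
The plan is to obtain the corollary from the preceding proposition by the observation that the marked isomorphism problem reduces to two instances of the marked quotient problem. Recall that there is at most one morphism of marked groups between two fixed marked groups, and that whenever such a morphism exists it is automatically surjective, since its image contains the marking of the target. Consequently, for marked groups $(G,S)$ and $(H,S)$ on a common generating tuple, $(G,S)$ and $(H,S)$ are isomorphic as marked groups if and only if $(G,S)$ is a marked quotient of $(H,S)$ \emph{and} $(H,S)$ is a marked quotient of $(G,S)$.

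I would then argue by contradiction. Suppose the marked quotient problem were semi-decidable for groups given by EDT0L presentations, say via a Turing machine $M$ that halts exactly on those pairs of EDT0L presentations for which the first-named marked group is a marked quotient of the second. Given EDT0L presentations for marked groups $(G,S)$ and $(H,S)$, dovetail the computations of $M$ on the two inputs ``is $(G,S)$ a marked quotient of $(H,S)$?'' and ``is $(H,S)$ a marked quotient of $(G,S)$?'', and halt as soon as both computations have halted. By the equivalence above, this procedure halts exactly when $(G,S)\cong(H,S)$ as marked groups, so it would semi-decide the marked isomorphism problem for groups given by EDT0L presentations, contradicting the previous proposition.

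The only point needing attention is that the reduction compares marked groups carrying a common generating tuple; but this is automatic, since in the proof of the previous proposition every group $\pi(L)$ is presented over the same fixed alphabet $S\sqcup\hat S\sqcup\{a,b\}$, so the reduction applies verbatim to the family $\{\pi(L)\}_L$. Alternatively, one can bypass the isomorphism problem entirely: the normal form argument in that proof shows that $\pi(L')$ is a marked quotient of $\pi(L)$ if and only if $L\subseteq L'$, so semi-decidability of the marked quotient problem would make inclusion of EDT0L languages semi-decidable; but inclusion is not semi-decidable, since equality of EDT0L languages is co-semi-decidable and yet, by Rozenberg's theorem, not decidable. Either way there is no genuine obstacle: the entire content lies in the preceding proposition, and what remains is a purely formal reduction.
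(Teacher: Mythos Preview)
Your proposal is correct and follows essentially the same approach as the paper: the paper's proof is the single sentence ``Two marked groups are equal if and only if each one is a marked quotient of the other,'' and your main argument is precisely this reduction, carried out with more care. Your alternative route via inclusion of EDT0L languages is also valid but is really the same reduction unwound one level further, since the equivalence $\pi(L')$ is a marked quotient of $\pi(L)$ $\iff$ $L\subseteq L'$ follows from the same normal-form argument used in the proposition.
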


\begin{proof}
	Two marked groups are equal if and only if each one is a marked quotient of the other. 
\end{proof}
One could expect much stronger results than the ones above. It would be very interesting to find examples of a family of languages LAN where the equality and inclusion problems are semi-decidable, while the isomorphism and quotient problem are not semi-decidable for groups with presentations in LAN.\\
A particular class of groups that admit EDT0L presentations for which semi-decidability of the isomorphism problem is an open problem is those consisting of finitely many relations and identities. 

\begin{problem}
	Show that equality is not semi-decidable for groups given by finitely many relations and identities. 
\end{problem}
\begin{problem}
		Find an example of an identity $W(x)=1$ such that the problem ``given a finite presentation, does the group it defines satisfy $W$?'' is not semi-decidable.
\end{problem}
\begin{problem}[\cite{MillerIII1992}]
	Is being metabelian semi-decidable for groups given by finite presentations? 
\end{problem}
The best results in this direction were obtained by Kleiman \cite{Kleiman1983}: 

\begin{theorem} [\cite{Kleiman1983}]
	There exists a finitely generated group $G$ with solvable word problem in which the identity problem is not semi-decidable: no algorithm can, on input an element of a free group, seen as  defining an identity, stop if and only if this identity is satisfied in $G$. 
\end{theorem}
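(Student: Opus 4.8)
The plan is as follows. The ``hard'' direction of the identity problem is essentially free: if $G$ is finitely generated with solvable word problem, then the set of words that are \emph{not} identities of $G$ is recursively enumerable, since to witness that $w(x_1,\dots,x_n)$ fails it suffices to find a substitution $x_i\mapsto g_i$ — with the $g_i$ written as words over a fixed generating set — for which $w(g_1,\dots,g_n)\ne 1$, and this is checkable because the word problem of $G$ is solvable. Hence the set $\mathrm{Ver}(G)$ of identities of $G$ is always co-recursively-enumerable, and the theorem amounts to building a finitely generated $G$ with solvable word problem whose set of identities fails to be recursively enumerable, i.e.\ whose identity problem is undecidable.

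To arrange this I would fix a recursively enumerable, non-recursive set $A=\{a_0,a_1,\dots\}\subseteq\N$ (so its complement is co-r.e.\ but not r.e.) together with a computable sequence $(w_m)_{m\in\N}$ of words in two fixed free generators, and construct $G$ so that
$$G\models (w_m=1)\iff m\notin A.$$
Any such $G$ with solvable word problem finishes the proof: if $\mathrm{Ver}(G)$ were recursively enumerable, then composing with the computable map $m\mapsto w_m$ would make the complement of $A$ recursively enumerable, contradicting the choice of $A$. So everything reduces to the group construction.

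For the construction, the natural strategy is to choose the $w_m$ to form an \emph{independent system of identities} — no $w_m$ being a consequence of the others — which can be arranged by Ol'shanskii-style small-cancellation methods; then let $H_i$ be a finitely generated group satisfying $w_m=1$ for every $m\ne a_i$ but violating $w_{a_i}=1$, for instance the relatively free group of rank $2$ in the variety defined by $\{w_m:m\ne a_i\}$ (this makes sense because $a_i$ is computable from $i$, and the independence of the system is exactly what guarantees that $w_{a_i}$ does not follow from the other laws). Since identities pass to subgroups and to direct products, a finitely generated $G$ squeezed as $\bigoplus_i H_i\le G\le\prod_i H_i$ would satisfy $\mathrm{Ver}(G)=\bigcap_i\mathrm{Ver}(H_i)$, so that $G\models w_m=1$ exactly when $m\ne a_i$ for all $i$, i.e.\ exactly when $m\notin A$. (An alternative, equivalent in spirit, is to build $G$ directly by imposing a graded small-cancellation set of defining relations realising precisely the laws $\{w_m:m\notin A\}$ and no others.)

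The main obstacle — and the technical heart of Kleiman's argument — is to carry this out while keeping $G$ \emph{both} finitely generated \emph{and} equipped with a solvable word problem: one must produce a finitely generated group between the infinitely generated $\bigoplus_i H_i$ and $\prod_i H_i$, via a wreath-product-like or diagonal construction, and one must simultaneously impose a strong enough (graded) small-cancellation condition on the $w_m$ that, on the one hand, forces the independence of the laws and rules out any unforeseen collapse, and, on the other hand, yields a Dehn-type decision procedure for the word problem of $G$ using only a computable, boundedly large portion of the defining data for each input. Reconciling this ``incomputable global variety'' with a ``computable local word problem'' is precisely where the full strength of the geometric machinery, as used by Kleiman, is needed.
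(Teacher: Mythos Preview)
The paper does not prove this theorem: it is simply quoted from \cite{Kleiman1983} as a known result, with no argument supplied. There is therefore no ``paper's own proof'' to compare against.

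As a sketch of Kleiman's argument, your outline is in the right spirit. The reduction you set up --- encoding the complement of a recursively enumerable non-recursive set $A$ via a computable family $(w_m)$ so that $G\models w_m=1\iff m\notin A$ --- is the correct shape, and your preliminary remark that solvability of the word problem forces $\mathrm{Ver}(G)$ to be co-recursively-enumerable is accurate and useful. The use of an infinite \emph{independent} system of group identities is precisely the right tool, and such systems were constructed by Ol'shanski\u{\i} and by Kleiman; the sandwich $\bigoplus_i H_i\le G\le\prod_i H_i$ with $H_i$ relatively free in the variety defined by $\{w_m:m\ne a_i\}$ does give $\mathrm{Ver}(G)=\bigcap_i\mathrm{Ver}(H_i)$ as you claim.

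Where the proposal is genuinely incomplete --- and you flag this yourself --- is exactly the heart of the matter: obtaining a \emph{finitely generated} $G$ inside $\prod_i H_i$ containing $\bigoplus_i H_i$ while retaining a solvable word problem. The B.\,H.\ Neumann--Hall diagonal trick handles finite generation, but a word-problem algorithm for such a $G$ would require a \emph{uniform} solution to the word problem in the $H_i$ together with an effective way of deciding, for a given element, in which coordinates it is nontrivial. Since each $H_i$ is a relatively free group in a variety defined by an \emph{infinite} (and $i$-dependent) set of laws, neither of these is automatic. Kleiman does not proceed via generic relatively free $H_i$; he instead designs the independent system with enough graded small-cancellation structure that a Dehn-type algorithm survives in the assembled group. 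Your last paragraph correctly names this obstacle but does not resolve it; supplying that resolution is essentially the content of \cite{Kleiman1983}, so as written the proposal is an accurate roadmap rather than a proof.
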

It would be very interesting to obtain the same result, but with the additional assumption that $G$ has an EDT0L presentation (or, better, a finite presentation).
Another interesting problem related to group varieties is the following: 
\begin{problem}
	Which group varieties admit free groups that have DTF0L presentations? 
\end{problem}
Results from the study of endomorphism of free groups in group varieties are useful in regards to this problem, as \cite{Kofinas2022}. 
Note that the problem of determining which varieties have finitely presented free groups is very hard. It is for instance an open problem to tell for which $p\in\mathbb{N}$ does the variety defined by $x^p=1$ has finitely presented free groups (this is related to the Burnside problem).

\section{Unified approach to computing marked quotients}

In this section we give a unified approach to computing marked quotients of EDT0L presented groups, thereby unifying and generalizing  \cite{BARTHOLDI2008} and \cite{Hartung2011}. (Note also that \cite{BARTHOLDI2008} and \cite{Hartung2011} deal with L-presentations, and while this does not make a difference in terms of groups, it possibly does in terms of marked groups, and the following results deal with marked groups.)
The following theorems were known:
\begin{theorem}[Nilpotent quotients, \cite{BARTHOLDI2008}] \label{thm:nilpquotsection}
	There is an algorithm that, given an EDT0L presentation for a marked group $(G,S)$ and a positive integer $k$, computes a finite presentation for the marked group $(G/\gamma_k(G),S)$.
\end{theorem}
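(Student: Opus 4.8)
The plan is to reduce to a finite L-presentation and then, working inside the free nilpotent group $N=\mathbb{F}_S/\gamma_k(\mathbb{F}_S)$, to replace the infinite relator set by a finite one, using the fact that $N$ is Noetherian. First I would convert the given EDT0L presentation of $(G,S)$ into a finite L-presentation $\langle S'\mid Q\mid \Phi\mid R\rangle$; this is effective by the proof of Theorem~\ref{thm: EDT0L=Lpres DTF0L=Ascending L pres} together with the effective equivalence of EDT0L and HDT0L systems (Lemma~\ref{eqhedt0l}). That conversion introduces finitely many auxiliary generators together with defining relations of the form $s=f(s)$ expressing each of them as a word in the original generators; I would carry these along and eliminate them by Tietze transformations only at the end, which is legitimate because such relations survive in every quotient of $G$, in particular in $G/\gamma_k(G)$. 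So from now on $G=\langle S'\mid Q\mid \Phi\mid R\rangle$, that is, $G=\mathbb{F}_{S'}/\langle\langle Q\cup\bigcup_{n\ge 0}\Phi^n(R)\rangle\rangle$ with $Q,R$ finite and $\Phi$ a finite set of endomorphisms of $\mathbb{F}_{S'}$.

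Next, set $N=\mathbb{F}_{S'}/\gamma_k(\mathbb{F}_{S'})$, the free nilpotent group of class $<k$ on $S'$; one has an explicit finite (polycyclic) presentation $\langle S'\mid D\rangle$ of $N$, and $N$ is polycyclic, hence Noetherian, with decidable word, subgroup-membership and subgroup-equality problems and with effective normal-closure computations — the classical polycyclic calculus, as already used in \cite{BARTHOLDI2008}. Since $\gamma_k$ is verbal, every $\phi\in\Phi$ descends to an endomorphism $\bar\phi$ of $N$; and since normal closure commutes with quotients, $G/\gamma_k(G)=N/M$ where $M$ is the image in $N$ of $\langle\langle Q\cup\bigcup_n\Phi^n(R)\rangle\rangle$. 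Hence it suffices to compute a finite generating set $m_1,\dots,m_t$ of $M$ as a subgroup of $N$: then $\langle S'\mid D,\,m_1,\dots,m_t\rangle$ is a finite presentation of $G/\gamma_k(G)$, and a final round of Tietze transformations removes the auxiliary generators and yields the required presentation on the marking $S$.

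To compute $M$, I would build the ascending chain of finitely generated normal subgroups of $N$ given by $M_0=\langle\langle\bar R\rangle\rangle^N$ and $M_{j+1}=\langle\langle M_j\cup\bigcup_{\phi\in\Phi}\bar\phi(M_j)\rangle\rangle^N$; each step is effective — apply the $\bar\phi$ to generators, then take a normal closure in the polycyclic group $N$. A short computation shows $\bigcup_j M_j$ equals the image in $N$ of $\langle\langle\bigcup_n\Phi^n(R)\rangle\rangle$. Because $N$ is Noetherian the chain stabilizes, and because subgroup equality in $N$ is decidable one can detect the first index $J$ with $M_J=M_{J+1}$; at that point $M_J$ is $\Phi$-invariant and contains $\bar R$, hence equals $\bigcup_j M_j$, and one finishes by putting $M=\langle\langle M_J\cup\bar Q\rangle\rangle^N$. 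The main obstacle — and the real content of the theorem — is precisely the termination of this last step: it is the Noetherianity of the free nilpotent group that forces the infinite, $\Phi$-iterated family of relators to be captured, modulo $\gamma_k$, by a finite $\Phi$-invariant subgroup, and its effective subgroup theory that lets the algorithm recognize when this has happened and write down a finite presentation. One could instead deduce the statement from Lemma~\ref{lem: marked quotient and finite pres} after showing the marked quotient problem for nilpotent targets is decidable for EDT0L-presented groups, but that reduction ultimately relies on the same two facts about $N$.
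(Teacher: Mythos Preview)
Your argument is correct and rests on the same two facts the paper uses: the free nilpotent group $\mathbb{F}_{S'}/\gamma_k$ is Noetherian (so the ascending chain of normal closures of $\bigcup_{i\le n}\Phi^i(R)$ stabilises), and the word problem in finitely presented nilpotent groups is uniformly decidable (so stabilisation can be detected by checking that the finitely many elements of $\Phi^{J+1}(R)$ vanish in the current quotient). The paper's proof is the same in substance but is packaged through Lemma~\ref{lem: stabilizaton on non-terminals suffices} and Lemma~\ref{lem: eff noeth Nilp Meta Finite}: rather than first converting the EDT0L system to an L-presentation, it works directly with the HDT0L data $(A,w_0,\Phi,f)$, detects stabilisation on the non-terminal side in $\mathbb{F}_A/\gamma_k$, and then invokes the general ``residual'' machinery to transfer this to $\langle S\mid f(\cdots)\rangle/\gamma_k$. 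Your route is the concrete one closer to the original \cite{BARTHOLDI2008}; the paper's abstraction buys a uniform proof that simultaneously covers the abelian-by-nilpotent, finite, residually free, and virtually-hyperbolic cases by varying $\kappa$, whereas your version would need to be redone for each.
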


\begin{theorem}[Finite quotients, \cite{Hartung2011}]\label{thm:finquotsection}
	There is an algorithm that, on input an EDT0L presentation for a marked group $(G,S)$, a finite presentation for a marked finite group $(H,S')$, and a function $f:S\rightarrow S'$, decides whether or not $f$ can be extended to a group homomorphism. 
\end{theorem}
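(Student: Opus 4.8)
The plan is to translate the question into an emptiness test for an EDT0L language. A function $f : S \to S'$ always extends to a group homomorphism $\bar f : \mathbb{F}_S \to H$ by freeness of $\mathbb{F}_S$, and, writing $(G,S) = \langle S \mid R \rangle$ with $R \subseteq (S\cup S^{-1})^*$ the EDT0L language of relators supplied on input, the map $\bar f$ descends to $G = \mathbb{F}_S/\langle\langle R\rangle\rangle$ if and only if $\bar f(R) = \{1_H\}$, that is, if and only if $R \subseteq K$ where $K := \ker \bar f$. (Processing a word $w$ letter by letter and multiplying the corresponding elements of $H$, with $s^{-1}$ sent to $f(s)^{-1}$, computes $\bar f(w)$, so the word condition ``$\bar f(w)=1$'' and the free-group condition ``$w \in K$'' coincide.) Since $H$ is finite, $K$ is a finite-index normal subgroup of $\mathbb{F}_S$, and the key point is that membership in $K$ is therefore a \emph{regular} condition on words.

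Concretely, I would first compute the multiplication table of $H$ from its finite presentation --- e.g. by running Todd--Coxeter coset enumeration on the trivial subgroup, which terminates precisely because $H$ is finite and returns $H$ together with the element named by each symbol of $S'$. Then the complement $\overline{K} = (S\cup S^{-1})^* \setminus K = \{\, w : \bar f(w) \ne 1_H \,\}$ is accepted by the explicit DFA with state set $H$, initial state $1_H$, accepting states $H \setminus \{1_H\}$, and transitions $\delta(h,s) = h\cdot f(s)$ and $\delta(h,s^{-1}) = h\cdot f(s)^{-1}$ for $s \in S$; in particular $\overline{K}$ is regular with a DFA computed from the data. Now $f$ extends to a homomorphism $G \to H$ if and only if $R \subseteq K$, if and only if $R \cap \overline{K} = \emptyset$. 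Using the effective closure of EDT0L under intersection with regular languages (\cite{Rozenberg1976}), I would compute an EDT0L system for $R \cap \overline{K}$, thereby reducing the problem to: decide whether a given EDT0L language is empty.

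Finally, emptiness of an EDT0L language given by a system $(w_0,\Phi)$ over $A \cup A_E$ with terminal alphabet $A$ is decidable. The language equals $\{\, \phi(w_0) : \phi \in \Phi^*,\ \phi(w_0)\in A^* \,\}$, so it is non-empty exactly when $\{\, \phi \in \Phi^* : \phi(w_0)\in A^* \,\}$ is non-empty; but --- exactly as in the proof above that the regular control may be assumed to consist only of compositions mapping $w_0$ into $A^*$ --- this set is the language of the finite DFA whose states are the subsets of $A\cup A_E$, whose initial state is the set of letters of $w_0$, whose accepting states are the subsets of $A$, and whose transition sends a subset $M$ to the set of letters occurring in $\{\phi(c) : c\in M\}$; non-emptiness of this DFA's language is reachability of an accepting state in a finite graph. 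Feeding the system for $R \cap \overline{K}$ into this test decides the theorem. I do not expect any single step to be a serious obstacle: the content is the observation that $K$, being of finite index, is ``seen'' by a finite automaton, after which everything is routine given the good closure properties of EDT0L and the decidability of its emptiness problem; the only points requiring care are extracting the actual finite group $H$ from its presentation and citing (or, as above, reproving) decidability of EDT0L emptiness. An alternative route, closer to Hartung's original argument, would first convert the EDT0L presentation to an L-presentation via Theorem~\ref{thm: EDT0L=Lpres DTF0L=Ascending L pres} and then observe that the homomorphisms $\mathbb{F}_S \to H$ killing all relators form a computable subset of the finite set $\mathrm{Hom}(\mathbb{F}_S,H)$, namely the largest subset that is invariant under precomposition by $\Phi$ among those killing the initial and additional relations.
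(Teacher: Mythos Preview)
Your argument is correct. It is, however, a genuinely different route from the paper's.

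The paper does not argue language-theoretically. Instead it fixes the residual $\kappa(G)=\bigcap_{\phi:G\to H}\ker\phi$ attached to the given finite group $H$, passes to the HDT0L description $(w_0,\Phi,f)$ of the relator language, and applies its two key lemmas: Lemma~\ref{lem: stabilizaton on non-terminals suffices} (stabilization on non-terminals implies stabilization on terminals) together with Lemma~\ref{lem: eff noeth Nilp Meta Finite} (effective Noetherianity). For finite $H$ the group $\mathbb{F}_A/\kappa(\mathbb{F}_A)$ is finite, so the chain of normal closures $\langle\langle\bigcup_{i\le n}\Phi^i(w_0)\rangle\rangle$ stabilizes, and stabilization is detected by checking, using the uniformly solvable word problem in finite groups, whether the words in $\Phi^{n+1}(w_0)$ already vanish in the current quotient. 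Once the stabilized relative presentation is in hand, whether $f$ defines a morphism to $H$ is a finite check.

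Your approach instead observes that the condition ``$\bar f(w)\neq 1$'' is recognized by a DFA with state set $H$, so $R\cap\overline{K}$ is again EDT0L by closure under regular intersection, and then tests emptiness of that EDT0L language. This is shorter and entirely self-contained for the finite case, and it stays at the level of languages rather than groups. What the paper's longer route buys is uniformity: the same effective-Noetherianity template yields the nilpotent, abelian-by-nilpotent, residually free, and virtually-hyperbolic-product cases (Theorems~\ref{thm:nilpquotsection}, \ref{thm:Abelianbynilpquot}, \ref{thm:freequotsection}, \ref{thm:VirtualHyperbolicquotients}) with no essential change, whereas your regularity-of-$K$ argument is specific to finite $H$. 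Your closing ``alternative route'' is in spirit the Hartung argument and is also correct, though the paper absorbs it into the residual framework rather than phrasing it as computing a $\Phi$-invariant subset of $\mathrm{Hom}(\mathbb{F}_S,H)$.
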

We add:
\begin{theorem}[Abelian-by-$k$-nilpotent quotients] \label{thm:Abelianbynilpquot}
	There is an algorithm that, given an EDT0L presentation for a marked group $(G,S)$, and an integer $k$, produces a finite presentation in the category of abelian-by-k-nilpotent groups of $(G/[\gamma_{k}(G),\gamma_{k}(G)],S)$.
\end{theorem}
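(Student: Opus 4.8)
The plan is to follow the general philosophy announced in the introduction: replace the Noetherianity + uniform word-problem hypotheses used for finite and nilpotent quotients by equational Noetherianity together with solvability of the universal Horn theory, and apply this scheme to the variety $\mathcal{A}_k$ of abelian-by-$k$-nilpotent groups. First I would record the structural facts about $\mathcal{A}_k$ that make the argument run: the free objects $F_n(\mathcal{A}_k)$ are finitely presented (this is classical for the varieties $\mathfrak{A}\mathfrak{N}_k$, going back to Hall, since $[\gamma_k,\gamma_k]$ acts on a finitely generated module over a finitely generated polycyclic — hence Noetherian — group ring), and the word problem in $F_n(\mathcal{A}_k)$, and more generally the universal Horn (= quasi-identity) theory of $\mathcal{A}_k$, is uniformly decidable. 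The latter rests on the Noetherianity of the relevant group rings $\mathbb{Z}[F_n/\gamma_k]$: a system of equations over such a module is equivalent to a finite subsystem, so solvability of finite systems of equations and inequations over $\mathcal{A}_k$ reduces to effective linear algebra over a Noetherian ring, which is decidable.

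Next I would set up the reduction of the marked quotient problem to these two ingredients, exactly paralleling the finite/nilpotent case. Given an EDT0L presentation for $(G,S)$ and a finite presentation for $(H,S')\in\mathcal{A}_k$ (with $S'$ indexed so that a candidate map $S\to S'$ is specified), I want to decide whether the relators of $G$ all hold in $H$, i.e. whether the map $S\to S'$ extends to a homomorphism $G\to H$ — equivalently whether $H$ is a marked quotient of $(G,S)$ once one also adds the relators of $G$'s marking matched against $S'$. The point is that the image of $G$ in $H$ is determined by a finite amount of data: since $H\in\mathcal{A}_k$ and $\mathcal{A}_k$ is equationally Noetherian, the (infinitely many) relators produced by the EDT0L/L-system need only be checked up to equational equivalence, and equational Noetherianity guarantees that finitely many of them already imply the rest \emph{in every group of $\mathcal{A}_k$}. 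Concretely: enumerate the relators $r_1, r_2, \dots$ of $G$ (effectively, from the EDT0L grammar — here I would invoke the HDT0L description from Lemma~\ref{eqhedt0l} to enumerate them); the ascending chain of "$\mathcal{A}_k$-radicals" of the partial sets $\{r_1,\dots,r_m\}$ stabilizes by equational Noetherianity, and one can detect stabilization using decidability of the universal Horn theory (to each step, ask whether $r_{m+1}$ is a consequence, over $\mathcal{A}_k$, of $r_1,\dots,r_m$ — this is a quasi-identity). Once the chain stabilizes at stage $m_0$, one has a \emph{finite} presentation $\langle S \mid r_1,\dots,r_{m_0}\rangle$ in the category $\mathcal{A}_k$ of the group $G/[\gamma_k(G),\gamma_k(G)]$ — because modding out a group by the variety laws of $\mathcal{A}_k$ and by $r_1,\dots,r_{m_0}$ gives the same $\mathcal{A}_k$-group as using all the $r_i$. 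That already yields the asserted algorithm producing a finite presentation of $(G/[\gamma_k(G),\gamma_k(G)],S)$ in the category of abelian-by-$k$-nilpotent groups.

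The remaining issue is effectivity of detecting stabilization, i.e. knowing \emph{when} to stop: equational Noetherianity tells us the chain stabilizes but a priori not at a computable stage. Here I would argue as in the nilpotent-quotient algorithm: the group $G/[\gamma_k(G),\gamma_k(G)]$, viewed as an extension of a finitely generated nilpotent group $N := G/\gamma_k(G)$ by an abelian group $M$, makes $M$ a module over the Noetherian ring $\mathbb{Z}[N]$ (and $N$ itself is effectively computable and finitely presented by Theorem~\ref{thm:nilpquotsection}). So after first computing $N = G/\gamma_k(G)$ and a finite presentation for it, the abelian kernel $M$ is presented as the quotient of a \emph{finitely generated} $\mathbb{Z}[N]$-module by the submodule generated by the (images of the) relators $r_i$; Noetherianity of $\mathbb{Z}[N]$ means this submodule is finitely generated, and one can use a Gröbner-basis / Hermite-form style procedure over $\mathbb{Z}[N]$ to detect exactly when adding a new relator $r_{m+1}$ fails to enlarge the submodule — and, crucially, to certify termination, since a strictly increasing chain of submodules of a fixed finitely generated module over a Noetherian ring must terminate and the computation itself witnesses this. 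This reduces the whole problem to effective module theory over $\mathbb{Z}[N]$ for $N$ finitely generated nilpotent, which is classical.

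The main obstacle, as in the earlier cases, is exactly this last point: turning the abstract equational Noetherianity of $\mathcal{A}_k$ into an \emph{effective} stopping criterion. The key lemma to nail down is that over $\mathbb{Z}[N]$ with $N$ a given finitely generated nilpotent group, one has an algorithm for the submodule membership problem in finitely generated modules (equivalently, that $\mathbb{Z}[N]$ admits effective Gröbner/standard bases); granting that, feeding in the effectively-enumerated relators from the EDT0L/HDT0L grammar and halting when the module stabilizes produces the desired finite presentation, and — by the reduction in the second paragraph and Lemma~\ref{lem: compute finite pres => quotients are computable}-type reasoning together with decidability of the isomorphism problem in $\mathcal{A}_k$ — also decides the marked quotient problem. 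I would therefore structure the section as: (i) $F_n(\mathcal{A}_k)$ finitely presented and $\mathcal{A}_k$ equationally Noetherian; (ii) effective module theory over $\mathbb{Z}[N]$, $N$ f.g. nilpotent; (iii) the enumeration-and-stabilization algorithm; (iv) deduce Theorem~\ref{thm:Abelianbynilpquot} and, via Lemma~\ref{lem: iso P EDT0L equiv to quotient problem}, the marked quotient statement of Theorem~\ref{Thm:quotient computation intro} for $\mathcal{C} = \mathcal{A}_k$.
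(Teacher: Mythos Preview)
Your proposal has the right ingredients — Hall's Max-n for free abelian-by-nilpotent groups and uniform solvability of the word problem — but there is a genuine gap in the termination argument. You enumerate relators $r_1,r_2,\dots$ and at each step ask whether $r_{m+1}$ is a consequence (in $\mathcal{A}_k$) of $r_1,\dots,r_m$. Noetherianity guarantees the chain of normal closures eventually stabilizes, but checking one new relator at a time can never \emph{certify} that you have reached the stable stage: the fact that $r_{m+1}$ is redundant says nothing about $r_{m+2},r_{m+3},\dots$. Your sentence ``the computation itself witnesses this'' is exactly where the argument breaks; a strictly-increasing-then-constant chain looks, at every finite stage, indistinguishable from one that might later grow again. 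The same objection applies to the module-theoretic version in your third paragraph.

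The paper closes this gap by exploiting the endomorphism structure of the HDT0L system rather than treating the relator set as a bare enumeration. Working with the DT0L part $(w_0,\Phi)$ on the non-terminal alphabet $A$ and the residual $\kappa(G)=[\gamma_k(G),\gamma_k(G)]$, one checks at stage $n$ whether the \emph{finitely many} elements of $\Phi^{n+1}(w_0)$ are already trivial in $\langle A\mid \bigcup_{i\le n}\Phi^i(w_0)\rangle/\kappa$. Since $\kappa$ is fully characteristic, each $\phi\in\Phi$ is an endomorphism of $\mathbb{F}_A/\kappa$; hence once $\Phi^{n+1}(w_0)$ lies in the normal closure of $\bigcup_{i\le n}\Phi^i(w_0)$, every $\Phi^{m}(w_0)$ with $m>n$ does too, and stabilization is genuinely certified. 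This is the content of the paper's Lemmas on ``stabilization on non-terminals'' and ``effective noetherianity for abelian-by-nilpotent and finite quotients''. A separate short lemma then transfers stabilization from the non-terminal alphabet $A$ to the terminal alphabet $S$ via the HDT0L morphism $f$.

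Two smaller remarks. First, the relevant Noetherian property here is Max-n for $\mathbb{F}_A/\kappa$ (Hall), not equational Noetherianity of the class $\mathcal{A}_k$; the paper reserves the equational-Noetherian route for the hyperbolic and free cases, where Max-n fails. Second, checking triviality of the elements of $\Phi^{n+1}(w_0)$ only needs the uniform word problem in finitely presented abelian-by-nilpotent groups (Baumslag--Cannonito--Miller), not the full universal Horn theory; your detour through $\mathbb{Z}[N]$-module computations and a prior computation of $N=G/\gamma_k(G)$ is unnecessary.
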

What we mean in the above is that $(G/[\gamma_{k}(G),\gamma_{k}(G)],S)$ is given by a finite presentation together with the identity  of abelian-by-k-nilpotent groups as an additional infinite set of relations.

\begin{theorem}[Residually free image]\label{thm:freequotsection}
	There is an algorithm that, on input an EDT0L presentation for a marked group $(G,S)$, produces a relative finite presentation in the category of residually free groups for the residually free image of $(G,S)$.
\end{theorem}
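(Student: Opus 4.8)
The plan is to reduce the statement to the construction of a finite presentation having the same marked free quotients as $(G,S)$. Recall that the residually free image $(G,S)^{\mathrm{rf}}$ is the quotient of $\mathbb{F}_S$ by the intersection $\bar N$ of the kernels of all homomorphisms from $\mathbb{F}_S$ onto free groups that factor through $G$; it depends only on the set of marked free quotients of $(G,S)$. Since the class of residually free groups is closed under subgroups and direct products, every finite presentation $\langle S\mid R'\rangle$ has a largest residually free quotient $\langle S\mid R'\rangle^{\mathrm{rf}}$, and this is the meaning of a presentation relative to the class of residually free groups. So it suffices to produce a finite $R'\subseteq\mathbb{F}_S$ such that $\langle S\mid R'\rangle$ has exactly the marked free quotients of $(G,S)$; then $\langle S\mid R'\rangle^{\mathrm{rf}}=(G,S)^{\mathrm{rf}}$ as marked groups, and $\langle S\mid R'\rangle$ is the required output.

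Existence of such an $R'$ comes from equational Noetherianity of free groups. Let $R$ be the (uniformly recursive) relator language of the given EDT0L presentation, so $G=\mathbb{F}_S/\langle\langle R\rangle\rangle$. View the elements of $R$ as a system of equations without constants in the variables $S$; since every homomorphism from $\mathbb{F}_S$ to a free group has image generated by $|S|$ elements, hence free of rank at most $|S|$ and so a subgroup of the fixed free group $\mathbb{F}_{|S|}$, equational Noetherianity of $\mathbb{F}_{|S|}$ yields a finite subset $R'\subseteq R$ such that every homomorphism from $\mathbb{F}_S$ to a free group which kills $R'$ also kills all of $R$. For such an $R'$ one has $\langle S\mid R'\rangle\twoheadrightarrow(G,S)$ because $R'\subseteq R$, so the marked free quotients of $(G,S)$ are among those of $\langle S\mid R'\rangle$; conversely every marked free quotient of $\langle S\mid R'\rangle$ kills $R$, hence factors through $(G,S)$ and is a marked free quotient of $(G,S)$. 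So the two marked groups have the same marked free quotients, as wanted.

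To make this effective, enumerate the finite subsets $R'$ of $R$. For a candidate $R'$, the group $H=\langle S\mid R'\rangle$ is finitely presented and, as above, already carries every marked free quotient of $(G,S)$; so one only has to decide whether $H$ carries no further one, that is, whether every marked free quotient of $H$ is a marked quotient of $(G,S)$. By Sela's theorem (\cite{Sela2001}, in its effective form \cite{Groves2009}, using that limit groups form a recursively enumerable class via finite presentations with decidable word problem and universal theory), compute the finitely many maximal limit quotients $(\Delta_1,\chi_1(S)),\dots,(\Delta_m,\chi_m(S))$ of $H$: every marked free quotient of $H$ factors through one of the $\chi_t$. Since each $\Delta_t$ is residually free, and hence equals its own residually free image, it follows that every marked free quotient of $H$ is a marked quotient of $(G,S)$ if and only if each $(\Delta_t,\chi_t(S))$ is itself a marked quotient of $(G,S)$. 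Each of these finitely many tests is decidable by the method used to prove Theorem~\ref{Thm:quotient computation intro}, which applies to limit groups because they are equationally Noetherian and have solvable universal Horn theory. As soon as all the tests succeed, output $\langle S\mid R'\rangle$; by the previous paragraph the enumeration eventually reaches such an $R'$, so the procedure halts.

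The hard part is the last step: deciding, for a finitely presented limit group $(\Delta,S')$ and a marked group $(G,S)$ given by an EDT0L presentation, whether $(\Delta,S')$ is a marked quotient of $(G,S)$. This needs the full quotient-computation machinery underlying Theorem~\ref{Thm:quotient computation intro} — equational Noetherianity of the target class together with solvability of its universal Horn theory — and not merely the (easier) decidability of free quotients; it is also what forces the detour through limit groups, since a finitely presented group such as $\mathbb{Z}^2$ can have infinitely many pairwise incomparable marked free quotients while having only finitely many maximal limit quotients, so one cannot replace the $(\Delta_t,\chi_t(S))$ by finitely many free marked quotients.
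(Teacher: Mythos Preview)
Your argument is correct, but it takes a considerably more elaborate route than the paper's. The paper proves the theorem by a single direct application of the effective-noetherianity machinery: in the HDT0L model with non-terminal alphabet $A$, seed $w_0$, endomorphisms $\Phi$ and projection $f$, one takes $\kappa$ to be the residually-free kernel and simply iterates $\Phi$ on $w_0$. Equational noetherianity of free groups (Proposition~\ref{Cor: finite pres as Residually free group }) guarantees that the sequence $\langle A\mid\bigcup_{i\le n}\Phi^i(w_0)\rangle/\kappa$ stabilises, and at each step Makanin's algorithm decides whether the finitely many relations of $\Phi^{n+1}(w_0)$ already hold in $\langle A\mid\bigcup_{i\le n}\Phi^i(w_0)\rangle/\kappa$; this is a single universal sentence over free groups. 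Once stabilisation is detected, Lemma~\ref{lem: stabilizaton on non-terminals suffices} yields that $\langle S\mid f(\bigcup_{i\le n}\Phi^i(w_0))\rangle$ is the desired finite presentation. No enumeration of arbitrary finite sub-presentations, no Makanin--Razborov layers and no limit groups enter.

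The ``hard part'' you isolate in your final paragraph --- deciding, via the machinery underlying Theorem~\ref{Thm:quotient computation intro}, whether a given equationally Noetherian target with decidable universal theory is a marked quotient of $(G,S)$ --- \emph{is} that machinery, and invoking it once with the target $\mathbb{F}_2$ (equivalently, with $\kappa$ the residually-free kernel) already proves the theorem outright. Your detour through Groves--Wilton and the limit groups $\Delta_t$ is therefore redundant: you are applying the key lemma once per limit group in an MR layer of each candidate $R'$, when a single application with the residually-free residual suffices. In the paper the logical flow is in fact reversed: computability of the first Makanin--Razborov layer (Theorem~\ref{Thm: MR diagram}) is \emph{deduced from} the present theorem rather than used to prove it.
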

\begin{theorem}[Quotients that are virtual direct products of hyperbolic groups]\label{thm:VirtualHyperbolicquotients}
	There is an algorithm that, on input an EDT0L presentation for a marked group $(G,S)$, a finite presentation for a marked  group $(H,S')$ which is virtually a direct product of hyperbolic groups, and a function $f:S\rightarrow S'$, decides whether or not $f$ can be extended to a group homomorphism. 
\end{theorem}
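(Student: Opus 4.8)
The plan is to follow the template of \cite{BARTHOLDI2008} and \cite{Hartung2011}, replacing their use of Noetherianity and uniform solvability of the word problem by \emph{equational Noetherianity} of $H$ together with solvability of its universal Horn theory. First I would convert the given EDT0L presentation of $(G,S)$ into an L-presentation $\langle \widehat S \mid Q \mid \Phi \mid R_0\rangle$ of $G$, using the effective part of the proof of Theorem \ref{thm: EDT0L=Lpres DTF0L=Ascending L pres} (which factors through the HDT0L normal form, Lemma \ref{eqhedt0l}). This conversion enlarges the generating tuple, $\widehat S \supseteq S$, but every new generator is equated by a relation of $Q$ to a word over $S$; consequently a function $f\colon S \to S'$ determines at most one homomorphism $G \to H$, and it induces a homomorphism $\widehat f\colon \mathbb{F}_{\widehat{S}} \to H$ which is well defined once we set each new generator's image using that relation. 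Then $f$ extends to $G \to H$ if and only if $\widehat f$ annihilates the finitely many words of $Q$ and every word of $\bigcup_{n\ge 0}\Phi^n(R_0)$. The first condition is a finite number of instances of the word problem in $H$, which is solvable since $H$ is virtually a direct product of hyperbolic groups; so everything reduces to deciding whether $\widehat f$ annihilates $\bigcup_n \Phi^n(R_0)$.

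For this I would use the action of the monoid $\Phi^*$ on $\mathrm{Hom}(\mathbb{F}_{\widehat{S}}, H)$ by precomposition. Writing $\Phi^{\le n} = \bigcup_{j \le n} \Phi^j$, set
$$V_n = \{\, g \in \mathrm{Hom}(\mathbb{F}_{\widehat{S}}, H) : g(\bar\phi(r)) = 1 \text{ for all } \bar\phi \in \Phi^{\le n},\ r \in R_0 \,\},$$
so that $V_0 \supseteq V_1 \supseteq \cdots$, and $\bigcap_n V_n$ is precisely the set of homomorphisms killing $\bigcup_n \Phi^n(R_0)$. Each $V_n$ is the set of solutions in $H$ of a finite system $E_n$ of constant-free equations, namely $E_n = \{\bar\phi(r) : \bar\phi \in \Phi^{\le n},\ r \in R_0\}$. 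The combinatorial heart of the argument is a \emph{propagation lemma}: if $V_n = V_{n+1}$ for some $n$, then $V_n = V_m$ for all $m \ge n$, hence $V_n = \bigcap_m V_m$. One proves this by induction on $m$: for $g \in V_m = V_n = V_{n+1}$ and a composition $\bar\psi = \phi_1 \circ \bar\psi'$ of length $m+1$ with $\phi_1 \in \Phi$, one checks that $g \circ \phi_1$ again lies in $V_n$ — because $g \in V_{n+1}$ and $\phi_1$ maps $\Phi^{\le n}$ into $\Phi^{\le n+1}$ — hence in $V_m$ by hypothesis, so $g(\bar\psi(r)) = (g\circ\phi_1)(\bar\psi'(r)) = 1$. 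That such an $n$ exists is exactly equational Noetherianity of $H$: the infinite system $\bigcup_n E_n$ is equivalent to a finite subsystem, which sits inside some $E_N$, forcing $V_N = \bigcap_m V_m$ and in particular $V_N = V_{N+1}$. Hyperbolic groups are equationally Noetherian (Sela; Reinfeldt--Weidmann), and equational Noetherianity is inherited by finite direct products and by finite extensions, so it holds throughout the class in the statement.

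The algorithm then runs as follows. Compute $E_0, E_1, \dots$ in turn; at stage $n$ decide whether $V_n = V_{n+1}$, i.e.\ whether each equation of $E_{n+1}$ holds on every solution in $H$ of the system $E_n$. This is a finite conjunction of universal Horn sentences over $H$, and is decidable by solvability of the universal (Horn) theory of virtual direct products of finitely many hyperbolic groups, established by Ciobanu \cite{Ciobanu2020}, building on Dahmani \cite{Dahmani2010}. By the propagation lemma together with equational Noetherianity this search halts at some stage $n$ with $V_n = \bigcap_m V_m$; at that point $f$ extends to $G \to H$ if and only if $\widehat f \in V_n$, which is decided by evaluating the finitely many equations of $E_n$ at $\widehat f$ using the word problem in $H$. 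Return the answer.

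The main obstacle is the universal-theory input: the correctness of the halting criterion rests entirely on the solvability of the universal (Horn) theory of virtual direct products of hyperbolic groups, which is the deep external result we are importing. The remaining points requiring care are minor by comparison: that equational Noetherianity passes to finite direct products and finite extensions of hyperbolic groups, that the EDT0L-to-L-presentation conversion and the resulting change of generating tuple are genuinely effective, and that the word-problem and universal-theory algorithms are uniform in the given finite presentation of $H$.
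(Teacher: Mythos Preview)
Your proposal is correct and follows essentially the same strategy as the paper: convert to an L/HDT0L presentation, use equational Noetherianity of $H$ to guarantee that the chain of partial relation sets stabilizes, detect stabilization via the decidable universal (Horn) theory of $H$ from \cite{Ciobanu2020,Dahmani2010}, and then finish with finitely many word-problem checks. The only difference is packaging: the paper phrases stabilization through the residual $\kappa(G)=\bigcap_{\phi:G\to H}\ker\phi$ and the quotients $\langle A\mid\bigcup_{i\le n}\Phi^i(w_0)\rangle/\kappa$ (Lemmas \ref{lem: stabilizaton on non-terminals suffices} and \ref{lem: noeth for virtual hyperbolic quotients}), whereas you work dually with the solution sets $V_n\subseteq\mathrm{Hom}(\mathbb{F}_{\widehat S},H)$; your propagation lemma is exactly the statement that each $\phi\in\Phi$ descends to an endomorphism once $V_n=V_{n+1}$, which is the paper's argument in Lemma \ref{lem: noeth for free quotients}/\ref{lem: noeth for virtual hyperbolic quotients}.
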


The theorems above in each case rely on two properties : 
\begin{itemize}
	\item For nilpotent and finite groups (the cases that were known), on noetherianity and uniform solvability of the word problem.  
	\item For abelian-by-nilpotent groups, noetherianity is replaced by the Max-n property \cite{Hall1954}, which is noetherianity for normal subgroups. We also use uniform solvability of the word problem. 
	\item For direct products of hyperbolic groups, we rely on equational noetherianity instead of actual noetherianity, and on decidability of the universal theory instead of the word problem (\cite{Ciobanu2020}, extending the result of \cite[Corollary 9.1]{Dahmani2010} on hyperbolic groups). (Decidability of the universal Horn theory would suffice.)
\end{itemize}

\subsection{Subgroup functors of fully characteristic subgroups }

The notion of subgroup functor is used in the study of finite groups. See for instance \cite{BallesterBolinches2016} and references therein.  
\begin{definition}
	Let $\mathcal{C}$ be a class of groups. Let $\kappa$ be a function that associates to each group $G$ in $\mathcal{C}$ a set $\kappa(G)$ of subgroups of $G$. We say that $\kappa$ is a \textbf{subgroup functor} for $\mathcal{C}$ if it respects the isomorphism relation:  if $f:G_1\rightarrow G_2 $ is an isomorphism, then $\kappa(G_2)=f(\kappa (G_1))$. 	
\end{definition}

We will consider subgroups functors with additional properties. 
\begin{definition}\label{def: Good subgroup functor}
	A \textbf{residual} is a subgroup functor $\kappa$ such that:
\begin{enumerate}
	\item For each $G$ in  $\mathcal{C}$, $\kappa (G)$ is a singleton. 
	\item For any two groups $G_1$ and $G_2$ in $\mathcal{C}$ and any group homomorphism $\phi : G_1 \rightarrow G_2$ the image of $\kappa (G_1)$ by $\phi$ lies in  $\kappa (G_2)$. 
		$$\forall G_1, G_2 \in \mathcal{C}, \forall \phi :G_1 \rightarrow G_2, \, \phi(\kappa (G_1))\le \kappa(G_2).$$
	\item  For all $G$,  $\kappa (G/\kappa(G))=\{1\}$.
\end{enumerate}
\end{definition}
In the definition above, applying condition 2 to $G_1=G_2$, we see that for each $G_1$, $\kappa (G_1)$ is fully characteristic in $G_1$.\\
As a consequence of the third condition, we see that  for any group $G$, $$(G/\kappa(G)) \big / \kappa(G/\kappa(G)) =G/\kappa(G).$$ 
\begin{example}
Let $\mathcal{Z}(G)$ designate the center of $G$. Then $\mathcal{Z}$ satisfies only the first condition of Definition \ref{def: Good subgroup functor}. \\
Let  $\mathcal{T}(G)$ designates the subgroup of $G$ generated by its elements of finite order.  Then $\mathcal{T}$ satisfies the first and second conditions of Definition \ref{def: Good subgroup functor}, but not the third. 
\end{example}

Residuals are not unrelated the notion of  ``residually-$\mathcal{C}$" group, where $\mathcal{C}$ is a class of groups. 

\begin{lemma}
	Suppose that $\kappa$ is a residual. Then there is a class $\mathcal{C}$ of groups such that for all $G$, $$\kappa(G)=\bigcap_{ H\in \mathcal{C},\, \phi:G:\rightarrow H} \ker \phi $$
	And thus $G/\kappa(G)$ is the biggest residually-$\mathcal{C}$ quotient of $G$. 
\end{lemma}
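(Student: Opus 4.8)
The plan is to take $\mathcal{C}$ to be the class of \emph{$\kappa$-trivial} groups: those groups $H$ in the ambient class on which $\kappa$ is defined that satisfy $\kappa(H)=\{1\}$ (here, as the paper does just below, I identify the singleton $\kappa(H)$ with its unique element). Once this choice is made, the asserted equality should drop straight out of the three defining properties of a residual, applied one at a time; the only genuine content is guessing the right $\mathcal{C}$.

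First I would establish the inclusion $\kappa(G)\le \bigcap_{H\in\mathcal{C},\,\phi:G\to H}\ker\phi$. This is immediate from property~2 of Definition~\ref{def: Good subgroup functor}: for any homomorphism $\phi:G\to H$ with $H\in\mathcal{C}$ one has $\phi(\kappa(G))\le \kappa(H)=\{1\}$, hence $\kappa(G)\le\ker\phi$; intersecting over all such $\phi$ gives the inclusion.

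For the reverse inclusion I would invoke property~3, together with the (implicit) fact that the ambient class is stable under the quotient $G\mapsto G/\kappa(G)$ — which is precisely what makes property~3 a meaningful statement. By property~3 the group $G/\kappa(G)$ lies in $\mathcal{C}$, and the canonical projection $q:G\to G/\kappa(G)$ has $\ker q=\kappa(G)$; therefore $\bigcap_{H\in\mathcal{C},\,\phi:G\to H}\ker\phi\le \ker q=\kappa(G)$. This also shows the indexing family is never empty, so there is no degenerate case to worry about, and combining the two inclusions proves the displayed equality.

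For the ``thus'' clause, I would argue that a quotient $G/N$ is residually-$\mathcal{C}$ exactly when $N$ equals the intersection of the kernels of the homomorphisms $G\to H\in\mathcal{C}$ that contain $N$; this forces $N\ge \bigcap_{H\in\mathcal{C},\,\phi}\ker\phi=\kappa(G)$, so every residually-$\mathcal{C}$ quotient of $G$ is itself a quotient of $G/\kappa(G)$, while $G/\kappa(G)$ lies in $\mathcal{C}$ by property~3 and is therefore trivially residually-$\mathcal{C}$. I do not expect any real obstacle here: the only point that needs care is the bookkeeping about which ambient class $\kappa$ lives on and checking its stability under the quotient by $\kappa$, and that stability is forced on us the moment property~3 is stated.
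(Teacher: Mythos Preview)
Your proposal is correct and follows essentially the same route as the paper: both take $\mathcal{C}=\{H:\kappa(H)=\{1\}\}$, use property~2 for the inclusion $\kappa(G)\subseteq\bigcap\ker\phi$, and use property~3 via the canonical projection $G\to G/\kappa(G)$ for the reverse inclusion. If anything, your write-up is slightly more complete, since you also spell out the ``thus'' clause about $G/\kappa(G)$ being the largest residually-$\mathcal{C}$ quotient, which the paper leaves implicit.
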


\begin{proof}
We just take $\mathcal{C}$ to be $\{G, \kappa(G)=\{1\}\}$. Then:
	\begin{align*}
		w\not \in \kappa(G)&\implies  w\ne 1 \text{ in $G/\kappa (G)$}\\
		&\implies  w \notin \bigcap_{ H\in \mathcal{C},\, \phi:G:\rightarrow H} \ker \phi
\end{align*}
And: 
\begin{align*}
	w \in \kappa(G)&\implies  \forall H\in\mathcal{C}, \forall \phi:G\rightarrow H, \phi(w)=1\\
	&\implies  w \in \bigcap_{ H\in \mathcal{C},\, \phi:G:\rightarrow H} \ker \phi
\end{align*}
\end{proof}

Note also the following corollary, which expresses the fact that the commutation property $$\forall G, \forall R\subseteq G, \, G/\kappa(G)/\langle\langle R\rangle\rangle=G/\langle\langle R\rangle\rangle/\kappa(G/\langle\langle R\rangle\rangle)$$  characterizes group varieties.
\begin{corollary}
	Suppose that $\kappa$ is a residual, and that the property $\kappa (G)=\{1\}$ is quotient stable. Then the set of groups $G$ with  $\kappa(G)=\{1\}$ is a group variety, and for any group $\kappa(G)$ is the subgroup of $G$ generated by the verbal subgroups associated to this variety. (It is thus itself a verbal subgroup when the variety is defined by finitely many identities.)
\end{corollary}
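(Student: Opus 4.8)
The plan is to combine the previous lemma with standard facts about varieties and verbal subgroups. First, I would unwind the hypotheses: $\kappa$ is a residual, so by the lemma there is a class $\mathcal{C}$ (concretely $\mathcal{C}=\{G : \kappa(G)=\{1\}\}$) with $\kappa(G)=\bigcap_{H\in\mathcal{C},\,\phi:G\to H}\ker\phi$, and moreover $\kappa(G)=\{1\}$ exactly when $G\in\mathcal{C}$. The extra hypothesis is that membership in $\mathcal{C}$ is quotient stable. I would then observe that $\mathcal{C}$ is automatically closed under subgroups and arbitrary direct products as well: closure under subgroups follows from condition 2 of the definition of a residual applied to an inclusion $H\hookrightarrow G$ (the image of $\kappa(H)$ lands in $\kappa(G)=\{1\}$, forcing $\kappa(H)=\{1\}$), and closure under products follows because the projections $\prod G_i\to G_i$ carry $\kappa(\prod G_i)$ into each $\kappa(G_i)=\{1\}$, hence $\kappa(\prod G_i)$ is trivial. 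Together with quotient stability, $\mathcal{C}$ is closed under subgroups, quotients and arbitrary direct products, so by Birkhoff's HSP theorem $\mathcal{C}$ is a group variety.

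Next I would identify $\kappa(G)$ with the appropriate verbal subgroup. Let $V$ be the set of identities (laws) satisfied by all groups in $\mathcal{C}$; since $\mathcal{C}$ is a variety, $\mathcal{C}$ consists exactly of the groups satisfying all laws in $V$. For an arbitrary group $G$, the verbal subgroup $V(G)$ generated by the values of all words in $V$ is the smallest normal subgroup with $G/V(G)\in\mathcal{C}$; this is the classical description of the verbal/relatively free quotient. I would then argue $\kappa(G)=V(G)$: on one hand $G/V(G)\in\mathcal{C}$ so $\kappa(G)\le V(G)$ by the intersection formula (one of the kernels in the intersection is $V(G)$, via $G\to G/V(G)$); on the other hand $G/\kappa(G)\in\mathcal{C}$ by condition 3, so $G/\kappa(G)$ satisfies every law of $V$, which forces $V(G)\le\kappa(G)$. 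Hence $\kappa(G)=V(G)$ is the subgroup generated by the verbal subgroups of the laws defining the variety, and when $V$ can be taken finite this is visibly a single verbal subgroup (the product of finitely many verbal subgroups, which is itself verbal via the word obtained by juxtaposing the defining words on disjoint variables).

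The main obstacle is making the HSP step airtight: strictly, Birkhoff's theorem characterizes varieties as classes closed under H, S and P (including the trivial group, which here sits in $\mathcal{C}$ since $\kappa$ of the trivial group is forced to be trivial by condition 1). So the real content is the verification of the S and P closure properties from the residual axioms, which is the short argument sketched above; quotient stability is given. The only subtlety worth a sentence is that condition 2 is stated for homomorphisms between groups \emph{in $\mathcal{C}$}, so to deduce $\kappa(H)=\{1\}$ from $H\le G\in\mathcal{C}$ one must first know $\kappa(H)$ is defined, i.e. that $H\in\mathcal{C}$ — but this is exactly what we are proving, so instead I would run the subgroup-closure argument directly from the intersection formula of the lemma (every map $H\to K$ with $K\in\mathcal{C}$ extends along $H\le G$ composed with maps $G\to K$, so $\kappa(H)\le\kappa(G)\cap H=\{1\}$), sidestepping the circularity. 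Everything else is bookkeeping with verbal subgroups.
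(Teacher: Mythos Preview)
Your proposal is correct and follows the same route as the paper: verify closure under subgroups and direct products, invoke Birkhoff's theorem together with the quotient-stability hypothesis, and identify $\kappa(G)$ with the verbal subgroup. The paper's proof is terser --- it simply cites the standard fact that ``residually-$\mathcal{C}$'' is closed under S and P for any class $\mathcal{C}$ (which is exactly what the previous lemma tells you the trivial-$\kappa$ class is), whereas you spell this out from the axioms and additionally write down the double inclusion $\kappa(G)=V(G)$ that the paper leaves implicit.

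One small point: the ``circularity'' worry you raise is a misreading. In Definition~\ref{def: Good subgroup functor} the class $\mathcal{C}$ is the \emph{domain} of the subgroup functor (in practice, all groups), not the class $\{G:\kappa(G)=\{1\}\}$; so condition~2 applies to any inclusion $H\hookrightarrow G$ without needing to know in advance that $H$ lies in the trivial-$\kappa$ class. Your first argument for subgroup closure was already fine and no sidestep is needed. (Your sidestep sentence is also slightly off --- not every map $H\to K$ extends through $G$ --- but you only need the single map $H\hookrightarrow G$ to a group in the trivial-$\kappa$ class, which has trivial kernel.)
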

\begin{proof}
	This simply follows from the fact that a group variety is a set of groups stable by unrestricted direct product, taking subgroups and taking quotients, by a result of  Birkhoff \cite{Birkhoff1935}. Because the set of residually $\mathcal{C}$-groups, for any $\mathcal{C}$, is always stable by taking direct products and subgroups, quotient stability is the only missing hypothesis. 
\end{proof}

For any group $G$ we will now abbreviate $G/\kappa (G)$ by $G/\kappa$. Also, for $R$ a set of elements of a group  $G$, we denote by $G/R$ the quotient $G/\langle\langle R\rangle \rangle$, omitting to express the normal closure.  
\begin{lemma}\label{lem: three marked group morphisms }
	Suppose that $\kappa$ satisfies conditions 1 and 2 in Definition \ref{def: Good subgroup functor}. 
	Let $(G,S)$ be a marked group. Let $R$ be set of relations. 
	 Then there are morphisms of marked groups:
	$$(G/R /\kappa, S)   \rightarrow (G/\kappa/R/\kappa,S)\rightarrow (G/R/\kappa/\kappa,S). $$
\end{lemma}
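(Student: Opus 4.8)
The idea is to build each of the two arrows by checking that the defining relations of the source group vanish in the target, using only the fact that a marked-group morphism $(A,S)\to(B,S)$ exists precisely when every relator of the presentation defining $A$ becomes trivial in $B$ (this is the "at most one morphism" remark from the preliminaries, made effective: a surjection $\mathbb F_S\twoheadrightarrow A$ factors through $\mathbb F_S\twoheadrightarrow B$ iff $\ker(\mathbb F_S\to A)\subseteq\ker(\mathbb F_S\to B)$). So it suffices to exhibit, for each arrow, a surjection of the stated groups compatible with the markings, equivalently a containment of normal subgroups of $\mathbb F_S$.

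For the first arrow $(G/R/\kappa,S)\to(G/\kappa/R/\kappa,S)$: start from the canonical quotient map $q\colon G\to G/\kappa$, which is a morphism of marked groups $(G,S)\to(G/\kappa,S)$. Since $q$ sends the relation set $R$ into the image $q(R)$, it induces a morphism $(G/R,S)\to((G/\kappa)/q(R),S)=(G/\kappa/R,S)$ of marked groups. Now post-compose with the canonical map to $(G/\kappa/R)/\kappa$; this gives a morphism $(G/R,S)\to(G/\kappa/R/\kappa,S)$. The target has trivial $\kappa$ by condition 3 of Definition~\ref{def: Good subgroup functor} — wait, we only assumed conditions 1 and 2, so instead use condition 2 directly: applying condition 2 to the morphism $(G/R,S)\to(G/\kappa/R/\kappa,S)$ just constructed, the image of $\kappa(G/R)$ lands inside $\kappa(G/\kappa/R/\kappa)$; but the latter group is already of the form $(\text{something})/\kappa$, and by the observation following Definition~\ref{def: Good subgroup functor} (namely $(H/\kappa)/\kappa(H/\kappa)=H/\kappa$, so $\kappa$ of such a group is trivial), that image is trivial. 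Hence the morphism $(G/R,S)\to(G/\kappa/R/\kappa,S)$ kills $\kappa(G/R)$ and therefore factors through $(G/R/\kappa,S)$, yielding the first arrow.

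For the second arrow $(G/\kappa/R/\kappa,S)\to(G/R/\kappa/\kappa,S)$: first note $G/R/\kappa/\kappa=G/R/\kappa$ by the same observation (iterating $\kappa$ on a group that is already a $\kappa$-quotient changes nothing), so it is enough to produce a morphism $(G/\kappa/R/\kappa,S)\to(G/R/\kappa,S)$. Compose the canonical maps $(G,S)\to(G/R,S)\to(G/R/\kappa,S)$; condition 2 applied to this composite gives that $\kappa(G)$ maps into $\kappa(G/R/\kappa)=\{1\}$, so the composite factors as $(G,S)\to(G/\kappa,S)\to(G/R/\kappa,S)$. This new morphism $(G/\kappa,S)\to(G/R/\kappa,S)$ sends $R$ (more precisely, the image of $R$ in $G/\kappa$) to $0$, so it induces $(G/\kappa/R,S)\to(G/R/\kappa,S)$; and once more condition 2 shows $\kappa(G/\kappa/R)$ dies in the target, so this factors through $(G/\kappa/R/\kappa,S)$, giving the second arrow.

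\textbf{Main obstacle.} There is no deep obstacle — the whole argument is a diagram chase in the category of marked groups, whose only subtlety is keeping track of which instances of $\kappa$ are already "absorbed" (i.e. trivial because the group is visibly a $\kappa$-quotient) versus which require invoking condition 2 of Definition~\ref{def: Good subgroup functor} to push a $\kappa$-subgroup into a trivial one. The place to be careful is that we are only allowed conditions 1 and 2, not condition 3, so one must always realize "$\kappa$ vanishes here" as the image-killing consequence of condition 2 together with the elementary identity $(H/\kappa)/\kappa(H/\kappa)=H/\kappa$, rather than by citing idempotency of $\kappa$ directly.
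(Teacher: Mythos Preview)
Your argument has a genuine gap: it repeatedly uses the identity $\kappa(H/\kappa)=\{1\}$ (equivalently $(H/\kappa)/\kappa(H/\kappa)=H/\kappa$), and this is \emph{precisely} condition~3 of Definition~\ref{def: Good subgroup functor}, not an ``elementary'' fact. You even notice the danger (``wait, we only assumed conditions 1 and 2''), but then resolve it by citing the observation right after Definition~\ref{def: Good subgroup functor}, which the paper explicitly derives \emph{from} condition~3. Without condition~3 there is no reason that $\kappa$ applied to something already of the form $H/\kappa$ should be trivial, so both your arrows as written are unjustified.

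The fix, which is essentially the paper's one-line proof, is to use condition~2 more directly. Condition~2 says exactly that $G\mapsto G/\kappa$ is functorial: for any $\phi\colon G_1\to G_2$ the composite $G_1\to G_2\to G_2/\kappa$ kills $\kappa(G_1)$ (since $\phi(\kappa(G_1))\le\kappa(G_2)$), hence induces $G_1/\kappa\to G_2/\kappa$. Now simply apply this functoriality. From $G\to G/\kappa$ one gets $G/R\to G/\kappa/R$, and functoriality of $/\kappa$ yields the first arrow $G/R/\kappa\to G/\kappa/R/\kappa$. From $G\to G/R$, functoriality of $/\kappa$ gives $G/\kappa\to G/R/\kappa$; the relations $R$ die in the target, so this descends to $G/\kappa/R\to G/R/\kappa$, and one more application of functoriality of $/\kappa$ gives the second arrow $G/\kappa/R/\kappa\to G/R/\kappa/\kappa$. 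No appeal to $\kappa(H/\kappa)=\{1\}$ is needed anywhere.
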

\begin{proof}
	This follows directly from the second condition in Definition \ref{def: Good subgroup functor}. 
\end{proof}

\begin{lemma}\label{lem: commutation mod kappa and add relations}
 Suppose that $\kappa$ is a residual. For any group we have that  
	$$((G/\kappa )/R) /\kappa =(G/R) /\kappa.$$
\end{lemma}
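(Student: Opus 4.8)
The plan is to show the two quotients $((G/\kappa)/R)/\kappa$ and $(G/R)/\kappa$ receive mutually inverse morphisms of marked groups — or rather, since morphisms of marked groups are unique when they exist (as recalled in the preliminaries), it suffices to produce surjective morphisms in both directions, after which they are forced to be isomorphisms. Both groups are naturally marked by the image of $S$, so everything takes place in the lattice of marked groups. First I would fix notation: write $\pi : G \to G/R$ for the quotient by $\langle\langle R\rangle\rangle$, and note that in $(G/\kappa)/R$ the symbol $R$ means the image of $R$ in $G/\kappa$, so that $(G/\kappa)/R = (G/\kappa)/\langle\langle \bar R\rangle\rangle$ where $\bar R$ is the image of $R$. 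The key structural input is Lemma \ref{lem: three marked group morphisms } applied with the roles of the relation set played by $R$, which already gives a morphism $(G/R/\kappa, S) \to (G/\kappa/R/\kappa, S)$; this is one of the two directions, so half the work is done by invoking the previous lemma.

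For the reverse direction I would argue as follows. There is an obvious surjection $G \to G/R$, hence by condition 2 of Definition \ref{def: Good subgroup functor} a surjection $G/\kappa \to (G/R)/\kappa$. Since $R$ maps into $\langle\langle R\rangle\rangle \le \ker(G \to G/R)$, the composite $G/\kappa \to (G/R)/\kappa$ kills the image $\bar R$ of $R$ and therefore factors through $(G/\kappa)/\langle\langle\bar R\rangle\rangle = (G/\kappa)/R$. Now apply $\kappa$-quotienting to this induced map $(G/\kappa)/R \to (G/R)/\kappa$: by condition 2 again, $\kappa\big((G/\kappa)/R\big)$ maps into $\kappa\big((G/R)/\kappa\big)$, which is trivial by condition 3 (the defining property $\kappa(H/\kappa) = \{1\}$ of a residual). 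Hence the map already factors through $\big((G/\kappa)/R\big)/\kappa$, yielding a morphism of marked groups $((G/\kappa)/R)/\kappa \to (G/R)/\kappa$ in the direction opposite to the one from Lemma \ref{lem: three marked group morphisms }.

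Having morphisms of marked groups in both directions, the remark in the preliminaries that a pair of morphisms $(A,S)\to(B,S)$ and $(B,S)\to(A,S)$ forces $A \cong B$ as marked groups closes the argument. I do not anticipate a serious obstacle here; the only point requiring a little care is the bookkeeping of what "$R$" denotes in each expression — in $(G/\kappa)/R$ one is adjoining the image of $R$, and one must check that the natural maps respect this, which is immediate since $R$ is carried to $R$ under every quotient map in sight. A secondary subtlety worth a sentence is that condition 1 (each $\kappa(G)$ a singleton, i.e. well-defined) is what lets us speak of "$\kappa$" functorially at all, and it is condition 3 specifically — not available for a general subgroup functor — that makes the second factorization go through; so the hypothesis that $\kappa$ is a \emph{residual}, and not merely satisfies conditions 1 and 2, is used essentially.
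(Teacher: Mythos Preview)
Your argument is correct and follows the same idea as the paper's one-line proof, namely producing marked-group morphisms in both directions between $(G/R)/\kappa$ and $(G/\kappa/R)/\kappa$ and invoking the fact that mutual morphisms of marked groups force isomorphism. The only inefficiency is that you build the reverse map $((G/\kappa)/R)/\kappa \to (G/R)/\kappa$ by hand, whereas it is already supplied by the \emph{second} arrow of Lemma~\ref{lem: three marked group morphisms }, $(G/\kappa/R/\kappa,S)\to (G/R/\kappa/\kappa,S)$, once you use condition~3 to identify $(G/R)/\kappa/\kappa$ with $(G/R)/\kappa$; so the whole lemma really does ``follow directly from the previous lemma'' together with idempotency.
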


\begin{proof}
	This follows directly from the previous lemma. 
\end{proof}

\begin{lemma}\label{lem: deleting relations modulo quotient by characteristic subgroup}
Let $\kappa$ be a residual. Let $(G,S)$ be a marked group. Let $R$ be a set of relations. 
Suppose that the relations of $R$ hold in $(G/\kappa,S) $.  Then there is an  isomorphism of marked groups:
$$((G/R )/\kappa, S)   =  (G/\kappa,S).$$
\end{lemma}
\begin{proof}
This follows directly from  the previous lemma. 
\end{proof}

\subsection{First Lemma : Detecting stabilization on non-terminals is sufficient}

Fix a marked group $(G,S)$ given by an HDT0L presentation. We thus consider an alphabet of non-terminals $A$, a finite set $\Phi$ of endomorphisms of $A^*$, a seed word $w_0\in A^*$, and a morphism $f:A^*\rightarrow S^*$.  
The language we are interested in is: 
$$f(\bigcup_{i=0}^\infty \Phi ^i (w_0)).$$

\begin{lemma}[Stabilization on non-terminals]\label{lem: stabilizaton on non-terminals suffices}
	Let $\kappa$ denote a residual. 
	Let $n$ be a natural number.   If   $$\langle A \, \vert \,\bigcup_{i=0}^n \Phi ^i (w_0) \rangle/\kappa =\langle A \, \vert \, \bigcup_{i=0}^\infty \Phi ^i (w_0) \rangle/\kappa,$$  then 
	$$ \langle S \, \vert \, f(\bigcup_{i=0}^n \Phi ^i (w_0)) \rangle/\kappa   \text=   \langle S \, \vert \, f(\bigcup_{i=0}^\infty \Phi ^i (w_0))\rangle/\kappa . $$ (Here, equality designates marked group isomorphism.)
	
\end{lemma}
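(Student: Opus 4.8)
The plan is to push the relation $\langle A \mid \bigcup_{i=0}^n \Phi^i(w_0)\rangle/\kappa = \langle A \mid \bigcup_{i=0}^\infty \Phi^i(w_0)\rangle/\kappa$ forward along the terminal morphism $f$. Write $R_n = \bigcup_{i=0}^n \Phi^i(w_0)$ and $R_\infty = \bigcup_{i=0}^\infty \Phi^i(w_0)$, both seen as subsets of $A^*$ (hence of $\mathbb{F}_A$ after reading letters as generators). The hypothesis says that the extra relators $R_\infty \setminus R_n$, i.e. all the $\Phi^i(w_0)$ for $i>n$, hold in $\langle A \mid R_n\rangle/\kappa$. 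I want to conclude that the $f$-images $f(\Phi^i(w_0))$ for $i > n$ hold in $\langle S \mid f(R_n)\rangle/\kappa$, which by Lemma \ref{lem: deleting relations modulo quotient by characteristic subgroup} gives the desired equality of marked groups.

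**Key steps, in order.**
First I would record that $f:\mathbb{F}_A \to \mathbb{F}_S$ (extend the monoid morphism to a group morphism on free groups; since $f$ maps letters to words this is automatic) carries the presentation $\langle A \mid R_n\rangle$ onto the presentation $\langle S \mid f(R_n)\rangle$ in a marking-compatible way — more precisely, $f$ induces a well-defined epimorphism of marked groups $\langle A \mid R_n\rangle \twoheadrightarrow \langle S \mid f(R_n)\rangle$ provided the marking of the target is $f(A)$; but the marking we care about is the tuple $S$, so I need to be slightly careful. The right statement is that $f$ induces a surjection $\mathbb{F}_A \twoheadrightarrow \langle S\mid f(R_n)\rangle$ sending $R_n$ into the identity, hence a homomorphism $\langle A \mid R_n \rangle \to \langle S \mid f(R_n)\rangle$. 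Second, compose with the canonical projection to $\langle S\mid f(R_n)\rangle/\kappa$; call this composite $\psi : \langle A\mid R_n\rangle \to \langle S\mid f(R_n)\rangle/\kappa$. Third — the crucial step — observe that $\psi$ factors through $\langle A\mid R_n\rangle/\kappa$: indeed by condition 2 in Definition \ref{def: Good subgroup functor}, $\psi(\kappa(\langle A\mid R_n\rangle)) \le \kappa(\langle S\mid f(R_n)\rangle/\kappa) = \{1\}$, using condition 3 for the last equality. So we get $\bar\psi : \langle A\mid R_n\rangle/\kappa \to \langle S\mid f(R_n)\rangle/\kappa$. Fourth, apply $\bar\psi$ to the relation $\Phi^i(w_0) = 1$ (valid in $\langle A\mid R_n\rangle/\kappa$ by hypothesis, for each $i>n$): we get $f(\Phi^i(w_0)) = 1$ in $\langle S\mid f(R_n)\rangle/\kappa$. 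Fifth, this says exactly that every relator of $f(R_\infty)$ holds in $\langle S \mid f(R_n)\rangle/\kappa$, so Lemma \ref{lem: deleting relations modulo quotient by characteristic subgroup} (applied with $G = \langle S\mid f(R_n)\rangle$ and $R = f(R_\infty\setminus R_n)$) yields $(\langle S\mid f(R_\infty)\rangle/\kappa, S) = (\langle S\mid f(R_n)\rangle/\kappa, S)$ as marked groups, which is the conclusion.

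**Main obstacle.**
The only genuinely delicate point is the marking bookkeeping in steps 1–3: one must make sure that all the maps in sight are morphisms \emph{of marked groups} with respect to the correct generating tuples (the $S$-marking downstairs, the $A$-marking upstairs), so that the final equality really is an equality of marked groups and not just of abstract groups — this is where the remark after the theorem statement about marked versus abstract isomorphism bites. I would handle this by noting that $f$ need not respect markings (it goes from an $A$-marked object to an $S$-marked object), but that is harmless: what we actually use is that the \emph{relators} transport, via Lemma \ref{lem: deleting relations modulo quotient by characteristic subgroup}, which is a statement purely about adding/removing relations to a fixed marked group $\langle S\mid f(R_n)\rangle/\kappa$. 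Everything else — that $\psi$ kills $\kappa$, that $\kappa$ is well-behaved under the quotient — is immediate from Definition \ref{def: Good subgroup functor} and Lemmas \ref{lem: commutation mod kappa and add relations}–\ref{lem: deleting relations modulo quotient by characteristic subgroup}. So I expect the proof to be short, with the entire content being the functoriality argument of step 3.
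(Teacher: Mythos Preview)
Your proposal is correct and follows essentially the same route as the paper: build the composite $\langle A\mid R_n\rangle \xrightarrow{f} \langle S\mid f(R_n)\rangle \xrightarrow{\pi} \langle S\mid f(R_n)\rangle/\kappa$, factor it through $\langle A\mid R_n\rangle/\kappa$ using the residual axioms, transport the extra relations along this factorization, and finish with Lemma \ref{lem: deleting relations modulo quotient by characteristic subgroup}. Your write-up is in fact more explicit than the paper's about which conditions of Definition \ref{def: Good subgroup functor} are invoked and about the marking bookkeeping, but the argument is the same.
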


\begin{proof}
	We have the following morphisms:
	 $$\langle A\vert \bigcup_{i=0}^n \Phi ^i (w_0) \rangle \overset{f}{\rightarrow} \langle S \vert f (\bigcup_{i=0}^n \Phi ^i (w_0) )\rangle\overset{\pi}{\rightarrow}  \langle S \vert f (\bigcup_{i=0}^n \Phi ^i (w_0) )\rangle/\kappa.$$
	 Because $\kappa$ defines a residual, the composition $\pi \circ f$ factors through  $\langle A\vert \bigcup_{i=0}^n \Phi ^i (w_0) \rangle/\kappa$. 
	 But by hypothesis 
	 $$\langle A\vert \bigcup_{i=0}^n \Phi ^i (w_0) \rangle/\kappa=\langle A\vert \bigcup_{i=0}^\infty \Phi ^i (w_0) \rangle/\kappa.$$ 
	 Thus all relations of $f (\bigcup_{i=0}^\infty \Phi ^i (w_0) )$ are already satisfied in $\langle S \vert f (\bigcup_{i=0}^n \Phi ^i (w_0) )\rangle/\kappa$. 
	 This is summed up in the following commutative diagram: 
	  \begin{center}
	 	\begin{tikzcd}
	 		\langle A\vert \bigcup_{i=0}^n \Phi ^i (w_0) \rangle  \arrow[d, "f"] \arrow[rd, "\pi"] & 
	 		
	 		\\ \langle S \vert f (\bigcup_{i=0}^n \Phi ^i (w_0) )\rangle \arrow[d, "\pi"] & 	\langle A\vert \bigcup_{i=0}^n \Phi ^i (w_0) \rangle /\kappa=\langle A\vert \bigcup_{i=0}^\infty \Phi ^i (w_0) \rangle /\kappa  \arrow[ld, "\exists g"]
	 		
	 		\\ \langle S \vert f (\bigcup_{i=0}^n \Phi ^i (w_0) )\rangle/\kappa  & \end{tikzcd}
	 \end{center}
	 	We conclude by applying  Lemma \ref{lem: deleting relations modulo quotient by characteristic subgroup}. 
\end{proof}

\subsection{Second  Lemma: Effective Noetherianity}

In our applications, we will use the following residuals $\kappa$:
\vspace{0.3cm}
\\
\centerline {
	\bgroup
	\def\arraystretch{1.5}
	\begin{tabular}{|c|c|}
		\hline 
	Nilpotent Quotients &	Abelian-by-Nilpotent Quotients   \\
		\hline
		$\kappa(G)=\gamma_{k}(G)$, for a given $k$ in $\N$ &	$\kappa(G)=[\gamma_{k}(G),\gamma_{k}(G)]$, for a given $k$ in $\N$ \\
		\hline
		Residually free  image &Virtual direct products of hyperbolic groups    \\
		\hline	\begin{tabular}{c} $\kappa(G)=\bigcap_{\phi:G\rightarrow \mathbb{F}_2}\ker (\phi)$    \end{tabular}	&\begin{tabular}{c} $\kappa(G)=\bigcap_{\phi:G\rightarrow H}\ker (\phi),$  \\ where $H$ is the given group \end{tabular} \\
		\hline
	\end{tabular}
	\egroup
}
\vspace{0.3cm}

We include two effective noetherianity lemmas: one deals with abelian-by-nilpotent and finite quotients, and the other one with free and hyperbolic quotients.
The reason for this is that the stabilization that occurs is different in both cases: in the first two cases, when we quotient a free group by the considered characteristic subgroup, we get a group which satisfies the Max-n property: all increasing sequences of normal subgroups stabilize. And thus the sequence 
$$	\mathbb{F}_A/\kappa /\langle\langle\bigcup_{i=0}^1 \Phi ^i (w_0) \rangle\rangle \rightarrow  	\mathbb{F}_A/\kappa /\langle\langle\bigcup_{i=0}^2 \Phi ^i (w_0) \rangle\rangle\rightarrow  	\mathbb{F}_A/\kappa /\langle\langle\bigcup_{i=0}^3 \Phi ^i (w_0) \rangle\rangle\rightarrow ...$$ eventually stabilizes. 
On the other hand, when $G/\kappa(G)$ is the residually free image of $G$, or its residually-$H$ image for a hyperbolic group $H$,  it does not satisfy the Max-n property in general, and the sequence that stabilizes is one where adding new relations is intertwined with taking residually free images (resp. residually-$H$ images). The sequence that stabilizes is then: 
$$	\mathbb{F}_A /\langle \langle w_0 \rangle \rangle/\kappa \rightarrow  	\mathbb{F}_A /\langle\langle \Phi ^1 (w_0) \rangle \rangle/\kappa/\langle\langle \Phi ^2 (w_0) \rangle \rangle /\kappa \rightarrow   ...$$

\begin{lemma}[Effective noetherianity for abelian-by-nilpotent and finite quotients]\label{lem: eff noeth Nilp Meta Finite}
	Suppose that $\kappa$ is one of the subgroup functors associated to abelian-by-nilpotent or finite quotients as presented in the table above. There exists an algorithm that on input a word $w_0$ and  a set $\Phi$ of endomorphisms of a free group $\mathbb{F}_A$ produces a natural number $n$ such that 
	$$\langle\langle \bigcup_{i=0}^n \Phi ^i (w_0) \rangle \rangle^{\mathbb{F}_A/\kappa (\mathbb{F}_A)} =\langle \langle \bigcup_{i=0}^\infty \Phi ^i (w_0) \rangle\rangle ^{\mathbb{F}_A/\kappa (\mathbb{F}_A)}.$$
	
\end{lemma}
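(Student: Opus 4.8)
The plan is to run the obvious semi-algorithm. Write $\overline{\mathbb{F}}:=\mathbb{F}_A/\kappa(\mathbb{F}_A)$, and for $n\in\N$ set $R_n=\bigcup_{i=0}^n\Phi^i(w_0)$ and $N_n=\langle\langle R_n\rangle\rangle^{\overline{\mathbb{F}}}$. For $n=0,1,2,\dots$ I would test whether the finite set $\Phi^{n+1}(w_0)$ is contained in $N_n$ — equivalently, whether each of its (finitely many, effectively computable) words represents $1$ in the group $\overline{\mathbb{F}}/N_n=\mathbb{F}_A/\langle\langle\kappa(\mathbb{F}_A)\cup R_n\rangle\rangle$ — and output the first $n$ that passes. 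For each of the residuals $\kappa$ in the table the group $\overline{\mathbb{F}}$ is, effectively from the input, a finitely generated group lying in the relevant class $\mathcal{C}$ (nilpotent of bounded class, abelian-by-nilpotent, or finite); since $\mathcal{C}$ is closed under quotients and has uniformly solvable word problem, each of the groups $\overline{\mathbb{F}}/N_n$ is again in $\mathcal{C}$ with solvable word problem, so the test is effective.

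Next I would prove that the stopping criterion is \emph{valid}: if $\Phi^{n+1}(w_0)\subseteq N_n$ then $\langle\langle\bigcup_{i=0}^\infty\Phi^i(w_0)\rangle\rangle^{\overline{\mathbb{F}}}=N_n$, which is the only nontrivial point. Note $R_{n+1}=\{w_0\}\cup\Phi(R_n)$ and $w_0\in R_n\subseteq N_n$, so the hypothesis is equivalent to $\Phi(R_n)\subseteq N_n$. Since the subgroup functors in question are functorial, every $\phi\in\Phi$ satisfies $\phi(\kappa(\mathbb{F}_A))\le\kappa(\mathbb{F}_A)$ and hence induces an endomorphism $\bar\phi$ of $\overline{\mathbb{F}}$; the hypothesis then reads $\bar\phi(R_n)\subseteq N_n$ for every $\phi\in\Phi$. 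As any endomorphism carries $\langle\langle R_n\rangle\rangle^{\overline{\mathbb{F}}}$ into $\langle\langle\bar\phi(R_n)\rangle\rangle^{\overline{\mathbb{F}}}$, and the latter lies in $N_n$ because $\bar\phi(R_n)\subseteq N_n$ and $N_n\trianglelefteq\overline{\mathbb{F}}$, we get $\bar\phi(N_n)\subseteq N_n$ for all $\phi\in\Phi$. Since $w_0\in N_n$, an induction on $i$ using this $\Phi$-invariance gives $\Phi^i(w_0)\subseteq N_n$ for all $i$, hence $\langle\langle\bigcup_{i=0}^\infty\Phi^i(w_0)\rangle\rangle^{\overline{\mathbb{F}}}\subseteq N_n$; the reverse inclusion is trivial.

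Then I would argue termination. The $N_n$ form an ascending chain of normal subgroups of $\overline{\mathbb{F}}$, and $\overline{\mathbb{F}}$ satisfies the maximal condition on normal subgroups: trivially when it is finite, by P.\ Hall's theorem on finitely generated abelian-by-polycyclic groups \cite{Hall1954} when it is abelian-by-nilpotent, and because it is even Noetherian when it is nilpotent of bounded class. Hence the chain stabilizes, say $N_{n+1}=N_n$ for some $n$; using $R_{n+1}=\{w_0\}\cup\Phi(R_n)$ and $w_0\in N_n$ as above, this equality is equivalent to $\Phi(R_n)\subseteq N_n$, which in turn is equivalent to $\Phi^{n+1}(w_0)\subseteq N_n$ (the words of $\Phi(R_n)$ other than those in $\Phi^{n+1}(w_0)$ already lie in $R_n\subseteq N_n$). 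So the test passes at stage $n$ and the algorithm halts, returning a number with the asserted property by the previous paragraph.

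The main obstacle is the validity argument: it is not \emph{a priori} clear that a single step of non-growth of the chain $(N_n)$ certifies non-growth at every later step, so the naive ``run until the chain appears to stabilize'' is not obviously correct. What rescues it is the soft observation that $N_{n+1}=N_n$ is equivalent to $\Phi$-invariance of $N_n$ (via functoriality of normal closures under the induced endomorphisms $\bar\phi$), which then forces $N_n$ to absorb the whole $\Phi$-orbit of $w_0$. A secondary technical point requiring care — and where the hypotheses genuinely bite — is uniform solvability of the word problem across all the quotients $\overline{\mathbb{F}}/N_n$ obtained by adjoining finitely many relators; for the abelian-by-nilpotent residuals this rests, like the maximal condition invoked for termination, on standard effective methods for finitely generated abelian-by-nilpotent groups (Noetherianity of the group ring over the relevant finitely generated nilpotent quotient).
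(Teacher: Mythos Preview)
Your proposal is correct and follows essentially the same approach as the paper: test for each $n$ whether the finitely many words of $\Phi^{n+1}(w_0)$ vanish in $(\mathbb{F}_A/\kappa(\mathbb{F}_A))/\langle\langle R_n\rangle\rangle$ using the uniform word problem, and invoke Max-n (Hall's theorem in the abelian-by-nilpotent case, finiteness in the finite case) for termination. You are more explicit than the paper about why a single step of stabilization $N_{n+1}=N_n$ forces the full equality---the paper compresses this into the remark that the equality is ``equivalent to the fact that each $\phi$ in $\Phi$ already defines a group homomorphism in the quotient''---but the underlying argument (full characteristicity of $\kappa(\mathbb{F}_A)$, hence induced endomorphisms $\bar\phi$, hence $\Phi$-invariance of $N_n$) is the same.
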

\begin{proof}
	This follows directly from the fact that in each case $\mathbb{F}_A/\kappa$ satisfies the Max-n property. It is a theorem of Hall \cite{Hall1954} for abelian-by-nilpotent groups. For the subroup functor $\kappa$ associated to computing finite quotients, it is trivial, since in this case the group $\mathbb{F}_A/\kappa (\mathbb{F}_A)$ is finite. \\
	Thus there exists $n$ such that the equality claimed in the lemma occurs. \\
	This equality is equivalent to the fact that each $\phi$  in $\Phi$ already defines a group homomorphism in the quotient $(\mathbb{F}_A/\kappa (\mathbb{F}_A))/\langle\langle \bigcup_{i=0}^n \Phi ^i (w_0) \rangle\rangle.$  It is possible to check algorithmically that this is the case: it suffices to check that each of the finitely many elements of $ \Phi ^{n+1} (w_0)$ is already the identity in 
	$(\mathbb{F}_A/\kappa (\mathbb{F}_A))/\langle\langle \bigcup_{i=0}^n \Phi ^i (w_0) \rangle\rangle.$ This is decidable because the word problem is uniformly solvable for the considered families of groups: abelian-by-nilpotent groups \cite{Baumslag1981} and finite groups. 
	\end{proof}
	
	We will now establish the corresponding result for free quotients and quotients that are virtually direct products of hyperbolic groups.
		A group $G$ is called \textbf{equationally Noetherian} if any infinite system of equations over $G$ is equivalent to a finite subsystem.
	 What we will use to compute free quotients is the following theorem of Guba: 
	
		\begin{theorem}[Guba, \cite{Guba1986}]\label{Thm: Guba }
	 Finitely generated free groups are equationally Noetherian.
	\end{theorem}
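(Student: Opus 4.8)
The plan is to deduce the theorem from the fact that $GL_n(K)$ is equationally Noetherian for every field $K$, using the classical embeddings: every finitely generated free group embeds in $\mathbb{F}_2$, and $\mathbb{F}_2$ embeds (by Sanov's theorem) in $SL_2(\mathbb{Z})\le GL_2(\mathbb{Q})$. For this to be useful I would first record that equational Noetherianity passes to subgroups: if $H\le G$, then a system $\mathcal{S}$ of equations with coefficients in $H$ in variables $x_1,\dots,x_m$ is in particular a system over $G$, hence equivalent over $G$ to a finite subsystem $\mathcal{S}_0\subseteq\mathcal{S}$; intersecting the two (equal) solution sets in $G^m$ with $H^m$ shows that $\mathcal{S}_0$ is equivalent to $\mathcal{S}$ already over $H$. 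So it suffices to treat $G=GL_n(K)$.

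For $GL_n(K)$ I would set up the standard dictionary with polynomial equations. Let $R=K[a^{(k)}_{ij},t_k : 1\le k\le m,\ 1\le i,j\le n]\,/\,(t_k\det(a^{(k)})-1 : 1\le k\le m)$ be the coordinate ring of $(GL_n)^m$; it is a finitely generated $K$-algebra, hence Noetherian by the Hilbert Basis Theorem. The generic matrix $X_k=(a^{(k)}_{ij})$ is invertible over $R$ (its inverse is the adjugate scaled by $t_k$), so substituting $x_k\mapsto X_k$ into any group word $w(x_1,\dots,x_m)$ — a word in the $x_k^{\pm1}$ and constant matrices from $GL_n(K)$ — yields a matrix $w(X_1,\dots,X_m)$ with entries in $R$. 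The single group equation $w=1$ is then equivalent to the simultaneous vanishing of the $n^2$ polynomials $p^{(w)}_{ij}:=(w(X_1,\dots,X_m)-\mathrm{Id})_{ij}\in R$, in the sense that a tuple in $(GL_n(K))^m$ solves $w=1$ if and only if it is a common zero of all the $p^{(w)}_{ij}$.

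Given now an arbitrary system $\mathcal{S}$ of group equations, let $I\subseteq R$ be the ideal generated by $\{p^{(w)}_{ij} : w\in\mathcal{S},\ 1\le i,j\le n\}$. Since $R$ is Noetherian, $I$ is generated by finitely many of these generators, coming from some finite subsystem $\mathcal{S}_0\subseteq\mathcal{S}$. A tuple in $(GL_n(K))^m$ that kills every polynomial attached to $\mathcal{S}_0$ then kills all of $I$, in particular every $p^{(w)}_{ij}$ with $w\in\mathcal{S}$, so it solves $\mathcal{S}$; hence $\mathcal{S}$ and $\mathcal{S}_0$ have the same solution set in $(GL_n(K))^m$, i.e.\ they are equivalent. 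This proves that $GL_n(K)$ is equationally Noetherian, and together with the subgroup reduction it yields the theorem.

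I do not anticipate a genuine obstacle here: the only points requiring care are the translation of one group equation into a finite packet of polynomial equations over the correct Noetherian ring (with inverses of variables absorbed into the adjoined $\det^{-1}$), and the observation that equivalence of systems survives restriction to a subgroup — both routine once stated carefully. (Guba's original argument is more combinatorial and internal to free groups, but the linear route sketched above is the one I would write up.)
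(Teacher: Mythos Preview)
Your argument is correct; the linear-group route via the Hilbert Basis Theorem and the faithful representation $\mathbb{F}_2\hookrightarrow SL_2(\mathbb{Z})$ is the standard modern proof, and both steps you flag (inheritance of equational Noetherianity by subgroups, and the translation of a group equation into a finite packet of polynomial equations over the Noetherian coordinate ring of $(GL_n)^m$) are handled cleanly.

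Note, however, that the paper does not prove this statement at all: it is simply quoted as Guba's theorem \cite{Guba1986} and used as a black box. So there is no ``paper's proof'' to compare against. For context, Guba's original 1986 argument is indeed internal and combinatorial (based on Makanin's machinery for equations in free groups), while the proof you wrote is closer in spirit to the later observation that linear groups over Noetherian rings are equationally Noetherian. Your route is shorter and more conceptual; Guba's is self-contained within free-group combinatorics and does not invoke linearity.
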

	This was later generalized to:
		\begin{theorem}[Sela \cite{Sela2009} in the torsion free case, Weidmann and Reinfeldt \cite{Weidmann2020} for the general case]\label{Thm: Hyperbolic => equationally noeth}
		Gromov hyperbolic groups are equationally Noetherian.
	\end{theorem}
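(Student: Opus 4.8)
The plan is to follow the Bestvina--Paulin / Rips / Sela program: Sela carried it out for torsion-free hyperbolic groups, and Reinfeldt--Weidmann extended it to allow torsion. First I would reformulate equational Noetherianity of a group $G$ as topological Noetherianity: for each $n$, topologize $G^n$ by declaring the solution set in $G^n$ of an arbitrary system of equations over $G$ to be closed; then $G$ is equationally Noetherian exactly when each $G^n$ is a Noetherian space. Suppose this fails, so for some $n$ there is a strictly decreasing chain of closed sets $V_1\supsetneq V_2\supsetneq\cdots$ in $G^n$. Choosing a point of $V_k\setminus V_{k+1}$ gives for each $k$ a homomorphism $\varphi_k:\mathbb{F}_n\to G$ whose kernel contains the system defining $V_k$ but fails some equation valid on $V_{k+1}$. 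Within its orbit under precomposition by $\mathrm{Aut}(\mathbb{F}_n)$ (or, better, the relevant modular orbit) I would replace $\varphi_k$ by one of minimal length $\lambda_k:=\max_i|\varphi_k(x_i)|_G$; such a minimizer exists since balls in $G$ are finite, and it still witnesses $V_k\setminus V_{k+1}$. Since the $\varphi_k$ are pairwise distinct (a point of $V_k\setminus V_{k+1}$ cannot equal one of $V_{k'}\subseteq V_{k+1}$ for $k'>k$), the lengths $\lambda_k$ are unbounded, and after passing to a subsequence $\lambda_k\to\infty$.

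Next comes the Bestvina--Paulin limit: rescaling a Cayley graph of $G$ by $1/\lambda_k$ and taking an equivariant ultralimit produces a nontrivial, fixed-point-free action of the stable quotient $L:=\mathbb{F}_n/\bigl(\text{stable kernel of }(\varphi_k)\bigr)$ on an $\mathbb{R}$-tree $T$; here $L$ is a finitely generated $G$-limit group. Hyperbolicity of $G$ is what guarantees that this action is \emph{stable} and that arc stabilizers are controlled --- virtually cyclic in the torsion-free case, of bounded type when torsion is allowed.

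I would then apply the Rips machine --- the structure theorem for stable actions of finitely generated groups on $\mathbb{R}$-trees (Rips, Bestvina--Feighn, Guirardel, and the torsion-sensitive versions used by Reinfeldt--Weidmann) --- to split $T$, hence $L$, into simplicial, axial, and surface pieces, with no exotic Levitt components because the action is stable and pulled back from a hyperbolic target. This yields a nontrivial graph-of-groups decomposition of $L$ over small subgroups, hence a nontrivial modular automorphism group generated by edge Dehn twists and automorphisms of the surface pieces. The shortening argument of Rips--Sela (in its torsion-aware form) then supplies, for all large $k$, a modular automorphism $\alpha_k$ with $\max_i|\varphi_k\circ\alpha_k(x_i)|_G<\lambda_k$, contradicting the minimality of $\varphi_k$. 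Hence no infinite strictly decreasing chain of closed sets exists, and $G$ is equationally Noetherian.

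The torsion-free case being essentially Sela's argument, the main obstacle is accommodating finite subgroups at every stage: one needs (i) a Rips machine valid for stable $\mathbb{R}$-tree actions in which finite groups may stabilize arcs or tripods, (ii) an acylindrical accessibility bound (Weidmann) to keep the complexity of the limiting splittings uniformly controlled along the sequence, and (iii) a shortening argument formulated for modular automorphisms that still works when finite subgroups are present. Tracking the finite stabilizers through the passage to the $\mathbb{R}$-tree limit and through the Rips decomposition is where essentially all the difficulty lies; everything else is the standard Bestvina--Paulin--Rips--Sela template.
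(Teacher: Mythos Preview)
The paper does not prove this theorem at all: it is quoted as a background result and attributed to \cite{Sela2009} (torsion-free case) and \cite{Weidmann2020} (general case), with no argument given. There is therefore nothing in the paper to compare your proposal against.

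That said, your outline is a faithful high-level description of the Bestvina--Paulin / Rips / Sela strategy that those references carry out, including the main difficulties in the torsion case. One point worth tightening: in the actual argument the contradiction is not obtained directly against length-minimality in the full $\mathrm{Aut}(\mathbb{F}_n)$-orbit. Precomposing by an arbitrary automorphism need not preserve membership in $V_k\setminus V_{k+1}$, so you cannot freely minimize there. The standard route is instead to show, via the shortening argument, that all but finitely many of the $\varphi_k$ factor through a proper quotient of the limit group $L$ (the ``shortening quotient''), and then to iterate, obtaining a strictly descending chain of $G$-limit groups; one then bounds the length of such chains by a complexity argument (JSJ / accessibility type), which is where the Reinfeldt--Weidmann work with torsion enters. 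Your sketch conflates these two steps, but the overall shape is correct.
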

	Note also that being equationally Noetherian is stable under finite direct products (this is immediate), and is also inherited by finite extensions by \cite{Baumslag1997}. Thus groups that are virtually direct products of hyperbolic groups are equationally Noetherian. 
	We will use the following: 
	\begin{proposition}[Houcine, \cite{Houcine2007}]
		If $H$ is an equationally Noetherian  group, every residually-$H$ group is \textbf{finitely presented as a residually-$H$ group.}
	\end{proposition}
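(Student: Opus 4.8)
The plan is to unwind the definitions. Read ``$G$ is finitely presented as a residually-$H$ group'' as: $G$ is finitely generated, say by a tuple $S=(x_1,\dots,x_n)$, and there is a \emph{finite} set $R_0$ of words over $S\cup S^{-1}$ such that $G=(\mathbb{F}_S/\langle\langle R_0\rangle\rangle)/\kappa$, where $\kappa$ is the residual sending a group to the intersection of the kernels of its homomorphisms to $H$. Write $G=\mathbb{F}_S/N$. Since $G$ is residually-$H$ we have $N=\bigcap\{\ker\phi : \phi\in\operatorname{Hom}(\mathbb{F}_S,H),\ N\le\ker\phi\}$; the goal is to replace $N$, as the defining data of the residually-$H$ quotient, by a finite subset.

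First I would pass from homomorphisms to solutions of equations. A homomorphism $\mathbb{F}_S\to H$ is nothing but a point of $H^n$, and such a homomorphism kills $N$ exactly when the corresponding point lies in the solution set $V_H(N)\subseteq H^n$ of the coefficient-free, $n$-variable system of equations $\{\,r(x_1,\dots,x_n)=1 : r\in N\,\}$. Now I would apply the hypothesis that $H$ is equationally Noetherian: this system is equivalent over $H$ to a finite subsystem, so there is a finite $R_0\subseteq N$ with $V_H(R_0)=V_H(N)$ (the inclusion $V_H(N)\subseteq V_H(R_0)$ being automatic since $R_0\subseteq N$).

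Finally I would verify that $R_0$ does the job. The residually-$H$ image of $\mathbb{F}_S/\langle\langle R_0\rangle\rangle$ is $\mathbb{F}_S/M$ with $M=\bigcap\{\ker\psi : \psi\in\operatorname{Hom}(\mathbb{F}_S,H),\ R_0\le\ker\psi\}$. Since $R_0\subseteq N$, any $\psi$ killing $N$ also kills $R_0$, giving $M\le N$; conversely, if $\psi$ kills $R_0$ then the associated point of $H^n$ lies in $V_H(R_0)=V_H(N)$, so $\psi$ kills $N$, giving $N\le M$. Hence $M=N$, i.e. $(\mathbb{F}_S/\langle\langle R_0\rangle\rangle)/\kappa=\mathbb{F}_S/N=G$, which is exactly the claimed relative finite presentation.

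I do not expect a real obstacle: the whole content is the dictionary between $\operatorname{Hom}(\mathbb{F}_S,H)$ and $H^n$, plus the observation that it is equational Noetherianity of the \emph{target} $H$ --- not of $G$ or of $\mathbb{F}_S$ --- that collapses an infinite relation set to a finite one without disturbing the residually-$H$ quotient. The one delicate point is that the version of equational Noetherianity used is the coefficient-free one in exactly the $n=|S|$ variables matching a generating tuple of $G$, which is why one needs $G$ finitely generated; in the applications of this proposition $H$ ranges over finitely generated free groups and over groups that are virtually direct products of hyperbolic groups, both covered by Theorems \ref{Thm: Guba } and \ref{Thm: Hyperbolic => equationally noeth}.
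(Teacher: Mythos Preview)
Your proof is correct and follows essentially the same route as the paper. The paper states Houcine's result without proof, then reformulates it as Proposition~\ref{Cor: finite pres as Residually free group } and proves that reformulation by exactly your argument: view the relators as a coefficient-free system of equations over $H$ in the variables $S$, invoke equational Noetherianity of $H$ to extract an equivalent finite subsystem, and observe that equality of solution sets means the two presentations have the same marked quotients in $H$, hence the same residually-$H$ image. Your write-up is somewhat more explicit (you spell out the dictionary $\operatorname{Hom}(\mathbb{F}_S,H)\cong H^n$ and verify both inclusions $M\le N$ and $N\le M$), but the idea is identical.
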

	Let $H$ be an equationally Noetherian. Denote by $\kappa$ the residual associated to ``residually-$H$ images'': $\kappa(G)=\bigcap_{\phi:G\rightarrow H}\ker (\phi)$ . An equivalent formulation of the above proposition is: 
	\begin{proposition}\label{Cor: finite pres as Residually free group }
 For any marked group $(G,S)$ given by a presentation $\langle S \vert R\rangle$, there exists a finite subset $T$ of $R$ such that 
 $$\langle S \vert R\rangle/\kappa = \langle S \vert T\rangle/\kappa $$
	\end{proposition}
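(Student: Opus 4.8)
The plan is to extract the finite subpresentation directly from $R$ by the standard translation between group homomorphisms and solution sets of systems of equations, which is simply Houcine's argument written out in the notation at hand. Write $S=(s_1,\dots,s_k)$ and view a homomorphism $\langle S\vert R\rangle\rightarrow H$ as nothing but a $k$-tuple $\bar h=(h_1,\dots,h_k)\in H^k$ satisfying $r(\bar h)=1$ for every $r\in R$; that is, as a solution in $H$ of the (a priori infinite) coefficient-free system of equations $\Sigma=\{\,r(\bar x)=1 : r\in R\,\}$ in the variables $\bar x=(x_1,\dots,x_k)$.

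Since $H$ is equationally Noetherian, $\Sigma$ is equivalent to a finite subsystem, which is necessarily of the form $\Sigma_0=\{\,r(\bar x)=1 : r\in T\,\}$ for some finite $T\subseteq R$, and $\Sigma$, $\Sigma_0$ have the same solution set in $H^k$. Reading this back through the correspondence: every homomorphism $\langle S\vert T\rangle\rightarrow H$ sends each $r\in R$ to $1$. Hence $R\subseteq\bigcap_{\phi: \langle S\vert T\rangle\rightarrow H}\ker\phi=\kappa(\langle S\vert T\rangle)$, i.e. all relations of $R$ already hold in $\langle S\vert T\rangle/\kappa$.

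Then I would assemble the desired equality of marked groups. Since $T\subseteq R$ we have $\langle S\vert R\rangle=\langle S\vert T\rangle/\langle\langle R\rangle\rangle$, and Lemma \ref{lem: deleting relations modulo quotient by characteristic subgroup}, applied to the marked group $\langle S\vert T\rangle$ together with the set of relations $R$ (which, by the previous paragraph, holds in $\langle S\vert T\rangle/\kappa$), yields exactly $\langle S\vert R\rangle/\kappa=\langle S\vert T\rangle/\kappa$ as marked groups. As a sanity check one can argue directly: $T\subseteq R$ gives a marked epimorphism $\langle S\vert T\rangle\twoheadrightarrow\langle S\vert R\rangle$, which by condition 2 of Definition \ref{def: Good subgroup functor} descends to a marked epimorphism $\langle S\vert T\rangle/\kappa\twoheadrightarrow\langle S\vert R\rangle/\kappa$; and the fact that $R$ is killed in $\langle S\vert T\rangle/\kappa$ produces an inverse marked morphism, so the two marked groups coincide.

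I expect no genuine obstacle here: the whole content of the proposition is carried by the definition of equational Noetherianity, which conveniently supplies an honest \emph{subsystem} $T\subseteq R$ and not merely an unrelated finite system equivalent to $\Sigma$ — without that refinement one would only obtain \emph{some} finite presentation of the residually-$H$ image rather than one whose relators are taken from $R$. The only points requiring a little care are that the coefficient-free, bounded-variable system $\Sigma$ is a legitimate instance of the equational Noetherianity hypothesis (it is, as a special case), and that the residual $\kappa$ is threaded correctly through the successive quotients, which is handled by its functoriality axiom and by Lemma \ref{lem: deleting relations modulo quotient by characteristic subgroup}. The statement is thus, as claimed in the text, just a reformulation of Houcine's proposition specialized to the presentation $\langle S\vert R\rangle$ of $G$.
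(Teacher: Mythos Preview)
Your argument is correct and follows essentially the same route as the paper: translate the relators $R$ into a coefficient-free system of equations over $H$, invoke equational Noetherianity to extract a finite subsystem $T\subseteq R$ with the same solution set, and conclude that the two marked residually-$H$ images agree. Your write-up is more explicit than the paper's (in particular you spell out the appeal to Lemma~\ref{lem: deleting relations modulo quotient by characteristic subgroup} and the inverse marked morphism), but the underlying idea is identical.
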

		\begin{proof}
		The set $R$ can be seen as a system of equations with no parameters, the variables being elements of $S$. As there are no parameters, we can consider that this system is a system of equations in the  group $H$. By equational noetherianity, there is a finite subsystem of equations equivalent to this one, denote it $T$. 
		This shows exactly that a finitely generated subgroup of $H$ is a marked quotient of  $(G,S)$ if and only if it is a marked quotient of $\langle S\vert T\rangle$, and the desired equality follows. 
	\end{proof}
	Denote by $\kappa(G)$ the set of elements of $G$ that are mapped to the identity by any morphism from $G$ to a free group. 
	\begin{lemma}[Effective noetherianity for computing residually free images]\label{lem: noeth for free quotients}
		There exists an algorithm that on input a word $w_0$ and  a set $\Phi$ of endomorphisms of a free group $\mathbb{F}_A$ produces a natural number $n$ such that 	
		$$\langle A\vert \bigcup_{i=0}^n \Phi ^i (w_0) \rangle /\kappa = \langle A \vert \bigcup_{i=0}^\infty  \Phi ^i (w_0) \rangle/\kappa.$$
	\end{lemma}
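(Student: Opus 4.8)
The plan is to reduce the statement to an effective version of Proposition \ref{Cor: finite pres as Residually free group } (equivalently, of Houcine's proposition and Guba's theorem), combined with an effective detection of the stabilization point. The stabilization here is \emph{not} the Max-n type: as remarked before the statement of Lemma \ref{lem: eff noeth Nilp Meta Finite}, the sequence that stabilizes is the one where new relations are added \emph{after} taking residually free images at each stage, i.e.
$$\mathbb{F}_A/\langle\langle w_0\rangle\rangle/\kappa \rightarrow \mathbb{F}_A/\langle\langle \Phi^1(w_0)\rangle\rangle/\kappa/\langle\langle \Phi^2(w_0)\rangle\rangle/\kappa \rightarrow \cdots$$
By Lemma \ref{lem: commutation mod kappa and add relations} the $n$-th term of this sequence is exactly $\langle A\vert \bigcup_{i=0}^n \Phi^i(w_0)\rangle/\kappa$, so the sequence is a sequence of quotients of one another, eventually stabilizing to $\langle A\vert \bigcup_{i=0}^\infty \Phi^i(w_0)\rangle/\kappa$ precisely by Proposition \ref{Cor: finite pres as Residually free group } applied to the presentation $\langle A\vert \bigcup_{i=0}^\infty \Phi^i(w_0)\rangle$. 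So \emph{existence} of a valid $n$ is already guaranteed; the content of the lemma is the \emph{effectivity} of finding it.

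First I would note that by Lemma \ref{lem: deleting relations modulo quotient by characteristic subgroup}, the equality $\langle A\vert \bigcup_{i=0}^n \Phi^i(w_0)\rangle/\kappa = \langle A\vert \bigcup_{i=0}^\infty \Phi^i(w_0)\rangle/\kappa$ holds as soon as every relation in $\bigcup_{i>n}\Phi^i(w_0)$ already holds in $\langle A\vert \bigcup_{i=0}^n \Phi^i(w_0)\rangle/\kappa$; and by the usual argument (as in the proof of Lemma \ref{lem: eff noeth Nilp Meta Finite}), since $\Phi$ is a monoid generating set it suffices that the finitely many elements of $\Phi^{n+1}(w_0)$ already vanish in $\langle A\vert \bigcup_{i=0}^n \Phi^i(w_0)\rangle/\kappa$. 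So the algorithm is: for $n=1,2,3,\dots$, build a finite presentation (relative to the category of residually free groups) for $\langle A\vert \bigcup_{i=0}^n \Phi^i(w_0)\rangle/\kappa$, test whether each of the finitely many words of $\Phi^{n+1}(w_0)$ is trivial in that group, and output the first $n$ that passes. Termination is ensured by the existence argument above; correctness by the preceding reduction.

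Two effectivity ingredients are needed to run this loop. (a) Given a finite presentation $\langle A\vert R_n\rangle$ with $R_n=\bigcup_{i=0}^n\Phi^i(w_0)$, one must effectively produce a finite relative presentation of its residually free image $\langle A\vert R_n\rangle/\kappa$ — this is exactly the content of Theorem \ref{thm:freequotsection} (Residually free image), so I would simply invoke that theorem; in fact this lemma and Theorem \ref{thm:freequotsection} will be proved together, with the Makanin--Razborov / Groves--Wilton machinery (the algorithm producing the first layer of the Makanin--Razborov diagram, hence a finite relative presentation of the residually free image) providing the effective input. (b) One must solve the word problem uniformly in residually free groups given by finite relative presentations; residually free groups embed in finite direct products of limit groups, which are finitely presented with solvable word problem, so the word problem is uniformly solvable there. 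With (a) and (b) in hand the loop above is a genuine algorithm.

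The main obstacle is ingredient (a): effectively computing a finite relative presentation of the residually free image of a finitely presented group. This is genuinely the deep step — it rests on the solution to Tarski's problem (Sela; Kharlampovich--Myasnikov), on equational Noetherianity of free groups (Guba, Theorem \ref{Thm: Guba }) to know such a finite presentation exists, and on the effective construction of Makanin--Razborov diagrams together with the Groves--Wilton algorithm to make it constructive. Everything else — the reduction via Lemmas \ref{lem: deleting relations modulo quotient by characteristic subgroup} and \ref{lem: commutation mod kappa and add relations}, the reduction of infinitely many checks to the finite set $\Phi^{n+1}(w_0)$, and the uniform word problem in residually free groups — is routine. (The same scheme, with $H$ a fixed virtually-direct-product-of-hyperbolics group in place of $\mathbb{F}_2$ and Theorem \ref{Thm: Hyperbolic => equationally noeth} plus \cite{Ciobanu2020} in place of Guba's theorem, will yield the analogous effective noetherianity statement needed for Theorem \ref{thm:VirtualHyperbolicquotients}.)
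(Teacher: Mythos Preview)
Your overall architecture --- existence of a stabilizing $n$ via Proposition~\ref{Cor: finite pres as Residually free group }, reduction of the infinite check to the finite set $\Phi^{n+1}(w_0)$, and a loop over $n$ --- matches the paper. The gap is in how you propose to carry out the effective check.

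You invoke Theorem~\ref{thm:freequotsection} for ingredient (a). In the paper's logical structure this is circular: Theorem~\ref{thm:freequotsection} is \emph{proved} from the present lemma (see the ``End of the proofs'' subsection), and Theorem~\ref{Thm: MR diagram} in turn is proved from Theorem~\ref{thm:freequotsection}. Your parenthetical that ``this lemma and Theorem~\ref{thm:freequotsection} will be proved together'' via Makanin--Razborov / Groves--Wilton does not resolve this; it just relocates the difficulty without explaining how the base case (finite presentation input) is handled independently.

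More importantly, you are overcomplicating the effective step. Observe that a word $w$ vanishes in $\langle A\mid R_n\rangle/\kappa$ (with $R_n=\bigcup_{i\le n}\Phi^i(w_0)$ finite) if and only if every homomorphism from $\mathbb{F}_A$ to a free group killing $R_n$ also kills $w$; equivalently, if and only if the universal Horn sentence
\[
\forall A,\ \Bigl(\bigwedge_{r\in R_n} r=1\Bigr)\ \Longrightarrow\ w=1
\]
holds in free groups. This is decidable directly by Makanin's theorem on the universal theory of free groups --- no need to compute a residually free image, no Groves--Wilton, no Makanin--Razborov layers, no embedding into products of limit groups. The paper takes exactly this shortcut: it packages the stabilization test as the single universal sentence
\[
\forall A,\ \Bigl(\bigwedge_{s\in\bigcup_{i\le n}\Phi^i(w_0)} s=1\Bigr)\ \Longleftrightarrow\ \Bigl(\bigwedge_{s\in\bigcup_{i\le n+1}\Phi^i(w_0)} s=1\Bigr)
\]
and feeds it to Makanin's algorithm. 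So your ingredients (a) and (b), once unwound, collapse to precisely this --- but you have obscured it behind dependencies that are either circular or unnecessary. Replace your two ingredients with a direct appeal to decidability of the universal theory of free groups and the proof goes through cleanly.
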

	
	\begin{proof}
	The abstract existence of $n$ such as in the lemma is given by  Proposition \ref{Cor: finite pres as Residually free group }. 
	We thus have to justify that we can detect that for a certain $n$ the equality 	$$\langle A\vert \bigcup_{i=0}^n \Phi ^i (w_0) \rangle /\kappa = \langle A \vert \bigcup_{i=0}^\infty  \Phi ^i (w_0) \rangle/\kappa$$ is attained. This equality is in fact equivalent to the fact that in the group $\langle A\vert \bigcup_{i=0}^n \Phi ^i (w_0) \rangle /\kappa $, the free group endomorphisms in $\Phi$ already define group endomorphisms of the quotient.  
	But we have the equivalence: 
\begin{enumerate}
	\item Each morphism in $\Phi$ defines a group endomorphism of  $\langle A\vert \bigcup_{i=0}^n \Phi ^i (w_0) \rangle /\kappa $;
	\item The finitely many relations  of $\Phi^{n+1}(w_0)$ already hold in  $\langle A\vert \bigcup_{i=0}^n \Phi ^i (w_0) \rangle /\kappa$.
\end{enumerate}
Indeed, the first point obviously implies the second one. Conversely, suppose that the relations of  $\Phi^{n+1}(w_0)$  hold in  $\langle A\vert \bigcup_{i=0}^n \Phi ^i (w_0) \rangle /\kappa$. 
Each morphism $\phi$ in  $\Phi$  gives a group morphism 
$$\langle A\vert \bigcup_{i=0}^n \Phi ^i (w_0) \rangle  \overset{\phi}{\rightarrow} \langle A \vert \bigcup_{i=0}^{n+1}  \Phi ^i (w_0) \rangle.$$
By the properties of $\kappa$, this morphism descends: 
$$\langle A\vert \bigcup_{i=0}^n \Phi ^i (w_0) \rangle /\kappa  \overset{\phi}{\rightarrow} \langle A \vert \bigcup_{i=0}^{n+1}  \Phi ^i (w_0) \rangle /\kappa.$$
And as by hypothesis we have that $\langle A \vert \bigcup_{i=0}^{n}  \Phi ^i (w_0) \rangle /\kappa=\langle A \vert \bigcup_{i=0}^{n+1}  \Phi ^i (w_0) \rangle /\kappa$, the equivalence is proved. 

Finally, all that is left to see is that the second item above can be checked: we can decide whether the finitely many relations  of $\Phi^{n+1}(w_0)$ already hold in  $\langle A\vert \bigcup_{i=0}^n \Phi ^i (w_0) \rangle /\kappa$. 
This is a direct consequence of Makanin's Theorem \cite{Makanin1985}, which states that the universal theory of free groups is decidable. Indeed, the elements of $\bigcup_{i=0}^{n+1}\Phi^i(w_0)$ are words over the alphabet $A$. Seeing $A$ as a set of variables, each relation $s$ of $\bigcup_{i=0}^{n+1}\Phi^i(w_0)$ defines an equation $s=1$.  
The universal sentence we must decide is then: 
$$\forall A, (\forall s\in \bigcup_{i=0}^{n+1}  \Phi ^i (w_0), s=1)\iff (\forall s\in \bigcup_{i=0}^{n}  \Phi ^i (w_0), s=1).$$
Note that the quantifications over $s$ are both finite, and in fact correspond to finite conjunctions.
This question can be decided by Makanin's algorithm \cite{Makanin1985}. 
	\end{proof}

		\begin{lemma}[Effective noetherianity for computing marked quotients that are virtual direct products of hyperbolic groups]\label{lem: noeth for virtual hyperbolic quotients}
		Let $H$ be a finitely generated group with a finite index subgroup which is a direct product of hyperbolic groups, and $\kappa$ the associated residual. 
		There exists an algorithm that on input a word $w_0$ and  a set $\Phi$ of endomorphisms of a free group $\mathbb{F}_A$ produces a natural number $n$ such that 	
		$$\langle A\vert \bigcup_{i=0}^n \Phi ^i (w_0) \rangle /\kappa = \langle A \vert \bigcup_{i=0}^\infty  \Phi ^i (w_0) \rangle/\kappa.$$
	\end{lemma}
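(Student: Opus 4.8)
The plan is to follow the proof of Lemma~\ref{lem: noeth for free quotients} almost verbatim, the only substantive change being that the appeal to Makanin's theorem is replaced by the decidability of the universal (Horn) theory of $H$, which for virtual direct products of hyperbolic groups is the content of \cite{Ciobanu2020} (extending \cite{Dahmani2010}). Two things have to be done: first, show that some $n$ as in the statement exists at all; second, show that the least such $n$ can be located by an incremental search.

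For the abstract existence of $n$, I would invoke the fact, recorded just after Theorem~\ref{Thm: Hyperbolic => equationally noeth}, that $H$ --- being virtually a direct product of hyperbolic groups --- is equationally Noetherian (hyperbolic groups are equationally Noetherian by that theorem, the property is trivially stable under finite direct products, and it is inherited by finite extensions by \cite{Baumslag1997}). Hence Proposition~\ref{Cor: finite pres as Residually free group } applies to the residual $\kappa$ attached to $H$. Applying it to the presentation $\langle A \mid \bigcup_{i=0}^\infty \Phi^i(w_0)\rangle$ produces a finite subset $T$ of the relator set with $\langle A \mid \bigcup_{i=0}^\infty \Phi^i(w_0)\rangle/\kappa = \langle A \mid T\rangle/\kappa$; since $\bigcup_{i=0}^\infty \Phi^i(w_0)$ is the increasing union of the finite sets $\bigcup_{i=0}^n \Phi^i(w_0)$, there is an $n$ with $T \subseteq \bigcup_{i=0}^n \Phi^i(w_0)$, and then $\langle A \mid \bigcup_{i=0}^n \Phi^i(w_0)\rangle/\kappa = \langle A \mid \bigcup_{i=0}^\infty \Phi^i(w_0)\rangle/\kappa$ (the intermediate quotient being squeezed between two groups which are by construction isomorphic, using the residual properties to get the relevant surjections).

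For the effective part I would reuse, word for word, the equivalence from the proof of Lemma~\ref{lem: noeth for free quotients}: the displayed equality holds if and only if every $\phi \in \Phi$ descends to a group endomorphism of $\langle A \mid \bigcup_{i=0}^n \Phi^i(w_0)\rangle/\kappa$, and this in turn holds if and only if the finitely many relations of $\Phi^{n+1}(w_0)$ already hold in $\langle A \mid \bigcup_{i=0}^n \Phi^i(w_0)\rangle/\kappa$. The proof of this equivalence --- each $\phi$ induces a morphism $\langle A \mid \bigcup_{i=0}^n \Phi^i(w_0)\rangle \to \langle A \mid \bigcup_{i=0}^{n+1} \Phi^i(w_0)\rangle$ which descends modulo $\kappa$ by the defining properties of a residual --- uses nothing specific to the choice of $H$, so it carries over unchanged. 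The algorithm then increments $n$, testing this condition at each step; termination is guaranteed by the previous paragraph.

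The single genuinely new point, and the main obstacle, is that this test must be shown to be decidable: one must decide whether a given finite set of words over $A$ is trivial in $\langle A \mid \bigcup_{i=0}^n \Phi^i(w_0)\rangle/\kappa$. Unwinding the definition $\kappa(G) = \bigcap_{\phi : G \to H} \ker\phi$ exactly as in the free case, this is equivalent to the validity in $H$ of the universal Horn sentence
$$\forall A,\ \Bigl(\,\bigwedge_{s \in \bigcup_{i=0}^n \Phi^i(w_0)} s = 1\,\Bigr) \;\Longrightarrow\; \Bigl(\,\bigwedge_{r \in \Phi^{n+1}(w_0)} r = 1\,\Bigr),$$
where both conjunctions are finite. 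Where the free-group proof calls Makanin's algorithm, I would here call the decidability of the universal theory of $H$ from \cite{Ciobanu2020} --- or, as noted in the discussion preceding these lemmas, already the decidability of its universal Horn theory, which is all that is needed. This finishes the construction of the algorithm; the real weight of the argument is thus carried entirely by the cited solvability of the universal (Horn) theory, everything else being a transcription of Lemma~\ref{lem: noeth for free quotients}.
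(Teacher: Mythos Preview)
Your proposal is correct and follows exactly the approach the paper takes: the paper's own proof is two sentences, stating that the argument of Lemma~\ref{lem: noeth for free quotients} carries over verbatim once one knows that $H$ is equationally Noetherian (via \cite{Weidmann2020} and \cite{Baumslag1997}) and that its universal theory is decidable by \cite{Ciobanu2020}. You have in fact written out more detail than the paper does, and your use of an implication rather than a biconditional in the displayed sentence is a harmless (indeed slightly cleaner) variant.
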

	\begin{proof}
		The proof is identical to the case of free groups: we use the fact that $H$ is equationally Noetherian (as hyperbolic groups are by \cite{Weidmann2020}, and as equational noetherianity is stable by direct product and finite extension \cite{Baumslag1997}), and that its universal theory is decidable \cite{Ciobanu2020} (and this uniformly in a given finite presentation of $H$).  
	\end{proof}
	
	\subsection{End of the proofs}
We 	simply put together the previous lemmas. 

\begin{proof}[Proofs for Theorems \ref{thm:nilpquotsection}, \ref{thm:finquotsection}, \ref{thm:Abelianbynilpquot}, \ref{thm:freequotsection}, \ref{thm:VirtualHyperbolicquotients}.]
	This is simply an application of Lemma \ref{lem: stabilizaton on non-terminals suffices} together with Lemma \ref{lem: eff noeth Nilp Meta Finite} for finite and abelian-by-nilpotent  quotients,  with Lemma \ref{lem: noeth for free quotients} for the residually free image, and with Lemma \ref{lem: noeth for virtual hyperbolic quotients} for virtually hyperbolic quotients. 
\end{proof}

\subsection{Computation of the first layer of the Makanin-Razborov diagram}

	Recall the following theorem of Sela: 
\begin{theorem}[Sela, \cite{Sela2001}]
	For any finitely generated group $G$, there are finitely many limit groups $\{\Gamma_{i},1\le i\le n\}$ and morphisms $\varphi_{i}:G\rightarrow\Gamma_{i}$ such that any morphism from $G$ to a free group factors through one of the morphisms $\varphi_{i}$.  
\end{theorem}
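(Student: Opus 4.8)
The plan is to avoid the shortening argument entirely and instead realise the $\Gamma_i$ as the coordinate groups of the irreducible components of $\mathrm{Hom}(G,\mathbb{F}_2)$, extracting them from the algebraic geometry over free groups. First I would reduce to a single target: any homomorphism from $G$ to a free group has finitely generated image, which is free of finite rank and hence embeds into $\mathbb{F}_2$, so it suffices to produce limit quotients through which every homomorphism $G\to\mathbb{F}_2$ factors; a factorisation of the composite $G\to\mathbb{F}_2$ through $\Gamma_i$ is a factorisation of the original map, since the embedding into $\mathbb{F}_2$ is injective. Presenting $G$ as $\mathbb{F}_k/\langle\langle\mathcal{R}\rangle\rangle$ on generators $s_1,\dots,s_k$, a homomorphism $\varphi\colon G\to\mathbb{F}_2$ is the same datum as the point $(\varphi(s_1),\dots,\varphi(s_k))\in\mathbb{F}_2^k$, and this point is exactly a solution of the system of equations $\{w(x_1,\dots,x_k)=1\}$ ranging over the relators $w\in\mathcal{R}$. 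Thus $\mathrm{Hom}(G,\mathbb{F}_2)$ is identified with an algebraic set $V\subseteq\mathbb{F}_2^k$.

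Next I would put the Zariski topology on $\mathbb{F}_2^k$, whose closed sets are the solution sets of arbitrary systems of equations. The crucial input is Guba's theorem that finitely generated free groups are equationally Noetherian (Theorem \ref{Thm: Guba }): every system of equations over $\mathbb{F}_2$ is equivalent to a finite subsystem, which says precisely that this topology satisfies the descending chain condition on closed sets, i.e. $\mathbb{F}_2^k$ is a Noetherian topological space. A standard argument then writes the closed set $V$ as a finite, irredundant union of irreducible closed sets, its irreducible components $V=V_1\cup\dots\cup V_n$.

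For each component $V_i$ I would form its coordinate group $\Gamma_i=\mathbb{F}_k/R_i$, where $R_i\trianglelefteq\mathbb{F}_k$ is the radical of $V_i$, namely the set of words that evaluate to $1$ at every point of $V_i$; since $V_i\subseteq V$, every relator of $G$ lies in $R_i$, so $\Gamma_i$ is a quotient of $G$ and there is a canonical epimorphism $\varphi_i\colon G\twoheadrightarrow\Gamma_i$. The decisive point is the dictionary of algebraic geometry over free groups: a finitely generated group is the coordinate group of an \emph{irreducible} algebraic set over $\mathbb{F}_2$ if and only if it is fully residually free, i.e. a limit group (Kharlampovich--Myasnikov \cite{Kharlampovich1998}). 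Applying this to each irreducible $V_i$ shows that every $\Gamma_i$ is a limit group. Finally, any $\varphi\colon G\to\mathbb{F}_2$ is a point of $V=V_1\cup\dots\cup V_n$, hence lies in some $V_i$; by definition of $R_i$ this forces $R_i\subseteq\ker(\mathbb{F}_k\to\mathbb{F}_2)$, so $\varphi$ factors as $G\xrightarrow{\varphi_i}\Gamma_i\to\mathbb{F}_2$, which is exactly the required conclusion.

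The main obstacle is the identification in the third step: establishing that the coordinate group of an irreducible component is fully residually free --- equivalently, that the radical cutting out an irreducible set is prime in the group-theoretic sense --- is the genuinely deep part, resting on the structure theory of limit groups rather than on formal manipulation. By contrast, the Noetherian decomposition into components is immediate from Guba's theorem once the Zariski topology is in place, and the reduction to $\mathbb{F}_2$ together with the final factorisation are routine. This route recovers Sela's statement \cite{Sela2001} through the algebraic-geometry-over-groups framework; Sela's own proof instead obtains the components via the Bestvina--Paulin limit construction, the Rips machine and the shortening argument, where the same finiteness is read off from the absence of infinite descending chains of proper epimorphisms between limit groups.
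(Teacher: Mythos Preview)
The paper does not prove this theorem: it is quoted twice (in the introduction and again just before Theorem~\ref{Thm: MR diagram}) as a result of Sela and used as a black box; the paper even remarks that the standard proofs are non-constructive, pointing to \cite{Champetier2005} for a compactness argument. There is therefore no proof in the paper against which to compare your proposal.

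Your outline is nonetheless a correct and well-known route --- the algebraic-geometry-over-groups approach of Baumslag--Myasnikov--Remeslennikov and Kharlampovich--Myasnikov --- and it does deliver the statement as written. One correction on where the weight lies: you flag the implication ``$V_i$ irreducible $\Rightarrow$ $\Gamma_i$ fully residually free'' as the deep step requiring structure theory, but this implication is purely formal once the Zariski topology is in place. If $g_1,\dots,g_m\in\Gamma_i\setminus\{1\}$ have representatives $w_1,\dots,w_m\in\mathbb{F}_k$, then each set $U_j=\{p\in V_i : w_j(p)\neq 1\}$ is open and non-empty in $V_i$; irreducibility of $V_i$ forces $U_1\cap\dots\cap U_m\neq\emptyset$, and evaluation at any common point gives a homomorphism $\Gamma_i\to\mathbb{F}_2$ killing none of the $g_j$. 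Thus the only substantial input in your argument is Guba's equational Noetherianity (Theorem~\ref{Thm: Guba }), which you already invoke for the Noetherian decomposition. The genuinely deep facts about limit groups --- their finite presentability, the full Makanin--Razborov hierarchy, Sela's shortening argument --- are simply not needed for the bare factorisation statement.
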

These limit groups constitute \emph{first layer of the Makanin-Razborov diagram of $G$}. \\
The theorem thus stated is a priori not constructive. And non-constructive arguments are involved in classical proofs (see for instance \cite{Champetier2005} where the above theorem is proved by Zorn's lemma and a compactness argument).
However, we will show that as a consequence of a theorem of Groves and Wilton, the limit groups that appear in the theorem above can actually be recovered from an EDT0L presentation of $G$. 
\begin{theorem}[Groves, Wilton, \cite{Groves2009}]\label{Thm: GrovesWilton}
	There is an algorithm that lists all finite presentations that define limit groups. 
\end{theorem}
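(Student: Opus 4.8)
The plan is to exhibit an explicit enumeration that outputs, for every limit group, at least one finite presentation of it, and outputs nothing else; this is exactly the asserted list. The starting observation is that limit groups are all finitely presentable (Sela \cite{Sela2001}, Kharlampovich--Myasnikov \cite{Kharlampovich1998}), so demanding a list of \emph{finite} presentations is the correct target and the real content is to generate such a list effectively rather than to decide, for a bare presentation, whether it defines a limit group.

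The engine I would use is the Kharlampovich--Myasnikov embedding theorem \cite{Kharlampovich1998}: every limit group embeds as a finitely generated subgroup of an iterated extension of centralizers of a free group (an $\omega$-residually free tower), and conversely every finitely generated subgroup of such a tower is again a limit group. This reduces the enumeration to two nested ones. First I would enumerate all towers: each is built from a finite-rank free group by a finite sequence of elementary steps $\Gamma_{i+1}=\langle \Gamma_i, t_1,\dots,t_r \mid [t_j,t_{j'}],\,[t_j,c]\ (c\in C_{\Gamma_i}(u))\rangle$, extending the centralizer of a chosen $u\in\Gamma_i$; since in such towers the relevant centralizer is finitely generated and explicitly computable, each step adds only finitely many relations, so each tower $\Gamma$ comes with an explicit finite presentation, and dovetailing over all finite sequences of such steps lists all towers. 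Secondly, for each $\Gamma$ I would run over all finite tuples of words in its generators, thereby enumerating its finitely generated subgroups $H\le\Gamma$, and for each $H$ compute a finite presentation. Outputting all these presentations yields the required list: by the embedding theorem every limit group occurs as some such $H$, and by construction every $H$ is a limit group.

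The hard part is the last step, namely computing a finite presentation for an arbitrary finitely generated subgroup $H\le\Gamma$. This is an \emph{effective coherence} statement for limit groups: coherence itself (finitely generated subgroups are finitely presented) is known, so the issue is to produce the presentation algorithmically. I would derive it from the effective structure theory of towers. A tower $\Gamma$ has decidable word problem, built from Makanin's algorithm \cite{Makanin1985} at the free base and propagated through the centralizer extensions, and it is equationally Noetherian, with Guba's theorem \cite{Guba1986} at the base and equational Noetherianity stable under the tower operations. Using these, one computes the graph-of-groups decomposition induced on $H$ by its action on the Bass--Serre tree attached to the top centralizer extension of $\Gamma$, with finitely presented vertex and edge stabilizers produced inductively down the tower; a finite presentation of $H$ is then assembled by the standard presentation of the fundamental group of a finite graph of groups.

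For correctness, two remarks. The procedure lists each limit group many times and through many redundant presentations, but this is harmless for an enumeration; and crucially one never has to certify that a given $\langle S\mid R\rangle$ ``is'' a limit group, since the only certificates used are the positive, finite data consisting of the construction of a tower and of a subgroup presentation inside it. The genuine obstacle, where the depth of the Sela and Kharlampovich--Myasnikov theory is unavoidable, is precisely the effective passage from a finitely generated subgroup to its induced graph-of-groups decomposition, and hence to a finite presentation.
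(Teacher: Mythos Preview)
The paper does not give a proof of this statement: it is quoted as a theorem of Groves and Wilton \cite{Groves2009} and immediately used as a black box in the subsequent proof of Theorem~\ref{Thm: MR diagram}. There is therefore no proof in the paper to compare your attempt against.

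As for your sketch on its own merits: the overall strategy---enumerate iterated extensions of centralizers of free groups, enumerate their finitely generated subgroups, and output finite presentations of the latter---is indeed in the spirit of the actual Groves--Wilton argument, and the reduction via the Kharlampovich--Myasnikov embedding is the right starting point. The step you correctly identify as the crux, \emph{effective coherence}, is however where your outline becomes thin. Saying that one ``computes the graph-of-groups decomposition induced on $H$'' from the Bass--Serre tree of the top centralizer extension presupposes that one can algorithmically produce finite generating sets for the vertex and edge stabilizers of the $H$-action, and then recurse; this does not follow from decidability of the word problem and equational Noetherianity alone, and is exactly the technical content that Groves and Wilton supply by other means. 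So your proposal is a reasonable high-level roadmap, but the decisive middle step is gestured at rather than established.
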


Here, we will deduce from Theorem  \ref{thm:freequotsection} the following result: 

\begin{theorem}\label{Thm: MR diagram}
	There is an algorithm which, given an EDT0L presentation for a marked group $(G,S)$, produces the first layer of its Makanin-Razborov diagram. 
\end{theorem}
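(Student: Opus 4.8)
The plan is to first replace $(G,S)$ by a finitely presented group with the same morphisms to free groups, using Theorem \ref{thm:freequotsection}, and then to produce the first layer by enumerating all finitely presented \emph{limit quotients} (via Groves--Wilton, Theorem \ref{Thm: GrovesWilton}) and testing, for finite families of them, whether they already absorb every morphism to a free group. The point is that, once one restricts to surjective limit quotients normalised to be the identity on the generators, this absorption condition becomes a \emph{universal} first-order statement about free groups, and hence is decidable by Makanin's algorithm \cite{Makanin1985}.

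\textbf{Reduction to a finite presentation.} Apply Theorem \ref{thm:freequotsection} (equivalently, the datum produced by Proposition \ref{Cor: finite pres as Residually free group }) to the input: this computes a finite set $T\subseteq\mathbb F_S$ such that, writing $P=\langle S\mid T\rangle$ and letting $\kappa$ be the ``residually free'' residual, the residually free image of $(G,S)$ is $(P/\kappa,S)$. Every morphism from $G$ (or from $P$) to a free group $F$ kills the corresponding $\kappa$ (free groups are residually free), so morphisms to free groups from $G$, from $G/\kappa(G)=P/\kappa$, and from $P$ are all canonically identified. Moreover any limit group $\Gamma$ is residually free, so a morphism $P\to\Gamma$ automatically kills $\kappa(P)$ and descends to $P/\kappa$. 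Hence a first layer of the Makanin--Razborov diagram of $P$ descends to one of $G/\kappa(G)$ and then, by composition with $G\twoheadrightarrow G/\kappa(G)$, to one of $G$; it therefore suffices to produce a first layer for the \emph{finitely presented} group $P$.

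\textbf{Enumeration and the key test.} Enumerate, via Theorem \ref{Thm: GrovesWilton}, all finite presentations of limit groups, and select those of the form $Q=\langle S\mid T\cup\rho\rangle$ with $\rho$ finite (up to relabelling the $|S|$ generators); for such $Q$ the identity on $S$ is a surjection $\psi_Q:P\twoheadrightarrow Q$ with $\ker(\mathbb F_S\to Q)=\langle\langle T\cup\rho\rangle\rangle$, so $Q$ is even a marked quotient of $(G,S)$. Given a finite list $Q_1,\dots,Q_m$ of such presentations with extra relators $\rho_1,\dots,\rho_m$, the condition ``every morphism $\phi:P\to F$, $F$ free, factors through some $\psi_{Q_j}$'' is equivalent to the truth in free groups of the universal sentence
$$\forall \bar x\;\Big[\Big(\textstyle\bigwedge_{t\in T}t(\bar x)=1\Big)\ \Longrightarrow\ \bigvee_{j=1}^{m}\bigwedge_{r\in\rho_j}r(\bar x)=1\Big],$$
since $\phi:\mathbb F_S\to F$ factors through $\psi_{Q_j}$ exactly when it kills $T\cup\rho_j$, and any offending $\phi$ may be assumed to land in $\mathbb F_2$. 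By Makanin's theorem (decidability of the universal theory of free groups, \cite{Makanin1985}) this is decidable. The algorithm then dovetails over all finite sublists of the enumerated presentations, tests the displayed sentence for each, outputs the first sublist that passes, and returns the groups $Q_j$ together with the morphisms $G\to Q_j$ (which are just the identity on $S$); one may optionally prune the list by deleting any $Q_j$ whose $\rho_j$ is trivial in some other $Q_l$ (decidable, as limit groups have solvable word problem), so that no member factors through another.

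\textbf{Correctness, termination, and the main obstacle.} Any list passing the test is a first layer of the diagram of $P$, hence, after transport, of $G$, in the sense of Sela's theorem \cite{Sela2001}; so the output is correct. Termination uses that a first layer of $P$ exists (Sela, \cite{Sela2001}), that each of its limit groups may be replaced by the image of the corresponding morphism (finitely generated subgroups of limit groups are limit groups), that the resulting surjective quotients are finitely presented (limit groups are finitely presented, \cite{Sela2001,Kharlampovich1998}) and thus admit presentations of the form $\langle S\mid T\cup\rho_i\rangle$, and finally that the Groves--Wilton enumeration is complete, so this particular sublist is eventually reached. I expect the main obstacle to be precisely the encoding in the previous paragraph: turning the a priori second-order statement ``\emph{every} morphism to \emph{some} free group factors through one of finitely many fixed quotients'' into a $\Pi_1$-statement about free groups --- this works only because we have restricted to \emph{surjective} limit quotients presented by the identity on the generators, which is what keeps the decision problem within the scope of Makanin's algorithm rather than requiring the much deeper decidability of the $\forall\exists$-theory of free groups; a secondary point to handle carefully is the termination argument, which rests on the three structural facts about limit groups quoted above together with the completeness of the Groves--Wilton list.
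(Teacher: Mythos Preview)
Your proof is correct and follows essentially the same route as the paper: reduce to a finite presentation via Theorem~\ref{thm:freequotsection}, enumerate finite families of limit-group presentations via Groves--Wilton, and verify the factorisation property by a universal sentence decidable by Makanin. The only cosmetic difference is that you pre-filter to presentations of the form $\langle S\mid T\cup\rho\rangle$ and test a one-sided implication, whereas the paper enumerates arbitrary limit-group presentations on $|S|$ generators and tests the bi-implication $\bigwedge_{r\in T} r=1 \Leftrightarrow \bigvee_j\bigwedge_{s\in\rho_j} s=1$; these are equivalent formulations, and your explicit discussion of termination and of why the surjectivity restriction keeps the test within the $\Pi_1$-theory is a welcome addition that the paper leaves implicit.
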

Note that this result seems to be new even if we replace ``EDT0L presentation" by ``finite presentation" in its statement. 
This restricted theorem, that deals only with finite presentations, is in fact really a reformulation of the original theorem of Groves and Wilton \cite{Groves2009}, since it is easily seen to imply it. Indeed, applying the algorithm given in the theorem above to the sequence of all finite presentations of groups gives an enumeration of all limit groups. 

\begin{proof}[Proof of Theorem \ref{Thm: MR diagram}]
	First apply Theorem \ref{thm:freequotsection} to compute a finite presentation $\langle S \vert R \rangle$ of a group which has exactly the same marked free quotients as $(G,S)$. \\
	By \cite{Groves2009}, it is possible to enumerate all finite presentations of limit groups, and thus also all finite sets of finite presentations of limit groups. Amongst these finite sets, one gives the desired first layer of the Makanin-Razborov diagram (it could be the empty set if the group has no free quotients). 
Thus all we have to show is that it is possible to verify that a given finite set of limit groups is the desired set. But this follows immediately from Makanin's theorem which states that the universal theory of free groups is decidable. Indeed, consider a finite set $\{\Gamma_{i},1\le i\le n\}$ of limit groups given by presentations $\langle S_i\vert R_i \rangle$. We can identify $S$ with the generating families $S_i$ since it suffices to work with marked groups morphism that are formal bijections on the generating families. The set of limit groups defined above does constitute the first layer of the Makanin-Razborov diagram of the group given by the presentation $\langle S \vert R \rangle$ exactly when the following  universal sentence is satisfied by all free groups: $$\forall S, \, \bigwedge_{r\in R} r=1 \iff \bigvee_{1\le i \le n} \bigwedge_{s\in R_i} s=1  $$
Remains in fact the special case of the empty set: to decide whether or not the group has an empty Makanin-Razborov diagram (i.e. no free quotients),  one needs to check whether the following holds in all free groups: $$\forall S, \, \bigwedge_{r\in R} r=1.$$
(As before, in the universal sentences above, the relations are seen as words whose variables are the generators of $S$.)
\end{proof}

\section{A residually nilpotent group with solvable word problem but non computable lower central series }
In this section we prove:
\begin{theorem}\label{Thm: uncomputable nilp quotients}
The exists a residually nilpotent finitely generated group $H$ with solvable word problem but uncomputable nilpotent quotients: no algorithm can, on input $n$, produce a finite presentation of $H/\gamma_n (H)$. This group can be supposed central-by-metabelian. 
\end{theorem}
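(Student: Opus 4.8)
The plan is to realize $H$ as a carefully chosen central quotient of Hall's finitely generated centre‑by‑metabelian group $F$ whose centre $Z:=Z(F)$ is free abelian of countably infinite rank \cite{Hall1954}. The facts about $F$ that I would use — or first establish — are: $F$ is residually nilpotent; the word problem of $F$ is solvable; the lower central quotients $F/\gamma_m(F)$ are computable from $m$ (take $F$ to be the free centre‑by‑metabelian group $\mathbb{F}_2/[\mathbb{F}_2'',\mathbb{F}_2]$, so that $F/\gamma_m(F)$ is obtained from the free nilpotent group $\mathbb{F}_2/\gamma_m(\mathbb{F}_2)$ by dividing out the value subgroup of the single identity $[[x_1,x_2],[x_3,x_4],x_5]$, which is an effectively computable verbal subgroup of a finitely generated nilpotent group); and $Z$ admits a free basis supplying, for each $n\in\N$, a fresh ``block'' of basis elements: one element $y_n$ at some lower‑central level $\ell(n)$ and elements $h_{n,1},h_{n,2},\dots$ at levels $\kappa(n)\le\nu(n,1)<\nu(n,2)<\cdots$ with $\ell(n)<\kappa(n)$ and $t\le\nu(n,t)$, where $\ell,\kappa,\nu$ are computable.

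Fix a recursively enumerable, non‑recursive set $S\subseteq\N$ with a computable enumeration; write $S[t]$ for the finite set enumerated by stage $t$ and $t(n)$ for the stage at which $n$ enters $S$ (when it does). Define a central subgroup $C\le Z$ blockwise, $C=\bigoplus_n C_n$, by
\[
C_n=\begin{cases}\langle h_{n,t}\mid t\ge 1\rangle & \text{if }n\notin S,\\ \langle h_{n,t}\mid t<t(n)\rangle+\Z\,(y_n-h_{n,t(n)}) & \text{if }n\in S,\end{cases}
\]
and set $H:=F/C$. Observe that the ``$n\in S$'' clause only becomes active at stage $t(n)$, before which block $n$ of $C$ is in its ``$n\notin S$'' form.

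Three things must then be checked. First, $H$ is finitely generated and centre‑by‑metabelian (a quotient of $F$ by a central subgroup), and residually nilpotent: this amounts to $\bigcap_m\gamma_m(F)C=C$, which using residual nilpotence of $F$ and of $F/Z$ reduces to $\bigcap_m(Z_{\ge m}+C)=C$ with $Z_{\ge m}:=\gamma_m(F)\cap Z$, and follows blockwise from the chosen layering of levels. Second, $H$ has solvable word problem: given a word $w$, decide via the word problem of $F$ whether $w\in Z$; if not, $w\neq 1$ in $H$; if so, compute its coordinates in the basis of $Z$ (supported at levels $\le\lambda$ for $\lambda$ computable from $|w|$) and decide $w\in C$ block by block. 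The key point is that, for a block $n$, membership of the bounded block‑$n$ part of $w$ in $C_n$ depends only on whether $n\in S[\lambda]$: if $n$ has not been enumerated by stage $\lambda$, then $t(n)>\lambda$ forces $\nu(n,t(n))>\lambda$, so the ``finisher'' $y_n-h_{n,t(n)}$ contributes nothing at levels $\le\lambda$ and the membership criterion is exactly the one for the ``$n\notin S$'' block; and $S[\lambda]$ is computable. Third — the crux — the lower central quotients of $H$ encode $S$: for each $n$, $\bar y_n$ has finite order in $H/\gamma_{\kappa(n)}(H)$ iff $n\in S$. Indeed $\gamma_{\kappa(n)}(F)C\cap Z=Z_{\ge\kappa(n)}+C$; if $n\in S$ then $y_n-h_{n,1}\in C_n$ (it is the sum of the finisher and the chain $\sum_{t<t(n)}(h_{n,t}-h_{n,t+1})\in C_n$) and $h_{n,1}\in Z_{\ge\kappa(n)}$, so $y_n\in\gamma_{\kappa(n)}(F)C$ and $\bar y_n=1$ in $H/\gamma_{\kappa(n)}(H)$; if $n\notin S$ the $y_n$‑coordinate of every element of $Z_{\ge\kappa(n)}+C$ vanishes (as $\ell(n)<\kappa(n)$ and $C_n$ involves no $y_n$), so $\bar y_n$ has infinite order there. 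Since $H/\gamma_{\kappa(n)}(H)$ is finitely generated nilpotent, from a finite presentation one can compute the order of the generator $\bar y_n$; hence an algorithm producing, on input $m$, a finite presentation of $H/\gamma_m(H)$ would decide $S$, a contradiction. Combined with Theorem~\ref{thm:nilpquotsection} this also shows $H$ has no EDT0L presentation, so $\pi(\text{EDT0L})\neq\pi(\text{RE})$.

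The hard part is the simultaneous engineering of $C$ in the second and third paragraphs: decidable membership in $C$ forces its dependence on the enumeration of $S$ to be ``local'' (settled after computably many stages), while non‑computability of the lower central series requires the undecidable bit ``$n\in S$?'' to nonetheless be recorded in the isomorphism type of the single quotient $H/\gamma_{\kappa(n)}(H)$. This is possible only because $Z(F)$ carries basis elements at unboundedly many lower‑central levels, leaving room for one independent block per $n$; aligning the level bookkeeping ($\ell(n)<\kappa(n)\le\nu(n,t)$, $t\le\nu(n,t)$) with the enumeration stages so that both properties hold at once is the technical core, together with verifying that Hall's group really enjoys all the structural and effectivity properties listed at the start.
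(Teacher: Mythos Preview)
Your approach is essentially the paper's: take a centre-by-metabelian group whose centre is free abelian of infinite rank, kill a central subgroup that encodes an r.e.\ non-recursive set via the lower-central levels, and check solvable word problem, residual nilpotence, and non-computability of $H/\gamma_m(H)$ exactly along the lines you sketch. The paper carries this out with Hall's concrete central extension of $\Z\wr\Z$ rather than the free centre-by-metabelian group; it exhibits an explicit basis $f_n=[b,\underbrace{a,\dots,a}_n,b]$ of the centre (with $f_n\in\gamma_{n+2}\setminus\gamma_{n+3}$, verified via an explicit unitriangular matrix quotient), and uses the simpler encoding ``add the relation $f_{p_n}=f_{p_n^{t}}$ if the $n$-th Turing machine halts in $t$ steps''. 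This concreteness is exactly what lets the paper avoid the black-boxed structural facts about $F$ you list in your first paragraph (graded basis of $Z(F)$ with $Z\cap\gamma_m(F)$ equal to the span of high-level basis elements, computability of the levels and of the words representing basis elements); for the free centre-by-metabelian group these facts are true but not entirely trivial, and your proof as written depends on them.

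One small slip: in your third check, the claim ``$y_n-h_{n,1}\in C_n$'' with the telescoping justification is not correct. The chain $\sum_{t<t(n)}(h_{n,t}-h_{n,t+1})=h_{n,1}-h_{n,t(n)}$ is \emph{not} in $C_n$ (the last summand $h_{n,t(n)-1}-h_{n,t(n)}$ already fails, since $h_{n,t(n)}\notin C_n$), and adding the finisher gives $y_n+h_{n,1}-2h_{n,t(n)}$, not $y_n-h_{n,1}$. The intended conclusion $y_n\in Z_{\ge\kappa(n)}+C$ is nevertheless immediate: simply write $y_n=(y_n-h_{n,t(n)})+h_{n,t(n)}$ with the first term in $C_n$ and the second in $Z_{\ge\nu(n,t(n))}\subseteq Z_{\ge\kappa(n)}$.
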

The same construction also yields:
\begin{theorem}\label{Thm: non effectively residually nilp}
	There exists a residually nilpotent finitely generated group $H$ with solvable word problem which is not \textbf{effectively residually nilpotent}: there does not exist an algorithm that, on input a non-identity element $w$ of  $H$, produces a finite presentation of a nilpotent quotient of $H$ in which $w$ has a non-trivial image (even though this necessarily exists). In fact, no algorithm can, on input a word $w$, determine $n$ such that $w$ has a non-trivial image in $H/\gamma_{n}(H)$. 
\end{theorem}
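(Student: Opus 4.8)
The plan is to build $H$ as a quotient of Hall's finitely generated center-by-metabelian group $U$ whose center $Z(U)$ is free abelian of countably infinite rank, say with basis $(z_1, z_2, \dots)$. The idea is that quotienting $U$ by a subgroup of the center generated by a subset $\{z_i : i \in I\}$ for a suitable $I \subseteq \N$ will ``hide'' information in the nilpotent quotients: because $U$ is center-by-metabelian, the way the basis elements $z_i$ appear in the lower central series $\gamma_n(U)$ can be arranged so that killing $z_i$ or not is detectable only in $\gamma_{n_i}(H)$ for some specific $n_i$. First I would recall/establish the relevant structure of Hall's group: there is a presentation in which each generator $z_i$ of the center lies in $\gamma_{k_i}(U)\setminus \gamma_{k_i+1}(U)$ for an explicitly known, strictly increasing sequence $k_i$, and such that the images of the $z_i$ remain independent in the associated graded Lie ring. (If the natural presentation of Hall's group does not spread the $z_i$ across infinitely many lower central terms, I would pass to a variant — e.g. an iterated construction or a wreath-type modification — engineered to do so; this is the first thing to pin down.)

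Next I would fix a set $I \subseteq \N$ that is recursively enumerable but not recursive, with a fixed enumeration $I = \{i_0, i_1, \dots\}$, and set $H = U / \langle z_i : i \in I\rangle$. Since $\langle z_i : i \in I\rangle$ is central, $H$ is again center-by-metabelian, and it is residually nilpotent because $\bigcap_n \gamma_n(H) = \{1\}$: any nontrivial element survives in some finite nilpotent quotient since the ``surviving'' center $\langle z_i : i \notin I\rangle$ together with $U/Z(U)$ (which is metabelian, hence residually nilpotent by Gruenberg) separates points — this needs a short argument using the center-by-metabelian structure and the fact that the quotient by the center is residually (torsion-free) nilpotent. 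For the word problem: an element of $H$ is trivial iff its preimage in $U$ lies in $\langle z_i : i \in I\rangle$; writing a general element of $U$ in normal form, its central part is a finite $\Z$-linear combination $\sum_j a_j z_j$, and this lies in the subgroup iff $a_j = 0$ for all $j \notin I$. Since $I$ is r.e., ``$a_j \in I$ is false'' — wait, rather: checking $a_j = 0$ whenever $j \notin I$ requires deciding $j \notin I$, which is not decidable. So I must instead choose $I$ more cleverly: take $I$ r.e. with $\N \setminus I$ also r.e. is impossible (that would make $I$ recursive), so instead I would make the \emph{dependence of $n_i$ on $i$} grow fast enough that word-problem instances only ever involve finitely many $z_j$ that can be resolved, OR — cleaner — arrange $H$ as $U/N$ where $N$ is generated by central elements $z_i - z_{i+1}$ for $i\in I$ or similar ``telescoping'' relations whose membership is semi-decidable from both sides. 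The cleanest route is probably: let $f$ be a computable increasing function whose range is not recursive, put relations $z_{f(m)} = 1$... and use that an element's central support is finite, so triviality reduces to finitely many membership queries ``is $j$ in $\mathrm{range}(f)$'' \emph{below a computable bound}, which is decidable. I would choose the construction so this bounded-search decidability holds.

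Finally, the uncomputability of the nilpotent quotients: $H/\gamma_n(H)$ determines which $z_i$ with $k_i < n$ survive, i.e. it determines $I \cap \{i : k_i < n\}$. If we could compute a finite presentation of $H/\gamma_n(H)$, then by Theorem \ref{thm:nilpquotsection} applied in reverse — actually more directly, from a finite presentation of a nilpotent group one can compute its lower central factors and detect which central generators vanish — we could decide, for each $i$ with $k_i < n$, whether $i \in I$; letting $n \to \infty$ this would make $I$ recursive, a contradiction. This gives Theorem \ref{Thm: uncomputable nilp quotients}. For Theorem \ref{Thm: non effectively residually nilp}: the element $z_i$ (for $i \notin I$) is nontrivial in $H$ but dies in every $\gamma_n(H)$ with $n > k_i$ — no wait, it \emph{survives} in $H/\gamma_n(H)$ exactly for $n > k_i$; an algorithm producing such an $n$ from the word $z_i$ would in particular tell us $k_i$, hence locate $i$ relative to the (non-recursive) set $I$, again a contradiction. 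The main obstacle I expect is the first step — arranging a center-by-metabelian group whose center is free abelian of infinite rank \emph{with its basis elements distributed across infinitely many distinct lower central series terms, independently in the graded Lie ring} — since Hall's original group must be inspected or modified to guarantee this, and one must simultaneously keep the word problem solvable in the quotient, which constrains how $I$ (equivalently, which $z_i$ are killed) may be chosen.
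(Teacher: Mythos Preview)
Your framework is right --- Hall's center-by-metabelian group $G$ with free abelian centre on a basis $(f_n)_{n\ge1}$ spread across the lower central series, and a central quotient encoding an undecidable problem --- and this is exactly what the paper does. But the specific construction you propose does not close, and the gap is not a detail: it is the main idea of the proof.

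Your first attempt, $H=G/\langle z_i:i\in I\rangle$ for $I$ r.e.\ non-recursive, you correctly diagnose as having unsolvable word problem (deciding $z_j=1$ is deciding $j\in I$). Your fix, ``take $f$ computable increasing with non-recursive range'', is impossible: a strictly increasing computable function has recursive range (compute $f(0),f(1),\dots$ until you overshoot). More generally, any ``kill $z_i$ for $i$ in some set $I$'' scheme forces $I$ to be recursive as soon as the word problem is solvable, so this route is closed. And even if it were open, it would not prove Theorem~\ref{Thm: non effectively residually nilp}: for a surviving $z_j$ (i.e.\ $j\notin I$) the least $n$ with $z_j\ne1$ in $H/\gamma_n(H)$ is just $k_j+1$, a structural invariant of $G$ computable from $j$; your final paragraph's claim that knowing this $n$ ``locates $i$ relative to $I$'' is not correct.

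The paper's trick, which you brush past with the word ``telescoping'' but do not find, is to \emph{identify} central generators rather than kill them, and to let the \emph{halting time} control how deep the identification reaches. Concretely: for each Turing machine $M_n$, if $M_n$ halts in $t$ steps, impose $f_{p_n}=f_{p_n^{\,t}}$ (and impose nothing if it does not halt). This single move buys both conclusions at once. The word problem is solvable because a word involves only finitely many $f_i$ with $i\le N$, and every relation touching such indices comes from some $M_n$ with $p_n\le N$ halting in $t$ steps with $p_n^{\,t}\le N$; running the first $N$ machines for $N$ steps recovers all of them. And $f_{p_n}$ is now always a non-identity element of $H$, but its depth in the lower central series has become uncomputable: it survives in $H/\gamma_m(H)$ from $m=p_n+3$ onward if $M_n$ never halts, and only from $m=p_n^{\,t}+3$ onward if $M_n$ halts in $t$ steps. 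So an algorithm returning such an $m$ would solve the halting problem, which gives Theorem~\ref{Thm: non effectively residually nilp}; and an algorithm producing a finite presentation of $H/\gamma_m(H)$ would let you decide $f_{p_n}=1$ there, giving Theorem~\ref{Thm: uncomputable nilp quotients}.

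Finally, your worry about whether the centre of Hall's group is genuinely spread across infinitely many lower-central terms is well-founded and is where most of the paper's work goes: one rewrites the presentation so that $f_n=[b,\underbrace{a,\dots,a}_{n},b]$ is a basis of the centre, and then verifies via an explicit unipotent matrix model that the odd-indexed $f_1,f_3,\dots,f_{2n-3}$ remain independent in $G/\gamma_{2n}(G)$.
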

In view of Theorem  \ref{Thm:quotient computation intro}, we immediately get:
\begin{corollary}
	The group given in Theorem \ref{Thm: uncomputable nilp quotients} cannot have an EDT0L presentation. 
\end{corollary}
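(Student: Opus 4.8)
The plan is a short argument by contradiction that combines the group $H$ produced in Theorem~\ref{Thm: uncomputable nilp quotients} with the nilpotent-quotient algorithm that is the content of the nilpotent case of Theorem~\ref{Thm:quotient computation intro}, namely Theorem~\ref{thm:nilpquotsection}. First I would assume, for contradiction, that $H$ admits a presentation in EDT0L. Since EDT0L is stable under homomorphic images and union with finite sets, Lemma~\ref{lem: change of generators} shows that having an EDT0L presentation does not depend on the chosen marking, so I may fix a concrete EDT0L system for $H$ and let $S$ be the marking of $H$ it determines, obtaining an EDT0L presentation of the marked group $(H,S)$.

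Next I would run, for an arbitrary input $n \in \N$, the algorithm of Theorem~\ref{thm:nilpquotsection} on this EDT0L presentation together with the integer $n$; it returns a finite presentation of the marked group $(H/\gamma_n(H),S)$. Discarding the marking data, this is in particular a finite presentation of the abstract group $H/\gamma_n(H)$, and the whole procedure is uniform in $n$. We have thus exhibited an algorithm that, on input $n$, produces a finite presentation of $H/\gamma_n(H)$ --- exactly what Theorem~\ref{Thm: uncomputable nilp quotients} asserts cannot exist. This contradiction proves that $H$ has no EDT0L presentation. (Equivalently one could invoke Theorem~\ref{thm:Abelianbynilpquot}, since nilpotent groups are abelian-by-nilpotent, or combine the decision version in Theorem~\ref{Thm:quotient computation intro} with Lemma~\ref{lem: marked quotient and finite pres}; the argument is insensitive to which of these is taken as the input.)

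There is essentially no internal obstacle here: all the difficulty lives in the two results being combined, and the corollary is their direct juxtaposition. The only point I would be careful to spell out is the mismatch between the marked setting of the quotient algorithm and the markless uncomputability statement of Theorem~\ref{Thm: uncomputable nilp quotients} --- this is resolved simply by forgetting markings in the output, and by the marking-independence supplied by Lemma~\ref{lem: change of generators}.
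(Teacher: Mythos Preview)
Your proposal is correct and is exactly the argument the paper intends: the corollary is stated immediately after Theorem~\ref{Thm: uncomputable nilp quotients} with the remark ``In view of Theorem~\ref{Thm:quotient computation intro}, we immediately get'', and your write-up simply unpacks this, invoking the nilpotent-quotient algorithm (Theorem~\ref{thm:nilpquotsection}) to derive a contradiction. Your extra care about markings via Lemma~\ref{lem: change of generators} is a welcome clarification the paper leaves implicit.
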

And thus Theorem \ref{Thm: uncomputable nilp quotients}, together with the results of \cite{Rauzy2022}, gives a complete description the parts  of  Theorem  \ref{Thm:quotient computation intro} that give trivial or non-trivial properties:
\begin{enumerate}
	\item ``Having computable marked abelian-by-k-nilpotent quotients'' (for a fixed $k$) or ``computable  marked free quotients'' are trivial properties satisfied by all finitely generated groups. 
	\item ``Having computable marked finite quotients'' or ``computable marked nilpotent quotients'' are non-trivial properties that not all group need to satisfy (even amongst residually finite  (resp. nilpotent) groups with solvable word problem), but that groups in $\pi(\text{EDT0L})$ do satisfy. 
\end{enumerate}

\subsection{Convenient presentation of Hall's 3-solvable group }

We will use a group introduced by P. Hall in \cite{Hall1954}. We first describe it. 
In what follows,  we use the convention that $[ x,y ]=x^{-1} y^{-1} x y$, and define inductively $[x,y,z]=[[x,y],z]$, and so on. Denote $x^y=y^{-1}xy$. \\
Hall's group is generated by two elements $a$ and $b$. For $i$ in $\mathbb{Z}$, denote by $b_i$ the element $a^{-i}ba^{i}$. The group  $G$ is then given by the presentation:
$$\pi_1=\langle a,b \, \vert \, [b_i,b_j,a]=1, \,[b_i,b_j,b]=1, \, \forall i ,j \in \mathbb{Z} \rangle $$
Note that $[b_i,b_j]^{a}=[b_{i+1},b_{j+1}]$, so $[b_i,b_j,a]=1\iff [b_i,b_j]=[b_{i+1},b_{j+1}]$, and thus the above presentation could be written equivalently: 
$$\pi_1'=\langle a,b \, \vert \, [b,b_i,a]=1, \,[b,b_i,b]=1, \, \forall i  \in \mathbb{Z} \rangle $$
The group $G$ can both be seen as a maximal central extension of the wreath product $\mathbb{Z} \wr \mathbb{Z}$ and as an  HNN  extension of an infinitely generated two step nilpotent group  generated by the elements $b_i$, $i\in\mathbb{Z}$.
The group $G$ is thus center-by-metabelian and (class 2 Nilpotent)-by-abelian. 
Note that the presentation above is EDT0L.  

Denote by $d_i$ the element $[b_0,b_i]$, for $i\in\mathbb{Z}$. Then $d_i=d_{-i}^{-1}$ and $d_i=[b_t,b_{t+i}]$ for all $t\in\mathbb{Z}$.

The centre of  $G$ is a free abelian group on countably many generators \cite{Hall1954},  in fact the elements $d_i$, $i >0$, form the basis of the centre of $G$.

We first describe another presentation of $G$ that allows to compute its nilpotent quotients easily. 

\begin{lemma}\label{lem: presentation Hall Gp}
	The marked  group $(G,(a,b))$ is also given by the presentation 
	$$\pi_2=\langle a,b \, \vert \, \forall n \in \N,\, [b,\underbrace{a,...,a}_n,b,a]=1,\, [b,\underbrace{a,...,a}_n,b,b]=1\rangle$$ 
\end{lemma}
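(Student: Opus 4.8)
The plan is to show that the two presentations $\pi_1$ and $\pi_2$ define the same marked group $(G,(a,b))$ by proving that, modulo the common relations, the relator sets generate the same normal closure in $\mathbb{F}_{\{a,b\}}$. Since $[b_i,b_j,a]=1$ is equivalent to $[b_i,b_j]=[b_{i+1},b_{j+1}]$ (as noted, using $[b_i,b_j]^a=[b_{i+1},b_{j+1}]$), the presentation $\pi_1$ is equivalent to $\pi_1'$, which only involves the relators $[b,b_i,a]$ and $[b,b_i,b]$ for $i\in\mathbb{Z}$. So it suffices to compare $\pi_1'$ with $\pi_2$. The key observation is that $b_i = a^{-i}ba^i$, so $[b,b_i] = [b, a^{-i}ba^i]$, and one should rewrite these commutators in terms of iterated commutators of the form $[b,\underbrace{a,\dots,a}_n,b]$.

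The heart of the matter is a change of ``coordinates'' on the relevant part of the free group (or rather, an identification after imposing enough relations). First I would work in the metabelian-type quotient where things are manageable: note that both presentations force the subgroup generated by the $b_i$ to be $2$-step nilpotent, so all the commutators $[b_0,b_i]$ are central in $\langle b_j : j\in\mathbb{Z}\rangle$, and the map $i \mapsto d_i = [b_0,b_i]$ extends $\mathbb{Z}$-linearly. The relator $[b,b_i]$ lives in the center, and one wants to express the ``difference operator'' $b_i b_0^{-1}$ and the iterated commutators $[b,a,\dots,a]$ in terms of one another. Concretely, $[b,a] = b^{-1}a^{-1}ba = b^{-1}b_{-1}$ up to conjugation (I would be careful here: $[b,a]=b^{-1}(a^{-1}ba) = b^{-1}b_{-1}$), and more generally $[b,\underbrace{a,\dots,a}_n,b]$ should, modulo the nilpotency relations already imposed, be expressible as a $\mathbb{Z}$-linear combination (in additive notation on the central/abelianized part) of the $[b,b_i]$, and conversely each $[b,b_i]$ is such a combination of the $[b,\underbrace{a,\dots,a}_n,b]$. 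The point is that the two families $\{[b,b_i]\}_{i}$ and $\{[b,\underbrace{a,\dots,a}_n,b]\}_{n\ge 0}$ span the same submodule over $\mathbb{Z}[a^{\pm 1}]$ once one recognizes $b_i = b^{a^i}$ and uses a binomial-type expansion relating $(a-1)^n$ applied to $b$ (which gives the iterated commutator $[b,a,\dots,a]$ modulo higher terms) and the shifts $b^{a^i} - b$.

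So the steps, in order: (1) recall that $\pi_1 \equiv \pi_1'$ via the conjugation identity. (2) Show that the relations of $\pi_2$ hold in $(G,(a,b))$: compute that $[b,\underbrace{a,\dots,a}_n,b,a]$ and $[b,\underbrace{a,\dots,a}_n,b,b]$ are consequences of the relators $[b,b_i,a], [b,b_i,b]$ — this direction is the easier one, since the iterated commutator with $a$'s can be expanded into a product of conjugates of $[b,b_i]$'s (using that once $\langle b_j\rangle$ is $2$-nilpotent, these commutators behave additively), and then each factor commutes with $a$ and $b$ by the $\pi_1'$ relations. (3) Conversely, show the $\pi_1'$ relators are consequences of the $\pi_2$ relators: here one must invert the linear relationship, extracting $[b,b_i,a]$ and $[b,b_i,b]$ from finitely many of the $[b,\underbrace{a,\dots,a}_n,b,\cdot]$. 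This uses that the change-of-basis matrix between $(a-1)^n b$ and $(a^i-1)b$ is unipotent/triangular over $\mathbb{Z}[a^{\pm1}]$ and hence invertible in the appropriate completed sense, but since we only ever need finitely many generators to kill a single $b_i$, each individual relation is a finite consequence.

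The main obstacle I expect is step (3), specifically making the ``inversion'' rigorous without the $2$-step nilpotency of $\langle b_j\rangle$ being available a priori in the group defined by $\pi_2$ — i.e., one has to be careful that when deriving the $\pi_1'$ relations from $\pi_2$, the commutator-collection manipulations one performs are valid in the free group (or in $\mathbb{F}/\langle\langle \text{relators of }\pi_2\rangle\rangle$) and not only after already assuming what one wants to prove. The clean way around this is probably to first establish, purely from the $\pi_2$ relators, that $[b,b_i,b_j]$ and similar third-order commutators vanish (bootstrapping the nilpotency), and only then run the linear-algebra argument; alternatively, one identifies $G$ abstractly (as the stated maximal central extension of $\mathbb{Z}\wr\mathbb{Z}$, equivalently the HNN extension) and checks both presentations against that model, which sidesteps the free-group bookkeeping. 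I would favor the latter: describe $G$ via its known structure — $\langle b_i\rangle$ is free $2$-step nilpotent of the appropriate shape with center spanned by the $d_i$, $i>0$, and $a$ acts by shifting indices — and then verify directly that both $\pi_1$ and $\pi_2$ present exactly this group with the marking $(a,b)$, the iterated-commutator relators of $\pi_2$ being precisely a reindexing of the shift-invariance plus vanishing conditions.
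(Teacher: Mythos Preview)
Your outline is conceptually on target and parallels the paper's strategy: both recognize that the families $\{d_i=[b_0,b_i]\}$ and $\{f_n=[b,\underbrace{a,\dots,a}_n,b]\}$ are related by a unipotent (upper-triangular with $\pm 1$ on the diagonal) change of basis, so imposing centrality of one family is equivalent to imposing centrality of the other. The paper carries this out concretely: it proves by induction on $n$ that $[b,\underbrace{a,\dots,a}_n]=b_0^{(-1)^n\binom{n}{n}}b_1^{(-1)^{n-1}\binom{n}{n-1}}\cdots b_n\cdot C$ with $C$ a product of commutators $[b_i,b_j]$, $|i-j|<n$, already known to be central by the inductive hypothesis; it then deduces $f_{n+1}=d_1^{\alpha_1}\cdots d_n^{\alpha_n}d_{n+1}^{-1}$, so $f_{n+1}$ is central iff $d_{n+1}$ is, modulo the earlier relations.

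The gap in your proposal is exactly where you yourself flag it: step (3). Your two suggested resolutions do not actually close it. The ``abstract model'' route does not sidestep the difficulty: verifying that $\pi_2$ presents $G$ (rather than a proper cover of $G$) still requires showing that the $\pi_1'$ relators are consequences of the $\pi_2$ relators, which is precisely the hard direction. And ``bootstrapping nilpotency'' is the right idea but needs content: the paper's induction \emph{is} that bootstrap, made precise. At stage $n{+}1$ one may legitimately use that $d_1,\dots,d_n$ are central, because the relations enforcing this have already been shown (inductively) to be equivalent in $\pi_1'$ and $\pi_2$; this is what licenses the commutator-collection manipulations without circularity. Your sketch never sets up or runs this induction.

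One minor slip: with $b_i=a^{-i}ba^i$ and $[x,y]=x^{-1}y^{-1}xy$, one has $[b,a]=b^{-1}(a^{-1}ba)=b_0^{-1}b_1$, not $b^{-1}b_{-1}$.
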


\begin{proof}
	We use again the notation $b_i=a^{-i} b a^i$ and $d_i=[b_0,b_i]$. \\Recall that, in any group, and for any elements $x,y,z$, we have:
	$$[x z, y] = [x, y]^z \cdot [z, y];$$ $$\left[x^{-1}, y\right] = [y, x]^{x^{-1}}.$$
	 \\
	Denote by $f_n$ the element $[b,\underbrace{a,...,a}_n,b]$ of $G$ (for $n\ge 1$). 
	\\
	Consider first $f_1=[b,a,b]$. We have  $[b,a]=b_0^{-1}b_1$, and thus
	$f_1=[b_0^{-1}b_1,b_0]=[b_1,b_0]=d_1^{-1}$. 
	
	And thus the two relations of $\pi_1$ that force  $d_1=[b_0,b_1]$  to be central are equivalent to the two relations of $\pi_2$ which impose that $f_1$ is central. 
	\\
	We show by induction that the same holds for each $n$: the relations that impose  that $f_i$ is central, $i\le n$, are equivalent to the relations that impose that $[b_i,b_j]$ is central, for $\vert i-j\vert \le n$. 
	We also prove via the same induction: for each $n$ there is an equality (which follows both from the relations of  $\pi_1$ and $\pi_2$): 
	 $$[b,\underbrace{a,...,a}_n]=b_0^{(-1)^n\tbinom{n}{n}}b_{1}^{(-1)^{n-1}\tbinom{n}{n-1}}...b_{n-1}^{(-1)^1\tbinom{n}{1}}b_n^{(-1)^0 \tbinom{n}{0}} C$$
	where $C$ designates a central product of commutators $[b_i,b_j]$ with $\vert i-j\vert <n$.
	\\
	The base case of this induction was proven above: $[b,a]=b_0^{-1}b_1$, and enforcing  $f_1$ to be central is equivalent to imposing  that $[b_0,b_1]$ should be central. 
	\\ 
	Induction step. We compute  $[b,\underbrace{a,...,a}_{n+1}]=[b,\underbrace{a,...,a}_n]^{-1}[b,\underbrace{a,...,a}_n]^a$.
	We replace   $[b,\underbrace{a,...,a}_n]$ by $b_0^{(-1)^n\tbinom{n}{n}}...b_0^{(-1)^0 \tbinom{n}{0}}C$ in that expression. 
	Conjugation by $a$  shifts every index: 
	$$[b,\underbrace{a,...,a}_n]^{a}=b_{1}^{(-1)^n\tbinom{n}{n}}b_{2}^{(-1)^{n-1}\tbinom{n}{n-1}}...b_n^{(-1)^1\tbinom{n}{1}}b_{n+1}^{(-1)^0 \tbinom{n}{0}}C.$$
	Inverting we get: 
	$$[b,\underbrace{a,...,a}_n]^{-1}=b_n^{(-1)^1 \tbinom{n}{0}}...b_{0}^{(-1)^{n+1}\tbinom{n}{n}}C^{-1}.$$
	We then multiply these two expressions:
		$$[b,\underbrace{a,...,a}_n]^{-1}[b,\underbrace{a,...,a}_n]^a= 
		b_n^{(-1)^1 \tbinom{n}{0}}... b_{0}^{(-1)^{n+1}\tbinom{n}{n}}C^{-1}
		\cdot 
		b_{1}^{(-1)^n\tbinom{n}{n}}...b_{n+1}^{(-1)^0 \tbinom{n}{0}}C.$$
	 We can rearrange the terms of this product. This is done at the cost of adding commutators $[b_i,b_j]$ with   $\vert i-j\vert <n+1$. This  changes the central  element $C$ to a $C'$. Then, using the rules of Pascal's triangle: 
		$$\tbinom{n}{k}+\tbinom{n}{k+1}=\tbinom{n+1}{k+1},$$
		we see that the formula is indeed preserved  at each step. 
	
	We then consider $f_{n+1}=[[b,\underbrace{a,...,a}_{n+1}],b]$. Using the formula for $[b,\underbrace{a,...,a}_{n+1}]$ together with the formulas   $[x z, y] = [x, y]^z \cdot [z, y]$ and $\left[x^{-1}, y\right] = [y, x]^{x^{-1}}$, and the fact that we already know by induction  that each $d_i$ is central, for $i\le n$ , we see that 
	$$f_{n+1}=d_1^{(-1)^{n+1}\tbinom{n+1}{1}}...  d_{n}^{(-1)^{2}\tbinom{n+1}{n}}d_{n+1}^{(-1)^{1}\tbinom{n+1}{n+1}}.$$

	This shows again that $f_{n+1}$ and $d_{n+1}$ differ by a product of central elements. Thus one is central if and only if the other one is, and thus the relation $ [b,\underbrace{a,...,a}_{n+1},b,a]=1$ and $ [b,\underbrace{a,...,a}_{n+1},b,b]=1$ of $\pi_2$ are equivalent, modulo the previous relations, to the relations $[b_i,b_{i+n+1},a]$ and $ [b_i,b_{i+n+1},b]$ of $\pi_1$. 
\end{proof}

\begin{corollary}\label{lem: presentation Nilp Quotient}
	The group $G/\gamma_n(G)$ (marked by the images of $a$ and $b$ in the quotient) is given by the following presentations: \\ 
	For $n=2$: 	$$\langle a,b \, \vert \, [b,a]=1\rangle$$ 
	For $n=3$: $$\langle a,b \, \vert \, [b,a,a]=1,[b,a,b]=1\rangle$$ 
	For $n\ge 4$: 
		$$\langle a,b \, \vert \, \forall k \le n-3,\, [b,\underbrace{a,...,a}_k,b,a]=1,\, [b,\underbrace{a,...,a}_k,b,b]=1,  [b,\underbrace{a,...,a}_{n-1}]=1,  [b,\underbrace{a,...,a}_{n-2},b]=1\rangle$$ 

\end{corollary}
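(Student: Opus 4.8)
The plan is to start from the presentation $\pi_2$ of $G$ produced by Lemma \ref{lem: presentation Hall Gp} (which was engineered exactly so that lower central quotients are visible) and to reduce everything to one identity. Writing $\mathbb{F}_2=\langle a,b\rangle$, we have $G/\gamma_n(G)=\langle a,b\mid \pi_2,\ \gamma_n(\mathbb{F}_2)=1\rangle$, i.e. we adjoin to $\pi_2$ a generating set of $\gamma_n(\mathbb{F}_2)$. The heart of the matter is to prove, for $n\ge 3$,
\begin{equation*}
\gamma_n(G)=\big\langle\big\langle\, [b,\underbrace{a,\ldots,a}_{n-1}],\ [b,\underbrace{a,\ldots,a}_{n-2},b]\,\big\rangle\big\rangle^{G},
\end{equation*}
the case $n=2$ (giving $G^{\mathrm{ab}}=\mathbb{Z}^2$) being immediate. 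Granting this, $G/\gamma_n(G)=\langle a,b\mid \pi_2,\ [b,\underbrace{a,\ldots,a}_{n-1}],\ [b,\underbrace{a,\ldots,a}_{n-2},b]\rangle$, and the proof finishes with a cheap bookkeeping step: the relator $[b,\underbrace{a,\ldots,a}_{n-1}]=1$ forces $[b,\underbrace{a,\ldots,a}_{m}]=1$ for all $m\ge n-1$ (commute repeatedly with $a$), hence $f_m:=[b,\underbrace{a,\ldots,a}_{m},b]=[\,1,b\,]=1$ for $m\ge n-1$; together with the relator $f_{n-2}=1$ this gives $f_m=1$ for all $m\ge n-2$, so every relator $[f_m,a]$, $[f_m,b]$ of $\pi_2$ with $m\ge n-2$ holds automatically and can be deleted. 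What survives is exactly the presentation in the statement for $n\ge 4$; for $n=3$ it is the same computation, the relators of $\pi_2$ indexed by $k=0$ being trivial.

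To prove the displayed identity, the inclusion $\supseteq$ holds because the right-hand side is normally generated by commutator brackets of weight $n$, hence lies in $\gamma_n(G)$. For $\subseteq$, put $\overline{G}:=G/\langle\langle [b,\underbrace{a,\ldots,a}_{n-1}],\,f_{n-2}\rangle\rangle$ and show $\gamma_n(\overline{G})=1$. Using that $G$ is a central extension of $\mathbb{Z}\wr\mathbb{Z}$ (so $Z(G)$ is central, $f_{n-2}\in Z(G)$, and $G/Z(G)=\mathbb{Z}\wr\mathbb{Z}$), together with the standard fact that $\gamma_n(\mathbb{Z}\wr\mathbb{Z})$ is the normal closure of $[b,\underbrace{a,\ldots,a}_{n-1}]$ (it is the ideal $(t-1)^{n-1}$ of the base ring $\mathbb{Z}[t^{\pm 1}]$), one sees that $\overline{G}$ is a central extension
\begin{equation*}
1\longrightarrow \overline{Z}\longrightarrow \overline{G}\longrightarrow Q:=(\mathbb{Z}\wr\mathbb{Z})/\gamma_n(\mathbb{Z}\wr\mathbb{Z})\longrightarrow 1,
\end{equation*}
with $\overline{Z}$ the image of $Z(G)$. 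Since $\gamma_n(Q)=1$, we get $\gamma_n(\overline{G})\subseteq\overline{Z}$, hence $\gamma_{n+1}(\overline{G})=[\gamma_n(\overline{G}),\overline{G}]=1$. Moreover $\gamma_{n-1}(\overline{G})$ maps onto $\gamma_{n-1}(Q)\cong\mathbb{Z}$, which is cyclic generated by the image of $[b,\underbrace{a,\ldots,a}_{n-2}]$, with central kernel $\gamma_{n-1}(\overline{G})\cap\overline{Z}$; therefore $\gamma_{n-1}(\overline{G})=\langle [b,\underbrace{a,\ldots,a}_{n-2}]\rangle\cdot(\gamma_{n-1}(\overline{G})\cap\overline{Z})$. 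Consequently $\gamma_n(\overline{G})=[\gamma_{n-1}(\overline{G}),\overline{G}]$ is the normal closure of $[[b,\underbrace{a,\ldots,a}_{n-2}],a]=[b,\underbrace{a,\ldots,a}_{n-1}]$ and $[[b,\underbrace{a,\ldots,a}_{n-2}],b]=f_{n-2}$: the central part of $\gamma_{n-1}(\overline{G})$ contributes nothing to commutators, and commutators against $\overline{G}=\langle a,b\rangle$ reduce to commutators against $a$ and against $b$. Both of these are trivial in $\overline{G}$, so $\gamma_n(\overline{G})=1$, which yields the identity.

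The main obstacle is the displayed identity, and inside it the sharpening from $\gamma_n(\overline{G})\subseteq\overline{Z}$ to $\gamma_n(\overline{G})=1$: the central-extension estimate alone only controls $\gamma_n(\overline{G})$ up to the centre, and collapsing it genuinely uses that $\gamma_{n-1}$ of the wreath quotient is cyclic, so that $\gamma_{n-1}(\overline{G})$ is generated by a single non-central element modulo $\overline{Z}$ — i.e. it uses the precise structure of $\mathbb{Z}\wr\mathbb{Z}$, not merely that $G$ is center-by-metabelian. An alternative, more hands-on route to the identity stays inside $G$ and uses the formulas $[b,\underbrace{a,\ldots,a}_m]=b_0^{\pm 1}b_1^{\pm\binom{m}{1}}\cdots b_m^{\pm 1}\,C$ and $f_m=d_1^{\pm\binom{m}{1}}\cdots d_m^{\pm 1}$ from the proof of Lemma \ref{lem: presentation Hall Gp}, but organizing that induction (tracking which $d_i$ remain central after killing $[b,\underbrace{a,\ldots,a}_{n-1}]$ and $f_{n-2}$) is more delicate, so the structural argument above is the one I would write out.
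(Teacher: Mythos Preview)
Your proof is correct, and the bookkeeping step eliminating the superfluous $\pi_2$-relators is carried out exactly as in the paper. The core step, however, follows a genuinely different route. The paper argues directly inside the finitely presented group: after adjoining $[b,\underbrace{a,\ldots,a}_{n-1}]$ and $f_{n-2}$ to the truncated $\pi_2$, it checks nilpotency of class $<n$ by running through all left-normed weight-$n$ commutators $[x_1,\ldots,x_n]$ with $x_i\in\{a,b\}$ and showing each one vanishes via casework on the positions of the $b$'s (using only the finitely many surviving relators). You instead establish the identity $\gamma_n(G)=\langle\langle[b,\underbrace{a,\ldots,a}_{n-1}],\,f_{n-2}\rangle\rangle^G$ structurally, by passing to the central extension $1\to Z(G)\to G\to \mathbb{Z}\wr\mathbb{Z}\to 1$ and using that $\gamma_{n-1}(\mathbb{Z}\wr\mathbb{Z})/\gamma_n(\mathbb{Z}\wr\mathbb{Z})\cong\mathbb{Z}$ is cyclic on the image of $[b,\underbrace{a,\ldots,a}_{n-2}]$; this lets you reduce $\gamma_n(\overline{G})$ to the two commutators $[g,a]$ and $[g,b]$ with $g=[b,\underbrace{a,\ldots,a}_{n-2}]$, both of which are imposed relators. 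The paper's argument is shorter and entirely self-contained (no appeal to the lower central series of $\mathbb{Z}\wr\mathbb{Z}$), whereas your argument yields the explicit description of $\gamma_n(G)$ as a byproduct and makes transparent why exactly two extra relators suffice.
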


\begin{proof}
	The presentation for the quotient $G/\gamma_n(G)$ is obtained by removing, in the  presentation given by Lemma \ref{lem: presentation Hall Gp}, every iterated commutator of length more than $k$, and then adding  two iterated commutators: 
	$$[b,\underbrace{a,...,a}_{n-1}]=1 \text{ and }  [b,\underbrace{a,...,a}_{n-2},b]=1$$
	To check that this is indeed correct, we only need to check that adding these two relations indeed yields a class $n$ nilpotent group. But this is immediate: any iterated commutator of length $n$ that contains more than three $b$'s is trivial by the original relations of $G$, it thus  suffices to deal with those that contain exactly one or two $b$'s to have a $n$ nilpotent group. Dealing with these is precisely what the last two relations do.  
\end{proof}

\begin{lemma}
	The elements $f_n=[b,\underbrace{a,...,a}_n,b]$ form another basis of the centre of $G$. 
\end{lemma}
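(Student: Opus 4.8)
The plan is to show that the family $\{f_n : n \geq 1\}$ spans the centre $\mathcal{Z}(G)$ and is $\mathbb{Z}$-linearly independent, using the already-established fact that the $\{d_i : i > 0\}$ form a basis of $\mathcal{Z}(G)$ (this is Hall's result, recalled in the excerpt) together with the explicit relation between the $f_n$ and the $d_i$ computed in the proof of Lemma~\ref{lem: presentation Hall Gp}. Recall that that computation gave, modulo lower-index commutators (which we now know are central in $G$, so there is no ambiguity of ordering),
$$f_{n}=d_1^{(-1)^{n}\tbinom{n}{1}}d_2^{(-1)^{n-1}\tbinom{n}{2}}\cdots d_{n}^{(-1)^{1}\tbinom{n}{n}}\cdot(\text{product of }d_i\text{ with }i<n\text{ coming from rearrangement}).$$
The crucial feature is that when $f_n$ is written additively in the basis $\{d_i\}$, the coefficient of $d_n$ is $(-1)^1\tbinom{n}{n}=-1$, a unit, and no $d_i$ with $i>n$ appears. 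So the change-of-basis matrix from $(d_1,d_2,\dots)$ to $(f_1,f_2,\dots)$ is (upper- or lower-) triangular with $\pm 1$ on the diagonal.

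First I would make the additive reformulation precise: writing $\mathcal{Z}(G)$ additively as $\bigoplus_{i\geq 1}\mathbb{Z}d_i$, each $f_n$ lies in the finite-rank submodule $\bigoplus_{i=1}^{n}\mathbb{Z}d_i$ and its $d_n$-coordinate is $\pm 1$. Then I would argue by induction on $N$ that $\mathrm{span}_{\mathbb{Z}}\{f_1,\dots,f_N\}=\bigoplus_{i=1}^{N}\mathbb{Z}d_i$: the inclusion $\subseteq$ is clear; for $\supseteq$, the induction hypothesis gives $d_1,\dots,d_{N-1}$, and since $f_N = \pm d_N + (\text{combination of }d_1,\dots,d_{N-1})$ with the $d_N$-coefficient a unit, we recover $d_N$ as well. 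Taking the union over all $N$ shows $\{f_n\}$ generates $\mathcal{Z}(G)$. For linear independence: a finite relation $\sum_{n=1}^{N}\lambda_n f_n=0$ with $\lambda_N\neq 0$ would, looking at the $d_N$-coordinate, force $\pm\lambda_N=0$ (since no $f_n$ with $n<N$ involves $d_N$), a contradiction; hence all $\lambda_n=0$. So $\{f_n\}$ is a basis.

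I expect the only real obstacle to be bookkeeping: I must be sure that the "lower-index" correction terms in the formula for $f_n$ genuinely involve only $d_i$ with $i<n$ (equivalently, only commutators $[b_j,b_k]$ with $|j-k|<n$), which is exactly what the inductive formula in the proof of Lemma~\ref{lem: presentation Hall Gp} asserts, and that the top coefficient is indeed a unit rather than, say, $\pm n$. The displayed formula in that proof, $f_{n+1}=d_1^{(-1)^{n+1}\binom{n+1}{1}}\cdots d_{n+1}^{(-1)^{1}\binom{n+1}{n+1}}$, settles both points: the $d_{n+1}$-exponent is $(-1)^1\binom{n+1}{n+1}=-1$. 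Given that, the triangularity argument is routine linear algebra over $\mathbb{Z}$, and no further computation inside $G$ is needed.
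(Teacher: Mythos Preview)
Your proposal is correct and follows essentially the same approach as the paper: both use the formula from the proof of Lemma~\ref{lem: presentation Hall Gp} expressing $f_n$ as a product $d_1^{\alpha_1}\cdots d_n^{\alpha_n}$ with top coefficient $\alpha_n=-1$, and conclude via the triangular change-of-basis argument. The paper compresses this into a single sentence, while you spell out the induction and the linear-independence check; the only minor inaccuracy is that the displayed formula for $f_{n+1}$ in that proof is already exact (the central factor $C'$ drops out when commutating with $b$), so your extra ``rearrangement'' correction term is not actually present---but this does not affect the argument.
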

\begin{proof}
This follows directly from the computations that appear in the proof of Lemma \ref{lem: presentation Hall Gp}: for each $n>0$, the element  $f_n$ can be written as a product $d_1^{\alpha_1}...d_n^{\alpha_n}$ with $\alpha_n=-1$. Since the elements $d_n$ form a basis of the centre of $G$, this shows that so do the elements $f_n$.  
\end{proof}	
	
	\begin{lemma}\label{lem: image of odd form free basis}
		The image of the elements $f_1$, $f_3$, ..., $f_{2n-3}$  of $G$ in $G/\gamma_{2n}(G)$ form the basis of a free abelian group.		
	\end{lemma}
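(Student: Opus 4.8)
The goal is to show that the images of $f_1, f_3, \dots, f_{2n-3}$ in $G/\gamma_{2n}(G)$ are independent in the abelianization of the center of that quotient. My plan is to track exactly where each $f_k$ lands when we pass to $G/\gamma_{2n}(G)$, using the presentation from Corollary \ref{lem: presentation Nilp Quotient} and the commutator calculus developed in the proof of Lemma \ref{lem: presentation Hall Gp}. The central point is that $f_k$ is an iterated commutator of weight $k+2$; it survives in $G/\gamma_{2n}(G)$ precisely when $k+2 \le 2n-1$, i.e. $k \le 2n-3$, which is exactly the range of indices appearing in the statement. So each of $f_1, f_3, \dots, f_{2n-3}$ is a nonzero element of $G/\gamma_{2n}(G)$, and they all lie in its center (being iterated commutators that become central in $G$ already).

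\textbf{Key steps.} First I would recall from the proof of Lemma \ref{lem: presentation Hall Gp} the expansion $f_{k} = d_1^{(-1)^{k}\binom{k}{1}} \cdots d_{k}^{(-1)^{1}\binom{k}{k}}$ in $G$, valid for $k \ge 1$, where the $d_i = [b_0, b_i]$ form a basis of the center of $G$. The crucial feature is that the exponent of $d_k$ in $f_k$ is $\pm 1$ — the expansion of $f_k$ involves $d_1, \dots, d_k$ and nothing of higher index, and involves $d_k$ with unit coefficient. Second, I would identify, inside $G/\gamma_{2n}(G)$, which $d_i$ remain nonzero: since $d_i = [b_0, b_i] = [b_0, a^{-i} b a^i]$ is an iterated commutator, and (as the proof of Lemma \ref{lem: presentation Hall Gp} shows) $d_i$ differs from $f_{i-?}$-type elements — more carefully, from the Pascal-triangle identity $f_{k+1} = d_1^{*} \cdots d_{k+1}^{\pm 1}$ we can invert the (unitriangular, over $\Z$) change of basis and write each $d_i$ as a $\Z$-combination of $f_1, \dots, f_i$; and the $f_j$ with $j$ small survive. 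Third — the heart of the argument — I would observe that the map on centers is controlled by this unitriangular matrix: in $G$ the elements $\{f_k\}_{k \ge 1}$ form a basis of the center (by the lemma immediately preceding this one), and passing to $G/\gamma_{2n}(G)$ kills exactly the $f_k$ with $k > 2n-3$ together with possibly imposing relations among the surviving ones coming from $[b,a,\dots,a]_{n-1} = 1$ and $[b,a,\dots,a]_{n-2},b] = 1$. I would check that neither of these two extra relations introduces any $\Z$-linear dependence among $f_1, f_3, \dots, f_{2n-3}$: the second relation says $f_{n-2} = 1$, which only kills one generator (and $n-2$ may or may not be odd — if odd it is simply not in our list, if even it was never in our list anyway), and the first relation $[b,a,\dots,a]_{n-1} = 1$ lies in the part of the lower central series generated by the $b_i$'s modulo center, so it imposes no relation among central elements. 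Hence among the surviving $f_k$ no new relations appear, and in particular $f_1, f_3, \dots, f_{2n-3}$ remain $\Z$-linearly independent in the (free abelian) center of $G/\gamma_{2n}(G)$.

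\textbf{Main obstacle.} The delicate step is the third one: verifying that the two extra relations $[b,\underbrace{a,\dots,a}_{n-1}] = 1$ and $[b,\underbrace{a,\dots,a}_{n-2},b] = 1$ that appear in the presentation of $G/\gamma_{2n}(G)$ do not secretly collapse the central subgroup generated by $f_1, f_3, \dots, f_{2n-3}$. For this I would argue that $[b,\underbrace{a,\dots,a}_{n-1}]$, as an element of $G$, sits in the subgroup generated by $\{b_i\}$ modulo the center (by the explicit formula $[b,\underbrace{a,\dots,a}_{n-1}] = b_0^{\pm\binom{n-1}{n-1}} \cdots b_{n-1}^{\pm\binom{n-1}{0}} C$ from the proof of Lemma \ref{lem: presentation Hall Gp}, with $C$ central of the next weight down) — so adjoining its vanishing changes the quotient of $G$ by its center, not the center itself. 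And $[b,\underbrace{a,\dots,a}_{n-2},b] = f_{n-2}$ is a single basis element of the center, whose index $n-2$ has the opposite parity to $2n-3$ when... actually $n-2$ and $2n-3$ need not be comparable in parity, so the safe statement is just: killing the single central generator $f_{n-2}$ leaves the remaining $f_k$ independent, and $f_{n-2} \notin \{f_1, f_3, \dots, f_{2n-3}\}$ only needs $n - 2$ to either be even or exceed $2n-3$; since $n \ge 2$ forces $n-2 < 2n-3$ for $n \ge 2$, the relevant check is the parity, and if $n-2$ happens to be odd and $\le 2n-3$ it equals some $f_{2j-3}$ — here one must be careful. I expect the resolution is that the statement is about weight bounds: $f_{n-2}$ has weight $n$, and for it to appear among $f_1, f_3, \dots, f_{2n-3}$ we'd need $n-2 \le 2n-3$, always true, and $n-2$ odd; but then killing $f_{n-2}$ does reduce the rank by one, so the claimed basis must actually avoid this — which suggests re-reading the indices: the list $f_1, f_3, \dots, f_{2n-3}$ has $n-1$ elements with odd indices $1, 3, \dots, 2n-3$, all of weight $\le 2n-1 < 2n$, hence all surviving in $G/\gamma_{2n}(G)$, and the relation $f_{n-2} = 1$ kills something of weight exactly $n < 2n$; if $n$ is such that $n-2$ is odd, i.e. $n$ odd, then indeed $f_{n-2}$ is in our list, so I would need to double-check the original statement's index range or verify that in that case the relation $[b,\underbrace{a,\dots,a}_{n-2},b]=1$ is redundant modulo $[b,\underbrace{a,\dots,a}_{n-1}]=1$ and the weight truncation. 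Resolving this parity bookkeeping is the one genuinely technical point; everything else is bookkeeping with the unitriangular change of basis between $\{d_i\}$ and $\{f_i\}$.
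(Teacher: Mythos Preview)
Your approach has a substantive gap: you never actually establish \emph{nontriviality}. All of your bookkeeping---the unitriangular change of basis between $\{d_i\}$ and $\{f_i\}$, the analysis of which relations the presentation adds---tells you which relations are \emph{visibly} imposed on the central elements, but it does not show that no \emph{further} relations hold. Knowing a presentation of a group does not, by itself, let you conclude that particular elements are independent; for that you must produce a quotient in which they remain independent. You have not constructed any such witness. The paper does exactly this: it builds explicit upper-triangular matrix groups $M_{2i}$ which are marked quotients of $G/\gamma_{2i}(G)$, computes that the image of $f_{2i-3}$ there is $c^{2}$ (an element of infinite order), and then uses the tower $G/\gamma_{2n}(G)\twoheadrightarrow G/\gamma_{2n-2}(G)\twoheadrightarrow\cdots\twoheadrightarrow G/\gamma_{4}(G)$ to peel off the coefficients $\alpha_{2n-3},\alpha_{2n-5},\dots,\alpha_1$ one at a time. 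That external matrix model is the missing ingredient in your argument.

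There is also a bookkeeping slip that sent you down the parity rabbit-hole: when you invoke Corollary~\ref{lem: presentation Nilp Quotient} for $G/\gamma_{2n}(G)$ you must substitute $2n$ for the parameter called $n$ there. The two extra relations are then $[b,\underbrace{a,\dots,a}_{2n-1}]=1$ and $f_{2n-2}=1$, and $2n-2$ is always even, so it never lies in the list $\{1,3,\dots,2n-3\}$. With that correction your ``Main obstacle'' evaporates---but the deeper problem above does not. Even granting that neither extra relation \emph{explicitly} kills an odd-index $f_k$, you still owe an argument that $\gamma_{2n}(G)$ contains no nontrivial $\Z$-combination of $f_1,f_3,\dots,f_{2n-3}$, and nothing you wrote supplies one.
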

	
	\begin{lemma}\label{lem: residuall nilpotency of G}
	The group $G$ is residually nilpotent. 
\end{lemma}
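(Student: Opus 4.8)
The plan is to show that $G$ is residually nilpotent by exhibiting, for every non-trivial element $w$ of $G$, a nilpotent quotient in which $w$ survives. Since $G$ is an extension with center $Z$ free abelian on the $d_i$ (equivalently the $f_i$), and $G/Z$ is the wreath product $\mathbb{Z}\wr\mathbb{Z}$, I would split into two cases according to whether $w$ lies in $Z$ or not.

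First, suppose $w\notin Z$. The wreath product $\mathbb{Z}\wr\mathbb{Z}$ is residually (finitely generated torsion-free nilpotent): indeed $\mathbb{Z}\wr\mathbb{Z}$ is residually nilpotent, a classical fact (it embeds in the inverse limit of its nilpotent quotients; one can check $\bigcap_n \gamma_n(\mathbb{Z}\wr\mathbb{Z})=1$ directly from the description of the lower central series of a wreath product $\mathbb{Z}\wr\mathbb{Z}$, whose lower central quotients are finitely generated free abelian). So the image $\bar w$ of $w$ in $\mathbb{Z}\wr\mathbb{Z}=G/Z$ is non-trivial, hence survives in some $(\mathbb{Z}\wr\mathbb{Z})/\gamma_n(\mathbb{Z}\wr\mathbb{Z})$, which is a quotient of $G/\gamma_n(G)$; thus $w$ has non-trivial image in $G/\gamma_n(G)$.

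Second, suppose $w\in Z\setminus\{1\}$. Then $w=d_1^{\alpha_1}\cdots d_k^{\alpha_k}$ for some exponents, not all zero; let $m$ be the largest index with $\alpha_m\neq 0$. I would use the computations from the proof of Lemma~\ref{lem: presentation Hall Gp}, which show that $f_n$ equals $d_n^{-1}$ times a product of $d_i$ with $i<n$ (so the $f_i$ and $d_i$ are related by a unitriangular change of basis); equivalently, modulo $\gamma_{n+1}(G)$ the element $d_n$ is (up to sign and lower-index terms) the iterated commutator $[b,a,\dots,a,b]$ of weight $n+1$. Since $d_m$ is an iterated commutator of weight $m+1$ which, by the explicit computation, is not killed in $G/\gamma_{m+2}(G)$ (Corollary~\ref{lem: presentation Nilp Quotient} and Lemma~\ref{lem: image of odd form free basis} show the relevant iterated commutators generate a free abelian subgroup there), the image of $d_m$ in $G/\gamma_{m+2}(G)$ is non-trivial and, using that $G/\gamma_{m+2}(G)$ contains these iterated commutators as a free abelian group, one sees the image of $w=d_1^{\alpha_1}\cdots d_m^{\alpha_m}$ in $G/\gamma_{m+2}(G)$ is non-trivial (the top term $d_m^{\alpha_m}$ cannot be cancelled since $d_m$ has infinite order modulo $\gamma_{m+2}(G)$ and the others live in lower-weight pieces of the graded Lie ring). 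Hence $w\notin\gamma_{m+2}(G)$.

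Combining the two cases, $\bigcap_n\gamma_n(G)=\{1\}$, i.e. $G$ is residually nilpotent. The main obstacle is the second case: one must be careful that the $d_i$ really do survive into $G/\gamma_n(G)$ for $n$ large enough and that the graded structure prevents cancellation among the $d_i^{\alpha_i}$ — but this is exactly controlled by the explicit formulas already established (the unitriangular relation between the $f_i$ and $d_i$, together with Corollary~\ref{lem: presentation Nilp Quotient} describing $G/\gamma_n(G)$ and Lemma~\ref{lem: image of odd form free basis} giving freeness of the relevant abelian subgroups), so the work reduces to assembling these facts rather than new computation.
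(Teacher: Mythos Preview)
Your first case ($w\notin Z$) is correct and matches the paper's argument via the residual nilpotency of $\mathbb{Z}\wr\mathbb{Z}$.

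The second case has a genuine gap. The ``top-degree'' argument you sketch presumes that the $d_i$ are stratified by weight in the lower central filtration, but they are not: each $d_i=[b_0,b_i]$ is a length-$2$ commutator and lies in $\gamma_2(G)$, not in $\gamma_{i+1}(G)$ as your phrasing ``the others live in lower-weight pieces'' suggests. What \emph{is} filtered is the basis $(f_i)$, with $f_i\in\gamma_{i+2}(G)$; after rewriting $w=\sum_i\beta_if_i$ via the unitriangular change of basis, the graded argument would need $f_{j_0}$ to have infinite order in $G/\gamma_{j_0+3}(G)$, where $j_0$ is the \emph{least} index with $\beta_{j_0}\ne0$. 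But Lemma~\ref{lem: image of odd form free basis}, which you invoke, only treats odd-indexed $f_i$, and for good reason: in the matrix group $M_n$ of Lemma~\ref{lem: Matrix model for some relations in G} one has $f_{n-3}=I_n$ when $n$ is odd, so $M_{j_0+3}$ detects nothing when $j_0$ is even. If $j_0$ is even, the lemmas you cite do not close the argument, and passing to a deeper quotient reintroduces the other $f_i$, all of which map to powers of the same central element $c$ in any single $M_n$.

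The paper's proof proceeds differently. Supposing a relation $\sum_{i\le k}\alpha_if_i=0$ (with $\alpha_k\ne0$) held in every nilpotent quotient, it would hold in every $M_n$; but there $f_i\mapsto c^{\pm\binom{n-4}{i-1}}$, so the exponent of $c$ contributed by $\alpha_kf_k$ is a polynomial of degree $k-1$ in $n$, while the remaining terms contribute degree at most $k-2$, contradicting the relation for large $n$. The essential input is thus the growth of exponents across the whole family $(M_n)_n$, not freeness of the central images in any single nilpotent quotient.
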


These two lemmas are proved by using an explicit matrix group  which is a quotient of the group 
$G/\gamma_n(G)$, and which allows us to prove that some elements of $G/\gamma_n(G)$ are different from the identity. 

	\begin{lemma}\label{lem: Matrix model for some relations in G}
	Fix $n$ an integer greater than $3$. 
	Consider the upper triangular  $n\times n$ matrices $a$, $b$ and $c$:
	$$ a=\left(\begin{array}{cccccc}
		1 & 1 & 0 & \cdots & \cdots & 0\\
		& 1 & 1 &  &  & \vdots\\
		&  & \ddots & \ddots &  & \vdots\\
		&  &  & \ddots & 1 & 0\\
		&  &  &  & 1 & 1\\
		&  &  &  &  & 1
	\end{array}\right)$$
	$$ b=\left(\begin{array}{cccccc}
		1 & 1 & 0 & \cdots & \cdots & 0\\
		& 1 & 0 &  &  & \vdots\\
		&  & \ddots & \ddots &  & \vdots\\
		&  &  & \ddots & 0 & 0\\
		&  &  &  & 1 & 1\\
		&  &  &  &  & 1
	\end{array}\right)
	 $$ 
$$	c=\left(\begin{array}{ccccc}
	1 & 0 & \cdots & 0 & 1\\
	& 1 & \ddots &  & 0\\
	&  & \ddots & \ddots & \vdots\\
	&  &  & 1 & 0\\
	&  &  &  & 1
\end{array}\right)$$

	Then, using the same notation $f_k=[b,\underbrace{a,...,a}_k,b]$ as in $G$, we have that:
	$$f_k= c^{(-1)^n\binom{n-4}{k-1}} \text{ if }k<n-3,$$ $$ f_{n-3}=I_n\text{ if $n$ is odd,}$$ 
	 $$ f_{n-3}=c^2\text{ if $n$ is even.}$$
	 And $[b,\underbrace{a,...,a}_{n-2}]$ is central (it is also either $c^2$ if $n$ is even or $I_n$ if $n$ is odd.) 
	 Thus the group $M_n$ generated by $a$ and $b$ is a marked quotient of $(G/\gamma_{n}(G),(a,b))$. 
\end{lemma}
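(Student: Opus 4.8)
The plan is to compute the relevant iterated commutators directly in the group of upper-triangular $n\times n$ matrices. Write $a = I_n + N$ where $N$ is the nilpotent matrix with $1$'s on the first superdiagonal, and note $b = I_n + E_{12} + E_{n-1,n}$ and $c = I_n + E_{1n}$. The matrix $c$ is central: $E_{1n}$ is killed by left-multiplication by anything supported strictly above the diagonal and by right-multiplication likewise, so $c$ commutes with $a$ and $b$. First I would establish the key computation $[b,\underbrace{a,\dots,a}_k]$ for $1\le k\le n-2$. Since $a$ and $b$ differ only in the $(2,3)$ entry and the $(n-1,n)$ entry relative to a common ``Jordan-like'' shape, one expects $[b,a]$ to be supported on a short band, and each further commutator with $a$ to shift and spread the support by one diagonal, with binomial coefficients appearing exactly as in the Pascal-triangle bookkeeping of Lemma \ref{lem: presentation Hall Gp}. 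Concretely, I would show by induction on $k$ that $[b,\underbrace{a,\dots,a}_k]$ is $I_n$ plus a linear combination (with signs and binomial coefficients $\binom{k}{j}$) of the matrix units $E_{2,3+?}$ lying on the $(k{+}1)$-th superdiagonal, up to a central correction in $c$ — mirroring the relation $[b,\underbrace{a,\dots,a}_k]=b_0^{\pm\binom kk}\cdots b_k^{\pm\binom k0}C$ valid in $G$.

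Next I would compute $f_k = [\,[b,\underbrace{a,\dots,a}_k],\,b\,]$. Feeding the formula for $[b,\underbrace{a,\dots,a}_k]$ into the commutator with $b$, and using that $b = I_n + E_{12} + E_{n-1,n}$, the only surviving contribution comes from the interaction of the band supporting $[b,\underbrace{a,\dots,a}_k]$ with the two off-diagonal entries of $b$; all products of two matrix units $E_{ij}E_{rs}$ vanish unless $j = r$, so at most one term survives, and it lands in the $(1,n)$ corner, i.e.\ is a power of $c$. Tracking the coefficient, one gets exactly $c^{(-1)^n\binom{n-4}{k-1}}$ for $k< n-3$; the sign $(-1)^n$ comes from the parity of the number of transpositions of matrix units needed to bring the product into the corner, and the binomial $\binom{n-4}{k-1}$ from the Pascal bookkeeping collapsed to a single surviving index. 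For $k = n-3$ the coefficient is $\binom{n-4}{n-4}\cdot(\text{something})$: when $n$ is odd the sign makes the two surviving contributions cancel, giving $f_{n-3}=I_n$, and when $n$ is even they reinforce to give $f_{n-3}=c^2$. The computation of $[b,\underbrace{a,\dots,a}_{n-2}]$ is the same but one step further: its support would lie on the $(n{-}1)$-th superdiagonal, which for an $n\times n$ matrix is just the $(1,n)$ corner, so it is automatically a power of $c$, hence central, and the same parity analysis gives $c^2$ or $I_n$.

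Finally, to conclude that $M_n = \langle a,b\rangle$ is a marked quotient of $(G/\gamma_n(G),(a,b))$, I would invoke the presentation of $G/\gamma_n(G)$ from Corollary \ref{lem: presentation Nilp Quotient}: it suffices to check that $a,b$ (as matrices) satisfy each defining relator there. The relators $[b,\underbrace{a,\dots,a}_k,b,a]=1$ and $[b,\underbrace{a,\dots,a}_k,b,b]=1$ for $k\le n-3$ hold because $f_k$ is a power of $c$, hence central, so its commutator with $a$ or $b$ is trivial. The relator $[b,\underbrace{a,\dots,a}_{n-1}]=1$ holds because that commutator is already a power of $c$ and moreover, being on the $n$-th superdiagonal of an $n\times n$ matrix, it is in fact the identity (one extra step past the corner is zero); and $[b,\underbrace{a,\dots,a}_{n-2},b]=1$ holds since $[b,\underbrace{a,\dots,a}_{n-2}]$ is central (a power of $c$), so it commutes with $b$. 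Together these verify all relators, giving the desired marked epimorphism $(G/\gamma_n(G),(a,b))\twoheadrightarrow (M_n,(a,b))$.

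The main obstacle will be the bookkeeping in the inductive computation of $[b,\underbrace{a,\dots,a}_k]$: keeping precise track of which matrix units appear, with which signs and binomial coefficients, and correctly identifying the single term that survives the final commutator with $b$ and its collapse into the corner entry. The parity argument distinguishing $n$ even from $n$ odd is delicate and is exactly the point of the lemma, so it deserves care; everything else is a routine, if lengthy, matrix calculation closely paralleling the commutator identities already worked out for $G$ in Lemma \ref{lem: presentation Hall Gp}.
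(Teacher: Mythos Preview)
Your overall strategy matches the paper's exactly: compute $g_k=[b,\underbrace{a,\dots,a}_k]$ inductively modulo the center, then compute $f_k=[g_k,b]$, then verify the relators of Corollary~\ref{lem: presentation Nilp Quotient}. However, the specific inductive form you anticipate for $g_k$ is wrong, and attempting the induction with that hypothesis would fail. The commutator $[b,a]$ is \emph{not} supported on a single superdiagonal: a direct computation (modulo the $(1,n)$ corner) gives
\[
[b,a]\;=_{/c}\;I_n+E_{1,3}+\sum_{i=2}^{n-2}(-1)^{n-i+1}E_{i,n},
\]
so besides the single entry $E_{1,3}$ there is a full column of entries $E_{i,n}$. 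The correct inductive statement, which the paper proves, is
\[
g_k\;=_{/c}\;I_n+E_{1,k+2}+\sum_{i=2}^{n-2}(-1)^{n-i+1}\binom{n-i-2}{k-1}E_{i,n}\qquad(k\le n-3),
\]
with binomials $\binom{n-i-2}{k-1}$ indexed by the row $i$, not $\binom{k}{j}$. The Pascal recursion arises from multiplying this last-column vector by $a^{-1}-I_n=\sum_{p\ge1}(-1)^p u^p$. Once this is in hand, the final commutator with $b$ picks out the $i=2$ term via $E_{1,2}E_{2,n}=E_{1,n}$, yielding exponent $(-1)^n\binom{n-4}{k-1}$. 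For $k=n-3$ there is a \emph{second} surviving contribution, from $E_{1,n-1}E_{n-1,n}$, and it is the interference of these two terms that produces the parity split $c^2$ versus $I_n$; your claim that ``at most one term survives'' is off by one precisely at this boundary case.

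Your final paragraph verifying the presentation is fine, though the reason $[b,\underbrace{a,\dots,a}_{n-1}]=1$ is simpler than you suggest: once $[b,\underbrace{a,\dots,a}_{n-2}]$ is a power of $c$ it is central, so its commutator with $a$ vanishes automatically.
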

	
	We first show how this lemma allows to prove Lemma \ref{lem: image of odd form free basis} and Lemma \ref{lem: residuall nilpotency of G}. 
	\begin{proof}[Proof of Lemma \ref{lem: image of odd form free basis}]
		By Lemma  \ref{lem: Matrix model for some relations in G}, $f_{2n-3}$ has infinite order in $G/\gamma_{2n}(G)$: indeed its image in an explicitly given matrix group has infinite order. 
		Suppose that a relation holds between the elements $f_1$, $f_3$, ..., $f_{2n-3}$  of  $G/\gamma_{2n}(G)$. Written additively, we have an equation:
		  $$\alpha_1 f_1 + ...+\alpha_{2n-3}f_{2n-3}=0.$$
		  But the image of this relation in  $G/\gamma_{4}(G)$ is just 
	 $$\alpha_1 f_1 =0,$$ and since we know that $f_1$ has infinite order there, $\alpha_1=0$. Taking consecutive quotients in  $G/\gamma_{2i}(G)$, we see that the information that each $f_{2i-3}$ has infinite order in $G/\gamma_{2i}(G)$ is sufficient to show that the elements $f_1$, $f_3$, ..., $f_{2n-1}$  form the basis of a free abelian group. 
	\end{proof}
	
	\begin{proof}[Proof of Lemma \ref{lem: residuall nilpotency of G}]
	We have that $G/ \mathcal{Z}(G)=\Z \wr \Z$, and $\Z \wr \Z$ is residually nilpotent (for instance any non-identity element of  $\Z \wr \Z$ has a non-identity image in a quotient of the form $\Z/p \wr \Z/p$, for $p$ a big enough prime. This group is a $p$-group, thus it is nilpotent).\\
Thus all we have to show is that every non-identity element of $\mathcal{Z}(G)$ has a non-trivial image in a nilpotent quotient. 
Suppose that a relation   $$\alpha_1 f_1 + ...+\alpha_{k}f_{k}=0$$ holds in every quotient $G/\gamma_{i}(G)$, with $\alpha_k\ne 0$. 
This relation should thus hold in the groups $M_n$ given by Lemma  \ref{lem: Matrix model for some relations in G}. But by Lemma  \ref{lem: Matrix model for some relations in G} the image of  $f_{k}$  in the group $M_n$ is the matrix $f_k=\pm c^{\binom{n-4}{k-1}}$, whose coefficients grow as $n^{k-1}$ when $n$ goes to infinity, whereas the image of $\alpha_1 f_1 + ...+\alpha_{k-1}f_{k-1}$ is a matrix whose coefficients form a polynomial of degree $k-2$ in $n$. Thus no such relation can hold in every group $M_n$.
	\end{proof}
	
	\begin{proof}[Proof of Lemma \ref{lem: Matrix model for some relations in G}]
		Fix $n>3$. Denote by $I_n$ the $n\times n$ identity matrix and by $E_{i,j}$ the $n\times n$  matrix with only one coefficient $1$ in position $(i ,j)$. \\
		Thus $a=I_n+\sum_{i=1..n-1} E_{i,i+1}$, $b=I_n+ E_{1,2}+ E_{n-1,n}$ and $c=I_n+E_{1,n}$. The element $c$ is in the centre of the group of upper triangular matrices with ones on the diagonal. 
		Denote $u =\sum_{i=1..n-1} E_{i,i+1}$  so that $a=I_n+u$ and  $a^{-1}=I_n+\sum_{i=1..n-1} (-1)^i u^i$.

		We first compute the coefficients of the element $g_k=[b,\underbrace{a,...,a}_{k}]$, except the top right coefficient in position $(1,n)$, since it corresponds to a central element that cancels out when computing $f_n=[g_n,b]$. In what follows, we denote $A=_{/c} B$ to mean that two matrices agree except maybe on the coefficient of $(1,n)$. \\
				Note first that $g_1=[b,a]=_{/c} I_n+E_{1,3}+\sum_{i=2..n-2} (-1)^{n-i+1} E_{i,n}$. 
				\\
				Indeed: 
	\begin{align*}
	[b,a] & =_{/c}(I_n-E_{1,2}-E_{n-1,n})(I_n+\sum_{i=1..n-1} (-1)^i u^i)(I_n+E_{1,2}+E_{n-1,n})(I_n+u) \\ 
	& =_{/c}(I_n-E_{1,2}-E_{n-1,n})(I_n+\sum_{i=1..n-1} (-1)^i u^i)(I_n+E_{1,2}+E_{1,3}+E_{n-1,n}+u) \\
	& =_{/c}(I_n-E_{1,2}-E_{n-1,n})(I_n+E_{1,2}+E_{1,3}+E_{n-1,n}+(\sum_{i=1..n-1} (-1)^i u^i)E_{n-1,n}) \\
	& =_{/c} I_n+E_{1,3}+(\sum_{i=1..n-1} (-1)^i u^i)E_{n-1,n}-E_{1,2}(\sum_{i=1..n-1} (-1)^i u^i)E_{n-1,n} \\
	& =_{/c} I_n+E_{1,3}+(\sum_{i=1..n-1} (-1)^i u^i) E_{n-1,n} \text{ (the last term contributes to the centre)}
\end{align*}
And $(\sum_{i=1..n-1} (-1)^i u^i) E_{n-1,n}= \sum_{i=1..n-2} (-1)^i E_{n-1-i,n}= \sum_{i=1..n-2} (-1)^{n-i+1} E_{i,n}$. \\

We then prove by induction that for $k\le n-3$
				$$g_k=_{/c} I_n+E_{1,k+2}+\underset{i=2..n-2}{\sum} (-1)^{n-i+1} \binom{n-i-2}{k-1}E_{i,n}.$$
				(Here  $\binom{p}{t} =0$ when $t>p$.)
	Denote  $h_k=\underset{i=2..n-2}{\sum} (-1)^{n-i+1} \binom{n-i-2}{k-1}E_{i,n}$ so that $g_k=I_n+E_{1,k+2}+h_k$.
	\\
	Now $g_k^{-1}=_{/c}I_n-E_{1,k+2}-h_k$. 	We now suppose that $g_k$ has the desired form, and compute $g_{k+1}=[g_k,a]$:
	\begin{align*}
		[g_k,a] & =_{/c}(I_n-E_{1,k+2}-h_k)(I_n+\sum_{i=1..n-1} (-1)^i u^i)(I_n+E_{1,k+2}+h_k)(I_n+u)\\ 
	& =_{/c}(I_n-E_{1,k+2}-h_k)(I_n+\sum_{i=1..n-1} (-1)^i u^i)(I_n+E_{1,k+2}+E_{1,k+3}+h_k+u) \\
	& =_{/c}(I_n-E_{1,k+2}-h_k)(I_n+E_{1,k+2}+E_{1,k+3}+h_k+(\sum_{i=1..n-1} (-1)^i u^i) h_k) \\
	& =_{/c} I_n+E_{1,k+3}+(\sum_{i=1..n-1} (-1)^i u^i) h_k-E_{1,k+2}h_k-(E_{1,k+2}+h_k)(\sum_{i=1..n-1} (-1)^i u^i) h_k \\
	& =_{/c} I_n+E_{1,k+3}+(\sum_{i=1..n-1} (-1)^i u^i) h_k \text{ (deleting central  terms)}
\end{align*}

Thus all that is left to be proved is  the equality  $$(\sum_{i=1..n-1} (-1)^i u^i) (\underset{i=2..n-2}{\sum} (-1)^{n-i+1} \binom{n-i-2}{k-1}E_{i,n}) =_{/c}\underset{i=2..n-2}{\sum} (-1)^{n-i+1}  \binom{n-i-2}{k}E_{i,n}.$$
The coefficient of $E_{i,n}$ obtained by expanding the product on the left is given by products of the form $((-1)^p u^p)((-1)^{n-i-p+1} \binom{n-i-p-2}{k-1}E_{i+p,n})$. Summing, we see that the coefficient of $E_{i,n}$, $2\le i\le n-1$, is given by:
$$ \underset{p=1..n-i-2}{\sum}(-1)^{n-i+1} \binom{n-i-p-2}{k-1}=\underset{t=1..n-i-3}{\sum}(-1)^{n-i+1} \binom{t}{k-1}$$
The results then follows from the following formula for binomial coefficients:
$$\binom{t}{p+1}=\underset{j=1..t-1}{\sum}\binom{j}{p}$$ 
\\
The formula for $g_k$ (modulo the centre)  is thus established. \\
We now compute $f_k=[g_k,b]$ (exactly)  to end the proof of the lemma. Note that $g_k^{-1}=I_n-E_{1,k+2}-h_k+E_{1,k+2}h_k$.
\\
Suppose first that $k<n-3$, so that $E_{1,k+2}E_{n-1,n}=0$. 
\begin{align*}
	[g_k,b] & =g_k^{-1}(I_n-E_{1,2}-E_{n-1,n})(I_n+E_{1,k+2}+h_k)(I_n+E_{1,2}+E_{n-1,n})\\ 
	& =g_k^{-1}(I_n-E_{1,2}-E_{n-1,n})(I_n+E_{1,k+2}+h_k+E_{1,2}+E_{n-1,n}) \\
	& =(I_n-E_{1,k+2}-h_k+E_{1,k+2}h_k)(I_n+E_{1,k+2}+h_k-E_{1,2} h_k) \\
	& = I_n-E_{1,2} h_k \\
	&=I_n+(-1)^{n}\binom{n-4}{k-1}E_{1,n}= c^{(-1)^n\binom{n-4}{k-1}} 
\end{align*}
In case $n-3=k$, there are two possibilities, depending on whether $n$ is odd or even. We have   $b=I_n+ E_{1,2}+ E_{n-1,n}$  and $g_{n-3}=I_n+ E_{1,n-1}+ (-1)^{n-1}E_{2,n}$.
An easy computation yields $$f_{n-3}=[g_{n-3},b]=(I_n+(1+(-1)^{n}) E_{1,n}),$$
and thus $f_{n-3}=c^2$ if $n$ is even or $I_n$ if $n$ is odd. 
This ends the lemma. 
\end{proof}
	
	\subsection{Construction of a group with non-computable nilpotent quotients }
	
We can now give a full proof of the main theorem of this section. 

\begin{proof}[Proof of Theorem \ref{Thm: uncomputable nilp quotients}]
	We use the notations introduced before: $G$ is Hall's group, and its center is a free abelian group generated by elements $f_n$, $n\in\N$.\\
	We define a set $\mathcal{A}$ of relations for $G$ defined in terms of Turing machines. \\Denote by $p_n$ the sequence of odd prime numbers: $p_1=3$, $p_2=5$, $p_3=7$...    
	Consider an effective enumeration of all Turing Machines: $M_1$, $M_2$, $M_3$... 
	For each $n$, consider a run of $M_n$ with no input. 
	If this run never stops, add nothing to $\mathcal{A}$. If this run stops in $t$ steps, we add a relation $f_{p_n}=f_{p_n^t}$ to $\mathcal{A}$.
	
	Denote by $H$  the group $G/\mathcal{A}$.  
	
\begin{lemma}
	The group $H$ has solvable word problem. 
\end{lemma}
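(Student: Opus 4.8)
The plan is to show that deciding whether a word $w$ over $\{a,b\}^{\pm 1}$ represents the identity in $H = G/\mathcal{A}$ reduces to finitely many checks in $G$ together with a linear-algebra computation over $\mathbb{Z}$, in such a way that only finitely many Turing machines $M_n$ need to be "consulted", and those only for a bounded number of steps. The essential structural fact is that $\mathcal{A} \subseteq \mathcal{Z}(G)$: each relation $f_{p_n} = f_{p_n^t}$ is an equation between two central elements, so $\langle\langle \mathcal{A}\rangle\rangle = \langle \mathcal{A}\rangle$ is a central (hence normal) subgroup of $G$ contained in the free abelian group $\mathcal{Z}(G) = \bigoplus_{k\ge 1}\mathbb{Z} f_k$. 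Therefore $w = 1$ in $H$ if and only if, writing $w$ in $G$, either $w \ne 1$ fails at the level of $G/\mathcal{Z}(G) = \mathbb{Z}\wr\mathbb{Z}$ (which has solvable word problem), or $w$ lies in $\mathcal{Z}(G)$ and its coordinate vector $(\alpha_1,\alpha_2,\dots) \in \bigoplus_k \mathbb{Z}$ (computable, since $G$ has solvable word problem and the $f_k$ form an explicit basis by the lemma proved above) belongs to the subgroup of $\bigoplus_k \mathbb{Z}$ generated by $\mathcal{A}$.

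First I would record that $G$ itself has solvable word problem — it is finitely presented-by-nothing in the naive sense, but concretely it is (class-2 nilpotent)-by-abelian with an explicit normal form coming from the HNN/wreath-product description, so one can compute, for any word $w$, its image in $\mathbb{Z}\wr\mathbb{Z}$ and, when that image is trivial, its expansion $\sum_k \alpha_k f_k$ in the basis of $\mathcal{Z}(G)$. Next I would observe that the vector $(\alpha_k)$ has bounded support: only finitely many $\alpha_k$ are nonzero, and the largest index $N$ with $\alpha_N \ne 0$ is bounded by (a simple function of) the length of $w$. The key point is then that membership of $(\alpha_k)_{k\le N}$ in $\langle \mathcal{A}\rangle$ only involves the relations $f_{p_n} = f_{p_n^t}$ for which the index $p_n \le N$ — a relation touching coordinates $p_n$ and $p_n^t$ can never help kill a vector supported on $\{1,\dots,N\}$ unless $p_n \le N$ (if $p_n > N$ the generator $f_{p_n}$ is off-support, and one checks that such relations form a direct summand that cannot interfere; here it is convenient that the $p_n$ are distinct odd primes and $p_n^t > p_n$, so the relations have a "triangular" shape). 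Hence only the finitely many machines $M_1,\dots,M_{\pi(N)}$ (those with $p_n \le N$) are relevant.

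The remaining issue is that, for each such $n$, the relation contributed to $\mathcal{A}$ depends on whether $M_n$ halts, and if so on its halting time $t_n$ — and halting is undecidable. This is where I expect the main obstacle to lie, and the resolution is that we do not need to decide halting outright: we only need to know the contribution of $M_n$ to the finite linear system, and we can extract it by running $M_n$ in the background while simultaneously testing membership. Concretely: run $M_1,\dots,M_{\pi(N)}$ in parallel; whenever one of them, say $M_n$, halts at time $t_n$, add the row $f_{p_n} - f_{p_n^t n}$ to the (finite) integer matrix $B$ of relations collected so far; at every stage test whether $(\alpha_k)_{k\le \max(N, \text{current indices})} \in \mathrm{rowspan}_{\mathbb{Z}}(B)$, which is decidable by Smith normal form. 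The point is that this process terminates: if $w = 1$ in $H$, then by definition $(\alpha_k)$ lies in the span of finitely many of the actual relations, so those machines all eventually halt and the membership test eventually succeeds; if $w \ne 1$ in $H$ but $w \in \mathcal{Z}(G)$, then — crucially — I must argue that we can also certify non-membership in finite time, which works because any machine $M_n$ with $n > \pi(N)$ can be ignored (its relation cannot affect coordinates $\le N$), so there is a fixed finite set of potentially-relevant machines, and for those we can afford to wait: either they halt (and we incorporate their row) or, if a machine among them runs long enough that even the largest conceivable halting-time contribution still cannot bring $(\alpha_k)$ into the span, we can stop. Making this last bound fully rigorous — i.e. a computable a priori bound, depending only on $\|w\|$, on how long we must run each relevant machine before concluding non-membership — is the delicate step; it uses that $f_{p_n^{t}}$ has support on the single coordinate $p_n^t$, so a relation with $p_n \le N$ either has $p_n^t \le N$ (finitely many possible $t$, all boundedly small) or $p_n^t > N$, in which case the relation merely introduces a free off-support coordinate and is harmless for deciding membership on $\{1,\dots,N\}$. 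Thus the whole procedure halts and correctly decides $w = 1$ in $H$.
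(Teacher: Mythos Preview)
Your proposal is correct and ultimately rests on the same observation as the paper: since the relations $f_{p_n}-f_{p_n^{t_n}}$ touch pairwise disjoint pairs of coordinates (distinct primes, and $p_n^{t_n}$ is never another $p_m$), a central word supported on $\{1,\dots,N\}$ lies in $\langle\mathcal{A}\rangle$ only through relations with \emph{both} $p_n\le N$ and $p_n^{t_n}\le N$; hence it suffices to run the finitely many machines $M_n$ with $p_n\le N$ for at most $\log_{p_n} N\le N$ steps and then solve a finite linear system over $\mathbb{Z}$.

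The difference is organizational. The paper states this bound immediately and is done in a paragraph. You instead begin with a dovetailing scheme (run the relevant machines in parallel, keep updating a matrix $B$, test membership at each stage), which as written does not terminate on negative instances; you then identify this as the ``delicate step'' and only in the last sentence supply the actual bound that makes everything work. That final sentence \emph{is} the proof --- once you have it, the dovetailing apparatus, the running Smith-normal-form tests, and the worry about ``certifying non-membership'' are all unnecessary. So your argument is right, but you should lead with the disjointness of the relation supports and the resulting a~priori step bound, and discard the intermediate machinery.
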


\begin{proof}
	Starting from Hall's group $G$, the group $H$ is obtained by adding a recursive set $\mathcal{A}$ of relations. Those relations  identify some central elements. 
	Just as $G$, $H$ maps onto the wreath product $\Z \wr \Z$, by collapsing the centre. Since $\Z \wr \Z$ has solvable word problem, to solve the word problem in $H$, it suffices to be able to solve the word problem for elements in the kernel of the morphism $H\rightarrow \Z \wr \Z$, thus in the centre of $H$. \\
	The centre $\mathcal{Z}(G)$ of $G$ is $\bigoplus_{n\in\N}\Z$ with basis the elements $f_i$, $i\ge 1$. 
	In $H$, this centre is quotiented by relations  $f_{p_n}=f_{p_n^t}$, for each $n$ and $t$ such that the $n$-th Turing machine stops in $t$ steps on no input. 
	Thus the centre  $\mathcal{Z}(H)$ is obtained from that of $G$ by identifying disjoint copies of $\Z$: it is still isomorphic to $\bigoplus_{n\in\N}\Z$.
	\\
	To determine whether a word $w$ on the $f_i$ is the identity in $\mathcal{Z}(H)$, one in fact need only to take into account a finite portion of the relations of $\mathcal{A}$: those that will identify some $f_i$ with some $f_j$, when $i$ and $j$ both appear in $w$. 
	\\
	Denote by $N$ the greatest number such that $f_N$ is a letter of $w$. We thus have to take into account all relations  $f_{p_n}=f_{p_n^t}$, where $p_n\le N$ and $p_n^t\le N$.   
	The list of all such relations can be obtained by running all Turing machines $M_1$, ..., $M_N$, for at most $N$ steps. This takes a finite amount of time. 
	To determine whether $w=1$ in $\mathcal{Z}(H)$, one is then left with a finitely generated abelian group given by a finite presentation, where the word problem can easily be solved. 
\end{proof}

\begin{lemma}\label{lem: uncomputable nilp quotients}
	There is no algorithm that can, on input $n$, produce a finite presentation of $H/\gamma_n (H)$.
\end{lemma}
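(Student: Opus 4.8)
The plan is to reduce the halting problem to the problem of computing nilpotent quotients of $H$. Recall from Corollary \ref{lem: presentation Nilp Quotient} that a presentation of $G/\gamma_n(G)$ involves the iterated commutators $[b,\underbrace{a,...,a}_k,b]=f_k$ for $k$ up to roughly $n$, and that by the previous lemmas the images of the odd-indexed elements $f_1, f_3, \ldots, f_{2m-3}$ form a free abelian basis inside $G/\gamma_{2m}(G)$. The key point is that, passing to $H = G/\mathcal{A}$, the relation $f_{p_n}=f_{p_n^t}$ is added to $\mathcal{A}$ precisely when the $n$-th Turing machine halts in $t$ steps on empty input. So in $H/\gamma_N(H)$, for $N$ large enough to ``see'' the index $p_n^t$ (concretely $N > 2 p_n^t + 3$ or so, so that $f_{p_n}$ and $f_{p_n^t}$ are both among the free generators that survive), the element $f_{p_n}$ has been identified with $f_{p_n^t}$ — whereas if $M_n$ does not halt, $f_{p_n}$ remains a primitive element of the free abelian part.

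First I would make precise which central elements survive: by Lemma \ref{lem: image of odd form free basis}, in $G/\gamma_{2m}(G)$ the elements $f_1, f_3, \ldots, f_{2m-3}$ are a basis of a free abelian subgroup, and more generally $f_k$ has infinite order in $G/\gamma_N(G)$ as soon as $k \le N-3$ (via the matrix models $M_N$). Passing to the quotient by $\mathcal{A}$ only collapses the central $\bigoplus_n \Z$ by identifying disjoint copies of $\Z$, so $\mathcal{Z}(H/\gamma_N(H))$ is computable from the finitely many relations of $\mathcal{A}$ with indices below $N$. Next I would argue the reduction: suppose, for contradiction, that there is an algorithm producing a finite presentation of $H/\gamma_N(H)$ on input $N$. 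Given $n$, choose $N$ of the form, say, $N = 2q_n + 4$ where $q_n$ is a number larger than $p_n^t$ for every $t$ we could conceivably need — but we cannot know $t$ in advance, so instead we run the reduction differently: for each candidate bound $N$, from the finite presentation of $H/\gamma_N(H)$ we can algorithmically decide (abelian group arithmetic on the center, which is a finitely presented abelian group extracted from the given finite presentation) whether the image of $f_{p_n}$ equals the image of $f_{p_n^t}$ for any $t$ with $p_n^t \le N-3$. If $M_n$ halts in $t$ steps, then for all $N > 2p_n^t+3$ the presentation of $H/\gamma_N(H)$ records $f_{p_n}=f_{p_n^t}$, so $f_{p_n}$ and $f_{p_n^t}$ become equal in the center; if $M_n$ never halts, then $f_{p_n}$ stays primitive in $\mathcal{Z}(H/\gamma_N(H))$ for every $N$ and in particular is never a power of any other $f_j$.

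This gives a semi-decision procedure for halting: run the hypothetical algorithm on $N=5,6,7,\dots$ in turn, and for each output presentation check whether $f_{p_n}$ has become equal to $f_{p_n^t}$ for some $t$; halt and report ``$M_n$ halts'' the first time this happens. Conversely, if $M_n$ halts in $t$ steps, this is detected once $N$ passes $2p_n^t+3$. Hence the procedure halts iff $M_n$ halts — contradicting undecidability of the halting problem. The main obstacle, and the point requiring care, is the extraction step: from an arbitrary finite presentation of $H/\gamma_N(H)$ one must be able to algorithmically determine the images of the specific elements $f_k$ in the center and decide equalities among them. This is where one must invoke that $H/\gamma_N(H)$ is (finitely generated nilpotent, hence) a group with solvable word problem uniformly computable from a finite presentation — so one simply forms the words $f_{p_n} f_{p_n^t}^{-1}$ in the generators $a,b$ and checks triviality in the given presentation. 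I would also double-check the exact threshold on $N$ (the interplay between the index $2m$ in Lemma \ref{lem: image of odd form free basis}, the bound $k \le N-3$ from the matrix models, and the parity of $p_n^t$), but this is a routine bookkeeping matter rather than a genuine difficulty.
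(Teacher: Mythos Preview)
Your argument has a genuine logical gap: you only produce a \emph{semi}-decision procedure for the halting problem, and then claim this contradicts its undecidability. It does not --- the halting problem \emph{is} semi-decidable. Your procedure (increase $N$, look for $f_{p_n}=f_{p_n^t}$ with $p_n^t\le N-3$) terminates if $M_n$ halts and runs forever otherwise; this is no stronger than simply running $M_n$.

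The paper's proof avoids this by a much sharper observation that yields an actual \emph{decision} procedure. The key point you missed is that $f_k=[b,\underbrace{a,\dots,a}_k,b]$ lies in $\gamma_{k+2}(G)$, so for any $t\ge 2$ the element $f_{p_n^t}$ already vanishes in $G/\gamma_{p_n+3}(G)$. Hence the relation $f_{p_n}=f_{p_n^t}$ (added when $M_n$ halts) becomes $f_{p_n}=1$ in $H/\gamma_{p_n+3}(H)$, whereas if $M_n$ never halts then $f_{p_n}$ survives nontrivially there (the other relations in $\mathcal{A}$ involve disjoint prime-power indices and do not touch $f_{p_n}$). Thus one checks a single word-problem instance --- is $f_{p_n}=1$ in $H/\gamma_{p_n+3}(H)$? --- at a level $N=p_n+3$ that is computable from $n$ \emph{without knowing} $t$. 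Since the word problem in finitely presented nilpotent groups is uniformly solvable, this would decide halting. Your search over growing $N$ and over values of $t$ is unnecessary; the decisive information is already present at the fixed level $p_n+3$, provided you test triviality of $f_{p_n}$ rather than equality with some $f_{p_n^t}$.
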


\begin{proof}
	This follows directly from the following fact. 
By  Lemma \ref{lem: image of odd form free basis}, the element $f_{p_n}$ is mapped to a non-identity element of $G/\gamma_{k}(G)$ if and only if $k\ge p_n+3$. 
In $H$, this element is identified with an element $f_i$, with $i>p_n$, if and only if the $n$-th Turing machine halts. 
The relations of $H$ that depend on Turing machines other that the $n$-th cannot have an influence on $f_{p_n}$: we also have that $f_{p_n}=1$ in $H/\gamma_{p_n +3}(H)$ if and only if the $n$th Turing machine halts. \\
This implies immediately that the word problem is not uniformly solvable in the groups $H/\gamma_{k}(H)$, as $k$ varies. Since finitely presented nilpotent groups have uniformly solvable word problem, this also implies that no algorithm can, given $k$, output a finite presentation for $H/\gamma_{k}(H)$. 
\end{proof}

\begin{lemma}\label{lem: H is res nilp}
	The group $H$ is residually nilpotent. 
\end{lemma}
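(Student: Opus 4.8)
The plan is to follow the scheme of the proof of Lemma~\ref{lem: residuall nilpotency of G}. Since $H=G/\mathcal{A}$ is obtained from $G$ by imposing \emph{central} relations, the surjection $G\twoheadrightarrow G/\mathcal{Z}(G)=\Z\wr\Z$ induces a surjection $H\twoheadrightarrow\Z\wr\Z$ whose kernel is the image of $\mathcal{Z}(G)$ in $H$, namely $\mathcal{Z}(H)=\mathcal{Z}(G)/N$, where $N$ is the (normal, hence ordinary) subgroup generated by the elements $f_{p_n}f_{p_n^{t_n}}^{-1}$ over the machines $M_n$ that halt, say in $t_n$ steps. As $\Z\wr\Z$ is residually nilpotent and $H/\mathcal{Z}(H)\cong\Z\wr\Z$, it suffices to prove that every nontrivial element of $\mathcal{Z}(H)$ survives in some nilpotent quotient $H/\gamma_k(H)$.

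First I would record the shape of $N$. Discarding the trivial relators (those coming from machines halting in at most one step), each relator $f_{p_n}f_{p_n^{t_n}}^{-1}$ just identifies the two basis vectors $f_{p_n}$ and $f_{p_n^{t_n}}$ of the free abelian group $\mathcal{Z}(G)=\bigoplus_{i\ge 1}\Z f_i$; all the indices that occur are odd, being powers of odd primes, and the two-element index sets $\{p_n,p_n^{t_n}\}$ are pairwise disjoint, since a proper power of an odd prime is neither a prime nor a power of a different prime. Hence $N$ is spanned by part of a basis of $\mathcal{Z}(G)$, the quotient $\mathcal{Z}(H)$ is free abelian, and a basis of it is furnished by the images of $\{f_i \mid i\text{ is not one of the ``source'' indices }p_n\text{ of a halting machine}\}$. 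In particular every nontrivial $h\in\mathcal{Z}(H)$ is the image of some $\tilde h=\prod_{i\in S}f_i^{\alpha_i}\in\mathcal{Z}(G)$ with $S$ finite, each $\alpha_i\neq 0$, and $S$ containing no source index.

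The heart of the matter is then to show that $\tilde h$ survives in the nilpotent group $H/\gamma_k(H)$ for $k$ large. Note that $H/\gamma_k(H)$ is obtained from $G/\gamma_k(G)$ by imposing the relations $f_{p_n}=f_{p_n^{t_n}}$. Take $k$ large enough that every index of $S$ lies below the threshold past which a central generator $f_i$ becomes trivial in $G/\gamma_k(G)$. Each relation $f_{p_n}=f_{p_n^{t_n}}$ either identifies its two central generators $f_{p_n},f_{p_n^{t_n}}$ of $G/\gamma_k(G)$, or (when the target $f_{p_n^{t_n}}$ has already died) kills the source $f_{p_n}$; since $p_n$ is a source index, $f_{p_n}\notin\{f_i\mid i\in S\}$, and, provided the $f_i$ with index below the threshold span a \emph{free} abelian subgroup of the centre of $G/\gamma_k(G)$, the images of the $f_i$, $i\in S$, remain part of a basis of the (still free abelian) group obtained from that centre after all these identifications. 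Hence $\prod_{i\in S}\overline{f_i}^{\alpha_i}\neq 1$ in $H/\gamma_k(H)$, and $H$ is residually nilpotent. The required freeness is Lemma~\ref{lem: image of odd form free basis} for the odd indices, and is obtained for the even ones, and for the mixed case, by the matrix computation of Lemma~\ref{lem: Matrix model for some relations in G} used exactly as in the proof of Lemma~\ref{lem: residuall nilpotency of G}: a nontrivial relation among the $f_i$ holding in all the relevant $G/\gamma_k(G)$ would survive in the abelian quotients $\langle c\rangle$ of the groups $M_n$, where the image of $\sum_i\alpha_i f_i$ has exponent a fixed nonzero integer polynomial in $n$, which cannot vanish for all $n$.

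The main obstacle is precisely this last point: controlling $\gamma_k(G)\cap\mathcal{Z}(G)$ well enough to know which of the $f_i$ remain $\Z$-linearly independent in $G/\gamma_k(G)$. The published Lemma~\ref{lem: image of odd form free basis} only addresses odd indices, and each matrix group $M_n$ — essentially the only family of explicit nilpotent quotients available — sees merely a rank-one slice $\langle c\rangle$ of the centre, so one really has to play off several of the $M_n$ against each other (a Vandermonde-type argument on binomial coefficients) to certify independence, exactly as for Lemma~\ref{lem: residuall nilpotency of G}. A secondary wrinkle is that the $M_n$ are not quotients of $H$, so they serve only to analyse $G/\gamma_k(G)$, after which the disjoint-support structure of $N$ recorded above is what allows one to finish by bare hands.
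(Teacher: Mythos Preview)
Your approach is sound but differs from the paper's, which is shorter and less structural. Rather than choosing a source-free lift and tracking which basis vectors $f_i$ survive in $H/\gamma_k(H)$, the paper keeps $w$ as an arbitrary word in the $f_i$ and forms the finite set $W$ of all words obtained from $w$ by optionally replacing each letter $f_i$ by its partner under the relevant relation $f_{p_n}=f_{p_n^{t_n}}$ of $\mathcal A$. Every element of $W$ equals $w$ in $H$ and is therefore nontrivial in $G$; residual nilpotency of $G$ (Lemma~\ref{lem: residuall nilpotency of G}), used as a black box, then yields an $M$ with all of $W$ nontrivial in $G/\gamma_M(G)$, and the paper concludes in one line that $w\neq 1$ in $H/\gamma_M(H)$. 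This avoids re-opening the matrix computations and the bookkeeping about source versus target indices that you carry out. What your route buys is transparency about the one step the paper leaves implicit: since $W$ is only a finite subset of the infinite coset $\tilde w+\langle\mathcal A\rangle$, the passage from ``every element of $W$ is nonzero in $G/\gamma_M(G)$'' to ``$w\neq 1$ in $H/\gamma_M(H)$'' is not formal and tacitly relies on the $\Z$-linear independence of the relevant $\bar f_i$ in $G/\gamma_M(G)$ --- exactly the point you isolate as the main obstacle, and exactly what Lemma~\ref{lem: image of odd form free basis} supplies for the odd indices (which are the only ones touched by $\mathcal A$).
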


\begin{proof}
	As before, we have that $H/ \mathcal{Z}(H)=\Z \wr \Z$, and $\Z \wr \Z$ is residually nilpotent.\\
	Thus all we have to show is that every non-identity element of $\mathcal{Z}(H)$ has a non-trivial image in a nilpotent quotient. 
	\\
	Take a word $w$ which is a product of elements $f_i$, $1\le i \le N$ and which defines a non-identity element of $H$.\\
	Each  element $f_i$ that appears in this word is possibly identified in $H$ with another central element $f_j$ via a relation $f_{p_n}=f_{p_n^t}$. 
	Consider the finite set of words $W$ which consists in all words obtained by substituting  in $w$ some elements $f_i$ by  elements $f_j$ to which they are equal. The set $W$ is finite since its size is at most $2^{|w|}$. 
	All these words define the same element as $w$ in $H$, but they define different elements in $G$. However, they all define non trivial elements in $G$. By Lemma \ref{lem: residuall nilpotency of G}, $G$ is residually nilpotent, thus there exists $M$ such that each element of $W$ is mapped to a non-identity  element of $G/\gamma_M (G)$. 
	\\
	It then follows that $w$ has a non-identity image in $H/\gamma_M (H)$, and thus that $H$ is indeed residually nilpotent. 
	
	(Note that the set $W$ can be enumerated, but there is no way to determine that we are done enumerating it. 
	 And thus  the number $M$  that appears in this proof cannot be computed.)
\end{proof}

This ends the proof of Theorem \ref{Thm: uncomputable nilp quotients}
\end{proof}

\begin{proof}[Proof of Theorem \ref{Thm: non effectively residually nilp} ]
	This follows directly from the proof of Lemma \ref{lem: uncomputable nilp quotients}: the information  ``$f_{p_k}$ has a non-trivial image in $H/\gamma_{n}(H)$'' exactly says that either the $k$-th Turing machine never halts, or that, if it does, it does so in $t$ steps, with $p_k^t\le n-3$. This information is sufficient to solve the halting problem, and thus it cannot be produced by an algorithm. 
\end{proof}

\newpage
\printbibliography

\end{justify}
\end{document}